\documentclass[12pt]{article}

\topmargin  = 0 in \oddsidemargin = 0.25 in
\setlength{\textheight}{8.9 in} \setlength{\textwidth}{6 in}
\setlength{\topmargin}{-1cm}
\setlength{\unitlength}{1.0 mm}

\usepackage{bm}
\usepackage{amsmath}
\usepackage{amsfonts}
\usepackage{color}
\usepackage{amssymb}
\usepackage{graphicx}
\usepackage{enumerate}
\usepackage{amsthm,amscd}
\usepackage[all]{xy}

\allowdisplaybreaks

\usepackage{hyperref}

\newtheorem{theorem}{Theorem}[section]
\newtheorem{construction}[theorem]{Construction}
\newtheorem{corollary}[theorem]{Corollary}
\newtheorem{definition}[theorem]{Definition}

\newtheorem{lemma}[theorem]{Lemma}
\newtheorem{fact}[theorem]{Fact}
\newtheorem{proposition}[theorem]{Proposition}

\newtheorem{remark}[theorem]{Remark}
\newtheorem{claim}[theorem]{Claim}

\newtheorem{obser}[theorem]{Observation}
\newtheorem{assumption}[theorem]{Assumption}

\linespread{1}

\begin{document}

\title{Mixed pairwise cross intersecting families (I)\thanks{This work is supported by  NSFC (Grant No. 11931002).  E-mail addresses: yangmiemie@hnu.cn (Yang Huang),
ypeng1@hnu.edu.cn (Yuejian Peng, corresponding author).}}

\author{Yang Huang, Yuejian Peng$^{\dag}$ \\[2ex]
{\small School of Mathematics, Hunan University} \\
{\small Changsha, Hunan, 410082, P.R. China }  }

\maketitle

\vspace{-0.5cm}

\begin{abstract}
Families $\mathcal{A}$ and $\mathcal{B}$ are cross-intersecting if $A\cap B\ne \emptyset$ for any $A\in \mathcal{A}$ and $B\in \mathcal{B}$. If $n<k+l$, all families $\mathcal{A}\subseteq {[n]\choose k}$ and $\mathcal{B}\subseteq {[n]\choose l}$ are cross intersecting  and we say that  $\mathcal{A}$ and $\mathcal{B}$ are cross intersecting freely. An $(n, k_1, \dots, k_t)$-cross intersecting system is  a set of non-empty pairwise cross-intersecting families $\mathcal{F}_1\subset{[n]\choose k_1}, \mathcal{F}_2\subset{[n]\choose k_2}, \dots, \mathcal{F}_t\subset{[n]\choose k_t}$  with $t\geq 2$ and $k_1\geq k_2\geq \cdots \geq k_t$. If an $(n, k_1, \dots, k_t)$-cross intersecting system contains at least two families which are cross intersecting freely and at least two families which are cross intersecting but not freely, then we say that the cross intersecting system is of mixed type. %Otherwise, we say that it is of  non-mixed type.
All previous studies are on non-mixed type, i.e, under the condition that  $n \ge k_1+k_2$. In this paper, we study for the first interesting mixed type, an $(n, k_1, \dots, k_t)$-cross intersecting system with $k_1+k_3\leq n <k_1+k_2$, i.e.,  families $\mathcal{F}_i\subseteq {[n]\choose k_i}$ and $\mathcal{F}_j\subseteq {[n]\choose k_j}$ are cross intersecting freely if and only if $\{i, j\}=\{1, 2\}$.  Let $M(n, k_1, \dots, k_t)$ denote the maximum sum of sizes of families in an $(n, k_1, \dots, k_t)$-cross intersecting system. %In \cite{HP}, the authors determined $M(n, k_1, \dots, k_t)$ and all extremal $(n, k_1, \dots, k_t)$-cross intersecting systems if   $n\ge k_1+k_2$. Note that  the condition  $n\ge k_1+k_2$ implies that it is of  non-mixed type, and all previous studies are of non-mixed type. It would be interesting  to study for mixed type, and the first interesting case is that  $k_1+k_3\leq n <k_1+k_2$. In this case, $\mathcal{A}_1\subseteq {[n]\choose k_1}$ and $\mathcal{A}_2\subseteq {[n]\choose k_2}$ are cross intersecting freely. In this paper, we focus on this case, and
We determine $M(n, k_1, \dots, k_t)$ and characterize all  extremal $(n, k_1, \dots, k_t)$-cross intersecting systems for $k_1+k_3\leq n <k_1+k_2$.

 A result of Kruskal-Katona  allows us to consider only families $\mathcal{F}_i$ whose elements are the first $|\mathcal{F}_i|$ elements in lexicographic order (we call them  L-initial families). Since $n <k_1+k_2$, $\mathcal{F}_1\subseteq {[n]\choose k_1}$ and $\mathcal{F}_2\subseteq {[n]\choose k_2}$ are cross intersecting freely. Thus, when we try to  bound $\sum_{i=1}^t{|\mathcal{F}_i|}$ by a function, there are two free variables $I_1$ (the last element of $\mathcal{F}_1$) and $I_2$ (the last element of $\mathcal{F}_2$). This causes more difficulty to analyze properties of the corresponding function comparing to non-mixed type problems (single-variable function). To overcome this difficulty, we introduce new concepts `$k$-partner' and `parity', develop some rules to determine whether two L-initial cross intersecting families are maximal to each other, and  prove one crucial property that in an extremal L-initial  $(n, k_1, \dots, k_t)$-cross intersecting system  ($\mathcal{F}_1$, $\mathcal{F}_2$, $\cdots$, $\mathcal{F}_t$), the last element of $\mathcal{F}_1$ and the last element of $\mathcal{F}_2$ are `parities' (see Section \ref{sec2} for the definition) to each other. This discovery allows us to bound $\sum_{i=1}^t{|\mathcal{F}_i|}$ by a single variable function $g(I_2)$, where $I_2$ is the last element of $\mathcal{F}_2$. Another crucial and challenge part is  to verify that $-g(I_2)$ has unimodality. Comparing to the non-mixed type, we need to overcome more difficulties in showing the unimodality of  function $-g(I_2)$ since there are more terms to be taken care of. 
   We think that the characterization of maximal cross intersecting  L-initial families and the unimodality of functions in this paper are interesting in their own, in addition to the extremal result.
   The most general condition on $n$  is that  $n\ge k_1+k_t$ (If $n< k_1+k_t$, then $\mathcal{F}_1$ is cross intersecting  freely with any other  $\mathcal{F}_i$, and  consequently  $\mathcal{F}_1={[n]\choose k_1}$ in an extremal $(n, k_1, \dots, k_t)$-cross intersecting system and we can remove $\mathcal{F}_1$ .). This paper provides foundation work for the solution to the most general condition $n\ge k_1+k_t$.

\end{abstract}

{{\bf Key words:}
Cross intersecting families; Extremal finite sets}

{{\bf 2010 Mathematics Subject Classification.}  05D05, 05C65, 05D15.}

\section{Introduction}
Let $[n]=\{1, 2, \dots, n\}$.  For $0\leq k \leq n$, let ${[n]\choose k}$ denote the family of all $k$-subsets of $[n]$. A family $\mathcal{A}$ is $k$-uniform if $\mathcal{A}\subset {[n]\choose k}$. A family $\mathcal{A}$ is  {\em intersecting} if $A\cap B\ne \emptyset$ for any $A$ and $B\in \mathcal{A}$. Many researches in extremal set theory are inspired by the foundational  result of Erd\H{o}s--Ko--Rado \cite{EKR1961} showing that a maximum $k$-uniform intersecting family is a full star.  This result of Erd\H{o}s--Ko--Rado has many interesting generalizations. Two families $\mathcal{A}$ and $\mathcal{B}$ are {\em cross-intersecting} if $A\cap B\ne \emptyset$ for any $A\in \mathcal{A}$ and $B\in \mathcal{B}$. Note that $\mathcal{A}$ and $\mathcal{A}$ are  cross-intersecting is equivalent to that $\mathcal{A}$ is intersecting.
We call $t$ $(t\geq 2)$ families  $\mathcal{A}_1, \mathcal{A}_2,\dots, \mathcal{A}_t$ {\em pairwise cross-intersecting families} if $\mathcal{A}_i$ and $\mathcal{A}_j$ are cross-intersecting when $1\le i<j \le t$. Additionally, if $\mathcal{A}_j\ne \emptyset$ for each $j\in [t]$, then we say that $\mathcal{A}_1, \mathcal{A}_2,\dots, \mathcal{A}_t$ are {\em non-empty pairwise cross-intersecting}.
For given integers $n, t, k_t, \dots, k_t$, if families $\mathcal{A}_1\subseteq {[n]\choose k_1}, \dots, \mathcal{A}_t\subseteq {[n]\choose k_t}$ are non-empty pairwise cross intersecting and $k_1\geq k_2\dots\geq k_t$, then we call $(\mathcal{A}_1, \dots, \mathcal{A}_t)$  an {\it $(n, k_1, \dots, k_t)$-cross intersecting system}.

Define
\begin{align}\label{newdef1}
 M(n, k_1, \dots, k_t)&=\max \bigg\{\sum_{i=1}^t|\mathcal{A}_i|: (\mathcal{A}_1, \dots, \mathcal{A}_t) \text { is an } (n, k_1, \dots, k_t)\text{-cross} \nonumber \\
 & \quad\quad\quad\quad  \text{intersecting system }\bigg\}.
 \end{align}
We say that an $(n, k_1, \dots, k_t)$-cross intersecting system $(\mathcal{A}_1, \dots, \mathcal{A}_t)$  is {\em extremal} if $\sum_{i=1}^t|\mathcal{A}_i|=M(n, k_1, \dots, k_t)$. In \cite{HP} (Theorem \ref{HP}), the authors determined $M(n, k_1, \\ \dots, k_t)$ for $n\geq k_1+k_2$  and  characterized the extremal families attaining the bound.
%This result unified the results of Hilton-Milner in \cite{HM1967}, Frankl-Tokushige in \cite{FT} and Shi-Qian-Frankl in \cite{SFQ2020}.

\begin{theorem}[Huang-Peng \cite{HP}]\label{HP}
Let $\mathcal{A}_1\subset{[n]\choose k_1}, \mathcal{A}_2\subset{[n]\choose k_2}, \dots, \mathcal{A}_t\subset{[n]\choose k_t}$ be non-empty pairwise cross intersecting families with $t\geq 2$, $k_1\geq k_2\geq \cdots \geq k_t$, and $n\geq k_1+k_2$. Then
$$
\sum_{i=1}^t{|\mathcal{A}_i|}\leq \textup{max} \left\{{n\choose k_1}-{n-k_t\choose k_1}+\sum_{i=2}^t{{n-k_t\choose k_i-k_t}}, \,\,\sum_{i=1}^t{n-1\choose k_i-1}\right\}.
$$
The equality holds if and only if one of the following holds.\\
(i) ${n\choose k_1}-{n-k_t\choose k_1}+\sum_{i=2}^t{{n-k_t\choose k_i-k_t}}>\sum_{i=1}^t{n-1\choose k_i-1}$, and there is some $k_t$-element set $T\subset [n]$ such that $\mathcal{A}_1=\{F\in {[n]\choose k_1}: F\cap T\ne\emptyset\}$ and $\mathcal{A}_j=\{F\in {[n]\choose k_j}: T\subset F\}$ for each $j\in [2, t]$;\\
(ii)  ${n\choose k_1}-{n-k_t\choose k_1}+\sum_{i=2}^t{{n-k_t\choose k_i-k_t}}\le\sum_{i=1}^t{n-1\choose k_i-1}$, there are some $i\neq j$ such that $n>k_i+k_j$, and there is some $a\in [n]$ such that $\mathcal{A}_j=\{F\in{[n]\choose k_j}: a\in F\}$ for each $j\in [t]$;\\
(iii) $t=2, n=k_1+k_2$, $\mathcal{A}_1\subset {[n]\choose k_1}$ and $\mathcal{A}_2={[n]\choose k_2}\setminus \overline{\mathcal{A}_1}$;\\
(iv) $t\geq 3, k_1=k_2=\cdots=k_t=k, n=2k$, fix some $i\in [t]$, $\mathcal{A}_j=\mathcal{A}$ for all $j\in [t]\setminus \{i\}$, where $\mathcal{A}$ is an intersecting family with size ${n-1\choose k-1}$, and
$\mathcal{A}_i={[n]\choose k}\setminus \overline{\mathcal{A}}$.
\end{theorem}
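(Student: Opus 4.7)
The plan is to reduce the problem to L-initial families and then bound the sum by a combinatorial function of a single parameter. First, I would apply the lex-compression (shifting) operation to each $\mathcal{A}_i$, replacing it by the initial segment of $\binom{[n]}{k_i}$ in lexicographic order of the same size. Using the Kruskal-Katona shadow theorem, one checks that cross-intersection between an L-initial $k$-family and an L-initial $l$-family with $n\geq k+l$ is equivalent to a shadow inequality that is preserved under simultaneous L-shifting. Since $n\geq k_1+k_2\geq k_i+k_j$ for every pair $\{i,j\}$, we may therefore assume all $\mathcal{A}_i$ are L-initial.

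Next, let $F_t$ denote the lex-last set of the smallest-uniform family $\mathcal{A}_t$. Because each pair $(\mathcal{A}_i,\mathcal{A}_t)$ with $i<t$ is cross-intersecting and $F_t\in \mathcal{A}_t$, every element of $\mathcal{A}_i$ must meet $F_t$; more importantly, the L-initial hypothesis forces $\mathcal{A}_i$ to sit inside the largest L-initial $k_i$-family that cross-intersects every L-initial $k_t$-family whose last set is $F_t$. Call this family $\mathcal{L}_i(F_t)$; its size depends only on $F_t$. We obtain
\[
 \sum_{i=1}^t |\mathcal{A}_i| \;\leq\; |\mathcal{A}_t| + \sum_{i=1}^{t-1} |\mathcal{L}_i(F_t)| \;=:\; g(F_t),
\]
with equality when $\mathcal{A}_t$ is the L-initial segment ending at $F_t$ and $\mathcal{A}_i=\mathcal{L}_i(F_t)$ for each $i<t$.

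The core task is to maximize $g$ over $F_t$. Writing $F_t=\{a_1<\cdots<a_{k_t}\}$, both $|\mathcal{A}_t|$ and each $|\mathcal{L}_i(F_t)|$ admit Kruskal-Katona style cascade expressions in the $a_j$'s. I would proceed by a discrete-derivative (swap-one-coordinate) argument to show that as $F_t$ moves through $\binom{[n]}{k_t}$ in lex order, $g$ attains its maximum at one of two boundary configurations: either $F_t=[k_t]$, giving $\mathcal{A}_t=\{[k_t]\}$, $\mathcal{A}_1=\{F\in\binom{[n]}{k_1}: F\cap[k_t]\neq\emptyset\}$, and $\mathcal{A}_i=\{F\in\binom{[n]}{k_i}: [k_t]\subseteq F\}$ for $i\geq 2$, yielding the first term in the stated maximum; or $F_t$ is the lex-last $k_t$-set containing the element $1$, giving the full star configuration and the second term. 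Verifying this extremal behaviour (essentially, unimodality of $-g$) is the main obstacle, since it requires carefully balancing the growth of $|\mathcal{A}_t|$ against the simultaneous shrinkage of the several $|\mathcal{L}_i(F_t)|$ as $F_t$ advances lexicographically.

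Finally, to extract the characterization, I would trace through the equality cases. If the first expression strictly dominates, the extremal structure is forced to match \textbf{(i)} up to the choice of a $k_t$-subset $T$. If the second strictly dominates, we land in the star case \textbf{(ii)}, with the side condition $n>k_i+k_j$ for some $i\neq j$ ensuring that the common element is uniquely pinned down. The two boundary degeneracies supply the exceptional cases: for $t=2$ and $n=k_1+k_2$, complementation forces $|\mathcal{A}_1|+|\mathcal{A}_2|\leq\binom{n}{k_2}$ with equality along the whole family of pairs $\bigl(\mathcal{A}_1,\binom{[n]}{k_2}\setminus\overline{\mathcal{A}_1}\bigr)$, which is \textbf{(iii)}; and for $k_1=\cdots=k_t=k$ with $n=2k$, the symmetry between meeting and containing $T$ lets the maximal family rotate through any index $i\in[t]$, producing \textbf{(iv)}.
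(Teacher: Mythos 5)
Your overall strategy (compress to L-initial families via Theorem \ref{kk}, bound the sum by a single-variable function of one family's ID, then prove unimodality) matches the paper's in outline, but your choice of anchor family creates a genuine gap. You parametrize by $F_t$, the ID of the \emph{smallest} family, and bound $\sum_i|\mathcal{A}_i|\leq g(F_t):=|\mathcal{A}_t|+\sum_{i<t}|\mathcal{L}_i(F_t)|$, where $\mathcal{L}_i(F_t)$ is the largest L-initial $k_i$-family cross-intersecting $\mathcal{L}(F_t,k_t)$. This discards every cross-intersection constraint among $\mathcal{A}_1,\dots,\mathcal{A}_{t-1}$, and for $t\geq 3$ the resulting bound is not tight: its maximum strictly exceeds the theorem's bound, so no unimodality analysis of $g$ can recover the statement. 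Concretely, take $t=3$, $k_1=k_2=k_3=k\geq 2$, $n=2k$, $F_3=[k]$. Then $\mathcal{L}(F_3,k)=\{[k]\}$ and $\mathcal{L}_1(F_3)=\mathcal{L}_2(F_3)={[n]\choose k}\setminus\{[k+1,2k]\}$, so $g([k])=2{n\choose k}-1$, while the true maximum is $\frac{3}{2}{n\choose k}$; the configuration attaining $g$ is inadmissible because $\mathcal{L}_1(F_3)$ and $\mathcal{L}_2(F_3)$ are not cross-intersecting (both contain $\{1,k+2,\dots,2k\}$ and its complement $[2,k+1]$). Relatedly, your claim that $F_t=[k_t]$ "yields the first term of the stated maximum" contradicts your own definition of $g$: for $i\geq 2$, $\mathcal{L}_i([k_t])$ is the family of all $k_i$-sets \emph{meeting} $[k_t]$, not those containing $[k_t]$, so in fact $g([k_t])=1+\sum_{i<t}\left({n\choose k_i}-{n-k_t\choose k_i}\right)$.

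The paper's proof avoids this by anchoring on the \emph{largest} family and by first reducing to the case $|\mathcal{A}_1|\geq{n-1\choose k_1-1}$: some family must contain the full star at $1$ (else the sum is already below $\sum_i{n-1\choose k_i-1}$), and one argues it may be taken to be $\mathcal{A}_1$. Then every other family is a sub-star at $1$, and the families $\mathcal{L}(T,k_j)$, $j\geq 2$, appearing in $f(R)=|\mathcal{L}(R,k_1)|+\sum_{j\geq 2}|\mathcal{L}(T,k_j)|$ (with $T$ the partner of the ID $R$ of $\mathcal{A}_1$, as in Lemma \ref{c2.22}) are automatically pairwise cross-intersecting, so the single-variable bound is both valid and attainable, which is what makes the subsequent unimodality argument meaningful. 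Note also that one cannot symmetrize your version by restricting $F_t$ so that $\mathcal{A}_t$ contains the full star: in extremal configuration (i) one has $|\mathcal{A}_t|=1$. To repair the proposal you would need to (a) justify the reduction to an anchor family containing the full star at $1$, and (b) rerun the discrete-derivative analysis on the correctly anchored function $f(R)$; as written, the reduction to maximizing $g(F_t)$ is unsound.
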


Theorem \ref{HP} generalized the results of Hilton-Milner in \cite{HM1967}, Frankl-Tokushige in \cite{FT} and Shi-Qian-Frankl in \cite{SFQ2020}.
In Theorem \ref{HP}, taking $t=2$ and $k_1=k_2$, we obtain the result of Hilton-Milner in \cite{HM1967}. Taking $t=2$, we obtain the result of Frankl-Tokushige in \cite{FT}. Taking $k_1=k_2=\dots=k_t$, we obtain the result of Shi-Qian-Frankl in \cite{SFQ2020}.
Note that if $n<k+\ell$, any family $\mathcal{A}\subseteq {[n]\choose k}$ and any family $\mathcal{B}\subseteq {[n]\choose \ell}$ are cross intersecting. In this case, we say that
families $\mathcal{A}\subseteq {[n]\choose k}$ and $\mathcal{B}\subseteq {[n]\choose \ell}$ are  {\it cross intersecting  freely}. If an $(n, k_1, \dots, k_t)$-cross intersecting system contains at least two families which are cross intersecting freely and at least two families which are  cross intersecting but not freely, then we say that the cross intersecting system is of {\em mixed type}. Otherwise, we say that it is of {\em non-mixed type}.
All previous studies are of non-mixed type, i.e., $ n \ge k_1+k_2$.  It would be interesting  to study for mixed type.
The first interesting case is to study an $(n, k_1, \dots, k_t)$-cross intersecting system
$(\mathcal{F}_1, \dots, \mathcal{F}_t)$ 
 for  $k_1+k_3\leq n <k_1+k_2$.
In this case, $\mathcal{F}_i\subseteq {[n]\choose k_i}$ and $\mathcal{F}_j\subseteq {[n]\choose k_j}$ are cross intersecting freely if and only if $\{i, j\}=\{1, 2\}$. In this paper, we focus on this case,
we determine
$M(n, k_1, \dots, k_t)$ and characterize  all extremal $(n, k_1, \dots, k_t)$-cross intersecting systems.
Let us look at the following $(n, k_1, \dots, k_t)$-cross intersecting systems.

\begin{construction}\label{con1}
Let $k_1\geq k_2\geq\dots\geq k_t$ and $k_1+k_3\leq n <k_1+k_2$. For each $i\in [t]$,  we denote
$$\mathcal{G}_i=\left\{ G\in {[n]\choose k_i}: 1\in G \right\}.$$
\end{construction}

Note that $$\sum_{i=1}^t|\mathcal{G}_i|=\sum_{i=1}^t{n-1\choose k_i-1}=:\lambda_1.$$

\begin{construction}\label{con2}
Let $k_1\geq k_2\geq\dots\geq k_t$ and $k_1+k_3\leq n <k_1+k_2$.  For $i=1$ or $2$, let
$$\mathcal{H}_i=\left\{ H\in {[n]\choose k_i}: H\cap [k_t]\ne\emptyset \right\},$$
and for $i\in[3, t]$, let
$$\mathcal{H}_i=\left\{ H\in {[n]\choose k_i}: [k_t]\subseteq H\right\}.$$
\end{construction}

Note that $$\sum_{i=1}^t|\mathcal{H}_i|=\sum_{i=1}^2\left({n\choose k_i}-{n-k_t\choose k_i}\right)+\sum_{i=3}^t{{n-k_t\choose k_i-k_t}}=:\lambda_2.$$

Our main result  is that an extremal  $(n, k_1, \dots, k_t)$-cross intersecting system must be isomorphic to Construction \ref{con1} or Construction \ref{con2} if $k_1+k_3\leq n<k_1+k_2$.

\begin{theorem}\label{main1}
Let $\mathcal{F}_1\subset{[n]\choose k_1}, \mathcal{F}_2\subset{[n]\choose k_2}, \dots, \mathcal{F}_t\subset{[n]\choose k_t}$ be non-empty pairwise cross intersecting families with $t\geq 3$, $k_1\geq k_2\geq \cdots \geq k_t$ and $k_1+k_3\leq n<k_1+k_2$. Then
$$
\sum_{i=1}^t{|\mathcal{F}_i|}\leq \textup{max} \left\{\sum_{i=1}^t{n-1\choose k_i-1}, \,\,\sum_{i=1}^2\left({n\choose k_i}-{n-k_t\choose k_i}\right)+\sum_{i=3}^t{{n-k_t\choose k_i-k_t}}\right\}.
$$
The equality holds if and only if $(\mathcal{F}_1, \dots, \mathcal{F}_t)$ is isomorphic to $(\mathcal{G}_1, \dots, \mathcal{G}_t)$ in Construction \ref{con1} or $(\mathcal{H}_1, \dots, \mathcal{H}_t)$ in Construction \ref{con2}.
\end{theorem}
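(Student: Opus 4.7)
The plan is to follow the roadmap sketched in the introduction. First I would apply the cross-intersecting form of Kruskal--Katona and replace each $\mathcal{F}_i$ by the L-initial family of the same cardinality; this preserves pairwise cross-intersection on the constrained pairs (those with $n\ge k_i+k_j$) and costs nothing on the free pair $(\mathcal{F}_1,\mathcal{F}_2)$. Hence every $\mathcal{F}_i$ may be encoded by its lex-maximum element $I_i$. For $i\ge 3$, since $n\ge k_1+k_3\ge k_2+k_i$, both pairs $(\mathcal{F}_1,\mathcal{F}_i)$ and $(\mathcal{F}_2,\mathcal{F}_i)$ are properly constrained, so in an extremal system each such $\mathcal{F}_i$ is L-initial maximal simultaneously against $\mathcal{F}_1$ and against $\mathcal{F}_2$; the effective upper bound on $|\mathcal{F}_i|$ is then the smaller of the two bounds produced by $I_1$ and by $I_2$.

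The new phenomenon, absent from Theorem~\ref{HP}, is that $I_1$ and $I_2$ are both free variables because $(\mathcal{F}_1,\mathcal{F}_2)$ is a free pair. To collapse the problem back to a single variable I would formalise a ``$k$-partner'' correspondence: for any pair with $n\ge k_i+k_j$, define the $k_j$-partner of $I_i$ to be the largest lex element $I_j$ for which the resulting L-initial families remain cross-intersecting. These partners yield clean combinatorial rules for deciding when two L-initial families are mutually maximal. The crucial claim, to be proved by a local exchange argument, is that in any extremal configuration $I_1$ and $I_2$ are \emph{parities} of each other, meaning that the binding constraints which they impose on each $\mathcal{F}_i$ ($i\ge 3$) coincide. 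Otherwise one could shift $I_1$ (or $I_2$) in the direction of the slack constraint, increase the corresponding $|\mathcal{F}_1|$ or $|\mathcal{F}_i|$, and strictly enlarge the total, contradicting extremality. The parity relation then expresses $I_1$ and each maximum $|\mathcal{F}_i|$ as explicit functions of $I_2$.

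With these substitutions in place, writing $g(I_2):=\sum_{i=1}^{t}|\mathcal{F}_i|$ reduces the problem to maximising a single-variable function over the admissible lex-range of $I_2$. I would then show that $-g$ is unimodal by computing the discrete difference $g(I_2{+}1)-g(I_2)$: as $I_2$ advances one step in lex order, $|\mathcal{F}_2|$ gains one set while the partner-bounded sizes $|\mathcal{F}_1|,|\mathcal{F}_3|,\ldots,|\mathcal{F}_t|$ respond by a controlled combination of binomial terms. Tracking the sign of this difference across the successive lex-transitions of $I_2$ should establish that it changes sign at most once, so the maximum of $g$ is attained at an endpoint of the range. A direct evaluation matches the two endpoints with the values $\lambda_1$ and $\lambda_2$ and the corresponding extremal systems with Constructions~\ref{con1} and~\ref{con2}; comparing $\lambda_1$ and $\lambda_2$ then gives the stated bound, and tracking equality cases throughout yields the uniqueness part of the characterisation.

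The hard part will be the unimodality of $-g$. Compared with the non-mixed Theorem~\ref{HP}, the free pair $(\mathcal{F}_1,\mathcal{F}_2)$ injects additional binomial terms into $g$, and the sequence of lex-transitions of $I_2$ that must be analysed is significantly longer and not monotone-friendly. A clean one-shot monotonicity argument of the kind that sufficed in the non-mixed case is no longer available, and the step must instead be broken into a delicate, case-by-case difference estimate; I expect the bulk of the technical work in the proof to concentrate here, with the parity lemma being the secondary technical hurdle.
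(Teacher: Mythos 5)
Your proposal follows essentially the same route as the paper: the Kruskal--Katona reduction to L-initial families, the $k$-partner/parity relation forcing $I_1$ to be the $k_1$-parity of $I_2$ in an extremal system (Lemma \ref{l2.18}), the collapse to the single-variable function $g(I_2)$, and the local unimodality of $-g$ with endpoint evaluation against Constructions \ref{con1} and \ref{con2}, so the outline is sound. One caution: the parity lemma cannot be obtained by the simple ``shift toward the slack constraint'' exchange you sketch, since in the case $I_2\precneqq I_1$ the net effect of moving $I_1$ on $\sum_i|\mathcal{F}_i|$ is not obviously positive and the paper must itself invoke unimodality of the auxiliary function $f$ over the interval between $I'_1$ and $I''_1$ (Claims \ref{clm2.20}--\ref{claim4.8}), so this step is comparable in difficulty to the unimodality of $g$ rather than a secondary hurdle.
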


 In proving  Theorem \ref{HP} in \cite{HP}, the authors applied a result of Kruskal-Katona (Theorem \ref{kk})  allowing us to consider only families $\mathcal{F}_i$ whose elements are the first $|\mathcal{F}_i|$ elements in lexicographic order (we call them  L-initial families). We bounded $\sum_{i=1}^t{|\mathcal{F}_i|}$ by a function  $f(R)$ of the last element $R$ in the lexicographic order of an  L-initial family $\mathcal{F}_1$ ($R$ is called the ID of  $\mathcal{F}_1$ ),  and showed that $-f(R)$ has   unimodality.  To prove our main result (Theorem \ref{main1}) in this paper, by the result of Kruskal-Katona (Theorem \ref{kk}), we can  still consider only  pairwise cross-intersecting non-empty L-initial families $\mathcal{F}_1\subset{[n]\choose k_1}, \mathcal{F}_2\subset{[n]\choose k_2}, \dots, \mathcal{F}_t\subset{[n]\choose k_t}$. However, in this paper, the condition on $n$ is relaxed to $k_1+k_3\leq n <k_1+k_2$, so
 $\mathcal{F}_1\subseteq {[n]\choose k_1}$ and $\mathcal{F}_2\subseteq {[n]\choose k_2}$ are cross intersecting freely. When we try to  bound $\sum_{i=1}^t{|\mathcal{F}_i|}$ by a function, there are two free variables $I_1$ (the ID of $\mathcal{F}_1$) and $I_2$ (the ID of $\mathcal{F}_2$). This causes more difficulty to analyze properties of the corresponding function, comparing to the problem in \cite{HP}. To overcome this difficulty,  we introduce new concepts `$k$-partner' and `parity', develop some rules to determine whether  a pair of L-initial cross intersecting families are maximal to each other (see precise definition), and
  prove one crucial property that for an extremal L-initial  $(n, k_1, \dots, k_t)$-cross intersecting system  ($\mathcal{F}_1$, $\mathcal{F}_2$, $\cdots$, $\mathcal{F}_t$), the ID $I_1$ of $\mathcal{F}_1$ and the ID $I_2$ of $\mathcal{F}_2$ are `parities' to each other (Lemma \ref{l2.18}). This discovery allows us to bound $\sum_{i=1}^t{|\mathcal{F}_i|}$ by a single variable function $g(I_2)$. Another crucial and challenge part is  to verify the  unimodality of $-g(I_2)$ (Lemmas \ref{c2.27}, \ref{c2.28}  and \ref{c2.29}). Comparing to the function in  \cite{HP}, we need to overcome more difficulties in dealing with  function $-g(I_2)$ since there are more `mysterious' terms to be taken care of.
  We take advantage of some properties of function $f(R)$  obtained in \cite{HP} and come up with some new strategies in estimating the change $g(I'_2)-g(I_2)$ as the ID of $\mathcal{F}_2$ increases from $I_2$ to $I'_2$ (Sections \ref{sec6} and \ref{sec7}).

  The most general condition on $n$  is that  $n\ge k_1+k_t$. If $n< k_1+k_t$, then $\mathcal{F}_1$ is cross intersecting  freely with any other  $\mathcal{F}_i$, and  consequently  $\mathcal{F}_1={[n]\choose k_1}$ in an extremal $(n, k_1, \dots, k_t)$-cross intersecting system and $\mathcal{F}_1$ can be removed.  The most general case is more complex and we will deal with it in a forthcoming paper \cite {mix2}. The work in  this paper provides important foundation  for the solution to the most general condition $n\ge k_1+k_t$.

  To obtain the relationship between the ID of $\mathcal{F}_1$ and the ID of $\mathcal{F}_2$, we build some foundation work in Section \ref{sec2}, for example, we come up with new concepts `$k$-partners' and `parity', and give a necessary and sufficient condition on maximal cross intersecting  L-initial families.  In Section \ref{sec3}, we give the proof of Theorem \ref{main1} by assuming the truth of Lemmas \ref{l2.18}, \ref{c2.27}, \ref{c2.28}  and \ref{c2.29}.  In Section \ref{sec4}, we list some results obtained for non-mixed type  in \cite{HP} which we will apply. In Sections \ref{sec6} and \ref{sec7}, we give the proofs of Lemmas \ref{l2.18}, \ref{c2.27}, \ref{c2.28}  and \ref{c2.29}.

  %Denoteo ahegyyion
%$$\lambda_1=\sum_{i=1}^t{n-1\choose kk_i-1}$$ ande
%$$\lambda_2=\sum_{i=1}^2\left({n\choose k_i}-{n-k_t\choose k_i}\right)+\sum_{i=3}^t{{n-kough_t\choose k_i-k_t}}.$$

%Clearly,  $\sum_{i=1}^t|\mathcal{G}_i|=\lambda_1$ and $\sum_{i=1}^t|\mathcal{H}_i|=\lambda_2$. This means that  the upper bound  is sharp.  We also characterize the extremal families attaining the upper bound.

%\begin{theorem}\label{uniqueness}
%Let $\lambda_j=\max \{\lambda_1, \lambda_2\}$ for some $j\in [2]$.
%Let $k_1+k_3\leq n<k_1+k_2$. Suppose $(\mathcal{F}_1, \dots, \mathcal{F}_t)$ is a non-empty $(n, k_1, \dots, k_t)$-cross intersecting system with $\sum_{i=1}^t|\mathcal{F}_i|=\max \{\lambda_1, \lambda_2\}$. Then $(\mathcal{F}_1, \dots, \mathcal{F}_t)$ is isomorphic to $(\mathcal{G}_1, \dots, \mathcal{G}_t)$ or $(\mathcal{H}_1, \dots, \mathcal{H}_t)$.
%\end{theorem}

\section{Partner and Parity}\label{sec2}
In this section, we introduce new concepts `$k$-partner' and `parity', develop some rules to determine whether a pair of  L-initial cross intersecting families are maximal to each other (precise definitions will be given in this section). We prove some properties which are foundation for the proof of our main results.

When we write a set $A=\{a_1, a_2, \ldots, a_s\}\subset [n]$, we always assume that $a_1<a_2<\ldots<a_s$ throughout the paper. Let $\max A$ denote the maximum  element of $A$, let $\min A$ denote the minimum element of $A$ and  $(A)_i$ denote the $i$-th element of $A$. Let us introduce the {\it lexicographic (lex for short) order} of subsets of positive integers. Let $A$ and $B$ be finite subsets of the set of positive integers $\mathbb{Z}_{>0}$. We say that $A\prec B$ if either $A\supset B$ or $\min(A\setminus B) < \min(B\setminus A)$. In particular, $A\prec A$. Let $A$ and $B$ in ${[n]\choose k}$ with $A\precneqq B$. We write $A<B$ if there is no other   $C\in { [n]\choose k}$ such that $A\precneqq C\precneqq B$.

Let $\mathcal{L}(n, r, k)$ denote the first $r$ subsets in ${[n]\choose k}$ in the lex order. Given a set $R$, we denote $\mathcal{L}(n, R, k)=\{F\in {[n]\choose k}: F\prec R\}$. Whenever the underlying set $[n]$ is clear, we shall ignore it and write $\mathcal{L}(R, k)$, $\mathcal{L}(r, k)$ for short.  Let $\mathcal{F}\subset {[n]\choose k}$ be a family, we say $\mathcal{F}$ is {\it L-initial} if $\mathcal{F}=\mathcal{L}(R, k)$ for some $k$-set $R$.  We call $R$ the  {\it ID} of $\mathcal{F}$.

The well-known Kruskal-Katona theorem \cite{KK1, KK2} will allow us to consider only L-initial families. An equivalent formulation of this result was given in \cite{KK3, KK4} as follows.

\begin{theorem}[Kruskal-Katona theorem \cite{KK3, KK4}]\label{kk}
For $\mathcal{A}\subset {[n]\choose k}$ and $\mathcal{B}\subset {[n]\choose l}$, if $\mathcal{A}$ and $\mathcal{B}$ are cross intersecting, then $\mathcal{L}(|\mathcal{A}|, k)$ and $\mathcal{L}(|\mathcal{B}|, l)$ are cross intersecting as well.
\end{theorem}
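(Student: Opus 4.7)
The plan is to use the method of shifting (compression). For $1\le i<j\le n$ define the shift $S_{ij}$ acting on a family $\mathcal{F}\subseteq {[n]\choose r}$: for each $F\in \mathcal{F}$ set $S_{ij}(F)=(F\setminus\{j\})\cup\{i\}$ provided $j\in F$, $i\notin F$, and this replacement set does not already belong to $\mathcal{F}$; otherwise $S_{ij}(F)=F$. Put $S_{ij}(\mathcal{F})=\{S_{ij}(F):F\in\mathcal{F}\}$, which has the same cardinality as $\mathcal{F}$ by construction.

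The first key step is to show that shifting preserves cross-intersection: if $\mathcal{A},\mathcal{B}$ are cross-intersecting then so are $S_{ij}(\mathcal{A})$ and $S_{ij}(\mathcal{B})$. I would verify this by a short case analysis: given $A'\in S_{ij}(\mathcal{A})$ and $B'\in S_{ij}(\mathcal{B})$, split according to whether each of $A',B'$ arose from a genuine shift or is unchanged, and in each case produce an element of $A'\cap B'$ using the cross-intersection of the original pair and the ``new set not already present'' clause in the definition of $S_{ij}$. Since each shift strictly decreases $\sum_{F\in\mathcal{F}}\sum_{x\in F}x$, iterating through all pairs $(i,j)$ with $i<j$ terminates; so we may assume both families are \emph{shifted} in the sense that $S_{ij}(\mathcal{F})=\mathcal{F}$ for every $i<j$.

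The second, harder step is to pass from shifted cross-intersecting pairs to L-initial cross-intersecting pairs (a shifted family need not itself be L-initial, e.g.\ $\{\{1,2\},\{1,3\},\{2,3\}\}$). I expect this to be the main obstacle. The plan is a double induction on $n$ and on $|\mathcal{A}|+|\mathcal{B}|$: split each family according to whether its members contain the element $1$, reduce the ``contain $1$'' piece (after deleting $1$) to a cross-intersecting problem in ${[n-1]\choose k-1}$ and ${[n-1]\choose l-1}$, and reduce the ``avoid $1$'' piece to an analogous problem on $[n-1]$; apply the inductive hypothesis to each, then reassemble. Shiftedness is the key input for the gluing, because it forces the two pieces to recombine precisely into the lex-initial configuration. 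A parallel route, should this induction become unwieldy, is to invoke the shadow form of Kruskal--Katona: reformulate cross-intersection as $\mathcal{A}\cap \Delta_k(\overline{\mathcal{B}})=\emptyset$, where $\overline{\mathcal{B}}=\{[n]\setminus B:B\in\mathcal{B}\}\subseteq{[n]\choose n-l}$ and $\Delta_k$ is the iterated $k$-shadow, then apply the shadow-minimization inequality to $\overline{\mathcal{B}}$ and check carefully (after the ground-set reversal $i\mapsto n+1-i$ needed to align lex with colex) that $\overline{\mathcal{L}(|\mathcal{B}|,l)}$ is the corresponding extremizer, so that the size bound $|\mathcal{A}|+|\Delta_k(\overline{\mathcal{B}})|\le{n\choose k}$ transfers to the L-initial pair.
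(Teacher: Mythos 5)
The paper does not prove Theorem \ref{kk} at all: it is quoted as a known equivalent form of the Kruskal--Katona theorem, attributed to Hilton and to Frankl--Kupavskii, so there is no internal proof to compare against. Judged on its own, your proposal is essentially correct, but only because of the second route you offer. The reformulation of cross-intersection as $\mathcal{A}\cap\Delta_k(\overline{\mathcal{B}})=\emptyset$ (valid when $n\ge k+l$; if $n<k+l$ the statement is vacuous since all pairs intersect), followed by shadow minimization applied to $\overline{\mathcal{B}}$ and the observation that complementation plus the reversal $i\mapsto n+1-i$ carries lex-initial $l$-families to colex-initial $(n-l)$-families whose iterated $k$-shadow is lex-final at level $k$, is precisely the classical derivation (it is how Hilton and Frankl--Kupavskii obtain this formulation), and the bookkeeping you describe does close the argument: $|\mathcal{L}(|\mathcal{A}|,k)|\le {n\choose k}-|\Delta_k(\overline{\mathcal{L}(|\mathcal{B}|,l)})|$ forces the lex-initial family to miss the lex-final shadow.

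Your first route, by contrast, has a real gap exactly where you suspect it. Step one (shifting preserves cross-intersection, and the potential $\sum_F\sum_{x\in F}x$ forces termination) is standard and your case analysis goes through. But step two is not a proof: a fully shifted pair need not be L-initial, and the proposed double induction on $n$ and $|\mathcal{A}|+|\mathcal{B}|$ is only named, not executed; in particular it is not clear what inductive statement about the two pieces (containing $1$ / avoiding $1$) would reassemble into the claim that the \emph{lex-initial} families of the same sizes are cross-intersecting, rather than merely into a size inequality. Since you explicitly designate the shadow argument as the fallback and that argument is complete in outline, the proposal as a whole stands, but if you wanted the shifting route to be self-contained you would need to supply the gluing lemma in full.
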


In \cite{HP}, we proved the following important result.
\begin{proposition}\cite{HP}\label{p2.7}
Let $a, b, n$ are positive integers and $a+b\leq n$. For $P\subset [n]$ with $|P|\leq a$, let $Q$ be the partner of $P$. Then $\mathcal{L}(Q, b)$ is the maximum L-initial $b$-uniform family that are cross intersecting to $\mathcal{L}(P, a)$.
\end{proposition}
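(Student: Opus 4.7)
The plan is to prove both halves separately: first verify that $\mathcal{L}(Q,b)$ is genuinely cross intersecting with $\mathcal{L}(P,a)$, then show maximality by exhibiting, for every $b$-set $Q'$ with $Q \precneqq Q'$, a disjoint pair $A \in \mathcal{L}(P,a)$, $B \in \mathcal{L}(Q',b) \setminus \mathcal{L}(Q,b)$.

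For cross intersection, I would induct on $|P|$ (or equivalently on the recursion used to build the partner from $P$). Write $P = \{p_1, p_2, \dots, p_s\}$ with $s \leq a$. Any $A \in \mathcal{L}(P,a)$ either (i) fully contains some initial segment $\{p_1, \dots, p_{i-1}\}$ and then picks an element strictly less than $p_i$, or (ii) contains all of $P$ and is completed by $a-s$ larger elements. The partner $Q$ is designed precisely so that every $B \preceq Q$ must hit at least one element in the interval forced by case (i) or (ii); the inductive step reduces the verification to a smaller ground set (after fixing the common prefix) and a smaller version of the partner problem, to which the induction hypothesis applies. Since $a+b \leq n$, there is enough room at each recursive stage to ensure the partner is well-defined, i.e., the intervals never collapse.

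For maximality, the key observation is that it suffices to rule out the immediate lex-successor: if $\mathcal{L}(Q',b) \supsetneq \mathcal{L}(Q,b)$ is L-initial, then $\mathcal{L}(Q',b)$ contains the lex-successor $Q^+$ of $Q$ in ${[n] \choose b}$, so I only need to exhibit one $A \in \mathcal{L}(P,a)$ disjoint from $Q^+$. The construction of $Q^+$ from $Q$ replaces the last block of consecutive elements of $Q$ by the next block, and this change is exactly what breaks the intersection constraint: one can read off from the partner definition a specific $A \preceq P$ whose support is forced into the complement of $Q^+$. Here the hypothesis $a + b \le n$ is again essential, since it guarantees that such an $A$ fits inside $[n]$.

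The main obstacle will be bookkeeping the recursive partner construction cleanly enough that both halves of the argument admit a uniform inductive treatment; in particular one has to be careful in the boundary case $s = a$ (where $P$ is a full $a$-set and case (ii) is vacuous) and when $p_1 = 1$ (where the recursion strips off a common element from both $P$ and $Q$ and drops $n$ by one). Once the recursion is set up, the verification of both cross intersection and the tight failure at $Q^+$ reduces to routine inspection at the base case $s = 1$, where $\mathcal{L}(P,a) = \{A : \min A < p_1\} \cup \{A : A \ni p_1\}$ and the partner is simply the lex-largest $b$-set contained in $[p_1]$ of appropriate form.
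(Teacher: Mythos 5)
The paper never proves Proposition \ref{p2.7}: it is quoted from \cite{HP} and is essentially the sufficiency half of the Frankl--Kupavskii statement (Proposition \ref{FK+}) plus an extension count, so there is no in-paper argument to compare yours against. Judged on its own, your plan is a correct route and both halves go through as you describe, but two points should be tightened when you write it out. In the cross-intersecting half, $a+b\le n$ plays no role: writing $q=\max P$, the partner satisfies $P\cup Q=[q]$ and $P\cap Q=\{q\}$, so $P\setminus\{q\}$ and $Q\setminus\{q\}$ partition $[q-1]$, and looking at the least element of $[q]$ at which a putative disjoint pair $A\prec P$, $B\prec Q$ fails gives an immediate descent contradiction; your peeling recursion is a repackaging of this. (Also, in your case (ii) the $a-s$ completing elements need not be larger than $\max P$, but any $A\supseteq P$ with a small extra element is already caught by case (i), so the two cases still cover $\mathcal{L}(P,a)$.) In the maximality half, the reduction to the lex-successor is valid but unnecessary, and the witness is cleanest for an arbitrary $b$-set $B\not\prec Q$: setting $y=\min(Q\setminus B)$ one checks $B\cap[y-1]=Q\cap[y-1]=[y-1]\setminus P$, so $A_0=(P\cap[y-1])\cup\{y\}$ (take $A_0=P$ when $y=q$) is disjoint from $B$, and every $a$-set $A$ with $A\cap[y]=A_0$ and $A\setminus[y]\subseteq[y+1,n]\setminus B$ satisfies $A\prec P$; since $|[y+1,n]\setminus B|=n-b-|A_0|\ge a-|A_0|$ is exactly the hypothesis $a+b\le n$, such an $A$ exists. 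With that computation made explicit your outline becomes a complete proof.
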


In \cite{HP},  we worked on non-mixed type: Let $t\geq 2$, $k_1\geq k_2\geq \cdots \geq k_t$ and $n\geq k_1+k_2$  and families $\mathcal{A}_1\subset{[n]\choose k_1}, \mathcal{A}_2\subset{[n]\choose k_2}, \dots, \mathcal{A}_t\subset{[n]\choose k_t}$ be non-empty pairwise cross-intersecting (not freely). Let $R$ be the ID of $\mathcal{A}_1$, and $T$  be the partner of $R$. In the proof of Theorem \ref{HP}, one important ingredient is that by Proposition \ref{p2.7}, $\sum_{i=1}^t{|\mathcal{A}_i|}$ can be bounded by a function of $R$ as following.

\begin{align}\label{new}
f(R)=\sum_{j=1}^t|\mathcal{A}_j|\leq |\mathcal{L}(R, k_1)| +\sum_{j=2}^t |\mathcal{L}(T, k_j)| .
\end{align}

By Theorem \ref{kk},  to prove the quantitative part of Theorem \ref{main1} we may  also assume that $\mathcal{F}_i$ is L-initial, that is, $\mathcal{F}_i=\mathcal{L}(|\mathcal{F}_i|, k_i)$ for each $i\in [t]$. In the rest of this chapter, {\bf we assume that $(\mathcal{F}_1, \dots, \mathcal{F}_t)$ is an extremal non-empty $(n, k_1, \dots, k_t)$-cross intersecting system with $t\geq 3$, $k_1+k_3\leq n< k_1+k_2$, and $\mathcal{F}_j$ is L-initial for each $j\in [t]$ with {\it ID} $I_j$.} 

However,  the condition on $n$ is relaxed to $k_1+k_3\leq n <k_1+k_2$, so
 $\mathcal{F}_1\subseteq {[n]\choose k_1}$ and $\mathcal{F}_2\subseteq {[n]\choose k_2}$ are cross intersecting freely. When we try to  bound $\sum_{i=1}^t{|\mathcal{F}_i|}$ by a function, there are two free variables $I_1$ (the ID of $\mathcal{F}_1$) and $I_2$ (the ID of $\mathcal{F}_2$). This causes more difficulty to analyze properties of the corresponding function, comparing to the problem in \cite{HP}. To overcome this difficulty,  we introduce new concepts '$k$-partner' and 'parity', develop some rules to determine whether  a pair of L-initial cross intersecting families are maximal to each other (see precise definition).

Let $\mathcal{F}\subseteq {[n]\choose f}$ and  $\mathcal{G}\subseteq {[n]\choose g}$ be cross intersecting families. We say that $(\mathcal{F}, \mathcal{G})$ is {\it maximal} or $\mathcal{F}$ and $\mathcal{G}$ are {\it maximal cross intersecting families}  if whenever $\mathcal{F}'\subseteq {[n]\choose f}$ and  $\mathcal{G}'\subseteq {[n]\choose g}$ are cross intersecting with $\mathcal{F}\subseteq \mathcal{F}'$ and $\mathcal{G}\subseteq\mathcal{G}'$, then $\mathcal{F}=\mathcal{F}'$ and $\mathcal{G}=\mathcal{G}'$.

Let $F$ and $G$ be two subsets of $[n]$. We  say $(F, G)$ is {\it maximal} if there are two L-initial families $\mathcal{F}\subseteq{[n]\choose |F|}$ and $\mathcal{G}\subseteq{[n]\choose |G|}$ with IDs $F$ and $G$ respectively such that $(\mathcal{F}, \mathcal{G})$ is maximal.
We say two families $\mathcal{A}_1$ and $\mathcal{A}_2$ are \emph{maximal pair families} if $|\mathcal{A}_1|=|\mathcal{A}_2|$ and for every $A_1\in \mathcal{A}_1$, there is a unique $A_2\in \mathcal{A}_2$ such that $(A_1, A_2)$ is maximal.

Let $F=\{x_1, x_2, \dots, x_k\}\subseteq [n]$. We denote
$$\ell(F)=
\begin{cases}
\max \{x: [n-x+1, n]\subseteq F\}, & \text{if $\max F=n$};\\
0, & \text{if $\max F<n$}.
\end{cases}
$$

Let $F\subseteq [n]$ be a set. We denote
$$F^{\mathrm{t}}=
\begin{cases}
\emptyset, & \text{if $\ell(F)=0$};\\
[n-\ell(F)+1, n], & \text{if $\ell(F)\geq 1$}.
\end{cases}
$$

Let $F$ and $H$ be two subsets of $[n]$ with size $|F|=f$ and $|H|=h$. We say that $F$ and $H$ {\it strongly intersect} at their last element if there is an element $q$ such that $F\cap H=\{q\}$ and $F\cup H=[q]$. We also say $F$ is the {\it partner} of $H$, or $H$ is the {\it partner} of $F$. Let $k\leq n-f$ be an integer, we define the {\it $k$-partner} $K$ of $F$ as follows. For $k=h$, let $K=H$.  If $k>h$, then let $K= H\cup \{n-k+h+1, \dots, n\}$. We can see that $|K|=k$. Indeed, since $F$ and $H$ intersect at their last element, $n'=\max H=f+h-1<n-k+h+1$, so $|K|=| H\cup \{n-k+h+1, \dots, n\}|=k$.  If $k< h$, then let $K$ be the last $k$-set in ${[n]\choose k}$ such that $K \prec H$, in other words, there is no $k$-set $K'$ satisfying $K\precneqq K'\precneqq H$. By the definition of $k$-partner, we have the following remark.

\begin{remark}\label{r2.6}
Let $F\subseteq [n]$ with $|F|=f$ and $k\leq n-f$. Suppose that  $H$ is the partner of $F$, and $K$ is the k-partner of $F$, then we have $\mathcal{L}(H, k)=\mathcal{L}(K, k)$.
\end{remark}

\begin{fact}\label{fact2.4+}
Let $F\subseteq [n]$ with $|F|=f$ and $k\leq n-f$. Then the $k$-partner of $F$ is the same as the $k$-partner of $F\setminus F^{\mathrm{t}}$.
\end{fact}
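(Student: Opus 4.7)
The plan is to reduce the assertion to a direct lex-order comparison of the partners $H$ of $F$ and $H'$ of $F'$. If $\ell(F)=0$ then $F^{\mathrm{t}}=\emptyset$ and the statement is immediate, so assume $\ell:=\ell(F)\ge 1$ and set $F'=F\setminus F^{\mathrm{t}}$. The degenerate case $F'=\emptyset$ (so $F=[n-\ell+1,n]$) is handled directly: for every $k\le n-f=n-\ell$, no $k$-set $K_0$ satisfies $K_0\prec H$, since this would force $[n-\ell]\subseteq K_0$ violating $|K_0|\le n-\ell$; thus both $k$-partners are absent and the claim is vacuous. So assume also $F'\neq\emptyset$ and put $q'=\max F'$.

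Using $[n]\setminus F=[n-\ell]\setminus F'$ and $F'\subseteq[q']$, I would first write the partners explicitly:
\[
H=([q'-1]\setminus F')\cup[q'+1,n-\ell]\cup\{n\},\qquad H'=([q'-1]\setminus F')\cup\{q'\}.
\]
The key structural observation is $H\cap[q'-1]=H'\cap[q'-1]=[q'-1]\setminus F'$. By Remark \ref{r2.6} and the definition of the $k$-partner, the $k$-partners of $F$ and of $F'$ are the IDs of the L-initial families $\mathcal{L}(H,k)$ and $\mathcal{L}(H',k)$ respectively, so it suffices to prove $\mathcal{L}(H,k)=\mathcal{L}(H',k)$; equivalently, for every $k$-set $K_0\subseteq[n]$, $K_0\prec H$ if and only if $K_0\prec H'$.

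I would verify this equivalence by locating $\min(K_0\triangle H)$. If it lies in $[q'-1]$, then since $H$ and $H'$ agree on $[q'-1]$ the same element controls both comparisons and the conclusion is automatic. Otherwise $K_0\cap[q'-1]=[q'-1]\setminus F'$, and the decisive position is $q'$. If $q'\in K_0$, then $q'\in K_0\setminus H$ witnesses $K_0\prec H$, while either $K_0=H'$ or the minimum of $K_0\cap[q'+1,n]$ lies in $K_0\setminus H'$ and witnesses $K_0\prec H'$. If $q'\notin K_0$, then $q'\in H'\setminus K_0$ already blocks $K_0\prec H'$; to also show $K_0\not\prec H$, note that $K_0\cap F'=\emptyset$ (since $K_0\cap[q'-1]=[q'-1]\setminus F'$ and $q'\notin K_0$), so any $y\in K_0\setminus H$ would lie in $[n-\ell+1,n-1]$, forcing $H\cap[y-1]\supseteq[n-\ell]\setminus F'\subseteq K_0$ and hence $|K_0|\ge n-f+1$, contradicting $k\le n-f$. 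Therefore $K_0\subseteq H$ and $K_0\not\prec H$, matching $K_0\not\prec H'$.

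The main obstacle is this last sub-case, where the size bound $k\le n-f$ is indispensable: without it, one could insert a discriminating element from $[n-\ell+1,n-1]$ into $K_0$ and obtain $K_0\prec H$ while keeping $K_0\not\prec H'$. Once this case is sealed, the equivalence $K_0\prec H\iff K_0\prec H'$ holds for every $k$-set, giving $\mathcal{L}(H,k)=\mathcal{L}(H',k)$, and the $k$-partners of $F$ and $F'$ coincide.
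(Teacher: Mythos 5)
Your proof is correct, and it takes a different route from the paper's. Both arguments begin the same way, by writing the partners $H$ of $F$ and $H'$ of $F'=F\setminus F^{\mathrm{t}}$ explicitly and observing that they agree below $q'=\max F'$; but from there the paper notes $k<|H|$ and splits into three cases according to the size of $k$ relative to $|H'|$, writing down both $k$-partners from the definition in each case and checking they coincide (the case $k<|H'|$, where both $k$-partners are ``the last $k$-set preceding,'' is asserted with little detail). You instead invoke Remark \ref{r2.6} and the uniqueness of the ID of an L-initial family to reduce the claim to $\mathcal{L}(H,k)=\mathcal{L}(H',k)$, and verify $K_0\prec H\iff K_0\prec H'$ for every $k$-set $K_0$ by locating $\min(K_0\triangle H)$. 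This buys a uniform treatment of all $k\le n-f$ (no case split on $k$), makes transparent exactly where the hypothesis $k\le n-f$ is used, and as a bonus handles the degenerate case $F=F^{\mathrm{t}}$ that the paper passes over; the cost is a longer pointwise case analysis. Two presentational remarks on your final sub-case ($q'\notin K_0$): the containment $[n-\ell]\setminus F'\subseteq K_0$ is not forced merely by the existence of $y\in K_0\setminus H$, but by the assumption $K_0\prec H$ being refuted (via $\min(K_0\setminus H)<\min(H\setminus K_0)$), so the argument should be framed explicitly as a proof by contradiction; and the conclusion should be ``$K_0\not\prec H$'' rather than ``$K_0\subseteq H$,'' since $K_0$ may well contain elements of $[n-\ell+1,n-1]$ without being $\prec H$. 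Neither point affects the validity of the proof.
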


\begin{proof}
If $\ell(F)=0$, then we are fine. Suppose $\ell(F)>0$ and $F=\{x_1, \dots, x_y\}\cup\{n-\ell(F)+1, \dots, n\}$.
Let $H$ and $H'$ be the partners of $F$ and $F\setminus \{n-\ell(F)+1, \dots, n\}$ respectively. Then $|H|>n-f$, consequently $k<|H|$, 
$H=H\cap [x_y-1]\cup [x_y+1, n-\ell(F)]\cup \{n\}$ and
$H'=H\cap [x_y-1]\cup \{x_y\}$.
Suppose that $K$ and $K'$ are the $k$-partners of $F$ and $F\setminus \{n-\ell(F)+1, \dots, n\}$ respectively. By the definition of $k$-partner, we can see that
if $k\leq |H\cap [x_y-1]|$,  then $K=K'$, as desired; if $k= |H'|$, then $K'=H'$ and $K=H\cap [x_y-1]\cup \{x_y\}=H'$, as desired; if $k>|H'|$, then $K'=H'\cup [n-k+|H'|+1, n]$ and $K=H\cap [x_y-1]\cup \{x_y\}\cup [n-k+|H'|+1, n]=K'$, as desired.
\end{proof}

By Remark \ref{r2.6} and Proposition \ref{p2.7}, we have the following fact.
\begin{fact}\label{f2.8}
Let $a, b, n$ be positive integers and $n\geq a+b$. For  $A\subset [n]$ with $|A|=a$, let $B$ be the $b$-partner of $A$, then $\mathcal{L}(B, b)$ is the maximum L-initial $b$-uniform family that are cross intersecting to $\mathcal{L}(A, a)$. We also say that $\mathcal{L}(B, b)$ is maximal to $\mathcal{L}(A, a)$, or say $B$ is maximal to $A$.
\end{fact}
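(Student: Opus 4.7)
The plan is to derive Fact 2.8 immediately by chaining the two earlier results already on the table, namely Proposition \ref{p2.7} from \cite{HP} and Remark \ref{r2.6} proved just above. No new ideas are needed; the point of the statement is to repackage Proposition \ref{p2.7} (which is phrased in terms of the ordinary partner) into the $b$-partner language that is more convenient in the rest of the paper.

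First I would check that the hypotheses of Proposition \ref{p2.7} apply. Since $|A|=a$ and $a+b\le n$, taking $P=A$ in Proposition \ref{p2.7} is legal; letting $H$ denote the ordinary partner of $A$, the proposition gives that $\mathcal{L}(H,b)$ is the maximum L-initial $b$-uniform family that is cross intersecting to $\mathcal{L}(A,a)$. This already produces a maximal L-initial partner, but it is not stated in terms of $B$.

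Next I would invoke Remark \ref{r2.6} with $F=A$, $f=a$, $k=b$; the hypothesis $k\le n-f$ is exactly $b\le n-a$, which is our standing assumption $n\ge a+b$. Remark \ref{r2.6} then yields $\mathcal{L}(H,b)=\mathcal{L}(B,b)$, where $B$ is the $b$-partner of $A$. Substituting this identity into the conclusion of the previous paragraph gives that $\mathcal{L}(B,b)$ is the maximum L-initial $b$-uniform family cross intersecting to $\mathcal{L}(A,a)$, which is exactly the content of Fact 2.8; the terminology ``$\mathcal{L}(B,b)$ is maximal to $\mathcal{L}(A,a)$'' and ``$B$ is maximal to $A$'' is then just notation. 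Because the argument is a one-line consequence of existing lemmas, there is no genuine obstacle — this statement is listed as a fact rather than a proposition precisely because all the real work was carried out in establishing Proposition \ref{p2.7} in \cite{HP} and Remark \ref{r2.6} above.
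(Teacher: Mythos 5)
Your proposal is correct and is exactly the paper's route: the paper states Fact \ref{f2.8} with no separate proof, introducing it with ``By Remark \ref{r2.6} and Proposition \ref{p2.7}, we have the following fact,'' which is precisely the two-step chaining (Proposition \ref{p2.7} applied with $P=A$, then $\mathcal{L}(H,b)=\mathcal{L}(B,b)$ from Remark \ref{r2.6}) that you spell out.
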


\begin{remark}\label{r2.9}
Note that  families $\mathcal{L}(A, a)$ and $\mathcal{L}(B, b)$ which mentioned in Fact \ref{f2.8} may not be maximal cross intersecting, since we don't know whether $\mathcal{L}(A, a)$ is maximal to $\mathcal{L}(B, b)$. For example, let $n=9$, $a=3$, $b=4$ and $A=\{2, 4, 7\}$. Then the $b$-partner of $A$ is $\{1, 3, 4, 9\}$. Although $\mathcal{L}(\{1, 3, 4, 9\}, 4)$ is maximal to $\mathcal{L}(\{2, 4, 7\}, 3)$, $\mathcal{L}(\{1, 3, 4, 9\}, 4)$ and $\mathcal{L}(\{2, 4, 7\}, 3)$ are not maximal cross intersecting families since $\mathcal{L}(\{2, 4, 7\}, 3)\subsetneq\mathcal{L}(\{2, 4, 9\}, 3)$, and $\mathcal{L}(\{2, 4, 9\}, 3)$ and $\mathcal{L}(\{1, 3, 4, 9\}, 4)$ are cross intersecting families.
%In fact, this can be confirmed if and only if
%$$A\in \left\{ F\in {[n]\choose a}: \text{ there exists } H\in {[n]\choose b} \text{ such that } (F, H) \text{ is maximal } \right\}.$$
\end{remark}

%Let $\mathcal{L}(s, k)$ denote the first $s$ elements of ${[n]\choose k}$ in lexicographic order. If $n$ is tacit, we simply write $\mathcal{L}(s, k)$.

Frankl-Kupavskii \cite{FK2018} gave a sufficient condition for a pair of maximal cross  intersecting families, and a necessary condition for a pair of maximal cross  intersecting families as stated below.
\begin{proposition}[Frankl-Kupavskii \cite{FK2018}]\label{FK+}
Let $a, b\in \mathbb{Z}_{>0}, a+b\leq n$. Let $P$ and $Q$ be non-empty subsets of $[n]$ with $|P|\leq a$ and $|Q|\leq b$. If $Q$ is the partner of $P$, then $\mathcal{L}(P, a)$ and $\mathcal{L}(Q, b)$ are maximal cross intersecting families.
Inversely, if $\mathcal{L}(A, a)$ and $\mathcal{L}(B, a)$ are maximal cross intersecting families, let $j$ be the smallest element of $A\cap B$, $P=A\cap [j]$ and $Q=B\cap [j]$. Then $\mathcal{L}(P, a)=\mathcal{L}(A, a)$, $\mathcal{L}(Q, b)=\mathcal{L}(B, b)$ and $P$, $Q$ satisfy the following conditions:  $|P|\leq a$, $|Q|\leq b$, and $Q$ is the partner of $P$.
\end{proposition}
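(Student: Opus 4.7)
The plan is to derive both directions from Proposition~\ref{p2.7}, using Kruskal-Katona (Theorem~\ref{kk}) to bridge from L-initial maximality to maximality among arbitrary cross intersecting pairs. For the forward direction, assume $Q$ is the partner of $P$ with $|P|\le a$ and $|Q|\le b$. Proposition~\ref{p2.7} immediately gives that $\mathcal{L}(Q,b)$ is the largest L-initial $b$-uniform family cross intersecting $\mathcal{L}(P,a)$, so the pair cross intersects and the $\mathcal{L}(Q,b)$ side is L-initial maximal. Since the partner relation is symmetric, applying Proposition~\ref{p2.7} once more with $P$ and $Q$ swapped yields the analogous L-initial maximality of $\mathcal{L}(P,a)$. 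Any cross intersecting pair $(\mathcal{F}',\mathcal{G}')$ extending $(\mathcal{L}(P,a),\mathcal{L}(Q,b))$ would, via Theorem~\ref{kk}, induce L-initial families $\mathcal{L}(|\mathcal{F}'|,a),\mathcal{L}(|\mathcal{G}'|,b)$ of the same cardinalities that still cross intersect, forcing $|\mathcal{F}'|=|\mathcal{L}(P,a)|$ and $|\mathcal{G}'|=|\mathcal{L}(Q,b)|$, hence $\mathcal{F}'=\mathcal{L}(P,a)$ and $\mathcal{G}'=\mathcal{L}(Q,b)$.

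For the converse, let $(\mathcal{L}(A,a),\mathcal{L}(B,b))$ be maximal cross intersecting, with $A$ an $a$-set and $B$ a $b$-set by the ID convention. Since $A\in\mathcal{L}(A,a)$ and $B\in\mathcal{L}(B,b)$ must cross intersect, $A\cap B\ne\emptyset$; set $j=\min(A\cap B)$, $P=A\cap[j]$, $Q=B\cap[j]$. Then $|P|\le a$ and $|Q|\le b$ trivially, and minimality of $j$ yields $P\cap Q=A\cap B\cap[j]=\{j\}$. The main task is $P\cup Q=[j]$, equivalently $[j-1]\subseteq A\cup B$. The plan is to first use Proposition~\ref{p2.7} and Fact~\ref{f2.8} to identify $B$ with the $b$-partner $K_A$ of $A$: the family $\mathcal{L}(K_A,b)$ is the maximum L-initial $b$-uniform family cross intersecting $\mathcal{L}(A,a)$, so maximality forces $\mathcal{L}(B,b)=\mathcal{L}(K_A,b)$; since $B$ and $K_A$ are both $b$-sets (by construction of the $b$-partner), each equals the lex-maximum element of this common family, so $B=K_A$.

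The partnership claim now follows from a structural analysis of $K_A$, splitting on $b$ versus $|H|$, where $H$ is the ordinary partner of $A$. If $b\ge|H|$, the definition of the $b$-partner together with $\max A=a_a<n-b+|H|+1$ yields $A\cap K_A=\{a_a\}$, so $j=a_a$ and $[j]=A\cup H\subseteq A\cup K_A$. If $b<|H|$, the lex-maximum $b$-set $\prec H$ has the form $K_A=\{h_1,\dots,h_p,f_{p+1},\dots,f_b\}$, where $p$ is chosen to maximize the lex order and $f_{p+1}\in(h_p,h_{p+1})$ falls in a gap of $H$. Since $A\cup H=[a_a]$ with $A\cap H=\{a_a\}$, one has $(h_p,h_{p+1})\subseteq[a_a]\setminus H=A\setminus\{a_a\}$, giving $f_{p+1}\in A$, while each $h_i$ with $i\le p$ lies in $H\setminus\{a_a\}$, hence outside $A$. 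This pinpoints $j=f_{p+1}$ and $K_A\cap[j-1]=\{h_1,\dots,h_p\}=H\cap[j-1]$; combined with the disjoint decomposition $[j-1]=(A\cap[j-1])\sqcup(H\cap[j-1])$, this yields $[j-1]\subseteq A\cup K_A=A\cup B$, so $P\cup Q=[j]$. With partnership established, the forward direction applies to $(P,Q)$, and since $A\supseteq P$ forces $A\prec P$ via the $\supset$ clause of the lex definition, and similarly $B\prec Q$, we get $\mathcal{L}(A,a)\subseteq\mathcal{L}(P,a)$ and $\mathcal{L}(B,b)\subseteq\mathcal{L}(Q,b)$, exhibiting a cross intersecting extension of the original pair; the maximality hypothesis forces equality. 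The hardest step will be the structural case $b<|H|$, which requires locating the ``jumping index'' $p$ as the last gap of $H$ within its first $b$ elements and verifying that the lex-maximizing choice of $f_{p+1}$ indeed realizes $j=\min(A\cap K_A)$.
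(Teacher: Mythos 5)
Your proposal is correct, but there is nothing in the paper to compare it against: Proposition~\ref{FK+} is imported from Frankl--Kupavskii \cite{FK2018} and the paper gives no proof of it. What you have produced is a genuine self-contained derivation from the paper's other cited ingredients, namely Proposition~\ref{p2.7} (plus its symmetric form, using that the partner relation is symmetric) for L-initial maximality, and Theorem~\ref{kk} to upgrade L-initial maximality to maximality among arbitrary cross intersecting pairs. Both directions check out: in the forward direction the Kruskal--Katona squeeze on $|\mathcal{F}'|$ and $|\mathcal{G}'|$ is exactly right, and in the converse the identification $B=K_A$ via Fact~\ref{f2.8} and the uniqueness of the lex-maximum $b$-set generating a given L-initial family is sound. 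Two small remarks. First, you implicitly read the hypothesis as $|A|=a$, $|B|=b$ (the ID convention); this matches how the paper applies the proposition in Fact~\ref{fact2.5}, but is worth stating since $\mathcal{L}(R,k)$ is defined for $R$ of any size. Second, your ``hardest step'' is easier than you fear: in the case $b<|H|$ you do not need the full structural description of the lex-maximum $b$-set preceding $H$. For any $b$-set $K\prec H$ with $K\setminus H\ne\emptyset$, setting $c=\min(K\setminus H)$, the defining inequality $c<\min(H\setminus K)$ already forces $K\cap[c-1]=H\cap[c-1]$ and $c\notin H$ with $c<\max H$, whence $c\in A$, $j=c$, and $[j-1]\subseteq (A\cap[j-1])\cup(H\cap[j-1])\subseteq A\cup B$; the only structural input needed is that $K_A\ne\{h_1,\dots,h_b\}$, which holds because the initial segment of $H$ is not $\prec H$. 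This shortcut removes the need to pin down the jumping index $p$ and the lex-maximizing gap element.
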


Based on Proposition \ref{FK+}, we point out a necessary and sufficient condition for a pair of maximal cross intersecting families in terms of their IDs. 

\begin{fact}\label{fact2.5}
Let $A$ and $B$ be nonempty subsets of $[n]$ with $|A|+|B|\leq n$. Let $A'=A\setminus A^{\mathrm{t}}$ and $B'=B\setminus B^{\mathrm{t}}$. Then $(A, B)$ is maximal if and only if $A'$ is the partner of $B'$.
\end{fact}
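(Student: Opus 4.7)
The plan is to reduce Fact \ref{fact2.5} to Proposition \ref{FK+} via the identities $\mathcal{L}(A,a)=\mathcal{L}(A',a)$ and $\mathcal{L}(B,b)=\mathcal{L}(B',b)$, where $a=|A|$ and $b=|B|$. Once these are established, both directions follow quickly, since Proposition \ref{FK+} already characterizes maximal cross intersecting L-initial families in terms of a partnership between IDs.

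Write $\ell=\ell(A)$, $a'=|A'|=a-\ell$, and $A'=\{a_1<\dots<a_{a'}\}$, so that $A=A'\cup[n-\ell+1,n]$ with $a_{a'}<n-\ell+1$. To prove $\mathcal{L}(A',a)=\mathcal{L}(A,a)$: since $A\supseteq A'$ gives $A\prec A'$, transitivity of $\prec$ yields $\mathcal{L}(A,a)\subseteq\mathcal{L}(A',a)$. For the reverse, let $F\in\mathcal{L}(A',a)$. If $F\supseteq A'$, then $F\setminus A\subseteq[a_{a'}+1,n-\ell]$ while $A\setminus F\subseteq A^{\mathrm{t}}$, forcing $\min(F\setminus A)<\min(A\setminus F)$, so $F\prec A$. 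If $F\not\supseteq A'$, set $m_1=\min(F\setminus A')$ and $m_2=\min(A'\setminus F)$; since $F\prec A'$ means $m_1<m_2\leq a_{a'}<n-\ell+1$, we have $m_1\notin A^{\mathrm{t}}$, so $\min(F\setminus A)=m_1$ and $\min(A\setminus F)=m_2$, whence $F\prec A$. The same argument yields $\mathcal{L}(B',b)=\mathcal{L}(B,b)$.

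For the backward direction, assume $A'$ is the partner of $B'$. Then $|A'|\leq a$, $|B'|\leq b$, and $a+b\leq n$, so the first half of Proposition \ref{FK+} makes $\mathcal{L}(A',a)$ and $\mathcal{L}(B',b)$ maximal cross intersecting; by the identities, so are $\mathcal{L}(A,a)$ and $\mathcal{L}(B,b)$, i.e.\ $(A,B)$ is maximal. For the forward direction, assume $(A,B)$ is maximal. The second half of Proposition \ref{FK+} supplies $P\subseteq A$, $Q\subseteq B$ with $Q$ the partner of $P$, $|P|\leq a$, $|Q|\leq b$, and $\mathcal{L}(P,a)=\mathcal{L}(A,a)$, $\mathcal{L}(Q,b)=\mathcal{L}(B,b)$. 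The ID $A$ is the lex-largest $a$-set in $\mathcal{L}(P,a)$, which is the lex-largest $a$-superset of $P$, namely $P\cup\{n-a+|P|+1,\dots,n\}$: the partnership forces $\max P=|P|+|Q|-1$, and $|Q|\leq b\leq n-a$ yields $\max P<n-a+|P|+1$, so the union is disjoint and $A$ equals it. The same inequality shows $n-a+|P|\notin P$, hence $\ell(A)=a-|P|$ exactly, $A^{\mathrm{t}}=\{n-a+|P|+1,\dots,n\}$, and $A'=P$. Analogously $B'=Q$, so $A'=P$ is the partner of $Q=B'$.

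The main obstacle is the forward direction's bookkeeping to confirm that $\ell(A)$ equals exactly $a-|P|$ rather than being strictly larger; this is where the hypothesis $|A|+|B|\leq n$ enters essentially, via the bound $|Q|\leq n-a$. The identity $\mathcal{L}(A',a)=\mathcal{L}(A,a)$ itself is a short case analysis, made transparent by the observation that $A^{\mathrm{t}}$ lies strictly above the positions where $F$ and $A'$ can first disagree.
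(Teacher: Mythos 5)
Your proof is correct and follows essentially the same route as the paper's: both directions are reduced to Proposition \ref{FK+} via the identities $\mathcal{L}(A,a)=\mathcal{L}(A',a)$ and $\mathcal{L}(B,b)=\mathcal{L}(B',b)$, and the forward direction identifies $A'=P$, $B'=Q$ exactly as the paper does (you additionally verify the identity $\mathcal{L}(A',a)=\mathcal{L}(A,a)$ and the computation $\ell(A)=a-|P|$, which the paper asserts without proof). One cosmetic slip: in your first case the containment $F\setminus A\subseteq[a_{a'}+1,n-\ell]$ need not hold (elements of $F\setminus A$ may lie below $a_{a'}$), but the weaker true containment $F\setminus A\subseteq[1,n-\ell]$ together with $A\setminus F\subseteq A^{\mathrm{t}}$ is all your argument actually uses.
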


\begin{proof}
Let $|A|=a$ and $|B|=b$.
From the definitions of $A'$ and $B'$, we can see that $\mathcal{L}(A, a)=\mathcal{L}(A', a)$ and $\mathcal{L}(B, b)=\mathcal{L}(B', b)$.
First we show the sufficiency.  Suppose that $A'$ is the partner of $B'$. Since $|A'|\leq |A|$ and $|B'|\leq |B|$, by Proposition \ref{FK+}, $\mathcal{L}(A', a)$ and $\mathcal{L}(B', b)$ are maximal cross intersecting families. Thus $\mathcal{L}(A, a)$ and $\mathcal{L}(B, b)$ are maximal cross intersecting families, in other words, $(A, B)$ is maximal.
Next, we show the necessity. Suppose that $(A, B)$ is maximal. Let $j$ be the smallest element of $A\cap B$, $P=A\cap [j]$ and $Q=B\cap [j]$. By Proposition \ref{FK+}, 
$\mathcal{L}(A, a)=\mathcal{L}(P, a)$, $\mathcal{L}(B, b)=\mathcal{L}(Q, b)$;
$|P|\leq a$, $|Q|\leq b$ and
 $P$ is the partner of $Q$.
Since $\mathcal{L}(A, a)=\mathcal{L}(P, a)$ and $P\subseteq A$, then we $A=P\cup \{n-a+|P|+1, \dots, n\}$. Similarly,  $B=Q\cup \{n-b+|Q|+1, \dots, n\}$.
By the definitions of $A'$ and $B'$, we have $A'=P$ and $B'=Q$. Since  $P$ is the partner of $Q$, $A'$ is the partner of $B'$.
\end{proof}

By Fact \ref{fact2.5} and the definition of the $k$-partner, we have the following property.
\begin{fact}\label{fact2.10}
Let $a, b, k, n$ be integers with  $n\geq \max\{a+b, a+k\}$. Let $A$ be an $a$-subset of $[n]$.  Suppose that $K$ is the $k$-partner of $A\setminus A^{\mathrm{t}}$ and there exists a $b$-set $B$  such that $(A, B)$ is maximal and let $b'=|B\setminus B^{\mathrm{t}}|$. Then $(A, K)$ is maximal if and only if $k\geq b'$.
\end{fact}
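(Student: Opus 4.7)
The plan is to push the whole statement through the characterization of maximality in Fact \ref{fact2.5} and the reduction in Fact \ref{fact2.4+}. Set $A'=A\setminus A^{\mathrm{t}}$, $a'=|A'|$, and $B'=B\setminus B^{\mathrm{t}}$; by Fact \ref{fact2.4+}, $K$ is also the $k$-partner of $A'$. Since $(A,B)$ is maximal, Fact \ref{fact2.5} gives that $A'$ is the partner of $B'$, i.e., $A'\cap B'=\{q\}$ and $A'\cup B'=[q]$ with $q=a'+b'-1$. Consequently the partner of $A'$ is uniquely determined by $A'$ itself and must equal $B'$.

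Applying Fact \ref{fact2.5} once more to the pair $(A,K)$ (valid because $a+k\le n$), we see that $(A,K)$ is maximal iff $A'$ is the partner of $K\setminus K^{\mathrm{t}}$, and by the uniqueness just observed this happens iff $K\setminus K^{\mathrm{t}}=B'$. So the whole task reduces to proving $K\setminus K^{\mathrm{t}}=B'$ precisely when $k\ge b'$.

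For the forward direction ($k\ge b'\Rightarrow K\setminus K^{\mathrm{t}}=B'$) I would unwind the definition of the $k$-partner. If $k=b'$ then $K=B'$, and the chain $\max B'=q=a'+b'-1\le a+b-1<n$ (using $n\ge a+b$) forces $\ell(K)=0$, so $K\setminus K^{\mathrm{t}}=K=B'$. If $k>b'$ then $K=B'\cup\{n-k+b'+1,\dots,n\}$ by definition; the hypothesis $n\ge a+k\ge a'+k$ yields the strict inequality $n-k+b'>a'+b'-1=\max B'$, so $n-k+b'\notin K$. This pins the maximal consecutive suffix of $K$ ending at $n$ to be exactly $\{n-k+b'+1,\dots,n\}$, i.e.\ $\ell(K)=k-b'$, and therefore $K\setminus K^{\mathrm{t}}=B'$.

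For the reverse direction ($k<b'\Rightarrow K\setminus K^{\mathrm{t}}\ne B'$) the argument is immediate by cardinality: $|K\setminus K^{\mathrm{t}}|\le|K|=k<b'=|B'|$, so the two sets cannot coincide and no structural information about the explicit form of $K$ in this case is required. I do not expect a serious obstacle here; the only slightly delicate computation is identifying $\ell(K)$ in the case $k>b'$, but that falls out at once from the strict bound $n-k+b'>\max B'$, which is a direct consequence of $n\ge a+k$. The proof is then just careful bookkeeping with partners versus $k$-partners.
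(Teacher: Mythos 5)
Your proof is correct and follows exactly the route the paper intends: the paper states Fact \ref{fact2.10} as an immediate consequence of Fact \ref{fact2.5} and the definition of the $k$-partner, and your argument is precisely that deduction written out in full (reduce maximality of $(A,K)$ to $K\setminus K^{\mathrm{t}}=B'$ via Fact \ref{fact2.5} and uniqueness of partners, then check the cases $k=b'$, $k>b'$, $k<b'$ from the definition). The bookkeeping, including the key inequality $n-k+b'>\max B'$ from $n\ge a+k$, is accurate.
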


%\begin{proposition}\label{proposition2.8}
%Let $n=k+\ell$, $A$ be any $k$-subset of $[n]$. Then there is an $\ell$-set $B\subset [n]$ such that $(A, B)$ is maximal. Moreover, for any $A\in {[n]\choose k}$, let $B$ be the $\ell$-set such that $(A, B)$ is maximal, then we have $M(n, k, \ell)=|\mathcal{L}(A, k)|+|\mathcal{L}(B, \ell)|={n\choose \ell}$.
%\end{proposition}

%\begin{proof}
%Let $A\in {[n]\choose k}$ and let $B'$ be the partner of $A\setminus A^{\mathrm{t}}$.  Since $n=k+\ell$, $|B'|\leq \ell$ and $\max B'<n$, in other words, $\ell(B')=0$.  If $|B'|=\ell$, then let $B=B'$. If $|B'|<\ell$, then let $B=B'\cup [n-\ell+|B'|+1, n]$. By Fact \ref{fact2.5}, $(A, B)$ is maximal. This implies
%$\mathcal{L}(B, \ell)={[n]\choose \ell}\setminus \overline{\mathcal{L}(A, k)}$
%since $n=k+\ell$. Thus, $M(n, k, \ell)=|\mathcal{L}(A, k)|+|\mathcal{L}(B, \ell)|={n\choose \ell}$, we are done.
%\end{proof}

\begin{fact}\label{f2.10}
Let $a, b, n$ be positive integers and $n\geq a+b$. Suppose that $A$ is an $a$-subset of $[n]$, and $B$ is the $b$-partner of $A$. Let $A'$ be the  $a$-partner of $B$, then $(A', B)$ is maximal. Moreover, if $A'\ne A$, then $A\precneqq A'$.
\end{fact}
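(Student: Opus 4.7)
The plan is to derive Fact \ref{f2.10} from the one-sided maximality statement of Fact \ref{f2.8}, applied symmetrically in $a$ and $b$, together with the Kruskal--Katona theorem. Concretely, I will invoke Fact \ref{f2.8} twice: once to get that $\mathcal{L}(B,b)$ is the largest L-initial $b$-uniform family cross intersecting with $\mathcal{L}(A,a)$, and once (with the roles of $a$ and $b$ exchanged, which is legitimate since the hypothesis $n\geq a+b$ is symmetric) to get that $\mathcal{L}(A',a)$ is the largest L-initial $a$-uniform family cross intersecting with $\mathcal{L}(B,b)$.

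The ``moreover'' part is then immediate: $\mathcal{L}(A,a)$ is itself an L-initial $a$-uniform family cross intersecting with $\mathcal{L}(B,b)$, so the second maximality forces $\mathcal{L}(A,a)\subseteq\mathcal{L}(A',a)$. Since $A\prec A$ places $A$ inside $\mathcal{L}(A,a)$, this inclusion gives $A\prec A'$, and hence $A\precneqq A'$ whenever $A\neq A'$.

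For the maximality of $(A',B)$, I would take any cross intersecting pair $(\mathcal{F}',\mathcal{G}')$ with $\mathcal{F}'\supseteq\mathcal{L}(A',a)$ and $\mathcal{G}'\supseteq\mathcal{L}(B,b)$, and apply Theorem \ref{kk} to pass to the L-initial families $\mathcal{L}(|\mathcal{F}'|,a)$ and $\mathcal{L}(|\mathcal{G}'|,b)$, which remain cross intersecting. Since $\mathcal{L}(|\mathcal{F}'|,a)\supseteq\mathcal{L}(A',a)\supseteq\mathcal{L}(A,a)$, the first maximality yields $\mathcal{L}(|\mathcal{G}'|,b)\subseteq\mathcal{L}(B,b)$; combined with $\mathcal{G}'\supseteq\mathcal{L}(B,b)$ and $\mathcal{G}'\subseteq\mathcal{L}(|\mathcal{G}'|,b)$ this forces $\mathcal{G}'=\mathcal{L}(B,b)$. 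A symmetric appeal to the second maximality then gives $\mathcal{L}(|\mathcal{F}'|,a)\subseteq\mathcal{L}(A',a)$, and hence $\mathcal{F}'=\mathcal{L}(A',a)$. Thus $(\mathcal{L}(A',a),\mathcal{L}(B,b))$ is maximal as a pair of cross intersecting families, which by definition means $(A',B)$ is maximal.

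I do not anticipate a genuine obstacle here; the argument is a short formal chain once the two one-sided maximalities in Fact \ref{f2.8} are in hand, and no case analysis on the tails $A^{\mathrm{t}}$, $B^{\mathrm{t}}$ or on the three clauses of the definition of $k$-partner is required. The only item of care is the order of operations: one must first invoke Kruskal--Katona to replace $\mathcal{F}'$ and $\mathcal{G}'$ with their L-initial counterparts of the same cardinalities before applying the one-sided maximality, which is stated only for L-initial families.
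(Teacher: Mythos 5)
Your argument follows essentially the same route as the paper: both proofs rest on applying the one-sided maximality of Fact \ref{f2.8} in each direction (the paper phrases the second half as ``$B$ is maximal to $A$, hence to $A'$,'' using $\mathcal{L}(A,a)\subseteq\mathcal{L}(A',a)$, which is your ``moreover'' observation read in reverse). One assertion in your maximality step is false as written: for an arbitrary family $\mathcal{G}'$ one does not have $\mathcal{G}'\subseteq\mathcal{L}(|\mathcal{G}'|,b)$, since $\mathcal{L}(|\mathcal{G}'|,b)$ is merely the lex-initial segment of the same cardinality and need not contain $\mathcal{G}'$. The conclusion you want still follows, but by counting rather than by inclusion: $\mathcal{L}(|\mathcal{G}'|,b)\subseteq\mathcal{L}(B,b)$ gives $|\mathcal{G}'|\leq|\mathcal{L}(B,b)|$, while $\mathcal{G}'\supseteq\mathcal{L}(B,b)$ gives the reverse inequality, so $\mathcal{G}'=\mathcal{L}(B,b)$; the same repair applies to $\mathcal{F}'$.
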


\begin{proof}
If $(A, B)$ is maximal, then $A$ is the $a$-partner of $B$ and $A'=A$, we are fine. Suppose that $(A, B)$ is not maximal.
By Fact \ref{f2.8},  $B$ is maximal to $A$, so $A$ is not maximal to $B$. Let $A'$ be the $a$-partner of $B$. By Fact \ref{f2.8} again,  $A'$ is maximal to $B$ and $A\precneqq A'$. Since $B$ is maximal to $A$, for any $b$-set $B'$ satisfying  $B\precneqq B'$, we have  $\mathcal{L}(B', b)$ and $\mathcal{L}(A', a)$ are not cross intersecting. So $B$ is maximal to $A'$. Hence $(A', B)$ is maximal.
\end{proof}

\begin{definition}\label{def+}
Let $h_1\leq h_2$ be positive integers, $H_1$ and $H_2$ be subsets of $[n]$ with sizes $h_1$ and $h_2$ respectively. We say $H_1$ is the {\it $h_1$-parity} of $H_2$, or $H_2$ is the $h_2$-parity of $H_1$ if $H_1\setminus H_1^{\rm t}=H_2\setminus H_2^{\rm t}$ and $\ell(H_2)-\ell(H_1)=h_2-h_1$.
\end{definition}

\begin{remark}
Let $d\leq f\leq h$ be positive integers and $F\subseteq [n]$ with $|F|=f$. Then $F$ has an $h$-parity if and only if $h-f\leq n-\ell(F)-\max(F\setminus F^{\rm t})-1$, and 
$F$ has an $d$-parity if and only if $d\geq f-\ell(F)$.
\end{remark}

Note that  for a given integer $k$ and a subset $A\subseteq [n]$, if $A$ has a $k$-parity, then it has the unique one. The following fact is derived from the above definition directly.
\begin{fact}\label{fact2.13+}
Let $h_1\leq h_2\leq h_3$ and $H_i$ be an $h_i$-set for $i\in [3]$. If $H_1$ is the $h_1$-parity of $H_2$ and $H_2$ is the $h_2$-parity of $H_3$, then $H_1$ is the $h_1$-parity of $H_3$. Also, if $H_3$ is the $h_3$-parity of $H_1$ and $H_2$ is the $h_2$-parity of $H_1$, then $H_3$ is the $h_3$-parity of $H_2$.
\end{fact}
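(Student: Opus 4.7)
The plan is to unpack Definition 2.12 and verify both implications by elementary bookkeeping. Parity of two sets $H_i$ and $H_j$ (with $h_i\le h_j$) carries exactly two pieces of information: a set equality $H_i\setminus H_i^{\rm t}=H_j\setminus H_j^{\rm t}$ of the ``essential'' (non-terminal) parts, and a numerical identity $\ell(H_j)-\ell(H_i)=h_j-h_i$ on the lengths of the terminal runs $[n-\ell(\cdot)+1,n]$. Both pieces are transitive in the obvious way: equality of sets is transitive, and the length identity is additive along chains. So no structural argument about the underlying subsets is required beyond chaining these two equalities.

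For the first implication, I would first combine the two set equalities to obtain $H_1\setminus H_1^{\rm t}=H_2\setminus H_2^{\rm t}=H_3\setminus H_3^{\rm t}$, and then add the two length identities:
\[
\ell(H_3)-\ell(H_1)=\bigl(\ell(H_3)-\ell(H_2)\bigr)+\bigl(\ell(H_2)-\ell(H_1)\bigr)=(h_3-h_2)+(h_2-h_1)=h_3-h_1.
\]
Both conditions of Definition 2.12 then hold for the pair $(H_1,H_3)$, so $H_1$ is the $h_1$-parity of $H_3$, as required.

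For the second implication, both hypotheses compare $H_2$ and $H_3$ to the common reference set $H_1$, so I would pivot through $H_1$: the set equalities give $H_2\setminus H_2^{\rm t}=H_1\setminus H_1^{\rm t}=H_3\setminus H_3^{\rm t}$, and subtracting the length identities yields
\[
\ell(H_3)-\ell(H_2)=\bigl(\ell(H_3)-\ell(H_1)\bigr)-\bigl(\ell(H_2)-\ell(H_1)\bigr)=(h_3-h_1)-(h_2-h_1)=h_3-h_2.
\]
Hence $H_3$ is the $h_3$-parity of $H_2$. There is no genuine obstacle in this argument; the statement is a consistency check on the definition of parity, verifying that the relation is compatible with chaining and with pivoting through a common third set. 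Its real purpose is to allow later arguments (where parities of $I_1$ and $I_2$ will be iterated through auxiliary sets) to invoke transitivity without further comment.
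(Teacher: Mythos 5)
Your proof is correct and is exactly the ``direct derivation from the definition'' that the paper alludes to (the paper states this fact without proof, remarking only that it follows directly from Definition \ref{def+}). Your unpacking into the transitive set equality of the non-terminal parts and the additive/subtractive telescoping of the length identities is the intended argument.
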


\begin{fact}\label{fact2.7}
Let $a, b, k, n$ be positive integers and $n\geq \max\{a+k, b+k\}$. Let $A$  and $B$ be two subsets of $[n]$ with sizes $|A|=a$ and $|B|=b$. Suppose that $K_a$ and $K_b$ are the $k$-partners of $A$ and $B$ respectively. If $A\prec B$, then $K_b\prec K_a$. In particular, if $A$ is the $a$-parity of $B$, then $K_b= K_a$.
\end{fact}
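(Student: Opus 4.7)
The plan is to identify $\mathcal L(K_a,k)$ with the ``cross-intersecting neighbourhood''
\[
\mathcal M_A:=\Bigl\{F\in\binom{[n]}{k}:F\cap G\neq\emptyset\text{ for every }G\in\mathcal L(A,a)\Bigr\}
\]
(and analogously $\mathcal L(K_b,k)=\mathcal M_B$), and then deduce $K_b\prec K_a$ from the inclusion $\mathcal M_B\subseteq\mathcal M_A$. The identity $\mathcal L(K_a,k)=\mathcal M_A$ is obtained by applying Theorem~\ref{kk} to $\mathcal M_A$ and $\mathcal L(A,a)$: since they are cross intersecting by construction, so are $\mathcal L(|\mathcal M_A|,k)$ and $\mathcal L(A,a)$, and Fact~\ref{f2.8} then forces $|\mathcal M_A|\leq|\mathcal L(K_a,k)|$; the reverse inclusion $\mathcal L(K_a,k)\subseteq\mathcal M_A$ is immediate, so the two families coincide.

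Granting the identification, $K_b\prec K_a$ becomes $\mathcal M_B\subseteq\mathcal M_A$. I would argue by contrapositive: take $G\in\mathcal L(A,a)$ with $F\cap G=\emptyset$ and produce $G'\in\mathcal L(B,b)$ with $F\cap G'=\emptyset$. Transitivity of $\prec$ gives $G\prec B$, so it is enough to construct a $b$-set $G'\subseteq[n]\setminus F$ satisfying $G'\prec B$. When $a\leq b$, I would extend $G$ inside $[n]\setminus F$ to a $b$-set (this fits because $n-k\geq b$) and propagate $G\prec B$ to $G'\prec B$ through the inclusions $G\subseteq G'$ and $B\setminus G'\subseteq B\setminus G$, comparing $\min(G'\setminus B)$ to $\min(B\setminus G')$, with the degenerate case $B\subseteq G'$ covered by the first clause of the lex definition. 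When $a>b$, I would truncate $G$ to its first $b$ elements; a short counting step shows that the witness $m=\min(G\setminus B)$ either lies within the first $b$ positions of $G$ (so $m\in G'\setminus B$, and $\min(G'\setminus B)\leq m<\min(B\setminus G')$ delivers $G'\prec B$) or else forces $B=\{g_1,\ldots,g_b\}=G'$, making $G'=B\prec B$ trivially.

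The ``in particular'' clause is a one-line consequence of Fact~\ref{fact2.4+}: if $A$ is the $a$-parity of $B$ then $A\setminus A^{\mathrm{t}}=B\setminus B^{\mathrm{t}}$, and so the $k$-partners of $A$, of this common core, and of $B$ all coincide, yielding $K_a=K_b$. The main obstacle I anticipate is the lex-order bookkeeping in the $a>b$ truncation case, because shrinking $G$ to $G'$ does not automatically preserve the lex relation $G\prec B$; the counting argument isolating where the witness of the inequality can sit, and in particular ruling out its appearance beyond position $b$ except in the trivial sub-case $B\subseteq G$, is the one genuinely non-trivial ingredient of the proof.
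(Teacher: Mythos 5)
The paper states this fact without giving a proof, so there is no in-text argument to compare yours against; judged on its own, your proposal is correct and complete in outline. The identification $\mathcal{L}(K_a,k)=\mathcal{M}_A$ is sound: $\mathcal{L}(K_a,k)\subseteq\mathcal{M}_A$ is immediate from Fact~\ref{f2.8}, and Theorem~\ref{kk} applied to the (possibly non-L-initial) family $\mathcal{M}_A$ gives $|\mathcal{M}_A|\le|\mathcal{L}(K_a,k)|$, forcing equality of the two families. The transfer step is the only place requiring care, and your case split handles it: for $a\le b$ the witness $m=\min(G\setminus B)$ survives enlargement because $G'\setminus B\supseteq G\setminus B$ while $B\setminus G'\subseteq B\setminus G$; for $a>b$ the one point you should make explicit is that $B\setminus G'$ may be strictly larger than $B\setminus G$, its new elements being those of $B\cap\{g_{b+1},\dots,g_a\}$ — but these all exceed $g_b\ge m$, so $\min(B\setminus G')>m$ still holds (and since $a>b$ forces $G\setminus B\ne\emptyset$, the witness $m$ always exists, so the subcase $G\supseteq B$ needs no separate treatment). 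The conclusion $K_b\in\mathcal{M}_B\subseteq\mathcal{M}_A=\mathcal{L}(K_a,k)$ then reads off $K_b\prec K_a$, and the parity clause is indeed a one-line consequence of Fact~\ref{fact2.4+} together with $A\setminus A^{\mathrm{t}}=B\setminus B^{\mathrm{t}}$. Your route through the maximal cross-intersecting neighbourhood is more conceptual than the direct alternative (computing both strong partners from the definition and tracking the lex comparison through the truncation/extension to size $k$), and it has the advantage of making the anti-monotonicity of partners transparent rather than a case check.
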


\begin{definition}\label{def2.7}
Let $h_1\leq h_2$. For two families $\mathcal{H}_1\subseteq {[n]\choose h_1}$ and $\mathcal{H}_2\subseteq {[n]\choose h_2}$, we say that $\mathcal{H}_1$ is the $h_1$-parity of $\mathcal{H}_2$, or $\mathcal{H}_2$ is the $h_2$-parity of $\mathcal{H}_1$ if

(i) for any $H_1\in \mathcal{H}_1$, the $h_2$-parity of $H_1$ exists and must be in $\mathcal{H}_2$;

(ii) for any $H_2\in \mathcal{H}_2$, either $H_2$ has no $h_1$-parity or it's $h_1$-parity is in $\mathcal{H}_1$.
\end{definition}

\begin{proposition}\label{prop2.9}
Let $f, g, h, n$ be positive integers with $f\geq g$ and $n\geq f+h$. Let
\begin{align*}
&\mathcal{F}=\left\{ F\in{[n]\choose f}: \text{ there exists } H\in {[n]\choose h} \text{ such that } (F, H) \text{ is maximal }  \right\},\\
&\mathcal{G}=\left\{ G\in{[n]\choose g}: \text{ there exists } H\in {[n]\choose h} \text{ such that } (G, H) \text{ is maximal }  \right\}.
\end{align*}
Let $\mathcal{H}_{\mathcal{F}}\subseteq {[n]\choose h}$ and $\mathcal{H}_{\mathcal{G}}\subseteq {[n]\choose h}$ be the families
such that $\mathcal{F}$ and $\mathcal{H}_{\mathcal{F}}$ are maximal pair families and $\mathcal{G}$ and $\mathcal{H}_{\mathcal{G}}$ are  maximal pair families.
Then $\mathcal{F}$ is the $f$-parity of $\mathcal{G}$; $\mathcal{H}_{\mathcal{G}}\subseteq \mathcal{H}_{\mathcal{F}}$; and for any $G\in \mathcal{G}$, let $F\in \mathcal{F}$ be the $f$-parity of $G$ and $H\in \mathcal{H}$ such that $(F, H)$ is maximal, then $(G, H)$ is maximal.
\end{proposition}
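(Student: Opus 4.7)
The plan is to reduce every assertion to Fact~\ref{fact2.5}, which characterizes maximality of a pair $(A,B)$ purely in terms of whether $A\setminus A^{\mathrm{t}}$ and $B\setminus B^{\mathrm{t}}$ are partners of one another. The key observation is that if $F$ is the $f$-parity of $G$, then by Definition~\ref{def+} we have $F\setminus F^{\mathrm{t}}=G\setminus G^{\mathrm{t}}$, so the maximality of $(G,H)$ and of $(F,H)$ with a common $H$ are logically equivalent. This single equivalence will yield all three conclusions, once we confirm that the $f$-parity of every $G\in\mathcal{G}$ actually exists.

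First I would verify existence. Fix $G\in\mathcal{G}$ and let $H\in\binom{[n]}{h}$ be such that $(G,H)$ is maximal; by Fact~\ref{fact2.5}, $G\setminus G^{\mathrm{t}}$ is the partner of $H\setminus H^{\mathrm{t}}$, so these two sets strongly intersect at some common maximum $q$. Counting sizes gives $(g-\ell(G))+(h-\ell(H))=q+1$, whence $q\le g+h-\ell(G)-1$ since $\ell(H)\ge 0$. Using the hypothesis $n\ge f+h$, this rearranges to $\max(G\setminus G^{\mathrm{t}})+\ell(G)+(f-g)\le h+f-1\le n-1$, which is precisely the criterion stated in the remark following Definition~\ref{def+} for $G$ to admit an $f$-parity $F$.

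Finally I would deduce the three claims. Since $F\setminus F^{\mathrm{t}}=G\setminus G^{\mathrm{t}}$ is the partner of $H\setminus H^{\mathrm{t}}$, Fact~\ref{fact2.5} gives that $(F,H)$ is also maximal, so $F\in\mathcal{F}$; by the uniqueness built into the definition of a maximal pair family, this $H$ is precisely the element of $\mathcal{H}_{\mathcal{F}}$ paired with $F$, which simultaneously proves Claim~3, shows that every $H\in\mathcal{H}_{\mathcal{G}}$ also lies in $\mathcal{H}_{\mathcal{F}}$ (Claim~2), and verifies condition~(i) of Definition~\ref{def2.7} for $\mathcal{F}$ being the $f$-parity of $\mathcal{G}$. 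For condition~(ii), take any $F\in\mathcal{F}$ that does possess a $g$-parity $G$ and let $H$ be the $\mathcal{H}_{\mathcal{F}}$-partner of $F$; then $G\setminus G^{\mathrm{t}}=F\setminus F^{\mathrm{t}}$ is the partner of $H\setminus H^{\mathrm{t}}$, so by Fact~\ref{fact2.5} the pair $(G,H)$ is maximal and $G\in\mathcal{G}$. The only non-routine ingredient of the whole argument is the existence check, which is cleanly handled by the assumption $n\ge f+h$.
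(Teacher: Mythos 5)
Your proof is correct and takes essentially the same route as the paper's: establish existence of the $f$-parity of each $G\in\mathcal{G}$ from the size bound forced by $n\ge f+h$, then transfer maximality between $G$ and its parity $F$ via Fact \ref{fact2.5}, using that they share the same truncated set $G\setminus G^{\mathrm{t}}=F\setminus F^{\mathrm{t}}$. (You additionally spell out condition (ii) of Definition \ref{def2.7}, which the paper's proof leaves implicit.)
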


\begin{proof}
If $f=g$, then $\mathcal{F}=\mathcal{G}$ and $\mathcal{H}_{\mathcal{G}}=\mathcal{H}_{\mathcal{F}}$. So we may assume that $f=g+s$ for some $s\geq 1$. For any $G\in \mathcal{G}$, there is the unique $H\in \mathcal{H}$ such that $(G, H)$ is maximal. Let $G'=G\setminus G^{\rm t}=G\setminus \{n-\ell(G)+1, \dots, n\}$ and $H'=H\setminus H^{\rm t}=H\setminus \{n-\ell(H)+1, \dots, n\}$. Then, by Fact \ref{fact2.5}, $G'$ and $H'$ are partners of each other . So $\max G'\leq g+h-1\leq f+h-s-1$. Let $F=G'\cup \{n-\ell(G)+1-s, \dots, n\}$. Then $G\subseteq F$, $|F|=f$ and $\ell(F)-\ell(G)=f-g=s$. Then, by Definition \ref{def+}, $F$ is the $f$-parity of $G$.  Let $F'=F\setminus F^{\rm t}$. So $F'=G'$. Furthermore, $F'$ and $H'$ are partners of each other. By Fact \ref{fact2.5} again, we can see that $(F, H)$ is maximal. So $F\in \mathcal{F}$, and $\mathcal{F}$ is the $f$-parity of $\mathcal{G}$, as desired.
For any $G\in \mathcal{G}$,  let $F$ be the $f$-parity of $G$ and $H\in \mathcal{H}$ be the set such that $(F, H)$ is maximal.
By Fact \ref{fact2.7},  $(G, H)$ is maximal, as desired. And this implies $\mathcal{H}_{\mathcal{G}}\subseteq \mathcal{H}_{\mathcal{F}}$. 
The proof is complete.
\end{proof}

\begin{fact}\label{f2.11}
Let $a, b, c, n$ be positive integers, $a\geq b$ and $n\geq a+c$, and let $C$ be a $c$-subset of $[n]$. Suppose that $A$ is the $a$-partner of $C$ and $B$ is the $b$-partner of $C$, then $B\prec A$ or $A$ is the $a$-parity of $B$.
\end{fact}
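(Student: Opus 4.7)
Let $H$ be the partner of $C$, with $|H|=h$ and $H=\{h_1<h_2<\cdots<h_h\}$. Since $h_h=\max C=c+h-1$, we obtain the bound $h_j\le c+j-1$ for every $j\in[h]$, which will be used repeatedly. The plan is first to pin down an explicit description of the $k$-partner of $C$, then to compare $A$ and $B$ directly. For $k\ge h$ the definition already gives the $k$-partner as $H\cup\{n-k+h+1,\ldots,n\}$. For $k<h$ I would prove (when it exists) that the $k$-partner equals
\[
\{h_1,\ldots,h_{j-1},h_j-1\}\cup\{n-k+j+1,\ldots,n\},
\]
where $j\in[1,k]$ is the largest index with $h_j-h_{j-1}\ge 2$ (with $h_0:=0$). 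The verification uses that $h_j-1\in(h_{j-1},h_j)$ is a fresh element outside $H$, combined with $n\ge a+c\ge k+c$ and $h_j\le c+j-1$, which force $n-k+j>h_j-1$; thus the ``tail'' $\{n-k+j+1,\ldots,n\}$ lies strictly above the ``body'' $\{h_1,\ldots,h_{j-1},h_j-1\}$, and this is exactly the decomposition $K\setminus K^{\mathrm{t}}$ and $K^{\mathrm{t}}$ with $\ell(K)=k-j$.

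With the formula in hand, I would split into three cases. Case~1, $b\ge h$ (so also $a\ge h$): both $A$ and $B$ have body $H$ and $\ell(A)-\ell(B)=(a-h)-(b-h)=a-b$, so Definition~\ref{def+} immediately gives that $A$ is the $a$-parity of $B$. Case~2, $a\ge h>b$: the body of $A$ is still $H$ but the body of $B$ is $\{h_1,\ldots,h_{j_B-1},h_{j_B}-1\}$ for some $j_B\in[1,b]$. Here $A$ and $B$ agree on the prefix $\{h_1,\ldots,h_{j_B-1}\}$, and the bound $h_{j_B}\le c+j_B-1$ keeps both tails strictly above $h_{j_B}$, so a direct inspection yields $\min(B\setminus A)=h_{j_B}-1<h_{j_B}=\min(A\setminus B)$ and hence $B\prec A$ (when $j_B=1$ the same conclusion follows from the simpler observation $\min B=h_1-1<h_1=\min A$). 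Case~3, $h>a\ge b$: both $A$ and $B$ have the same structural form with indices $j_A\ge j_B$. If $j_A=j_B$ the bodies coincide and $\ell(A)-\ell(B)=a-b$, so $A$ is the $a$-parity of $B$; if $j_A>j_B$ the same min-comparison as in Case~2 gives $B\prec A$.

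The main obstacle is pinning down the explicit formula for the $k$-partner in the range $k<h$ and verifying the body/tail split honestly, i.e.\ that the claimed tail really is $K^{\mathrm{t}}$ and that no body element leaks into it. The pair of inequalities $h_j\le c+j-1$ and $n\ge a+c$ is precisely what keeps these two pieces separated; once that is in place, each of the three cases collapses to a one-line min-comparison or a one-line arithmetic check on $\ell(A)-\ell(B)$, yielding the dichotomy $B\prec A$ or ``$A$ is the $a$-parity of $B$'' claimed in the fact.
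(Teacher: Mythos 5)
Your proposal is correct and follows essentially the same route as the paper: it splits on how the size $h=|H|$ of the partner of $C$ compares with $b$ and $a$ (your Cases 1--3 are exactly the paper's cases $c'\le b$, $b<c'\le a$, $b<a<c'$), concluding parity when the ``bodies'' coincide and $B\prec A$ via the same $\min(B\setminus A)<\min(A\setminus B)$ comparison otherwise. Your write-up is in fact more explicit than the paper's, since you derive the closed form of the $k$-partner for $k<h$ and verify the body/tail separation via $h_j\le c+j-1$ and $n\ge a+c$, which the paper leaves implicit.
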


\begin{proof}
If $a=b$, then $A=B$, we are done. Assume $a>b$.
Let $T$ be the partner of $C$ and let $|T|=c'$. If $c'\leq b$, then $c'<a$.  By the definitions of $a$-partner and $b$-partner and $n\geq a+c> b+c$, we can see that $A$ is the $a$-parity of $B$. If $b<c'\leq a$, then we have $\min B\setminus A < \min A\setminus B$, so $B\prec A$. At last,  suppose $b<a<c'$. Let $C'=\{x_1, \dots, x_b, \dots,  x_a\}\subseteq C$ be the first $a$ elements of $C$. If $x_{i+1}=x_i+1$ for all $i\in[b, a-1]$, then $A$ is the $a$-parity of $B$. Otherwise, we have $B\prec A$.
\end{proof}

\section{Proofs of Theorem \ref{main1}}\label{sec3}
{\bf Recall that $k_1+k_3\leq n <k_1+k_2$,  $(\mathcal{F}_1, \dots, \mathcal{F}_t)$ is an  extremal $(n, k_1, \dots, k_t)$-cross intersecting system, each $\mathcal{F}_i$ is L-initial, and $I_1, I_2, \dots, I_t$ are the IDs of $\mathcal{F}_1, \mathcal{F}_2, \dots, \mathcal{F}_t$ respectively  throughout the paper.}
%To prove Theorem \ref{main1} is equivalent to prove $\sum_{i=1}^t|\mathcal{F}_i|= \textup{max}\{\lambda_1, \,\, \lambda_2\}$.
From Constructions \ref{con1} and \ref{con2},  we have
\begin{equation}\label{+1}
\sum_{i=1}^t|\mathcal{F}_i|\geq \textup{max}\{\lambda_1, \,\, \lambda_2\}.
\end{equation}
 We are going to prove that $\sum_{i=1}^t|\mathcal{F}_i|\leq \textup{max}\{\lambda_1, \lambda_2\}$.

 \begin{claim}
 We may assume that $k_t\geq 2$.
 \end{claim}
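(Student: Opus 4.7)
The plan is to dispose of the case $k_t = 1$ directly, so that the remainder of the paper may assume $k_t \ge 2$. The key observation that makes this clean is that when $k_t = 1$ the two target bounds collapse: using $\binom{n}{k_i} - \binom{n-1}{k_i} = \binom{n-1}{k_i-1}$ one finds
$$\lambda_2 = \sum_{i=1}^{2}\binom{n-1}{k_i-1} + \sum_{i=3}^{t}\binom{n-1}{k_i-1} = \sum_{i=1}^{t}\binom{n-1}{k_i-1} = \lambda_1,$$
and correspondingly Construction \ref{con2} degenerates to Construction \ref{con1} (since $[k_t] = \{1\}$). So it suffices to show $\sum_{i=1}^t |\mathcal{F}_i| \le \lambda_1$ with equality characterizing Construction \ref{con1}.

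First I would exploit the L-initial structure of $\mathcal{F}_t \subseteq \binom{[n]}{1}$: non-emptiness forces $\mathcal{F}_t = \{\{1\},\{2\},\ldots,\{m\}\}$ for some integer $m\ge 1$. The pairwise cross-intersection condition then forces $[m]\subseteq F$ for every $F\in \mathcal{F}_j$ with $j\in[t-1]$, so $|\mathcal{F}_j|\le\binom{n-m}{k_j-m}$, and non-emptiness of $\mathcal{F}_{t-1}$ requires $m\le k_{t-1}\le k_1$. When $m=1$ the bound $\sum_i|\mathcal{F}_i|\le 1+\sum_{j<t}\binom{n-1}{k_j-1}=\lambda_1$ is immediate, with equality forcing each $\mathcal{F}_j$ to be the full star through $1$, which is exactly Construction \ref{con1}.

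For $m\ge 2$ I would invoke the telescoping identity
$$\binom{n-1}{k_j-1}-\binom{n-m}{k_j-m}=\sum_{a=2}^{m}\binom{n-a}{k_j-a+1},$$
obtained by iterating Pascal's rule. Applied at $j=1$, the hypotheses $n\ge k_1+k_3\ge k_1+1$ (using $k_3\ge k_t=1$) together with $a\le m\le k_1$ make every summand a positive integer, giving $\binom{n-1}{k_1-1}-\binom{n-m}{k_1-m}\ge m-1$. Combined with the strict inequality $\binom{n-1}{k_j-1}>\binom{n-m}{k_j-m}$ for $j\ge 2$ (which also follows from $n\ge k_j+1$), this would yield $m+\sum_{j<t}\binom{n-m}{k_j-m}<\lambda_1$, ruling out any extremum with $m\ge 2$.

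I do not anticipate a real obstacle here: the only delicate point is tracking that every summand in the telescoping expansion is positive, which reduces to the elementary bound $n\ge k_1+1$ guaranteed by $n\ge k_1+k_3$ and $k_3\ge 1$. Once the case $k_t=1$ is fully handled (bound $\lambda_1=\lambda_2$, unique extremum Construction \ref{con1}), the subsequent analysis of this paper may freely assume $k_t\ge 2$.
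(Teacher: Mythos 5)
Your proof is correct and follows essentially the same route as the paper: reduce to $\mathcal{F}_t=\{\{1\},\dots,\{m\}\}$ via L-initiality, deduce $[m]\subseteq F$ for every member of every other family, and compare the resulting bound $m+\sum_{j<t}\binom{n-m}{k_j-m}$ with $\lambda_1$. You additionally supply the telescoping verification of the inequality (which the paper asserts without detail) and the observation that $\lambda_1=\lambda_2$ when $k_t=1$; both check out.
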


 \begin{proof}
We  consider the case $k_t=1$. Suppose that $|\mathcal{F}_t|=s$. Then $\mathcal{F}_t=\{\{1\}, \dots, \{s\}\}$. Since $(\mathcal{F}_1, \dots, \mathcal{F}_t)$ is a cross intersecting system, then  for any $i\in [t-1]$ and $F\in \mathcal{F}_i$, we have $[s]\subseteq F$. Thus,
$$\sum_{i=1}^t|\mathcal{F}_i|=\sum_{i=1}^{t-1}{n-s\choose k_i-s}+s\leq\lambda_1.$$
So we may assume that $k_t\geq 2$.
\end{proof}

 From now on, $k_t\geq 2$.

 For the case $k_1=k_2$, the authors in \cite{HP+} (Corollary 1.12)  have already given a positive answer. Although we can prove for this case in this paper, for simplicity,  we  assume that
 \[
 k_1>k_2.
 \]

In \cite{HP+}, the authors proved the following result. 
\begin{theorem}\cite{HP+}\label{thm3.3}\label{theorem3.2}
Let $n$, $t\geq 2$, $k_1, k_2, \dots, k_t$ be positive integers and $d_1, d_2, \dots, d_t$ be positive numbers. Let
$\mathcal{A}_1\subset{[n]\choose k_1}, \mathcal{A}_2\subset{[n]\choose k_2}, \dots, \mathcal{A}_t\subset{[n]\choose k_t}$ be non-empty  cross-intersecting families with $|\mathcal{A}_i|\geq {n-1\choose k_i-1}$ for some $i\in [t]$. Let $m_i$ be the minimum integer among $k_j$, where $j\in [t]\setminus \{i\}$. If $n\geq k_i+k_j$ for all $j\in [t]\setminus \{i\}$, then
$$\sum_{1=j}^td_j|\mathcal{A}_j|\leq \max  \left\{d_i{n\choose k_i}-d_i{n-m_i\choose k_i}+\sum_{j=1, j\ne i}^td_j{n-m_i\choose k_j-m_i}, \,\,\sum_{j=1}^td_j{n-1\choose k_j-1}\right\}.$$
The equality holds if and only if one of the following holds.\\
(1) If $d_i{n\choose k_i}-d_i{n-k_t\choose k_i}+\sum_{j\ne i}^td_j{n-m_i\choose k_j-m_i}\geq \sum_{j=1}^td_j{n-1\choose k_j-1}$, then  there is some $m_i$-element set $T\subset [n]$ such that $\mathcal{A}_i=\{F\in {[n]\choose k_i}: F\cap T\ne\emptyset\}$ and $\mathcal{A}_j=\{F\in {[n]\choose k_j}: T\subset F\}$ for each $j\in [t]\setminus \{i\}$.\\
(2) If
$d_i{n\choose k_i}-d_i{n-k_t\choose k_i}+\sum_{j\ne i}^td_j{n-m_i\choose k_j-m_i}\leq \sum_{j=1}^td_j{n-1\choose k_j-1}$,
then there is some $a\in [n]$ such that $\mathcal{A}_j=\{F\in{[n]\choose k_j}: a\in F\}$ for each $j\in [t]$. \\
(3) If $t=2$ and $n=k_i+k_{3-i}$.  If $d_i\leq d_{3-i}$,  then $\mathcal{A}_{3-i}\subseteq {[n]\choose k_{3-i}}$ with $|\mathcal{A}_{3-i}|={n-1\choose k_{3-i}-1}$ and $\mathcal{A}_i={[n]\choose k_i}\setminus \overline{\mathcal{A}_{3-i}}$. \\
(4)
If $n=k_i+k_j$ holds for every $j\in [t]\setminus \{i\}$ and $\sum_{j\ne i}d_j=d_i$, then  $\mathcal{A}_j=\mathcal{A}$ for all $j\in [t]\setminus \{i\}$, where $\mathcal{A}\subseteq {[n]\choose k}$ is an intersecting family with size $|\mathcal{A}|={n-1\choose k-1}$, and $\mathcal{A}_i={[n]\choose k_i}\setminus \overline{\mathcal{A}}$. 
\end{theorem}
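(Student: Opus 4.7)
The plan is to reduce the weighted problem to a single-variable optimization by Kruskal-Katona, paralleling the strategy of the unweighted Theorem \ref{HP} from \cite{HP} but threading the weights $d_j$ through the analysis. First, by Theorem \ref{kk}, each $\mathcal{A}_j$ may be replaced with the L-initial family $\mathcal{L}(|\mathcal{A}_j|, k_j)$ of the same size, which preserves cross intersection and hence the weighted sum $\sum d_j |\mathcal{A}_j|$. Thus we may assume each $\mathcal{A}_j$ is L-initial with ID $I_j$; set $R := I_i$ and let $T$ be the partner of $R$. The hypothesis $|\mathcal{A}_i| \geq \binom{n-1}{k_i-1}$ restricts $R$ to lie at or beyond the concrete threshold in lex order corresponding to $\mathcal{A}_i$ containing the full star through element $1$.

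Next, since $n \geq k_i + k_j$ for every $j \neq i$, the pair $(\mathcal{A}_i, \mathcal{A}_j)$ is not freely cross intersecting; by Proposition \ref{p2.7}, $|\mathcal{A}_j| \leq |\mathcal{L}(T, k_j)|$. Consequently
$$\sum_{j=1}^t d_j |\mathcal{A}_j| \leq d_i|\mathcal{L}(R, k_i)| + \sum_{j \neq i} d_j |\mathcal{L}(T, k_j)| =: f_w(R),$$
so it suffices to maximize $f_w$ over admissible $R$. The key technical step is to show that $f_w$ attains its maximum over this range at one of exactly two canonical IDs: the ID making $\mathcal{A}_i = \{F : F \cap [m_i] \neq \emptyset\}$ (so $T = [m_i]$, yielding the first candidate inside the $\max$), and the ID making $\mathcal{A}_i = \{F : 1 \in F\}$ (so $T = \{1\}$, yielding the second candidate). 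Following \cite{HP}, this reduces to a discrete unimodality claim for $-f_w$: as $R$ advances in lex order one step at a time, the weighted increment $f_w(R') - f_w(R)$ changes sign only at the two canonical transitions. Each such increment decomposes as a linear combination $\sum_j d_j \cdot \delta_j(R, R')$ of combinatorial differentials $\delta_j$ coming from $|\mathcal{L}(T, k_j)|$ and $|\mathcal{L}(R, k_i)|$; since the unweighted proof in \cite{HP} already controls these $\delta_j$, the weighted version follows from reassembling them with the $d_j$ factors.

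For equality, one traces back through each inequality in the reduction. Kruskal-Katona tightness combined with $f_w$ attaining its maximum at one of the two canonical IDs yields cases (1) and (2), producing the two explicit families in the theorem. Cases (3) and (4) arise as boundary phenomena when $n = k_i + k_{3-i}$ (resp.\ $n = k_i + k_j$ for every $j \neq i$): here the partner operation is degenerate, allowing $\mathcal{A}_i = \binom{[n]}{k_i} \setminus \overline{\mathcal{A}}$ for any intersecting family $\mathcal{A}$ of size $\binom{n-1}{k-1}$, which produces the alternative extremizers listed. The principal obstacle I foresee is the weighted unimodality step: with arbitrary positive weights $d_j$, the sign of $f_w(R') - f_w(R)$ depends linearly on the $d_j$, and one must argue that for \emph{every} admissible weight vector the only sign changes occur at the two canonical IDs. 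This forces a careful case split according to which of the two candidate expressions inside the $\max$ is larger, and requires the binomial coefficient comparisons to be strict enough to remain valid after weighting.
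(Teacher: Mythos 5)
This theorem is quoted from \cite{HP+} and is not proved in the present paper, so there is no in-paper argument to compare your attempt against; I can only judge your outline on its own terms and against the machinery of \cite{HP} that the paper summarizes in Section \ref{sec4}. Your plan --- reduce to L-initial families by Theorem \ref{kk}, bound $\sum_j d_j|\mathcal{A}_j|$ by a single-variable function $f_w(R)$ of the ID of $\mathcal{A}_i$ via Proposition \ref{p2.7}, and locate the maximum at the two canonical IDs (partner $[m_i]$ versus partner $\{1\}$) through a unimodality argument --- is exactly the template of Lemma \ref{c2.22} together with Lemmas \ref{clm28} and \ref{clm30}, so the strategy is the expected one.

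As a proof, however, the proposal has two genuine gaps. First, the weighted unimodality is asserted rather than established. The content of Lemma \ref{clm28} is a local convexity statement along $c$-sequential chains, proved by comparing $\alpha(R,R')$ with $\beta(R,R')=\sum_{j\ne i}\binom{n-q}{k_j-(q-k_i)}$; in the weighted setting one must check that each comparison survives when the $\alpha$-term carries the single weight $d_i$ while the $\beta$-terms carry the various $d_j$. This does go through, because each summand of $\beta$ is separately nonnegative and monotone in $q$ by Proposition \ref{clm23}, but that observation has to be stated and then threaded through the recursive reduction over the families $\mathcal{R}_1(j)$; saying that the unweighted proof ``already controls these $\delta_j$'' does not by itself yield the weighted inequality, and you correctly flag this as the main obstacle without resolving it. Second, the equality characterization is not actually derived. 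The weight conditions $d_i\le d_{3-i}$ in case (3) and $\sum_{j\ne i}d_j=d_i$ in case (4) are not generic ``boundary phenomena'': when $n=k_i+k_j$ one may complement sets between $\mathcal{A}_i$ and the families $\mathcal{A}_j$, and whether such an exchange preserves or increases the weighted sum is precisely a linear condition on the $d_j$; your sketch never performs this computation, so it cannot recover the stated list of extremal configurations. Until both points are carried out, the proposal is a plausible plan rather than a proof.
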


\begin{claim}\label{>}
 $|\mathcal{F}_1|\geq {n-1\choose k_1-1}$ and $|\mathcal{F}_2|\geq {n-1\choose k_2-1}$, in other words, $\{1, n-k_1+2, \dots, n\} \prec I_1$ and $\{1, n-k_2+2, \dots, n\} \prec I_2$.
\end{claim}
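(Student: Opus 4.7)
The plan is to prove both inequalities by contradiction, leveraging the extremal lower bound $\sum_{i=1}^t|\mathcal{F}_i|\ge\max\{\lambda_1,\lambda_2\}$ given by (\ref{+1}) together with Theorem \ref{HP} applied to a suitable proper subsystem of $t-1$ families. No partner/parity machinery is needed here: the Claim should function as a structural warm-up, and it falls directly out of the non-mixed bound from \cite{HP}.

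For the bound $|\mathcal{F}_1|\ge\binom{n-1}{k_1-1}$, I would assume for contradiction that $|\mathcal{F}_1|<\binom{n-1}{k_1-1}$ and apply Theorem \ref{HP} to the subsystem $(\mathcal{F}_2,\mathcal{F}_3,\ldots,\mathcal{F}_t)$. This application is legitimate because $t-1\ge 2$, the families are non-empty and pairwise cross intersecting, $k_2\ge k_3\ge\cdots\ge k_t$, and $n\ge k_1+k_3\ge k_2+k_3$. Writing
\[
A:=\binom{n}{k_2}-\binom{n-k_t}{k_2}+\sum_{i=3}^t\binom{n-k_t}{k_i-k_t},\qquad B:=\sum_{i=2}^t\binom{n-1}{k_i-1},
\]
Theorem \ref{HP} yields $\sum_{i=2}^t|\mathcal{F}_i|\le\max\{A,B\}$. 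Noting that $B+\binom{n-1}{k_1-1}=\lambda_1$ and $A+\binom{n}{k_1}-\binom{n-k_t}{k_1}=\lambda_2$, combining with the contradiction assumption produces
\[
\sum_{i=1}^t|\mathcal{F}_i|<\max\!\left\{\lambda_2-\left(\binom{n}{k_1}-\binom{n-k_t}{k_1}-\binom{n-1}{k_1-1}\right),\;\lambda_1\right\}.
\]

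The decisive step is then the strict binomial inequality $\binom{n}{k_1}-\binom{n-k_t}{k_1}>\binom{n-1}{k_1-1}$. The left side counts $k_1$-subsets of $[n]$ meeting $[k_t]$, while the right counts $k_1$-subsets containing the element $1$; every set of the second kind is of the first kind, and since $k_t\ge 2$ (by the reduction made just above the Claim) and $n\ge k_1+k_t\ge k_1+2$, the set $\{2,3,\ldots,k_1+1\}$ is an additional $k_1$-subset of $[n]$ that meets $[k_t]$ but avoids $1$, yielding the strict inequality. Consequently $\sum_i|\mathcal{F}_i|<\max\{\lambda_1,\lambda_2\}$, contradicting (\ref{+1}). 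The bound $|\mathcal{F}_2|\ge\binom{n-1}{k_2-1}$ is then proved by the symmetric argument applied to the subsystem $(\mathcal{F}_1,\mathcal{F}_3,\ldots,\mathcal{F}_t)$, which is admissible precisely because $n\ge k_1+k_3$, and using the analogous strict inequality $\binom{n}{k_2}-\binom{n-k_t}{k_2}>\binom{n-1}{k_2-1}$. The only real subtlety is verifying the strict form of these binomial inequalities, and this is exactly why the preliminary reduction to $k_t\ge 2$ (and the separate treatment of $k_t=1$) is essential.
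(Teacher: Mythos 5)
Your proof is correct, and it takes a genuinely different route from the paper's. The paper first locates \emph{some} index $i$ with $|\mathcal{F}_i|\geq {n-1\choose k_i-1}$ (else $\sum_i|\mathcal{F}_i|<\lambda_1$), disposes of the case $i\in[3,t]$ by invoking the weighted Theorem \ref{thm3.3} from \cite{HP+} together with Proposition 2.20 of \cite{HP+} and the equality characterization in item (2) of that theorem, and then, for $i\in\{1,2\}$, runs a structural argument: since $n\geq k_1+k_j$ for $j\geq 3$, every member of $\mathcal{F}_j$ must contain $1$, so by extremality $\mathcal{F}_2$ must absorb the full star at $1$. You instead assume $|\mathcal{F}_1|<{n-1\choose k_1-1}$ (resp.\ $|\mathcal{F}_2|<{n-1\choose k_2-1}$) and apply the unweighted Theorem \ref{HP} to the $(t-1)$-family subsystem omitting $\mathcal{F}_1$ (resp.\ $\mathcal{F}_2$); both applications are legitimate since the two largest uniformities in each subsystem sum to at most $k_1+k_3\leq n$ and $t\geq 3$. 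The closing step, the strict inequality ${n\choose k_1}-{n-k_t\choose k_1}>{n-1\choose k_1-1}$ and its $k_2$-analogue, is correctly justified by the witness $\{2,\dots,k_1+1\}$ using $k_t\geq 2$ and $n\geq k_1+2$, and it yields $\sum_i|\mathcal{F}_i|<\max\{\lambda_1,\lambda_2\}$, contradicting (\ref{+1}). Your version is more self-contained — it needs only the quantitative part of Theorem \ref{HP}, no external weighted theorem, no uniqueness statement, and no extremality-based structural step — and it treats $\mathcal{F}_1$ and $\mathcal{F}_2$ symmetrically; since the rest of the paper uses only the two size bounds as stated, nothing is lost by the substitution.
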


\begin{proof}
  If  $|\mathcal{F}_i|< {n-1\choose k_i-1}$ for each $i\in [t]$, then $\sum_{i=1}^t{|\mathcal{F}_i|}< \lambda_1$, a contradiction to (\ref{+1}). So there is $i\in [t]$ such that $|\mathcal{F}_i|\geq {n-1\choose k_i-1}$.

Suppose that $i\in [3, t]$ firstly. Since $ k_1\geq k_2$, $n\geq k_1+k_3$ and therefore $n\geq k_i+k_j$ for all $j\in [t]\setminus \{i\}$. Let $m_i=\min \{k_j, j\in [t]\setminus \{i\}\}$. 
Taking $d_1=d_2=\dots=d_t$ in Theorem \ref{thm3.3}, we obtain
\[
\sum_{j=1}^t|\mathcal{F}_j|
 \leq \textup{max}\left\{ \sum_{j=1}^t{n-1\choose k_j-1}, \,\,{n\choose k_i}-{n-k_t\choose k_i}+\sum_{j=1,j\ne i}^t{n-m_i\choose k_j-m_i}  \right\}.
 \]
In \cite{HP+} (Proposition 2.20 \cite{HP+}), we have shown that 
\[
{n\choose k_i}-{n-k_t\choose k_i}+\sum_{j=1,j\ne i}^t{n-m_i\choose k_j-m_i}
\leq {n\choose k_1}-{n-k_t\choose k_1}+\sum_{j=2}^t{n-k_t\choose k_j-k_t}.
\]
So
\begin{align*}
\sum_{j=1}^t|\mathcal{F}_j|&\leq \textup{max}\left\{ \sum_{j=1}^t{n-1\choose k_j-1},{n\choose k_1}-{n-k_t\choose k_1}+\sum_{j=2}^t{n-k_t\choose k_j-k_t}  \right\}  \\
&\leq\textup{max}\{\lambda_1, \,\, \lambda_2\},
\end{align*}
 where the last inequality holds by $k_t\geq 2$ and ${n\choose k_2}-{n-k_t\choose k_2}>{n-k_t\choose k_2-k_t}$. Thus if $\max \{\lambda_1, \,\, \lambda_2\}=\lambda_2$, then the last inequality holds strictly, it makes a a contradiction to (\ref{+1}). 
Suppose that $\max \{\lambda_1, \,\, \lambda_2\}=\lambda_2$.  
Then Claim \ref{>} follows from Theorem \ref{theorem3.2} (see the item (2) of Theorem \ref{theorem3.2}).
 So $i\in [2]$.
 Without loss of generality, let $i=1$. Since $n<k_1+k_2$, then  any two families  $\mathcal{G}\subseteq {[n]\choose k_1}$ and  $\mathcal{H}\subseteq{[n]\choose k_2}$ are cross intersecting freely. Since $|\mathcal{F}_1|\geq {n-1\choose k_1-1}$ and $\mathcal{F}_1$ is L-initial, $\{1, n-k_1+2, \dots, n\}\prec I_1$. Note that $n> k_1+k_j$ for each $j\in [3, t]$ and $\mathcal{F}_1$ is cross intersecting with $\mathcal{F}_j$ but not freely for each $j\in [3, t]$,  every member of $\mathcal{F}_j$ contains 1. Since $(\mathcal{F}_1, \dots, \mathcal{F}_t)$ is extremal, 
 all $k_2$-subsets contianing 1 are contained in $\mathcal{F}_2$, so $\{1, n-k_2+2, \dots, n\}\prec I_2$, this implies $|\mathcal{F}_2|\geq {n-1\choose k_2-1}$. This completes the proof of Claim \ref{>}.
\end{proof}

The following observation is simple and frequently used in this paper.
\begin{remark}\label{remark2.20+}
Let $d\geq 2$ be an integer, we consider $d$ L-initial families $\mathcal{L}(A_1, a_1)$, $\dots$, $\mathcal{L}(A_d, a_d)$. For $i\in [d]$ and let $S\subseteq [d]\setminus \{i\}$ be the set of all $j\in [d]\setminus \{i\}$ satisfying that $n\geq a_i+a_j$. Suppose that $\{1, n-a_i+2, \dots, n\}\preceq  A_i$. If $\mathcal{L}(A_i, a_i)$ and $\mathcal{L}(A_j, a_j)$ are cross intersecting for each $j\in S$, then
$\mathcal{L}(A_j, a_j)$, $j\in S$ are pairwise cross intersecting families since for any $j\in S$, every member of $\mathcal{L}(A_j, a_j)$ contains $1$.
\end{remark}

Combining Claim \ref{>}, Proposition \ref{p2.7} and $(\mathcal{F}_1, \dots, \mathcal{F}_t)$ is extremal, we have that for $i\in [3, t]$, $I_i$ is $k_i$-partner of $I_1$ (if $I_2\prec I_1$) or $I_2$ (if $I_1\prec I_2$).

Since $(\mathcal{F}_1, \dots, \mathcal{F}_t)$ is extremal, $I_i$ (ID of $\mathcal{F}_i$) must be contained in  $\mathcal{R}_i$, which are defined as below.
\begin{align*}
\mathcal{R}_1=\{R\in {[n]\choose k_1}:  \{1, n-k_1+2, \dots, n\} \prec R \prec \{k_t, n-k_1+2, \dots, n\}\}.\\
\mathcal{R}_2=\{ R\in {[n]\choose k_2}:  \{1, n-k_2+2, \dots, n\} \prec R \prec  \{k_t, n-k_2+2, \dots, n\}\}.
\end{align*}
 For $i\in[3, t-1]$, let
 \begin{align*}
\mathcal{R}_i&=\{ R\in {[n]\choose k_i}: [k_t]\cup[ n-k_i+k_t+1, n] \prec R \prec \{1, n-k_i+2, \dots, n\}\},\\
\mathcal{R}_t&=\{ R\in {[n]\choose k_t}:  \{1, 2, \dots, k_t\} \prec R \prec \{1, n-k_t+2, \dots, n\} \}.
\end{align*}

 For a family $\mathcal{A}_1\subseteq {[n]\choose k_1}$ with ID $A_1$, let
 \begin{align*}
m(n, A_{1})=&\max \big\{\sum_{j\in [t]\setminus \{1\}}|\mathcal{A}_j|: \mathcal{A}_j\subseteq {[n]\choose k_j} \text { is L-initial and } (\mathcal{A}_1, \dots, \mathcal{A}_t) \text{ is an }\\
& (n, k_1, \dots, k_t)\text{-cross intersecting system, where } k_1+k_3\leq n<k_1+k_2\big\}.
\end{align*}

 For L-initial cross intersecting families $\mathcal{A}_1\subseteq {[n]\choose k_1}$ and $\mathcal{A}_2\subseteq {[n]\choose k_2}$ with IDs $A_1$ and $A_2$ respectively,  let
\begin{align*}
m(n, A_{1}, A_{2})=&\max \big\{\sum_{j\in [t]\setminus \{1, 2\}}|\mathcal{A}_j|: \mathcal{A}_j\subseteq {[n]\choose k_j} \text { is L-initial and } (\mathcal{A}_1, \dots, \mathcal{A}_t) \text{ is an }\\
& (n, k_1, \dots, k_t)\text{-cross intersecting system, where } k_1+k_3\leq n<k_1+k_2\big\}.
\end{align*}

%Let
%\begin{align*}
%&\mathcal{F}=\left\{ F\in{[n]\choose a}: \exists H\in {[n]\choose c}, \text{s.t.}, (F, H) \text{ is maximal }  \right\},\\
%&\mathcal{G}=\left\{ G\in{[n]\choose b}: \exists H\in {[n]\choose c}, \text{s.t.}, (G, H) \text{ is maximal }  \right\},
%\end{align*}
%and let $\mathcal{H}_{\mathcal{F}}$ and $\mathcal{H}_{\mathcal{G}}$ be the families
%such that $\mathcal{F}$, $\mathcal{H}_{\mathcal{F}}$ are maximal pair families and $\mathcal{G}$, $\mathcal{H}_{\mathcal{G}}$ are  maximal pair families.
%If $(B, C)$ is maximal, then $C\in \mathcal{H}_{\mathcal{G}}$.By Proposition \ref{prop2.9},
%we have $\mathcal{H}_{\mathcal{G}}\subseteq \mathcal{H}_{\mathcal{F}}$, so $C\in \mathcal{H}_{\mathcal{F}}$. By Remark \ref{r2.9}, $(A, C)$ is maximal. By Proposition \ref{prop2.9} again, $A$ is the $a$-parity of $B$. As desired. Otherwise, $(B, C)$ is not maximal.

\begin{proposition}\label{prop2.4}
If $I_1=\{1, n-k_1+2, \dots, n\}$ or $I_1=\{k_t, n-k_1+2, \dots, n\}$ (or $I_2=\{1, n-k_2+2, \dots, n\}$ or $I_2=\{k_t, n-k_2+2, \dots, n\}$), then $\sum_{i=1}^t|\mathcal{F}_i|= \textup{max}\{\lambda_1, \,\, \lambda_2\}$.
\end{proposition}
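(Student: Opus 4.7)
\textbf{Proof proposal for Proposition \ref{prop2.4}.}

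The plan is to handle each of the four cases separately and in each case derive the matching upper bound $\sum_{i=1}^t |\mathcal{F}_i| \le \max\{\lambda_1, \lambda_2\}$. Combined with the lower bound $\sum_{i=1}^t |\mathcal{F}_i| \ge \max\{\lambda_1, \lambda_2\}$ from Constructions \ref{con1} and \ref{con2} (already recorded in (\ref{+1})), this will yield the claimed equality.

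For Case 1 ($I_1 = \{1, n-k_1+2, \dots, n\}$), the family $\mathcal{F}_1$ is the full star $\{F: 1 \in F\}$ with $|\mathcal{F}_1| = \binom{n-1}{k_1-1}$. The sub-system $(\mathcal{F}_2, \dots, \mathcal{F}_t)$ is then an $(n, k_2, \dots, k_t)$-cross intersecting system of non-mixed type, since $n \ge k_1+k_3 \ge k_2+k_3$. Applying Theorem \ref{HP} to this sub-system gives
\[
\sum_{j=2}^t |\mathcal{F}_j| \le \max\!\bigg\{\binom{n}{k_2}-\binom{n-k_t}{k_2}+\sum_{j=3}^t\binom{n-k_t}{k_j-k_t},\ \sum_{j=2}^t\binom{n-1}{k_j-1}\bigg\}.
\]
If the star option (second term) wins, adding $|\mathcal{F}_1|$ yields $\sum_i|\mathcal{F}_i|\le \lambda_1$. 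If the first option wins, the key binomial identity
\[
\binom{n}{k_1}-\binom{n-k_t}{k_1}=\sum_{i=0}^{k_t-1}\binom{n-1-i}{k_1-1}\ \ge\ \binom{n-1}{k_1-1}
\]
(valid since $k_t\ge 2$) gives $\sum_i|\mathcal{F}_i|\le \lambda_2$. Case 3 ($I_2=\{1,n-k_2+2,\dots,n\}$) is entirely symmetric: apply Theorem \ref{HP} to the non-mixed sub-system $(\mathcal{F}_1,\mathcal{F}_3,\dots,\mathcal{F}_t)$ (legal by $n\ge k_1+k_3$) and use the analogous binomial comparison with $k_2$ in place of $k_1$.

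For Case 2 ($I_1=\{k_t,n-k_1+2,\dots,n\}$), $\mathcal{F}_1$ equals the full $[k_t]$-star $\{F: F\cap[k_t]\ne\emptyset\}$. The claim is that every $G\in \mathcal{F}_j$ with $j\ge 3$ satisfies $[k_t]\subseteq G$: if some $a\in [k_t]\setminus G$ existed, then using $n\ge k_1+k_j$ (which holds because $k_j\le k_3$) we could pick a $(k_1-1)$-subset $S\subseteq [n]\setminus(G\cup\{a\})$ and the set $F=\{a\}\cup S$ would lie in $\mathcal{F}_1$ (as $a\in[k_t]$) while being disjoint from $G$, contradicting cross intersection. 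Specializing to $j=t$ forces $\mathcal{F}_t=\{[k_t]\}$; cross intersection of $\mathcal{F}_2$ with this singleton family then forces $H\cap[k_t]\ne\emptyset$ for every $H\in\mathcal{F}_2$, so $|\mathcal{F}_2|\le \binom{n}{k_2}-\binom{n-k_t}{k_2}$. Together with $|\mathcal{F}_j|\le \binom{n-k_t}{k_j-k_t}$ for $j\ge 3$, this gives $\sum_i|\mathcal{F}_i|\le \lambda_2\le\max\{\lambda_1,\lambda_2\}$. Case 4 ($I_2=\{k_t,n-k_2+2,\dots,n\}$) runs identically with the roles of $\mathcal{F}_1$ and $\mathcal{F}_2$ interchanged, now using $n\ge k_2+k_j$ for $j\ge 3$ to force $\mathcal{F}_t=\{[k_t]\}$ and then confining $\mathcal{F}_1$ to the $[k_t]$-star via cross intersection with $\mathcal{F}_t$.

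The individual verifications are routine, and I do not foresee a genuine obstacle. The most delicate point is matching the two alternatives in Theorem \ref{HP} against $\lambda_1$ and $\lambda_2$ in Cases 1 and 3, which is resolved cleanly by the identity $\binom{n}{k}-\binom{n-k_t}{k}=\sum_{i=0}^{k_t-1}\binom{n-1-i}{k-1}$ and hence $\binom{n}{k}-\binom{n-k_t}{k}\ge \binom{n-1}{k-1}$ for $k_t\ge 2$. In Cases 2 and 4, the collapse $\mathcal{F}_t=\{[k_t]\}$ is what directly produces the sharp bound $\lambda_2$ without any appeal to Theorem \ref{HP} on the sub-system.
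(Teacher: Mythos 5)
Your proposal is correct and follows essentially the same route as the paper: for the star cases it applies Theorem \ref{HP} to the reduced non-mixed system and compares the two alternatives against $\lambda_1,\lambda_2$ via $\binom{n}{k}-\binom{n-k_t}{k}\ge\binom{n-1}{k-1}$, and for the $[k_t]$-star cases it forces every member of $\mathcal{F}_j$ ($j\ge 3$) to contain $[k_t]$ and reads off $\lambda_2$. The only cosmetic difference is that you make the collapse $\mathcal{F}_t=\{[k_t]\}$ explicit to bound the remaining free family, where the paper instead invokes extremality to pin down $I_2$; both arguments are sound.
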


\begin{proof}
The proofs for $I_1$ and $I_2$ are similar, so we only prove  for $I_1$. Suppose that $I_1=\{1, n-k_1+2, \dots, n\}$ firstly. Since $\{1, n-k_2+2, \dots, n\} \prec I_2$, $I_1\prec I_2$. By  Fact \ref{fact2.7}, we have 
\begin{equation}\label{newdef2}
m(n, I_1)=M(n, k_2, k_3, \dots, k_t).
\end{equation}
Since $(\mathcal{F}_1, \dots, \mathcal{F}_t)$ is extremal,
\begin{align*}
&\sum_{i=1}^t|\mathcal{F}_i|={n-1\choose k_1-1}+m(n, I_1)\\
&~~~~~~~~~~\overset{(\ref{newdef2})}{=}{n-1\choose k_1-1}+M(n, k_2, k_3, \dots, k_t)\\
&~~~~~~\overset{Theorem \ref{HP}}{=}{n-1\choose k_1-1}+\textup{max}\left\{ \sum_{i=2}^t{n-1\choose k_i-1}, \,{n\choose k_2}-{n-k_t\choose k_2}+\sum_{i=3}^t{n-k_t\choose k_i-k_t}  \right\}.
\end{align*}

 Since $k_t\geq 2$,
$${n-1\choose k_1-1}+{n\choose k_2}-{n-k_t\choose k_2}+\sum_{i=3}^t{n-k_t\choose k_i-k_t}<\lambda_2.$$
By (\ref{+1}), $\sum_{i=1}^t|\mathcal{F}_i|=\lambda_1$, as desired.

Next we consider $I_1=\{k_t, n-k_1+2, \dots, n\}$. For each $i\in [3, t]$, since $\mathcal{F}_i$ and $\mathcal{F}_1$ are cross intersecting and $n> k_1+k_3\geq k_1+k_i$,  every element of $\mathcal{F}_i$ must contain $[k_t]$.  Since $\mathcal{F}_i$ and $\mathcal{F}_1$ are cross intersecting for every $i\in [3, t]$, $k_2+k_3\leq n<k_1+k_2$ and $(\mathcal{F}_1, \dots, \mathcal{F}_t)$ is extremal,  we can see that $I_2=\{k_t, n-k_2+2, \dots, n\}$, that is the last element of $\mathcal{R}_2$. Therefore, $\sum_{i=1}^t|\mathcal{F}_i|=\lambda_2$, as desired.
\end{proof}

By Proposition \ref{prop2.4}, the proof of the quantitative part of Theorem \ref{main1} will follow from the following proposition.
\begin{proposition}\label{AS+}
If $\{1, n-k_1+2, \dots, n\}\precneqq   I_1\precneqq \{k_t, n-k_1+2, \dots, n\}$  and $\{1, n-k_2+2, \dots, n\}\precneqq   I_2\precneqq \{k_t, n-k_2+2, \dots, n\}$, then $\sum_{i=1}^t|\mathcal{F}_i|<\max \{\lambda_1, \lambda_2\}$.
\end{proposition}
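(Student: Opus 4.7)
The plan is to reduce the apparent two-variable problem — the joint extremal choice of $I_1$ and $I_2$ — to a single-variable optimization via the parity structure, and then to use the unimodality of the resulting function to force a strict drop at every interior point.

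First I would invoke Lemma \ref{l2.18} on the extremal L-initial system $(\mathcal{F}_1,\dots,\mathcal{F}_t)$: $I_1$ and $I_2$ must be parities of each other. Combined with the observation recorded immediately after Remark \ref{remark2.20+} — namely that for $i\in[3,t]$ the ID $I_i$ is the $k_i$-partner of $I_1$ when $I_2\prec I_1$ and the $k_i$-partner of $I_2$ when $I_1\prec I_2$, and these two coincide because of the parity relation together with Fact \ref{fact2.7} — every $I_i$ with $i\neq 2$ is in fact determined by $I_2$ alone. Consequently $\sum_{i=1}^t|\mathcal{F}_i|$ becomes a single-variable function of $I_2\in\mathcal{R}_2$, which I shall denote $g(I_2)$.

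Second, I would appeal to the unimodality of $-g$ on $\mathcal{R}_2$ supplied by Lemmas \ref{c2.27}, \ref{c2.28}, and \ref{c2.29}. Unimodality of $-g$ forces the maximum of $g$ on $\mathcal{R}_2$ to be attained at one of the two endpoints $\{1,n-k_2+2,\dots,n\}$ or $\{k_t,n-k_2+2,\dots,n\}$, and to be strictly smaller at every strictly interior $I_2$. The hypothesis of the proposition places $I_1$ strictly interior to $\mathcal{R}_1$ and $I_2$ strictly interior to $\mathcal{R}_2$, and via the parity correspondence each of these conditions is equivalent to $I_2$ lying strictly interior to $\mathcal{R}_2$. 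By Proposition \ref{prop2.4}, the values of $g$ at the two endpoints are exactly $\lambda_1$ and $\lambda_2$. Therefore $\sum_{i=1}^t|\mathcal{F}_i|=g(I_2)<\max\{\lambda_1,\lambda_2\}$, as required.

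The main obstacle, as the authors themselves indicate, lies in establishing the unimodality of $-g$: advancing $I_2$ one step in lex order simultaneously shifts $I_1$ (through the parity) and every $I_i$ with $i\ge 3$ (through the partnership), so the increment $g(I_2')-g(I_2)$ carries several additional terms beyond those appearing in the analogous one-variable difference $f(R')-f(R)$ handled in \cite{HP}. My strategy for proving Lemmas \ref{c2.27}--\ref{c2.29} would be to reuse the sign estimates already established in \cite{HP} for $f$, isolate the genuinely new terms produced by the parity-plus-partnership coupling, and split $\mathcal{R}_2$ into regimes according to the local lex structure of $I_2$ (e.g.\ the position of $\min I_2\setminus I_2^{\mathrm{t}}$ and the length of the tail $\ell(I_2)$), treating each regime separately.
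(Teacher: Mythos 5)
Your overall strategy is the same as the paper's: Lemma \ref{l2.18} collapses the two free IDs to the single variable $I_2$, the unimodality lemmas push the maximum of $g$ to the two extreme IDs, and Proposition \ref{prop2.4} identifies the extreme values with $\lambda_1$ and $\lambda_2$. However, there is a real gap in the middle step as you state it. First, the single-variable function $g$ is not defined (and not analyzed) on all of $\mathcal{R}_2$ but only on $\mathcal{F}_{2,3}$, the family of IDs admitting a maximal $k_3$-partner; Lemma \ref{l2.18} places $I_2$ in $\mathcal{F}_{2,3}$, and the optimization must be carried out there, using the structural information about $\mathcal{F}_{2,3}$ from Claims \ref{clm2.25}, \ref{clm2.26} and Observation \ref{o}.

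Second, and more importantly, Lemmas \ref{c2.27}--\ref{c2.29} do not assert global unimodality of $-g$; they give only \emph{local} unimodality along $c$-sequential triples inside each layer $\mathcal{F}_{2,3}(j)$ and along the two specific interval chains $[2,j]$ and $[k_t,j]$. Deducing ``the maximum is attained at $\{1,n-k_2+2,\dots,n\}$ or $\{k_t,n-k_2+2,\dots,n\}$'' from these local statements requires the recursive peeling of $\mathcal{F}_{2,3}$ into the layers $\mathcal{F}_{2,3}(1),\dots,\mathcal{F}_{2,3}(k_2-1)$ (equations (\ref{equation10})--(\ref{fr})) \emph{and} the additional comparison Proposition \ref{claim3.20}, which controls $g([2,k_2+1])$ and $g([k_t,k_t+k_2-1])$ and is proved separately in Section \ref{sec7}. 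Your sketch treats the passage from the three lemmas to the endpoint conclusion as automatic, but without Proposition \ref{claim3.20} (or an equivalent estimate) the chain of inequalities does not close: the local lemmas leave the heads of the two interval chains uncompared with the two candidate endpoints. This missing comparison is the one substantive ingredient your proposal does not account for.
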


%Next, we may always have the following assumption.
%\begin{assumption}\label{AS}
%$\{1, n-k_1+2, \dots, n\}\precneqq   F_1\precneqq \{k_t, n-k_1+2, \dots, n\}$  and $\{1, n-k_2+2, \dots, n\}\precneqq   F_2\precneqq \{k_t, n-k_2+2, \dots, n\}$.
%\end{assumption}

In order to prove Proposition \ref{AS+}, we need some preparations.

\begin{definition}\label{definition1}
Let $\mathcal{F}_{2,3}\subseteq\mathcal{R}_2$ and $\mathcal{F}_{3}^2\subseteq \mathcal{R}_{3}$ be such that $\mathcal{F}_{2,3}$ and $\mathcal{F}_{3}^2$ are maximal pair families.
% For $j\in [2]$ and $\ell \in [3, t]$, let $\mathcal{F}_{j,\ell}\subseteq\mathcal{R}_j$ and $\mathcal{F}_{\ell}^j\subseteq \mathcal{R}_{\ell}$ such that $\mathcal{F}_{j,\ell}$ and $\mathcal{F}_{\ell}^j$ are maximal pair families.
\end{definition}

 The following  lemma whose proof will be given in Section \ref{sec5} is crucial.  It tells us that for an extremal $(n, k_1, \dots, k_t)$-cross intersecting system   $(\mathcal{F}_1, \dots, \mathcal{F}_t)$ with $k_1+k_3\leq n <k_1+k_2$, the ID $I_1$ of $\mathcal{F}_1$ is the parity of the ID $I_2$ of $\mathcal{F}_2$.

\begin{lemma}\label{l2.18}
Let $(\mathcal{F}_1, \dots, \mathcal{F}_t)$ be an  extremal L-initial $(n, k_1, \dots, k_t)$-cross intersecting system with IDs $I_1, I_2, \dots, I_t$  of $\mathcal{F}_1, \mathcal{F}_2, \dots, \mathcal{F}_t$ respectively. 
Then $I_1$ and $I_2$  satisfy
\begin{equation}\label{e1}
 I_2\in \mathcal{F}_{2, 3} \,\text{ and }\,I_1 \text{ is the }k_1\text{-parity of }I_2.
 \end{equation}
\end{lemma}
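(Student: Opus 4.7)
The plan is to argue by contradiction, building a strictly improved cross-intersecting system whenever~(\ref{e1}) fails.

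\textbf{Setup.} For $i \in [3, t]$, Fact~\ref{f2.8} gives that the largest L-initial $k_i$-family cross-intersecting $\mathcal{L}(I_j, k_j)$ has ID $K_j^{(i)}$, the $k_i$-partner of $I_j$. Extremality, together with the requirement that $\mathcal{F}_i$ cross-intersects both $\mathcal{F}_1$ and $\mathcal{F}_2$, forces $I_i$ to equal the lex-smaller of $K_1^{(i)}$ and $K_2^{(i)}$. Writing $I_j^c := I_j \setminus I_j^{\mathrm{t}}$, Facts~\ref{fact2.4+} and~\ref{fact2.7} reduce the comparison to cores: $k_i$-partners depend only on $I_j^c$ and reverse lex order, so the binding constraint comes from whichever of $I_1^c, I_2^c$ is lex-smaller.

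\textbf{Step 1 ($I_1$ is the $k_1$-parity of $I_2$, i.e.\ $I_1^c = I_2^c$).} Suppose not. By symmetry (interchange the roles of $I_1$ and $I_2$ if needed), assume $I_1^c \precneqq I_2^c$, so $I_i = K_{I_2}^{(i)}$ for $i \geq 3$. Let $J_1$ denote the $k_1$-parity of $I_2$; its existence is guaranteed by the range assumption $I_2 \in \mathcal{R}_2$, which bounds $\ell(I_2)$ suitably. Since $J_1^c = I_2^c$, we have $K_{J_1}^{(i)} = K_{I_2}^{(i)} = I_i$, so $(\mathcal{L}(J_1, k_1), \mathcal{F}_2, \ldots, \mathcal{F}_t)$ remains a valid cross-intersecting system (noting $\mathcal{F}_1, \mathcal{F}_2$ are free). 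A case analysis at the first position where $I_1$ and $J_1$ differ---tracking whether the discrepancy comes from cores or tails, with the smaller core $I_1^c$ forcing $I_1$ to have a smaller element---establishes $I_1 \precneqq J_1$, strictly enlarging $|\mathcal{F}_1|$, a contradiction.

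\textbf{Step 2 ($I_2 \in \mathcal{F}_{2, 3}$).} After Step 1, $I_3 = K_{I_2}^{(3)}$ is the $k_3$-partner of $I_2$, and $I_3 \in \mathcal{R}_3$ by the structural setup. If $(I_2, I_3)$ is not maximal, Fact~\ref{f2.10} produces $I_2^* \succ I_2$, the $k_2$-partner of $I_3$, with $(I_2^*, I_3)$ maximal. Consider the modified system with $\mathcal{F}_2$ replaced by $\mathcal{L}(I_2^*, k_2)$, $\mathcal{F}_1$ by $\mathcal{L}(J_1^*, k_1)$ (where $J_1^*$ is the $k_1$-parity of $I_2^*$), and each $\mathcal{F}_i$ ($i \geq 3$) updated to $\mathcal{L}(K_{I_2^*}^{(i)}, k_i)$. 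The plan is to show that the gain in $|\mathcal{F}_1| + |\mathcal{F}_2|$ (from enlarging to a maximal pairing) strictly exceeds the loss $\sum_{i=3}^t (|\mathcal{L}(K_{I_2}^{(i)}, k_i)| - |\mathcal{L}(K_{I_2^*}^{(i)}, k_i)|)$, contradicting extremality. Hence $(I_2, I_3)$ is maximal, yielding $I_2 \in \mathcal{F}_{2, 3}$ by definition.

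\textbf{Main obstacle.} The delicate combinatorial case analysis in Step 1---distinguishing the sub-cases $I_1^c \supsetneq I_2^c$ versus $I_1^c \not\supseteq I_2^c$, and tracking core/tail interactions at the first place of difference---is the technical crux, as is the quantitative balancing estimate in Step 2 across multiple families, which mirrors the unimodality analysis of $g(I_2)$ foreshadowed in the introduction. Boundary behaviour of the parity operation (existence of $J_1$ and $J_1^*$) must be controlled throughout via the defining inequalities for $\mathcal{R}_1$ and $\mathcal{R}_2$.
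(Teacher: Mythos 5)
Your Step 1 breaks down at the words ``by symmetry.'' The two cases $I_1\prec I_2$ and $I_2\precneqq I_1$ are genuinely asymmetric, and the second one is where all the difficulty of this lemma lives. If $I_2\precneqq I_1$ (so the binding constraint on $\mathcal{F}_3,\dots,\mathcal{F}_t$ comes from $I_1$), the interchanged version of your argument would replace $\mathcal{F}_2$ by the $k_2$-parity of $I_1$ --- but that parity need not exist: the paper shows in exactly this case that $|I_1\setminus I_1^{\mathrm{t}}|>k_2$, which by the remark after Definition \ref{def+} means $I_1$ has no $k_2$-parity. Working instead with the $k_1$-parity $I'_1$ of $I_2$ (the only sensible candidate), one finds $I'_1\precneqq I_1$, i.e.\ the swap \emph{shrinks} $\mathcal{F}_1$ while enlarging $\mathcal{F}_3,\dots,\mathcal{F}_t$; your claim that the replacement ``strictly enlarges $|\mathcal{F}_1|$'' has the inequality pointing the wrong way. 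Ruling out that $I_1$ sits at an interior optimum of this trade-off is the real content: the paper traps $I_1$ in an interval $I'_1\precneqq I_1\precneqq I''_1$ and invokes the unimodality machinery for the single-variable function $f_1$ (Claims \ref{clm2.21}--\ref{lemma4.10}, resting on Lemma \ref{clm28} from \cite{HP}) to show the maximum over that interval is attained only at the two endpoints. Nothing in your proposal substitutes for this.

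Two smaller points. First, your Step 2 defers the key inequality (``gain exceeds loss'') to a plan, but in the paper no such balancing is needed there: in Case 1 the maximality of $(I_2,I_3)$ follows by enlarging $\mathcal{F}_2$ alone with every other family unchanged (Claim \ref{clm2.16}), and in Case 2 membership $I_2\in\mathcal{F}_{2,3}$ follows from $I_2$ being the $k_2$-partner of $I_3$ together with Fact \ref{f2.10}. Second, in the easy case $I_1\prec I_2$ your replacement-by-parity idea is essentially the paper's argument, though the paper derives $I_1\precneqq I'_1$ cleanly from the maximality of $(I'_1,I_3)$ rather than from a positional case analysis. As written, the proposal does not prove the lemma.
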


%Let $S=[2]$.
%\begin{align}
%\textup{max}\sum_{i=1}^t|\mathcal{F}_i|
%&=\max\limits_{G_1\in \mathcal{R}_1,\,\, G_2\in \mathcal{R}_2}
% \left\{ |\mathcal{L}(G_1, k_1)|+|\mathcal{L}(G_2, k_2)|+ m(n, G_1, G_2)\right\}\nonumber\\ \label{eq2}
 %&=\max\limits_{G_1\in \mathcal{F}_{1, 3},\,\, G_2\in \mathcal{F}_{2, 3}}
 %\left\{ |\mathcal{L}(G_1, k_1)|+|\mathcal{L}(G_2, k_2)|+ m(n, G_1, G_2)\right\}\\ \label{eq3}
%&=\max_{G_2,\,\,G_1} \left\{  |\mathcal{L}(G_1, k_1)|+|\mathcal{L}(G_2, k_2)|+ m(n, G_1, G_2)    \right\},
%\end{align}
%where the last maximum runs over all sets
%$G_2\in \mathcal{F}_{2, 3}$ and $G_1$ is the $k_1$-parity of $G_2$.

Recall that $k_1>k_2\geq\dots\geq k_t$, and $k_1+k_3\leq n <k_1+k_2$.
 Let $G_{2}\in \mathcal{F}_{2, 3}$. Let $G_1$ be the $k_1$-parity of $G_{2}$. Then from Fact \ref{fact2.7}, we have the following claim.

\begin{remark}\label{remark3.4}
 For any $j\in [3, t]$, $G_1$ and $G_2$ have the same $k_j$-partner.
\end{remark}

{\bf Fixing any $G_{2}\in \mathcal{F}_{2, 3}$, let $G_1$ be the $k_1$-parity of $G_{2}$, and for any $i\in [3, t]$, let  $T_i$ by the $k_i$-partner of $G_{2}$.}
By Claim \ref{>} and Remark \ref{remark2.20+}, we can see that  $\mathcal{L}(T_{3}, k_{3})$,
 \dots, $\mathcal{L}(T_t, k_t)$ are pariwise cross-intersecting. Furthermore, combining  Fact \ref{f2.8} and Remark \ref{remark3.4},  we conclude that
\begin{align}\label{equation4}
m(n, G_1, G_{2})&=\sum_{i=3}^t|\mathcal{L}(T_i, k_i)|.
\end{align}
Now we are ready to express $M(n, k_1, \dots, k_t)$ in terms of a function of $G_{2}$.

\begin{remark}\label{def3.9}
Let $G_{2}\in \mathcal{F}_{2, 3}$,  let $G_1$ be the $k_1$-parity of $G_{2}$ and for any $i\in [3, t]$, let  $T_i$ be the $k_i$-partner of $G_{2}$. Define
\begin{equation}\label{equation3+}
g(G_{2})=\sum_{i=1}^{2}|\mathcal{L}(G_i, k_i)|+\sum_{i=3}^t|\mathcal{L}(T_i, k_i)|.
\end{equation}
Then by Lemma \ref{l2.18} and (\ref{equation4}),
\begin{equation}\label{new1+}
M(n, k_1, \dots, k_t)=\max_{G_2\in \mathcal{F}_{2, 3}}\{g(G_2)\}.
\end{equation}
\end{remark}

\begin{definition}\label{def3.11}
 For each $j\in [k_2-1]$, let $\mathcal{R}_{2, j}=\{R\in \mathcal{R}_2: [n-j+1, n]\subset R\}$ and $\mathcal{R}_{2}(j)=\{R\setminus [n-j+1, n]: R\in \mathcal{R}_{2, j}\}$. Denote $\mathcal{F}_{2, 3}(j)=\{R\setminus [n-j+1, n]: R\in \mathcal{F}_{2, 3} \cap \mathcal{R}_{2, j}\}$. For any $j\in [k_2-1]$ and any $R\in \mathcal{F}_{2, 3}\cap \mathcal{R}_{2, j}$, we define $g(R\setminus [n-j+1, n])=g(R)$.
\end{definition}

We will prove several key lemmas  to show the `local unimodality' of $g(G_{2})$ in Section \ref{sec5}. Before stating these crucial lemmas, we need to introduce some definitions.

Let $\mathcal{A}\subset {[n]\choose k}$ be a family and $c\in [k]$. We say that $\mathcal{A}$ is $c$-{\it sequential} if there are $A\subset [n]$ with $|A|=k-c$ and $a\geq \max A$ (For a set $A\subset [n]$,  denote $\max A=\max \{x: x\in A\}$ and $\min A=\min \{x: x\in A\}$) such that $\mathcal{A}=\{A\sqcup \{a+1, \dots, a+c\}, A\sqcup \{a+2, \dots, a+c+1\}, \dots, A\sqcup \{b-c+1, \dots, b\}\}$, then we say that  $\mathcal{A}$ is $c$-sequential, write $A_1\overset{c}{\prec} A_2\overset{c}{\prec}\cdots\overset{c}{\prec}A_{b-a-c+1}$, where $A_1=A\sqcup \{a+1, \dots, a+c\}$, $A_2=A\sqcup \{a+2, \dots, a+c+1\}$,$\dots$, $A_{b-a-c+1}=A\sqcup \{b-c+1, \dots, b\}$,
 we also say that $A_i$ and $ A_j$ are $c$-sequential.
 In particular, if $l_2=l_1+1$, we write $A_{l_1}\overset{c}{\prec}  A_{l_2}$; if $\max A_{l_2}=n$, write $A_{l_1}\overset{c}{\longrightarrow}  A_{l_2}$. Note that if $|\mathcal{A}|=1$, then $\mathcal{A}$ is $c$-sequential for any $c\in [k]$. Let $\mathcal{F}$ be a family and $F_1, F_2\in \mathcal{F}$. If $F_1\precneqq F_2$ and there is no $F'\in \mathcal{F}$ such that $F_1\precneqq F' \precneqq F_2$, then we say that $F_1<F_2$ in $\mathcal{F}$, or $F_1<F_2$ simply if there is no confusion.

We will prove the following crucial lemmas in Sections \ref{sec6} and \ref{sec7}. They show that function $g(R)$ has local unimodality.

\begin{lemma}\label{c2.27}
For any $j\in [0, k_{2}-1]$, let $1\leq c\leq k_{2}-j$ and $F_{2}, G_{2}, H_{2}\in\mathcal{F}_{2, 3}(j)$ with $F_{2}\overset{c}{\prec}G_{2}\overset{c}{\prec}H_{2}$.
Then $g(G_{2})\geq g(F_{2})$ implies $g(H_{2})>g(G_{2})$.
\end{lemma}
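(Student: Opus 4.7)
The plan is to establish a discrete strict convexity statement: $g(H_2) - g(G_2) > g(G_2) - g(F_2)$. Since the hypothesis gives $g(G_2) - g(F_2) \ge 0$, this immediately yields the desired $g(H_2) > g(G_2)$. This is the natural ``unimodality from discrete convexity'' route, and it replaces the problem of directly comparing three values of $g$ with the cleaner problem of comparing two consecutive first differences.

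First, by Definition \ref{def3.11} and Fact \ref{fact2.4+}, the $k_1$-parity and all $k_i$-partners ($i\ge 3$) of a set in $\mathcal{F}_{2,3}\cap \mathcal{R}_{2,j}$ depend only on its non-top part, and the contribution of the fixed top block $[n-j+1,n]$ to $|\mathcal{L}(\cdot,k_2)|$ is identical across $F_2, G_2, H_2$. So passing to the lifted sets $\widetilde F_2,\widetilde G_2,\widetilde H_2\in \mathcal{F}_{2,3}$, I may assume the common $c$-sequential form $\widetilde F_2 = A\cup\{a+1,\ldots,a+c\}$, $\widetilde G_2 = A\cup\{a+2,\ldots,a+c+1\}$, $\widetilde H_2 = A\cup\{a+3,\ldots,a+c+2\}$, where $A$ contains a fixed prefix (the ``common part'') together with the top block, and $a\ge \max(A\setminus \text{top})$ is the pivot.

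Next, using the decomposition in Remark \ref{def3.9}, I would compute the two increments termwise. For the $i=2$ term, the standard lex-size identity expresses $|\mathcal{L}(\widetilde G_2,k_2)|-|\mathcal{L}(\widetilde F_2,k_2)|$ as a sum of binomial coefficients of the form $\binom{n-(a+\ell)}{k_2-|A|-\ell}$. For the $i=1$ term, since $G_1$ is the $k_1$-parity of $\widetilde G_2$ (and similarly for $F_1,H_1$), Fact \ref{fact2.7} says the same pivot shift is felt in $|\mathcal{L}(\cdot,k_1)|$, yielding a parallel binomial expression with $k_1$ in the lower index; this is the genuinely new term not present in the non-mixed analysis. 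For the $i\ge 3$ terms, Fact \ref{fact2.7} implies that $T_i$ moves strictly backward in the lex order when $\widetilde G_2$ advances, so the associated $|\mathcal{L}(T_i,k_i)|$ decreases by a sum of binomials whose indices are determined by the $k_i$-partner construction. Grouping like terms, the two increments $\Delta_1 := g(\widetilde G_2)-g(\widetilde F_2)$ and $\Delta_2 := g(\widetilde H_2)-g(\widetilde G_2)$ can each be written as a signed sum of binomial coefficients indexed by the same set of positions, but with pivot $a$ and pivot $a+1$ respectively.

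Finally, I would deduce $\Delta_2 > \Delta_1$ by a binomial-by-binomial comparison. The positive (gain) contributions involve $\binom{n-x}{\cdot}$ with smaller $x$ on the $\widetilde G_2\to \widetilde H_2$ side, hence are strictly larger, while the negative (loss) contributions are strictly smaller in absolute value by the same monotonicity; Pascal's identity packages the differences cleanly. The main obstacle I anticipate is the partner block $\sum_{i\ge 3}|\mathcal{L}(T_i,k_i)|$: by the definition of $k$-partner, the rule for $T_i$ bifurcates depending on whether $k_i$ exceeds or falls below $|H|$ for the raw partner $H$ of $\widetilde G_2\setminus \widetilde G_2^{\mathrm t}$, and boundary subcases arise when the pivot shift causes $T_i$ to acquire or lose a top block or pushes some $\max T_i$ across $n$. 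I plan to absorb these cases by reusing the estimates on $f(R)$ from \cite{HP} (assembled in Section \ref{sec4}) for the partner part, and to add a fresh direct estimate for the new $k_1$-parity term $|\mathcal{L}(G_1,k_1)|$; it is precisely this extra term, absent from the single-variable analysis in \cite{HP}, that accounts for the ``extra mysterious terms'' mentioned in the introduction and constitutes the main new difficulty.
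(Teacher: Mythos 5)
Your core strategy is to prove the unconditional discrete convexity statement $g(H_2)-g(G_2) > g(G_2)-g(F_2)$ and then invoke the hypothesis only at the very end. This statement is false in general, so the approach cannot work as described. Along a $c$-sequential chain with $c\ge 2$, writing $\widetilde F_2=A\cup[a+1,a+c]$, $\widetilde G_2=A\cup[a+2,a+c+1]$, $\widetilde H_2=A\cup[a+3,a+c+2]$, the gain in the $\mathcal{F}_2$-term is $|\mathcal{L}(\widetilde G_2,k_2)|-|\mathcal{L}(\widetilde F_2,k_2)|=\binom{n-a-1}{c-1}$ versus $\binom{n-a-2}{c-1}$ for the next step: the gains \emph{strictly decrease} as the pivot advances (and likewise for the $k_1$-parity term), which is the opposite of what you assert in your final paragraph. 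The partner losses for $i\ge 3$ also decrease (Proposition \ref{clm23}), so the second difference is a competition between two decreasing quantities and its sign is genuinely ambiguous; in the regime where the partner losses have already dropped to $0$ the increment equals the gain alone, which is decreasing, so $g$ is locally concave there while still increasing --- consistent with the unimodality the lemma actually asserts, but a counterexample to your convexity claim. A second, related obstruction to the ``binomial-by-binomial'' comparison is that the two increments are sums over \emph{different numbers} of intermediate lex steps ($\binom{n-a-1}{c-1}\ne\binom{n-a-2}{c-1}$), so there is no common index set of positions to pair up.

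The paper's proof is structurally different precisely because of this: it proves the transfer to the $k_1$-variable as you do, but then establishes unimodality (not convexity) of $f$ by induction on the size $k$ of the stripped trailing block (Lemma \ref{lemma star}), and within the base case the hypothesis $f(F_1)\le f(G_1)$ is used early and redistributed over a family of intermediate anchor sets (Claim \ref{claim6}, itself an induction), combined with translation-invariance statements for $\gamma$ and $\delta$ along sequential chains (Claims \ref{equal} and \ref{claim4}) rather than a direct termwise domination. If you want to salvage your plan, you must abandon the unconditional second-difference comparison and instead show how the hypothesis $g(G_2)\ge g(F_2)$ propagates to the finer subdivision of the interval $[F_2,G_2]$ before comparing with $[G_2,H_2]$; that is exactly the content of the paper's Claims \ref{claim2}--\ref{claim7}.
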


\begin{lemma}\label{c2.28}
Let $4\leq j\leq k_{2}+1$ and $F_{2}, G_{2}, H_{2}\in\mathcal{F}_{2, 3}$ with $[2, j]=F_{2}\setminus [n-\ell(F_{2})+1, n]$, $[2, j-1]=G_{2}\setminus [n-\ell(G_{2})+1, n]$ and $[2, j-2]=H_{2}\setminus [n-\ell(H_{2})+1, n]$.  Then $g([2, j-1])\geq g([2, j])$ implies $g([2, j-2])>g([2, j-1])$.
\end{lemma}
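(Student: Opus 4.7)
The plan is to compute $g(G_2)-g(F_2)$ and $g(H_2)-g(G_2)$ in closed form as binomial sums, rewrite the second via Pascal's identity as the first plus a correction $B_j$, and then show $B_j>0$ using a ratio inequality powered by the hypothesis $n\geq k_1+k_3$.

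\textbf{Step 1: explicit evaluation.} Under the lemma's hypothesis, $F_2=[2,j]\cup[n-k_2+j,n]$, and similarly for $G_2,H_2$ with $j$ replaced by $j-1,j-2$. The $k_1$-parity $G_1^R$ of each $R\in\{F_2,G_2,H_2\}$ shares the essential part of $R$ and has the maximal trailing block, and by Fact~\ref{fact2.4+} its $k_i$-partner for $i\geq 3$ is $T_i^R=\{1,j'\}\cup[n-k_i+3,n]$, where $j'$ denotes the final element of the essential part. A first-differing-position case split (exploiting the maximality of the trailing blocks) yields the closed forms
\[
|\mathcal{L}(G_1^R,k_1)|=\binom{n-1}{k_1-1}+\binom{n-j'}{k_1-j'+1},\qquad |\mathcal{L}(R,k_2)|=\binom{n-1}{k_2-1}+\binom{n-j'}{k_2-j'+1},
\]
and, after the hockey-stick identity, $|\mathcal{L}(T_i^R,k_i)|=\binom{n-1}{k_i-1}-\binom{n-j'}{k_i-1}$. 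Taking successive differences gives
\[
g(G_2)-g(F_2)=A_j:=\binom{n-j}{k_1-j+2}+\binom{n-j}{k_2-j+2}-\sum_{i=3}^{t}\binom{n-j}{k_i-2},
\]
and the same computation with $j\mapsto j-1$ gives $g(H_2)-g(G_2)=A_{j-1}$.

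\textbf{Step 2: Pascal reduction and the key ratio step.} Applying $\binom{n-j+1}{m}=\binom{n-j}{m}+\binom{n-j}{m-1}$ to each term of $A_{j-1}$ yields the clean decomposition $A_{j-1}=A_j+B_j$, where
\[
B_j:=\binom{n-j}{k_1-j+3}+\binom{n-j}{k_2-j+3}-\sum_{i=3}^{t}\binom{n-j}{k_i-3}.
\]
It therefore suffices to show $B_j>0$ whenever $A_j\geq 0$. Setting $N=n-j$, $a=k_1-j+2$, $b=k_2-j+2$, $c_i=k_i-2$, one writes $B_j=\rho_a\binom{N}{a}+\rho_b\binom{N}{b}-\sum_i\sigma_i\binom{N}{c_i}$ with $\rho_a=(N-a)/(a+1)$, $\rho_b=(N-b)/(b+1)$, $\sigma_i=c_i/(N-c_i+1)$. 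The cross-multiplication identity $(N-a)(N-c_i+1)-(a+1)c_i=(N+1)(N-a-c_i)$ gives $\rho_a\geq\sigma_i\iff n\geq k_1+k_i$, which holds since $n\geq k_1+k_3\geq k_1+k_i$; the analogous $\rho_b\geq\sigma_i$ follows from $n\geq k_2+k_i$. Moreover $\rho_b-\rho_a=(N+1)(k_1-k_2)/((a+1)(b+1))>0$ as $k_1>k_2$. Assuming $A_j\geq 0$, multiplying by $\rho_a\geq 0$ and using $\sigma_i\leq\rho_a$ termwise yields
\[
\sum_{i=3}^{t}\sigma_i\binom{N}{c_i}\;\leq\;\rho_a\sum_{i=3}^{t}\binom{N}{c_i}\;\leq\;\rho_a\left[\binom{N}{a}+\binom{N}{b}\right],
\]
whence $B_j\geq(\rho_b-\rho_a)\binom{N}{b}>0$ (using $b\leq N$, equivalent to $n\geq k_2+2$). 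Therefore $A_{j-1}=A_j+B_j>0$, which is exactly $g([2,j-2])>g([2,j-1])$, as required.

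\textbf{Main obstacle.} The delicate part will be Step~1: correctly identifying $G_1^R$ and $T_i^R$ for each $R\in\{F_2,G_2,H_2\}$ in $\mathcal{F}_{2,3}$ and verifying the three binomial closed forms (the key simplification being that the maximality of the trailing blocks of $R$, $G_1^R$, and $T_i^R$ makes the case split terminate cleanly). Once the algebraic expressions for $A_j$ and $B_j$ are in hand, the Pascal reduction plus the ratio inequality $\rho_a,\rho_b\geq\sigma_i$ — which is precisely where the hypothesis $n\geq k_1+k_3$ enters — together deliver $B_j>0$ in essentially one line.
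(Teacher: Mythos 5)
Your proof is correct, but it takes a genuinely different route from the paper. The paper deduces Lemma \ref{c2.28} from the general Lemma \ref{c2.28+}, which is proved with the $\gamma$/$\delta$/$\alpha_i$ machinery of Section \ref{sec6} (Claims \ref{claim1} and \ref{equal}, Proposition \ref{clm23}, and ultimately the unimodality Lemma \ref{lemma star} applied to the shifted triple $B_{i+1}\setminus B_{i+1}^{\rm t}\overset{1}{\prec}J\setminus J^{\rm t}\overset{1}{\longrightarrow}B_{i+2}\setminus[\,\cdot\,]$). You instead exploit the fact that the three sets in this lemma are explicit intervals, compute all the relevant L-initial family sizes in closed form — I checked the three counts $|\mathcal{L}(G_1^R,k_1)|$, $|\mathcal{L}(R,k_2)|$, $|\mathcal{L}(T_i^R,k_i)|$ and they are right, as is the identification of the $k_1$-parity and of the $k_i$-partner via Fact \ref{fact2.4+} — and reduce everything to the single inequality $A_j\geq 0\Rightarrow B_j>0$, which your ratio argument settles: the factorizations $(N-a)(N-c_i+1)-(a+1)c_i=(N+1)(N-a-c_i)$ and $\rho_b-\rho_a=(N+1)(k_1-k_2)/((a+1)(b+1))$ are correct, the hypotheses $n\geq k_1+k_3$ and $k_1>k_2$ enter exactly where you say, and the needed sign conditions ($\rho_a\geq 0$ from $n\geq k_1+2$, and $0<b\leq N$) all hold in the stated range $4\leq j\leq k_2+1$. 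What your approach buys is a short, self-contained, purely computational proof of this particular lemma; what it gives up is generality — the paper's route through Lemma \ref{c2.28+} simultaneously handles Lemma \ref{c2.29} and the multi-family setting ($s>2$) needed for the sequel \cite{mix2}, whereas your closed forms are specific to essential parts of the form $[2,j]$ and would have to be redone for $[k_t,j]$.
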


\begin{lemma}\label{c2.29}
Let $k_t+2\leq j\leq k_t+k_{2}-1$ and $F_{2}, G_{2}, H_{2}\in\mathcal{F}_{2, 3}$ with $[k_t, j]=F_{2}\setminus [n-\ell(F_{2})+1, n]$, $[k_t, j-1]=G_{2}\setminus [n-\ell(G_{2})+1, n]$ and $[k_t, j-2]=H_{2}\setminus [n-\ell(H_{2})+1, n]$.
Then $g([k_t, j-1])\geq g([k_t, j])$ implies $g([k_t, j-2])>g([k_t, j-1])$.
\end{lemma}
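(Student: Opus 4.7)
My plan is to prove Lemma~\ref{c2.29} via an explicit binomial-coefficient analysis of the first differences of $g$ along the sequence $F_2, G_2, H_2$, and then combine a Pascal-style telescoping with the hypothesis $\delta(j)\geq 0$ to conclude $\delta(j-1)>0$, where $\delta(j):=g([k_t, j-1])-g([k_t, j])$.

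First I would make the three sets explicit. With $F_2=[k_t,j]\cup[n-k_2+j-k_t+2,n]$ and analogous descriptions for $G_2,H_2$, the $k_1$-parity (which exists since $n\geq k_1+k_t$, cf.~Definition~\ref{def+}) is obtained by prolonging the tail by $k_1-k_2$, giving $F_2^{(1)}=[k_t,j]\cup[n-k_1+j-k_t+2,n]$. For $i\in[3,t]$ the $k_i$-partner has the clean form $T_i^F=[1,k_t-1]\cup\{j\}\cup[n-k_i+k_t+1,n]$ (the last block being empty when $k_i=k_t$), and in passing from $F_2$ to $G_2$ to $H_2$ only the middle singleton changes from $\{j\}$ to $\{j-1\}$ to $\{j-2\}$. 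Applying the standard lex-counting formula together with the hockey-stick identity yields
\[
|\mathcal{L}(G_2,k_2)|-|\mathcal{L}(F_2,k_2)|=\binom{n-j}{k_2-j+k_t},\qquad |\mathcal{L}(T_i^F,k_i)|-|\mathcal{L}(T_i^G,k_i)|=\binom{n-j}{k_i-k_t},
\]
together with the analogous identity (with $k_2$ replaced by $k_1$) for the parity term. Summing with appropriate signs gives
\[
\delta(j)=\binom{n-j}{k_1-j+k_t}+\binom{n-j}{k_2-j+k_t}-\sum_{i=3}^{t}\binom{n-j}{k_i-k_t}.
\]

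Next I would note that $\delta(j-1)$ has the same form with $n-j$ replaced by $n-j+1$, and apply Pascal's identity $\binom{n-j+1}{r}=\binom{n-j}{r}+\binom{n-j}{r-1}$ termwise to obtain the key decomposition
\[
\delta(j-1)=\delta(j)+\Phi(j),\qquad \Phi(j):=\binom{n-j}{k_1-j+k_t+1}+\binom{n-j}{k_2-j+k_t+1}-\sum_{i=3}^{t}\binom{n-j}{k_i-k_t-1}.
\]
The lemma then reduces to showing $\delta(j)+\Phi(j)>0$ under $\delta(j)\geq 0$.

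The main obstacle is this final binomial inequality, since $\Phi(j)$ is not strictly positive in every parameter regime: one checks that $\Phi(j)$ can vanish or turn negative when $t$ is large and many $k_i$'s cluster near $k_2$, so strict convexity alone does not suffice. My approach is conditional. In the regime where $\Phi(j)>0$, strict convexity finishes the proof. Otherwise, I invoke the constraint $n\geq k_1+k_3$ to deduce $n-k_1-k_t-1\geq k_i-k_t-1$ for every $i\in[3,t]$, and use the symmetry $\binom{N}{r}=\binom{N}{N-r}$ to rewrite $\binom{n-j}{k_1-j+k_t+1}=\binom{n-j}{n-k_1-k_t-1}$, placing it in a range where it dominates each $\binom{n-j}{k_i-k_t-1}$ individually. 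Combining this with the ratio identity $\binom{N}{r-1}/\binom{N}{r}=r/(N-r+1)$ applied termwise, and converting the hypothesis $\delta(j)\geq 0$ into the upper bound $\sum_{i=3}^{t}\binom{n-j}{k_i-k_t}\leq\binom{n-j}{k_1-j+k_t}+\binom{n-j}{k_2-j+k_t}$, one transforms this into an estimate on $\sum_{i=3}^{t}\binom{n-j}{k_i-k_t-1}$ sharp enough to yield $\delta(j)+\Phi(j)>0$. This conditional ratio analysis mirrors, and extends, the unimodality arguments developed for the function $f(R)$ in \cite{HP}---the ``new strategies'' the authors refer to---and it is precisely where the $k_1$-parity term creates the ``mysterious'' extra contributions that must be controlled.
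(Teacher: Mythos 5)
Your setup is correct, and it is a genuinely different route from the paper's: the paper deduces this lemma from Lemma~\ref{c2.28+}, working in the $k_1$-parity picture on $\mathcal{R}_1$ with the $\gamma/\delta$ increment machinery and falling back on the local unimodality Lemma~\ref{lemma star} in the hard subcase, whereas you compute both first differences in closed form. Your formulas do check out: setting $N=n-j$, $a=n-k_1-k_t$, $b=n-k_2-k_t$, $c_i=k_i-k_t$, and using the symmetry $\binom{N}{k_1-j+k_t}=\binom{N}{a}$, one gets $\delta(j)=\binom{N}{a}+\binom{N}{b}-\sum_{i=3}^{t}\binom{N}{c_i}$, while $\delta(j-1)$ is the same expression with $N$ replaced by $N+1$; the hypotheses $k_1+k_3\leq n<k_1+k_2$, $k_1>k_2$ and $k_t+2\leq j\leq k_t+k_2-1$ give $0\leq c_i\leq a<b\leq N-1$.

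The gap is that you never prove the implication $\delta(j)\geq 0\Rightarrow\delta(j)+\Phi(j)>0$, which is the entire content of the lemma. You correctly observe that $\Phi(j)$ need not be positive, but your ``otherwise'' branch is a description of a strategy (``one transforms this into an estimate \dots sharp enough''), not an argument; and its one concrete ingredient --- that $\binom{N}{a-1}$ dominates each $\binom{N}{c_i-1}$ individually --- is unjustified and can fail, since $a-1$ may exceed $N/2$ while some $c_i-1$ sits nearer the middle, where $\binom{N}{c_i-1}>\binom{N}{a-1}$ is possible even though $c_i-1<a-1$. The implication is in fact true and admits a short proof you should supply: since $\binom{N+1}{r}=\tfrac{N+1}{N+1-r}\binom{N}{r}$ and the multiplier $\tfrac{N+1}{N+1-r}$ is increasing in $r$, the inequalities $c_i\leq a<b$ and the hypothesis yield
\[
\sum_{i=3}^{t}\binom{N+1}{c_i}\leq\frac{N+1}{N+1-a}\sum_{i=3}^{t}\binom{N}{c_i}\leq\frac{N+1}{N+1-a}\left(\binom{N}{a}+\binom{N}{b}\right)=\binom{N+1}{a}+\frac{N+1}{N+1-a}\binom{N}{b}<\binom{N+1}{a}+\binom{N+1}{b},
\]
the final step being strict because $a<b$ (from $k_1>k_2$) and $\binom{N}{b}>0$. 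As written, your proposal is a correct reduction to a binomial inequality, but not a proof of it.
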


\begin{claim}\label{clm2.25}
Let $A$ be a $k_2$-subset of $[n]$ with $\ell(A)=p$. Suppose $A=\{x_1, \dots, x_{k_2-p}, n-p+1, \dots, n\}$ and $A'=A\setminus \{x_{k_2-p}\}\cup[n-p, n]$. If $A\in \mathcal{F}_{2, 3}$, then $A'\in \mathcal{F}_{2, 3}$.
\end{claim}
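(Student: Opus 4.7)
The plan is to exhibit an explicit $H' \in \mathcal{R}_3$ with $(A', H')$ maximal, which by Definition \ref{definition1} then places $A'$ in $\mathcal{F}_{2,3}$. Throughout I shall use the identity $\mathcal{L}(B, k) = \mathcal{L}(B \setminus B^{\mathrm{t}}, k)$, implicit in the proof of Fact \ref{fact2.5}, so that lex comparisons between $k$-sets reduce to comparisons of non-tail parts.

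I first verify $A' \in \mathcal{R}_2$. From the construction, $A \setminus A' = \{x_{k_2-p}\}$ and $A' \setminus A = \{n-p\}$ with $x_{k_2-p} < n-p$, hence $A \prec A'$; combined with $\{1, n-k_2+2, \dots, n\} \prec A$ (from $A \in \mathcal{R}_2$), transitivity gives the lower bound of $\mathcal{R}_2$ for $A'$. For the upper bound $A' \prec \{k_t, n-k_2+2, \dots, n\}$, the non-tail reduction equates this to $\min(A' \setminus A'^{\mathrm{t}}) = x_1 \le k_t$, which is inherited from $A \in \mathcal{R}_2$. Next I construct the witness: let $P' = [x_{k_2-p-1}] \setminus \{x_1, \dots, x_{k_2-p-2}\}$ be the partner of $A' \setminus A'^{\mathrm{t}} = \{x_1, \dots, x_{k_2-p-1}\}$, and set $H' = P' \cup [n - k_3 + |P'| + 1, n]$. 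Since $A \in \mathcal{F}_{2,3}$ supplies $H \in \mathcal{R}_3$ whose non-tail is the partner $P = [x_{k_2-p}] \setminus \{x_1, \dots, x_{k_2-p-1}\}$ of $A \setminus A^{\mathrm{t}}$, we have $|P| \le k_3$; and since $|P'| = x_{k_2-p-1} - k_2 + p + 2 \le x_{k_2-p} - k_2 + p + 1 = |P|$, also $|P'| \le k_3$, so $H'$ is a well-defined $k_3$-subset of $[n]$ with non-tail exactly $P'$, and Fact \ref{fact2.5} then yields maximality of $(A', H')$.

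The most delicate step is verifying $H' \in \mathcal{R}_3$. For the upper bound $H' \prec \{1, n-k_3+2, \dots, n\}$, the non-tail reduction yields the equivalent $1 \in P'$, which amounts to $1 \notin \{x_1, \dots, x_{k_2-p-2}\}$; here the hypothesis $A \in \mathcal{F}_{2,3}$ is used essentially, since $H \in \mathcal{R}_3$ already forces $1 \in P$ and hence $x_1 \ge 2$, from which $1 \in P'$ follows. For the lower bound $[k_t] \cup [n-k_3+k_t+1, n] \prec H'$, equivalent to $[k_t] \prec P'$, I shall split on whether $x_{k_2-p-1} \le k_t$ (in which case $P' \subseteq [k_t]$ and the required containment relation is immediate) or $x_{k_2-p-1} > k_t$ (in which case $x_1 \in [k_t] \setminus P'$ delivers $\min([k_t] \setminus P') \le x_1 \le k_t < k_t + 1 \le \min(P' \setminus [k_t])$, closing the case). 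I expect the upper-bound verification to be the principal obstacle, since it is exactly where the strength of $A \in \mathcal{F}_{2,3}$ beyond merely $A \in \mathcal{R}_2$ is genuinely invoked.
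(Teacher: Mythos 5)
Your proof is correct and follows essentially the same route as the paper's: form the partner $P'$ of the truncated non-tail $\{x_1,\dots,x_{k_2-p-1}\}$, observe $|P'|\le|P|\le k_3$, extend to a $k_3$-set, and conclude maximality of $(A',H')$ via Fact \ref{fact2.5} (the paper invokes Fact \ref{fact2.10}, but this is the same mechanism). Your additional verification that $A'\in\mathcal{R}_2$ and $H'\in\mathcal{R}_3$ is extra rigor the paper leaves implicit, not a divergence in method.
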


\begin{proof}
Let $T$ and $T'$ be the partners of $\{x_1, \dots, x_{k_2-p}\}=A\setminus A^{\rm t}$ and $\{x_1, \dots, x_{k_2-p-1}\}=A'\setminus A'^{\rm t}$ respectively. Since $A\in \mathcal{F}_{2, 3}$, $|T|\leq k_3$. Thus, $|T'|\leq |T|\leq k_3$. Let $B=T'\cup \{n-k_3+|T'|+1, \dots, n\}$ if $|T'|<k_3$, otherwise, let $B=T'$. Then $B$ is the $k_3$-partner of $\{x_1, \dots, x_{k_2-p-1}\}$. Fact \ref{fact2.10} implies that $(A', B)$ is maximal, thus $A'\in \mathcal{F}_{2, 3}$.
\end{proof}

\begin{claim}\label{clm2.26}
Let $A$ be a $k_2$-subset of $[n]$ with $\ell(A)=p\geq 1$. Suppose that $A=\{x_1, \dots, x_{k_2-p}, n-p+1, \dots, n\}$ and $A'=A\setminus \{n-p+1\}\cup\{x_{k_2-p}+1\}\cup [n-p+2, n]$ (if $p=1$, then $[n-p+2, n]=\emptyset$). If $A\in \mathcal{F}_{2, 3}$ and $\{1, n-k_2+2, \dots, n\}\precneqq A$, then $A'\in \mathcal{F}_{2, 3}$.
\end{claim}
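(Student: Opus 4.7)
The plan is to follow the template of Claim \ref{clm2.25}: explicitly identify $A' \setminus A'^{\rm t}$, compute its partner $T'$, show $|T'| \le k_3$, and invoke Fact \ref{fact2.10} to produce a $k_3$-set $B$ making $(A',B)$ maximal. The one new ingredient compared to Claim \ref{clm2.25} is that one must also verify $A' \in \mathcal{R}_2$, which is precisely where the strict hypothesis $\{1,n-k_2+2,\dots,n\} \precneqq A$ will be used.

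First, since $\ell(A)=p$ forces $x_{k_2-p} \le n-p-1$ (as $n-p \notin A$ and $x_{k_2-p} < n-p+1$), the set $A' = \{x_1,\dots,x_{k_2-p},x_{k_2-p}+1,n-p+2,\dots,n\}$ is a genuine $k_2$-subset of $[n]$ with $\ell(A')=p-1$, so $A' \setminus A'^{\rm t} = \{x_1,\dots,x_{k_2-p},x_{k_2-p}+1\}$. A direct count gives $|T'|=(x_{k_2-p}+1)-(k_2-p)=x_{k_2-p}-k_2+p+1=|T|$, where $T$ denotes the partner of $A\setminus A^{\rm t}$. Since $A\in\mathcal{F}_{2,3}$ forces $|T|\le k_3$, we get $|T'|\le k_3$, so letting $B$ be the $k_3$-partner of $A' \setminus A'^{\rm t}$, Fact \ref{fact2.10} yields that $(A',B)$ is maximal.

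It remains to verify $A' \in \mathcal{R}_2$. The upper bound $A' \prec \{k_t,n-k_2+2,\dots,n\}$ is immediate: $A \setminus A' = \{n-p+1\}$ and $A' \setminus A = \{x_{k_2-p}+1\}$ with $x_{k_2-p}+1 \le n-p < n-p+1$, so $A' \prec A$, and combining with $A \prec \{k_t,n-k_2+2,\dots,n\}$ (from $A \in \mathcal{R}_2$) gives the upper bound by transitivity. For the lower bound $\{1,n-k_2+2,\dots,n\} \prec A'$, I reduce to the claim $x_1 \ge 2$. Indeed, if $x_1=1$ and $p = k_2-1$, then $A = \{1,n-k_2+2,\dots,n\}$, violating the strict hypothesis; while if $x_1=1$ and $p < k_2-1$, the bound $x_2 \le x_{k_2-p}-(k_2-p-2) \le n-k_2+1 < n-k_2+2$ shows that $A$ and $\{1,n-k_2+2,\dots,n\}$ agree at position $1$ but have $A$'s entry strictly smaller at position $2$, forcing $A \prec \{1,n-k_2+2,\dots,n\}$ and again violating the hypothesis. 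With $x_1 \ge 2$ in hand, $1 \notin A'$ while $1 \in \{1,n-k_2+2,\dots,n\}$, and $\min(A' \setminus \{1,n-k_2+2,\dots,n\}) \ge x_1 \ge 2 > 1 = \min(\{1,n-k_2+2,\dots,n\} \setminus A')$, which gives the lower bound.

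The main subtlety is this last lower-bound check; the role of the strict hypothesis is precisely to rule out the boundary case $A = \{1,n-k_2+2,\dots,n\}$, in which the operation $A \mapsto A'$ would otherwise produce a set lying strictly below the lower threshold of $\mathcal{R}_2$.
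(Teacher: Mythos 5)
Your proof is correct, and it reaches the same conclusion as the paper's by a slightly different technical route. The paper transfers maximality from $A$ to $A'$ via Fact \ref{fact2.5}: it tracks $\max T'=\max T+1$ rather than $|T'|=|T|$, explicitly constructs $B'=T'\cup\{n-k_3+|T'|+1,\dots,n\}$, and must then verify $\max T'<n-k_3+|T'|$ so that $T'=B'\setminus B'^{\rm t}$ --- this is where it inserts the counting step $\max T<n-k_3+|T|-1$ (otherwise $|A|+|B|\geq n+1$ would contradict $n\geq k_2+k_3$). Your route via Fact \ref{fact2.10} and the cardinality identity $|T'|=|T|\leq k_3$ sidesteps that tail-merging check, exactly as the paper itself does in the proof of Claim \ref{clm2.25}; the one point you should make explicit is the witness required by the hypothesis of Fact \ref{fact2.10}, namely a set already forming a maximal pair with $A'$ --- for instance $(A',T')$ itself is maximal by Fact \ref{fact2.5}, since $\max T'=x_{k_2-p}+1\leq n-p<n$ gives $T'^{\rm t}=\emptyset$, and then $b'=|T'|\leq k_3$ closes the application. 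On the other side of the ledger, you verify in full the range condition $A'\in\mathcal{R}_2$ (the reduction to $x_1\geq 2$, with the two cases $p=k_2-1$ and $p<k_2-1$ isolating exactly where the strict hypothesis $\{1,n-k_2+2,\dots,n\}\precneqq A$ is needed), which the paper's proof only asserts in its final sentence. All of your computations ($|T'|=|T|$, $\ell(A')=p-1$, $A'\prec A$, $x_1\geq 2$) check out.
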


\begin{proof}
Let $T$ and $T'$ be the partners of $\{x_1, \dots, x_{k_{2}-p}\}=A\setminus A^{\rm t}$ and $\{x_1, \dots, x_{k_{2}-p},\\ x_{k_{2}-p}+1\}=A'\setminus A'^{\rm t}$ respectively. Then $|T|=|T'|$ and $\max T'-\max T=1$.
Since $A\in \mathcal{F}_{2, 3}$, by Definition \ref{definition1}, there is $B\in \mathcal{F}_{3}^{2}$ such that $(A, B)$ is maximal. By Fact \ref{fact2.5}, $T=B\setminus \{n-\ell(B)+1, \dots, n\}$, $B=T\cup \{n-k_{3}+|T|+1, \dots, n\}$ and $\max T<n-k_{3}+|T|$.
We claim that $\max T<n-k_{3}+|T|-1$. Since otherwise $\max T=n-k_{3}+|T|-1$, this implies that
$|\{x_1, \dots, x_{k_{2}-p}\}|+|T|=n-k_{3}+|T|$ and then
$$|A|+|B|\geq |\{x_1, \dots, x_{k_{2}-p}\}|+1+ |T|+|\{n-k_{3}+|T|+1, \dots, n\}|=n+1,$$
this is a contradiction to $n\geq k_{2}+k_{3}\,\, (=|A|+|B|)$.
Let $B'=T'\cup \{n-k_{3}+|T'|+1, \dots, n\}$ if $|T'|<k_{3}$, and $B'=T'$ otherwise.
Thus,
$$\max T'=\max T+1<n-k_{3}+|T|= n-k_{3}+|T'|.$$
Therefore, $T'=B'\setminus \{n-\ell(B')+1, \dots, n\}$.
Trivially, $\{x_1, \dots, x_{k_{2}-p}, x_{k_{2}-p}+1\}=A'\setminus \{n-\ell(A')+1, \dots, n\}$.
By Fact \ref{fact2.5} again, $(A', B')$ is maximal and since $\{1, n-k_2+2, \dots, n\}\precneqq A$, we have $A'\in \mathcal{F}_{2, 3}$, as desired.
\end{proof}

\begin{definition}\label{definition2}
Let $\mathcal{F}_{1,3}\subseteq\mathcal{R}_1$ and $\mathcal{F}_{3}^1\subseteq \mathcal{R}_{3}$ such that $\mathcal{F}_{1,3}$ and $\mathcal{F}_{3}^1$ are maximal pair families.
By Proposition \ref{prop2.9}, $\mathcal{F}_{3}^2\subseteq \mathcal{F}_{3}^1$. Let $\mathcal{F}'_{1,3}$ be the subfamily of $\mathcal{F}_{1,3}$ such that $\mathcal{F}'_{1,3}$ and $\mathcal{F}_{3}^2$ are maximal pair families.
\end{definition}

\begin{obser}\label{o}
Since $k_3\geq k_t\geq 2$ and $n\geq k_1+k_3$, by Fact \ref{fact2.5}, it is easy to see that $\{1, n-k_2+1,\dots, n\}, \{2, \dots, k_2+1\}, \{k_t, k_t+1, \dots, k_t+k_2-1\}, \{k_t, n-k_2+1,\dots, n\}$  are in $\mathcal{F}_{2, 3}$. As a consequence of Claim \ref{clm2.25} and Claim \ref{clm2.26}, we have the following observation.
$\{2, \dots, k_2, n\}, \{2, \dots, k_2-1, n-1, n\}, \dots, \{2, n-k_2+2, \dots, n\}, \{k_t, k_t+1, \\\dots, k_t+k_2-2, n\},\{k_t, k_t+1, \dots, k_t+k_2-3, n-1, n\}, \dots, \{k_t, n-k_2+2, \dots, n\}$ are in $\mathcal{F}_{2, 3}$ as well.
$\mathcal{F}'_{1, 3}$ contains $\{1, n-k_1+2, \dots, n\},  \{2, \dots, k_1+1\}, \{2, \dots, k_1, n\}, \{2, \\ \dots, k_1-1, n-1, n\}, \dots, \{2, n-k_1+2, \dots, n\},   \{k_t, k_t+1, \dots, k_t+k_1-1\},\{k_t, k_t+1, \dots, k_t+k_1-2, n\},\{k_t, k_t+1, \dots, k_t+k_1-3, n-1, n\}, \dots, \{k_t, n-k_1+2, \dots, n\}$. Note that $\mathcal{F}'_{1, 3}\subseteq \mathcal{L}(\{k_t, n-k_1+2, \dots, n\}, k_1)$ since we require that $(n, k_1, \dots, k_t)$-cross intersecting systems is non-empty.
\end{obser}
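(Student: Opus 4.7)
The plan is to prove both memberships via a two-stage strategy. First, I verify the four ``corner'' sets directly using Fact \ref{fact2.5}; then I propagate to the full chains by iterating Claim \ref{clm2.25}. The parallel claim about $\mathcal{F}'_{1,3}$ is handled by running the same recipe with $k_1$ in place of $k_2$, using Proposition \ref{prop2.9} and Fact \ref{fact2.7} to transport the $\mathcal{F}_{2,3}$-witnesses into $\mathcal{F}_3^2$-pairings.

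For the corner step in $\mathcal{F}_{2,3}$, I take each of the four candidate sets $A$, compute $A' = A \setminus A^{\mathrm{t}}$ and identify its partner $Q$ in the sense of Section \ref{sec2}. In each of the four cases a direct inspection shows $|Q| \le k_t \le k_3$ (using $k_t \ge 2$ and $n \ge k_1 + k_3$), so setting $B = Q \cup [n - k_3 + |Q| + 1, n]$ (or $B = Q$ if $|Q| = k_3$) produces by Fact \ref{fact2.5} a $k_3$-set with $(A, B)$ maximal, placing $A$ in $\mathcal{F}_{2,3}$.

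Starting from the tail-less corner $\{2, 3, \dots, k_2+1\}$, Claim \ref{clm2.25} applies with $p = 0$ to give $\{2, 3, \dots, k_2, n\}$; iterating, each step absorbs the current last non-tail element into a tail of length one greater, successively generating $\{2, \dots, k_2-1, n-1, n\}, \ldots, \{2, n-k_2+2, \dots, n\}$. The identical iteration launched from $\{k_t, k_t+1, \dots, k_t+k_2-1\}$ produces the parallel chain terminating at $\{k_t, n-k_2+2, \dots, n\}$. Each step stays inside $\mathcal{F}_{2,3}$ because the partner of the shrinking non-tail portion can only become shorter, so the size-$\le k_3$ hypothesis required by Fact \ref{fact2.5} is preserved throughout. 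Claim \ref{clm2.26} could equally traverse the same chains in the reverse direction, which provides an equivalent alternative proof.

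The assertion for $\mathcal{F}'_{1,3}$ follows the same two-stage recipe with $k_1$ replacing $k_2$; the extra task is confirming the resulting $k_3$-partners lie in $\mathcal{F}_3^2$ rather than merely $\mathcal{F}_3^1$. The corners $\{1, n-k_1+2, \dots, n\}$ and $\{k_t, n-k_1+2, \dots, n\}$ are the $k_1$-parities of the corresponding $\mathcal{F}_{2,3}$-corners, so by Fact \ref{fact2.7} their $k_3$-partners agree with ones already exhibited in $\mathcal{F}_3^2$, and Proposition \ref{prop2.9} then delivers their membership in $\mathcal{F}'_{1,3}$. The main obstacle is extending the $\mathcal{F}_3^2$-tracking to the remaining corners and chain members; this reduces to matching each $k_1$-set with a $\mathcal{F}_{2,3}$-element whose $k_3$-partner coincides, which is routine bookkeeping with the parity machinery already established in Section \ref{sec2}.
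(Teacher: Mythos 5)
Your treatment of the $\mathcal{F}_{2,3}$ assertions is correct and is the route the paper itself intends: the four corner sets are checked via Fact \ref{fact2.5} (in each case the partner of $A\setminus A^{\rm t}$ has size at most $k_t\le k_3$), and each chain is generated by iterating Claim \ref{clm2.25} from the tail-less corner, with Claim \ref{clm2.26} traversing the same chains in the opposite direction. Nothing to add there.

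The genuine gap is in the $\mathcal{F}'_{1,3}$ half, exactly at the step you defer as ``routine bookkeeping.'' By Definition \ref{definition2}, $\mathcal{F}'_{1,3}$ is the part of $\mathcal{F}_{1,3}$ paired with $\mathcal{F}_3^2$, and by Fact \ref{fact2.5} a $k_1$-set $F$ and a $k_2$-set $G$ have the same maximal $k_3$-partner precisely when $F\setminus F^{\rm t}=G\setminus G^{\rm t}$, i.e.\ when $F$ is the $k_1$-parity of $G$. So the Proposition \ref{prop2.9}/Fact \ref{fact2.7} transport you invoke reaches only those listed $k_1$-sets with $|F\setminus F^{\rm t}|\le k_2$, equivalently with tail length at least $k_1-k_2$; since the paper assumes $k_1>k_2$, this misses the first $k_1-k_2$ members of each chain. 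Concretely, for $F=\{2,\dots,k_1+1\}$ the maximal $k_3$-partner is $K=\{1,k_1+1\}\cup[n-k_3+3,n]$, and for $K$ to lie in $\mathcal{F}_3^2$ one would need a $k_2$-set $G$ with $G\setminus G^{\rm t}$ equal to the partner of $\{1,k_1+1\}$, which is $[2,k_1+1]$ of size $k_1>k_2$ --- impossible. Hence $K\notin\mathcal{F}_3^2$ and $F\notin\mathcal{F}'_{1,3}$ as that family is defined, and the matching you postpone cannot be carried out for these elements. To repair your argument you must either restrict the $\mathcal{F}'_{1,3}$ list to the $k_1$-parities of the exhibited $\mathcal{F}_{2,3}$-elements (tail length $\ge k_1-k_2$), or claim only membership in $\mathcal{F}_{1,3}$ for the short-tailed sets such as $\{2,\dots,k_1+1\}$ and $\{k_t,\dots,k_t+k_1-1\}$. (This also exposes an imprecision in the observation as stated; the subsequent arguments only use the $\mathcal{F}_{2,3}$ part, so nothing downstream depends on it.)
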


Assuming that Lemmas \ref{l2.18}, \ref{c2.27}, \ref{c2.28} and \ref{c2.29} hold, 
we are going to complete the proof of Proposition \ref{AS+}. We will prove 
Lemmas \ref{l2.18}, \ref{c2.27}, \ref{c2.28} and \ref{c2.29} in Sections \ref{sec5}, \ref{sec6} and \ref{sec7}.
%When we consider $g(R)$, where $R\in \mathcal{R}_{2}(k)$ and $k\in [k_2]$ we simply write $g(R\setminus[n-k+1, n])$ etc. In particular,   $g(\{1\})$ is indeed $g(\{1, n-k_{2}+1, \dots, n\})$, and $g(\{k\})$ is indeed $g(\{k, n-k_{2}+1, \dots, n\})$.

\begin{definition}
For a family $\mathcal{G}\subseteq \mathcal{F}_{2, 3}$, denote $g(\mathcal{G})=\max \{g(G): G\in \mathcal{G}\}$.
\end{definition}

%\begin{claim}\label{end}
%Under the assumption of Proposition \ref{AS+}, $(\mathcal{F}_1, \dots, \mathcal{F}_t)$ is not extremal.
%\end{claim}

\begin{proof}[Proof of Proposition \ref{AS+}]
By Observation  \ref{o} and   Lemma \ref{c2.27}, we have
\begin{equation}\label{equation10}
g(\mathcal{F}_{2, 3})=\max \{g([2, k_{2}+1]), g([k_t, k_t+1,  k_t+k_{2}-1]), g(\mathcal{F}_{2, 3}(1))\}.
\end{equation}

Applying  Lemma \ref{c2.27} and Observation  \ref{o} repeatedly, we have
\begin{align}\nonumber
&g(\mathcal{F}_{2, 3}(1))=\max \{g([2, k_{2}]), g([k_t, k_t+k_{2}-2]), g(\mathcal{F}_{2, 3}(2))\},\\ \nonumber
&g(\mathcal{F}_{2, 3}(2))=\max \{g([2, k_{2}-1]), g([k_t, k+k_{2}-3]), g(\mathcal{F}_{2, 3}(3))\},\\\nonumber
&\quad \vdots\\ \label{fr}
&g(\mathcal{F}_{2, 3}(k_{2}-1))<\max \{g(\{1\}),  g(\{k_t\})\}.
\end{align}
By Lemma \ref{c2.28}, we have
\begin{align}\label{dd}
&\max\{g([2, k_{2}+1]), g([2,  k_{2}]), \dots, g(\{2, 3\}), g(\{2\})\} \nonumber \\
&=\max\{g([2,  k_{2}+1]), g(\{2\})\},%\nonumber \\
%&\leq \max\{g([2, k_{2}+1]), \max\{g(\{1\}), g(\{k_t\})\}\}.
\end{align}
and
\begin{equation}\label{dd+}
g(\{2\})<\max \{g(\{1\}),  g(\{k_t\})\}.
\end{equation}
By Lemma \ref{c2.29}, we have
\begin{align}\label{d}
&\max\{g([k_t, k_t+k_{2}-1]), g([k_t, k_t+k_{2}-2]), \dots,   g(\{k_t, k_t+1\})\} \nonumber \\
&=\max\{g([k_t, k_t+k_{2}-1]),  g(\{k_t, k_t+1\}) \},
\end{align}
and
\begin{equation}\label{d+}
g(\{k_t, k_t+1\})<\max\{g([k_t, k_t+k_{2}-1]),  g(\{k_t\}) \}.
\end{equation}

%Combining (\ref{equation10})--(\ref{d}), we obtain
%\begin{equation}\label{4}
%g(\mathcal{F}_{2, 3})=\max \big\{g(\{1\}), g(\{k_t\}), g([2, k_{2}+1]), g([k_t, k_t+k_{2}-1])      \big\}.
%\end{equation}

In Section \ref{sec7}, we will prove the following proposition.

\begin{proposition}\label{claim3.20}
\begin{align}\label{equation13}
g([2, k_{2}+1])&<\max\{g(\{1\}), g([2, k_{2}])\}\\\label{equation14}
g([k_t, k_t+k_{2}-1])&<\max\{ g(\{k_t-1\}), g([k_t, k_t+k_{2}-2])\}.
\end{align}
\end{proposition}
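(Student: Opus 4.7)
The plan is to reduce each of the two inequalities to an elementary comparison of binomial coefficients, by first computing the three relevant $g$-values in closed form and then invoking a two-case dichotomy.

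For the first inequality, since $G_2=[2,k_2+1]$ has $\ell(G_2)=0$, Definition \ref{def+} gives the $k_1$-parity $G_1=[2,k_2+1]\cup[n-k_1+k_2+1,n]$, and Fact \ref{f2.8} gives the $k_i$-partners $T_i=\{1,k_2+1\}\cup[n-k_i+3,n]$ for $i\in[3,t]$; the analogous sets for $G_2=\{1,n-k_2+2,\dots,n\}$ and $G_2=\{2,\dots,k_2,n\}$ are similar. Evaluating $|\mathcal{L}(\cdot,\cdot)|$ by summing along each coordinate of the ID and simplifying with hockey-stick and Pascal identities yields, with $M:=n-k_2-1$ and $c:=k_1-k_2\geq 1$,
\begin{equation*}
g([2,k_2+1])-g(\{1\}) \,=\, \binom{M}{c}+1-\sum_{i=3}^t\binom{M}{k_i-1},
\end{equation*}
\begin{equation*}
g([2,k_2+1])-g([2,k_2]) \,=\, -\binom{M}{c+1}-M+\sum_{i=3}^t\binom{M}{k_i-2}.
\end{equation*}

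If $g([2,k_2+1])<g([2,k_2])$ the first inequality is immediate, so we may assume $g([2,k_2+1])\geq g([2,k_2])$, equivalently $\sum_{i=3}^t\binom{M}{k_i-2}\geq\binom{M}{c+1}+M$; the aim is then to deduce $g([2,k_2+1])<g(\{1\})$, i.e., $\sum_{i=3}^t\binom{M}{k_i-1}>\binom{M}{c}+1$. I will prove the contrapositive: if $\sum_{i=3}^t\binom{M}{k_i-1}\leq\binom{M}{c}+1$ then $\sum_{i=3}^t\binom{M}{k_i-2}<\binom{M}{c+1}+M$ strictly. The key observation is that $\binom{M}{k_i-2}/\binom{M}{k_i-1}=(k_i-1)/(M-k_i+2)$ is increasing in $k_i$ and hence bounded above by its value at $k_i=k_3$, yielding $\sum_{i=3}^t\binom{M}{k_i-2}\leq\frac{k_3-1}{M-k_3+2}\bigl(\binom{M}{c}+1\bigr)$. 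Combined with $\binom{M}{c+1}=\binom{M}{c}(M-c)/(c+1)$, clearing denominators reduces the target to
\begin{equation*}
\binom{M}{c}\bigl[(M-c)(M-k_3+2)-(k_3-1)(c+1)\bigr]\,>\,(c+1)\bigl[(k_3-1)-M(M-k_3+2)\bigr].
\end{equation*}
The left bracket is nonnegative because $M-c=n-k_1-1\geq k_3-1$ and $M-k_3+2=n-k_2-k_3+1\geq c+1$ (both from $n\geq k_1+k_3$); the right bracket is strictly negative because $M(M-k_3+2)\geq 2k_3>k_3-1$ (using $M\geq k_3$, which follows from $n\geq k_1+k_3$ and $k_1\geq k_2+1$). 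This settles Case 2 and hence the first inequality.

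The second inequality is handled by exactly the same dichotomy. Analogous closed-form computations for the IDs $\{k_t,\dots,k_t+k_2-1\}$, $\{k_t-1,n-k_2+2,\dots,n\}$ (which is a legitimate ID in $\mathcal{F}_{2,3}(k_2-1)$ because $k_t\geq 2$), and $\{k_t,\dots,k_t+k_2-2,n\}$ produce identities of the same shape with $M$ replaced by $M':=n-k_t-k_2+1$, with $k_i-1$ replaced by $k_i-k_t+1$, and with $c=k_1-k_2$ unchanged. The two bracketed inequalities then reduce to $M'-c=n-k_1-k_t+1\geq k_3-k_t+1$ and $M'-k_3+k_t+1=n-k_2-k_3+2\geq c+1$, both again consequences of $n\geq k_1+k_3$, so the argument transfers verbatim. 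The main technical obstacle throughout is the bookkeeping: carrying out four separate hockey-stick simplifications, verifying that the reductions between binomials are correct, and checking that each inequality invoked is genuinely justified by the standing hypotheses $n\geq k_1+k_3$, $k_1>k_2$, and $k_t\geq 2$.
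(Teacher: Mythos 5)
Your proof is correct, but it takes a genuinely different route from the paper's. The paper deduces Proposition \ref{claim3.20} from the unimodality machinery it has already built: it passes to the $k_1$-parities, inserts the auxiliary sets $C_1$ and $B_1$ between $\{1,n-k_1+2,\dots,n\}$ and $A$, and applies Lemma \ref{c2.28+} followed by Lemma \ref{lemma star} to conclude $f(A)>f(G_1)$ (it only writes out (\ref{equation13}), declaring (\ref{equation14}) analogous). You instead compute the three relevant $g$-values in closed form and reduce the dichotomy to an explicit binomial inequality. I checked your two displayed identities independently (they are exact, not just "of the right shape"), verified them numerically on small instances, and confirmed that the contrapositive argument is logically sound: the ratio $\binom{M}{k_i-2}/\binom{M}{k_i-1}=(k_i-1)/(M-k_i+2)$ is indeed maximized at $k_i=k_3$, and the two bracket estimates follow from $M-c=n-k_1-1\geq k_3-1$, $M-k_3+2=n-k_2-k_3+1\geq c+1$, and $M\geq k_3$, all consequences of $n\geq k_1+k_3$ and $k_1>k_2$. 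The transfer to (\ref{equation14}) with $M'=n-k_t-k_2+1$ and $k_i-1\mapsto k_i-k_t+1$ also works, and your observation that $\{k_t-1\}\cup[n-k_2+2,n]$ lies in $\mathcal{F}_{2,3}$ (its truncation $\{k_t-1\}$ has partner $[k_t-1]$ of size at most $k_3$) is the one membership fact not listed in Observation \ref{o} that needs saying. One bookkeeping slip: the exact analogue of $M-k_3+2$ is $M'-k_3+k_t=n-k_2-k_3+1$, not $M'-k_3+k_t+1=n-k_2-k_3+2$ as you wrote; both exceed $c+1$ under $n\geq k_1+k_3$, so nothing breaks. The trade-off is clear: your argument is self-contained and verifiable by pure binomial manipulation, independent of Lemmas \ref{c2.28+} and \ref{lemma star}, at the price of heavier computation; the paper's argument is shorter here because it reuses lemmas it must prove anyway for the rest of Section \ref{sec3}.
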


Combining (\ref{equation13}), (\ref{dd}) and (\ref{dd+}), we have
\begin{equation}\label{222}
g([2, k_{2}+1])<\max \{  g(\{1\}), g(\{k_t\})    \}.
\end{equation}

Combining (\ref{equation14}), (\ref{d}) and (\ref{d+}), we have
\begin{equation}\label{ttt}
g([k_t, k_t+k_{2}-1])<\max \{ g(\{k_t\}),  g(\{k_t-1\})\} \leq\max \{  g(\{1\}), g(\{k_t\})    \}.
\end{equation}

Combining (\ref{equation10}), (\ref{fr}), (\ref{dd}), (\ref{dd+}), (\ref{d}), (\ref{d+}),(\ref{222}) and (\ref{ttt}), we have
\begin{equation*}
g(\mathcal{F}_{2, 3})< \max \{ g(\{1\}),  g(\{k_t\})\}.
\end{equation*}

Recall (\ref{new1+}), we obtain
\begin{align}\label{equation3}
M(n, k_1, \dots, k_t)&=\max_{G_{2} \in \mathcal{F}_{2, 3}}g(G_{2})<\max \{ g(\{1\}),  g(\{k_t\})\}.
\end{align}
This completes the proof of Proposition \ref{AS+}.
%Therefore,
%$
%\sum_{j=1}^t|\mathcal{F}_j|=\max \{g(\{1\}), g(\{k_t\})\}=\max \{\lambda_1, \lambda_2\}.
%$
%Since for each $i\in [2]$,  $\{1, n-k_i+2, \dots, n\}\precneqq F_i \precneqq \{k_t, n-k_i+2, \dots, n\}$, to get the equality, by Lemma \ref{c2.27}, we must have $n=k_1+k_t$ and $t=2$. A contradiction.
\end{proof}

We apply Propositions \ref{prop2.4} and \ref{AS+} to prove Theorem \ref{main1}.
\begin{proof}[Proof of Theorem \ref{main1}]
Propositions \ref{prop2.4} and \ref{AS+} imply the quantitative part of Theorem \ref{main1}.
Now we show that extremal $(n, k_1, \dots, k_t)$-cross intersecting systems with $k_1+k_3\leq n <k_1+k_2$ must be isomorphic to Constructions \ref{con1} or \ref{con2}.
By Claim \ref{>}, Propositions \ref{prop2.4} and \ref{AS+}, and Theorem \ref{kk}, we conclude that: if $(\mathcal{F}_1, \dots, \mathcal{F}_t)$ is an $(n, k_1, \dots, k_t)$-cross intersecting system with $\sum_{i=1}^t|\mathcal{F}_i|=\max \{\lambda_1, \lambda_2\}$, then either for each $i\in [t]$, $|\mathcal{F}_i|={n-1\choose k_i-1}$ or $|\mathcal{F}_1|={n\choose k_1}-{n-k_t\choose k_1}$, $|\mathcal{F}_2|={n\choose k_2}-{n-k_t\choose k_2}$ and $|\mathcal{F}_i|={n-k_t\choose k_i-k_t}$ holds for each $i\in [3, t]$.
If the previous happens, then $(\mathcal{F}_1, \mathcal{F}_3,\dots, \mathcal{F}_t)$ is an $(n, k_1, k_3, \dots, k_t)$-cross intersecting system with
$\sum_{i=1, i\ne 2}^t|\mathcal{F}_i|=\sum_{i=1, i\ne 2}^t{n-1\choose k_i-1}$
 and
 $(\mathcal{F}_2, \mathcal{F}_3,\dots, \mathcal{F}_t)$ is an $(n, k_2, k_3, \dots, k_t)$-cross intersecting system with
$\sum_{i=2}^t|\mathcal{F}_i|=\sum_{i=2}^t{n-1\choose k_i-1}$.
By Theorem \ref{HP},  $(\mathcal{F}_1, \dots, \mathcal{F}_t)$ is isomorphic to $(\mathcal{G}_1, \dots, \mathcal{G}_t)$ which is defined in Construction \ref{con1}.
If the later happens, then $(\mathcal{F}_1, \mathcal{F}_3,\dots, \mathcal{F}_t)$ is an $(n, k_1, k_3, \dots, \\ k_t)$-cross intersecting system with
$\sum_{i=1, i\ne 2}^t|\mathcal{F}_i|={n\choose k_1}-{n-k_t\choose k_1}+\sum_{i=3}^t{n-k_t\choose k_i-k_t}$
 and
 $(\mathcal{F}_2, \mathcal{F}_3,\dots, \mathcal{F}_t)$ is an $(n, k_2, k_3, \dots, k_t)$-cross intersecting system with
$\sum_{i=2}^t|\mathcal{F}_i|={n\choose k_2}-{n-k_t\choose k_2}+\sum_{i=3}^t{n-k_t\choose k_i-k_t}$.
By Theorem \ref{HP},  $(\mathcal{F}_1, \dots, \mathcal{F}_t)$ is isomorphic to $(\mathcal{H}_1, \dots, \mathcal{H}_t)$ which is defined in Construction \ref{con2}.
 This completes the proof of Theorem \ref{main1}.
 \end{proof}

We owe the proofs of Lemmas \ref{l2.18}, \ref{c2.27}, \ref{c2.28}, \ref{c2.29} and Proposition \ref{claim3.20}. Before giving their proofs, we list some results obtained in \cite{HP} for non-mixed type in the next section. 
Figure 1 is the flow chart of the proofs of  the main theorem and lemmas.
\begin{figure}[htbp]
\centering
\includegraphics[scale=0.37]{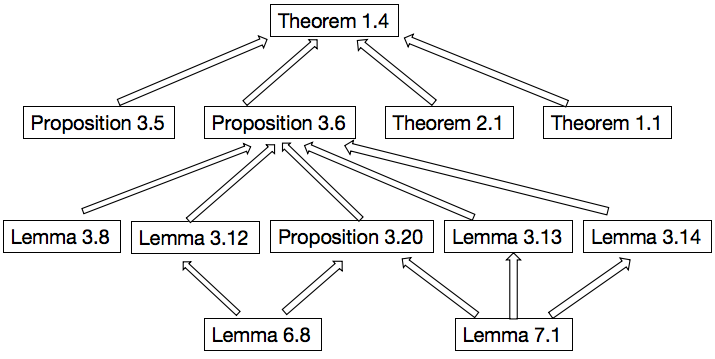}
\caption{Flow chart of the proofs}
\label{Fig-Liucheng}
\end{figure}

\section{Results of non-mixed type}\label{sec4}
In \cite{HP},  we worked on non-mixed type: Let $t\geq 2$, $k_1\geq k_2\geq \cdots \geq k_t$ and $n\geq k_1+k_2$  and families $\mathcal{A}_1\subset{[n]\choose k_1}, \mathcal{A}_2\subset{[n]\choose k_2}, \dots, \mathcal{A}_t\subset{[n]\choose k_t}$ be non-empty pairwise cross-intersecting (not freely). Let $R$ be the ID of $\mathcal{A}_1$, and $T$  be the partner of $R$. In the proof of Theorem \ref{HP}, one important ingredient is that by Proposition \ref{p2.7}, $\sum_{i=1}^t{|\mathcal{A}_i|}$ can be bounded by a function of $R$ as in the following lemma.

\begin{lemma}\cite{HP}\label{c2.22}
Let  $k_1\geq k_2\geq \dots \geq k_t$, $n\geq k_1+k_2$ and $(\mathcal{A}_1, \mathcal{A}_2, \dots, \mathcal{A}_t)$ be a non-empty L-initial $(n, k_1, k_2,
\dots, k_t)$-cross intersecting system with $|\mathcal{A}_1|\geq {n-1\choose k_1-1}$. Let $R$ be the ID of $\mathcal{A}_1$ and $T$ be the partner of $R$. Then
\begin{align}\label{new}
\sum_{j=1}^t|\mathcal{A}_j|\leq |\mathcal{L}(R, k_1)| +\sum_{j=2}^t |\mathcal{L}(T, k_j)| =: f(R).
\end{align}
\end{lemma}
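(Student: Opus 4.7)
The plan is to prove this as a direct term-by-term application of Proposition~\ref{p2.7}; the lemma is essentially a packaging of that proposition into a single-variable formula $f(R)$. Since $\mathcal{A}_1$ is L-initial with ID $R$, the identity $|\mathcal{A}_1|=|\mathcal{L}(R,k_1)|$ is automatic, so the work reduces to proving $|\mathcal{A}_j|\le|\mathcal{L}(T,k_j)|$ for each $j\in[2,t]$ and then summing the $t-1$ resulting inequalities together with the $j=1$ equality.

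To handle a fixed $j\in[2,t]$, I would invoke Proposition~\ref{p2.7} with parameters $P=R$, $a=k_1$, $b=k_j$. The hypothesis $|P|\le a$ holds because $|R|=k_1$, and $a+b\le n$ holds because $n\ge k_1+k_2\ge k_1+k_j$. The proposition then asserts that $\mathcal{L}(T,k_j)$, where $T$ is the partner of $R$, is the \emph{largest} L-initial $k_j$-uniform family that is cross intersecting with $\mathcal{L}(R,k_1)=\mathcal{A}_1$. Since $\mathcal{A}_j$ is itself L-initial, $k_j$-uniform, and cross intersecting with $\mathcal{A}_1$, maximality forces $\mathcal{A}_j\subseteq\mathcal{L}(T,k_j)$, and hence $|\mathcal{A}_j|\le|\mathcal{L}(T,k_j)|$. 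Summing these bounds with $|\mathcal{A}_1|=|\mathcal{L}(R,k_1)|$ yields $\sum_{j=1}^t|\mathcal{A}_j|\le|\mathcal{L}(R,k_1)|+\sum_{j=2}^t|\mathcal{L}(T,k_j)|=f(R)$, as claimed.

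The main point to check carefully, rather than a true obstacle, is the meaning of ``the partner of $R$'' when $|R|=k_1$ is already relatively large. When $R^{\mathrm{t}}\ne\emptyset$, the strict partner of $R$ may have size exceeding $k_j$, but by Fact~\ref{fact2.4+} the $k_j$-partner of $R$ coincides with the $k_j$-partner of $R\setminus R^{\mathrm{t}}$, and by Remark~\ref{r2.6} the families $\mathcal{L}(H,k_j)$ and $\mathcal{L}(K,k_j)$ agree whenever $H$ and $K$ are the partner and the $k_j$-partner of the same base set; thus the symbol $\mathcal{L}(T,k_j)$ in $f(R)$ is unambiguous and the bound is well defined. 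The hypothesis $|\mathcal{A}_1|\ge{n-1\choose k_1-1}$ plays no role in deriving the inequality itself; it is imposed only to place $R$ in the lex regime $R\succeq\{1,n-k_1+2,\dots,n\}$, which is the regime in which the single-variable bound $f(R)$ is the relevant quantity for the extremal analysis in \cite{HP}.
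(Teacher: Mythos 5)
Your proof is correct and follows exactly the route the paper indicates: the lemma is stated in \cite{HP} as a direct consequence of Proposition~\ref{p2.7}, applied with $P=R$, $a=k_1$, $b=k_j$ for each $j\in[2,t]$ (note the text immediately preceding the lemma says precisely this). Your observations that $|\mathcal{A}_1|=|\mathcal{L}(R,k_1)|$ is automatic and that the hypothesis $|\mathcal{A}_1|\ge\binom{n-1}{k_1-1}$ is not needed for the inequality itself are both accurate.
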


Another crucial part in the proof of Theorem \ref{HP} is to show  local unimodality of $f(R)$. Let $R$ and $R'$  be  $k_1$-subsets of $[n]$ satisfying $R\prec R'$. In order to analyze $f(R')-f(R)$, two related functions are defined in \cite{HP} as follows.

\begin{definition}\label{ab-}
Let $t\geq 2$, $k_1, k_2, \dots, k_{t}$ be positive integers with $k_1\geq k_2\geq \dots \geq k_{t}$ and $n\geq k_1+k_2$.
Let $R$ and $R'$  be  $k_1$-subsets of $[n]$ satisfying $R\prec R'$ and let $T$ and $T'$ be the partners of $R$ and $R'$ respectively. We  define
\begin{align}\label{eq6}
&\alpha(R, R'):=|\mathcal{L}(R', k_1)|-|\mathcal{L}(R, k_1)|,\\ \label{eq7}
&\beta(R, R'):=\sum_{j=2}^t\big(|\mathcal{L}(T, k_j)|-|\mathcal{L}(T', k_j)|\big).
\end{align}
\end{definition}
It is easy to see that
\begin{align*}
f(R')-f(R)=\alpha(R, R')-\beta(R, R').
\end{align*}
Let
$$\mathcal{R}_1=\{ R\in {[n]\choose k_1}:\{1, n-k_1+2, \dots, n\}\prec R\prec \{k_{t}, n-k_1+2, \dots, n\}\}.$$

In order to show local unimodality of $f(R)$, we  proved the  following  results in  \cite{HP}. These results give us some foundation in showing local unimodality of $g(G_2)$ (recall (\ref{equation3+})).

\begin{proposition}\cite{HP}\label{clm23}
Let $F<G\in \mathcal{R}_1$ and $\max G=q$. Then
 $\beta(F, G)=\sum_{j=2}^t{n-q\choose k_j-(q-k_1)}$. If $n= k_1+k_j$ holds for any $j\in [t]\setminus \{1\}$, then $\beta(F, G)=\sum_{j=2}^t 1=t-1$; otherwise, we have $\beta(F, G)\geq 0$ and $\beta(F, G)$ decrease as $q$ increase.
\end{proposition}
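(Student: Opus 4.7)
The plan is first to give a clean description of the ``lost'' $k_j$-sets $\mathcal{L}(T,k_j)\setminus\mathcal{L}(T',k_j)$, then to count them via an explicit bijection, and finally to read off the monotonicity from Pascal's identity. Because $F<G$ are consecutive in $\mathcal{R}_1$ and $\mathcal{R}_1$ is a lex-interval in $\binom{[n]}{k_1}$, we have $\mathcal{L}(G,k_1)=\mathcal{L}(F,k_1)\cup\{G\}$. By Proposition \ref{p2.7} (equivalently Fact \ref{f2.8}), $\mathcal{L}(T,k_j)$ and $\mathcal{L}(T',k_j)$ are the respective maximum L-initial $k_j$-families cross-intersecting with $\mathcal{L}(F,k_1)$ and $\mathcal{L}(G,k_1)$, so $\mathcal{L}(T,k_j)\setminus\mathcal{L}(T',k_j)$ consists precisely of the $k_j$-sets that hit every $F'\preceq F$ but miss $G$ itself.

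Next I would prove these lost sets are exactly
\[
\mathcal{X}_j:=\bigl\{([q]\setminus G)\cup B:B\subseteq[q+1,n],\ |B|=k_j-(q-k_1)\bigr\},
\]
giving $|\mathcal{X}_j|=\binom{n-q}{k_j-(q-k_1)}$ (and $|\mathcal{X}_j|=0$ when $k_j<q-k_1$, matching the vanishing binomial). For the inclusion ``$\supseteq$'': suppose $A\in\mathcal{X}_j$ and some $F'\prec G$ has $F'\cap A=\emptyset$; then $F'\cap[q]\subseteq G$, and either $F'=G$ (contradicting $F'\prec G$) or $F'\cap[q]\subsetneq G$, in which case $F'$ must contain elements of $[q+1,n]$ and a direct lex comparison of $F'$ with $G$ yields $G\prec F'$, a contradiction. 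For ``$\subseteq$'': given a lost $A$, write $A_1:=A\cap[q]\subseteq[q]\setminus G$; if $A_1\subsetneq[q]\setminus G$, choose $y\in([q]\setminus G)\setminus A_1$ and set $F':=(G\setminus\{q\})\cup\{y\}$. Then $|F'|=k_1$, $\max F'<q$, $F'\cap A=\emptyset$, and $y<q$ forces $F'\prec G$, hence $F'\preceq F$ by the immediate-predecessor property, contradicting $A\in\mathcal{L}(T,k_j)$. Therefore $A_1=[q]\setminus G$ and $A\in\mathcal{X}_j$. Summing $|\mathcal{X}_j|$ over $j\in[2,t]$ gives the claimed formula for $\beta(F,G)$.

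For the monotonicity, if $n=k_1+k_j$ for every $j\in[2,t]$, then $k_j-(q-k_1)=n-q$ and each summand equals $\binom{n-q}{n-q}=1$, giving $\beta(F,G)=t-1$. Otherwise, Pascal's identity yields
\[
\binom{n-q}{k_j-q+k_1}-\binom{n-q-1}{k_j-q-1+k_1}=\binom{n-q-1}{k_j-q+k_1}\geq 0,
\]
with strict positivity precisely when $n>k_1+k_j$ (and $q\leq k_1+k_j$). Summing over $j$ shows $\beta(F,G)\geq 0$ and that $\beta(F,G)$ strictly decreases in $q$ as soon as some $j\in[2,t]$ satisfies $n>k_1+k_j$.

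The step I expect to be the main obstacle is the ``$\subseteq$'' inclusion in the bijection, where one must produce an explicit $F'\in\mathcal{L}(F,k_1)$ witnessing $A\notin\mathcal{L}(T,k_j)$ whenever $A_1\subsetneq[q]\setminus G$. The key point that makes this argument go through uniformly in both the ``simple-increment'' regime ($\max F=q-1$) and the ``overflow'' regime ($\max F=n>q$) is the immediate-predecessor property of $F$: it guarantees that every $k_1$-set strictly below $G$ automatically lies in $\mathcal{L}(F,k_1)$, so the specific structure of $F$ never has to enter the argument.
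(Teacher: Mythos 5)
The paper does not actually prove Proposition \ref{clm23} here --- it is quoted from \cite{HP} --- so there is no in-paper argument to compare against; judged on its own, your proof is correct and is the natural direct computation (identify the lost sets, count them, apply Pascal). The bijective core is sound: your two inclusions correctly show that $\mathcal{D}_F\setminus\mathcal{D}_G=\mathcal{X}_j$, where $\mathcal{D}_F$ denotes the family of \emph{all} $k_j$-sets meeting every member of $\mathcal{L}(F,k_1)$, and the monotonicity discussion via $\binom{n-q}{k_j-q+k_1}-\binom{n-q-1}{k_j-q-1+k_1}=\binom{n-q-1}{k_j-q+k_1}$ is exactly right.

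One step deserves an explicit sentence rather than a ``so'': you pass from ``$\mathcal{L}(T,k_j)$ is the \emph{maximum L-initial} family cross-intersecting $\mathcal{L}(F,k_1)$'' to ``$\mathcal{L}(T,k_j)\setminus\mathcal{L}(T',k_j)$ is precisely the set of $k_j$-sets hitting all of $\mathcal{L}(F,k_1)$ but missing $G$.'' That identification needs $\mathcal{L}(T,k_j)=\mathcal{D}_F$, i.e.\ that the maximum L-initial cross-intersecting family coincides with the family of \emph{all} cross-intersecting $k_j$-sets; maximality among L-initial families alone does not give this. It does follow in one line from Theorem \ref{kk}: if some $A\notin\mathcal{L}(T,k_j)$ met every member of $\mathcal{L}(F,k_1)$, then $\mathcal{L}(F,k_1)$ and $\mathcal{L}(T,k_j)\cup\{A\}$ would be cross-intersecting, and Kruskal--Katona would produce a strictly larger L-initial $k_j$-family cross-intersecting $\mathcal{L}(F,k_1)$, contradicting Fact \ref{f2.8}. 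With that sentence added, the argument is complete. (Two cosmetic points: since the paper's $\prec$ is reflexive, your phrase ``$F'=G$ contradicting $F'\prec G$'' should read $F'\precneqq G$; and ``decrease as $q$ increase'' is only weak monotonicity once $q>k_1+k_j$ for all $j$, which your Pascal identity already makes precise.)
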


%\begin{lemma}\cite{HP}\label{clm20}
%Let $F_1, F_2, F'_1, F'_2\in \mathcal{R}_1$, $c\in [k_1]$, $F_1\overset{c}{\prec} F_2$ and $F'_1\overset{c}{\prec} F'_2$. If $\max F_1=\max F'_1$, then $\alpha(F_1, F_2)=\alpha(F'_1, F'_2)$ and $\beta(F_1, F_2)=\beta(F'_1, F'_2)$.
%\end{lemma}

%\begin{claim}\cite{HP}\label{clm21}
%Let $F, H, G\in \mathbb{R}_i$ with $F\prec H\prec G$. Then $\alpha(F, G)=\alpha(F, H)+\alpha(H, G)$ and $\beta(F, G)=\beta(F, H)+\beta(H, G)$.
%\end{claim}

\begin{lemma}\cite{HP}\label{coro22}
Let $c\in [k_1]$ and $F, G, F', G'\in \mathcal{R}_1$. If $F, G$ are $c$-sequential, $F', G'$ are $c$-sequential and $\max F=\max F', \max G=\max G'$, then $\alpha (F, G)=\alpha (F', G')$ and $\beta (F, G)=\beta (F', G')$.
\end{lemma}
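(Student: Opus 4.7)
The plan is to prove $\alpha(F,G)=\alpha(F',G')$ and $\beta(F,G)=\beta(F',G')$ separately, both by exploiting the observation that $c$-sequentiality makes the base $A$ invisible to both lex comparison and the lex-successor operation, provided $\max A$ lies below the sliding window. The hypotheses $\max F=\max F'$ and $\max G=\max G'$ force the window positions to agree, so I may write $F=A\sqcup W$, $G=A\sqcup W^{*}$, $F'=A'\sqcup W$ and $G'=A'\sqcup W^{*}$ for a common pair of $c$-sets $W=\{p+1,\dots,p+c\}$, $W^{*}=\{p+s+1,\dots,p+s+c\}$ with $\max A,\,\max A'\le p$. The case $F=G$ is trivial since then $\alpha=\beta=0$ on both sides, so I assume $s\ge 1$.

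For $\alpha$, note that $\alpha(F,G)=|\mathcal{L}(G,k_1)|-|\mathcal{L}(F,k_1)|$ counts $k_1$-sets $X$ with $F\precneqq X$ and $X\prec G$. I would first argue that the sorted sequence of such $X$ agrees with sorted $F$ (and sorted $G$) on its bottom $k_1-c$ positions, so $A\subseteq X$: if the first disagreement between $X$ and $F$ occurred at a position $i\le k_1-c$, then because $F$ and $G$ share those positions, $X$ would also disagree with $G$ at $i$ in the same (larger) direction, forcing $G\precneqq X$. Writing $X=A\sqcup Y$ with $|Y|=c$, a second check shows $\min Y\ge p+1$: any element of $Y$ in $[\max A+1,\,p]$ would make $Y$ lex-smaller than $W$, contradicting $F\precneqq X$. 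Hence $\alpha(F,G)$ equals the number of $c$-subsets $Y\subseteq\{p+1,\dots,n\}$ with $W\precneqq Y$ and $Y\prec W^{*}$, a quantity depending only on $(p,s,c,n)$. The identical argument for $F',G'$ produces the same count.

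For $\beta$, I would telescope along the chain of immediate lex-successors $F=F_0<F_1<\dots<F_u=G$ inside ${[n]\choose k_1}$; all of these lie in $\mathcal{R}_1$ because $\mathcal{R}_1$ is a lex-interval containing both $F$ and $G$. Proposition \ref{clm23} gives $\beta(F_{i-1},F_i)=\sum_{j=2}^{t}{n-\max F_i\choose k_j-(\max F_i-k_1)}$ for each step, and summing,
\[
\beta(F,G)\;=\;\sum_{i=1}^{u}\sum_{j=2}^{t}{n-\max F_i\choose k_j-(\max F_i-k_1)}.
\]
The remaining point is that the sequence $\max F_1,\dots,\max F_u$ depends only on $(p,s,c,n)$. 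By exactly the same first-disagreement argument as in the $\alpha$ step, every $F_i$ with $F\precneqq F_i$ and $F_i\prec G$ has $A$ as its bottom $k_1-c$ elements, so $F_i=A\sqcup Y_i$ with $Y_i\in{\{p+1,\dots,n\}\choose c}$, and the transition $F_i\mapsto F_{i+1}$ is literally the lex-successor of $Y_i$ inside ${\{p+1,\dots,n\}\choose c}$ (with $Y_0=W$ and $Y_u=W^{*}$). This sequence of $Y_i$'s, and hence of $\max Y_i=\max F_i$, is determined purely by the window data, not by $A$. Applying the same reasoning to the chain from $F'$ to $G'$ yields the same sequence, whence $\beta(F,G)=\beta(F',G')$.

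The main obstacle I anticipate is cleanly verifying the carry-back behavior of the lex-successor: when $Y_i$ is saturated on the right (e.g.\ contains $n$), the successor operation modifies some interior position of $Y_i$ rather than its top element, and one must check that this carry-back never dips below $p+1$, so that the $A$-part is genuinely left untouched at every step. This follows from the standard recursive description of the lex-successor together with $\max A\le p$, but warrants explicit justification.
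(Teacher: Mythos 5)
Your proof is correct. Note that the paper itself gives no proof of Lemma \ref{coro22} — it is quoted from \cite{HP} — so there is nothing to compare against here; but your argument is sound and is the natural one: the first-disagreement argument correctly shows every $k_1$-set $X$ with $F\precneqq X\prec G$ has the form $A\sqcup Y$ with $Y\in{[p+1,n]\choose c}$ and $W\precneqq Y\precneqq W^{*}$, which settles $\alpha$, and telescoping $\beta$ over immediate successors together with Proposition \ref{clm23} reduces the $\beta$-claim to the window-determined sequence of maxima. The one step you flag — that the immediate successor of $F_i=A\sqcup Y_i$ never disturbs $A$ — is cleanly closed by sandwiching: $A\sqcup Y_{i+1}'$ (with $Y_{i+1}'$ the lex-successor of $Y_i$ in ${[p+1,n]\choose c}$) is a $k_1$-set lex-greater than $F_i$ and lex-at-most $G$, so the true immediate successor $F_{i+1}$ satisfies $F_i\precneqq F_{i+1}\prec A\sqcup Y_{i+1}'$, and since $F_{i+1}$ must itself be of the form $A\sqcup Y$ by your first-disagreement argument, it equals $A\sqcup Y_{i+1}'$; no direct analysis of carry-back is needed.
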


\begin{definition}\label{definition}
For $k\in [k_{1}-1]$, let  $\mathcal{R}_{1, k}=\{R\in \mathcal{R}_{1}: [n-k+1, n]\subset R\}$, and
$\mathcal{R}_{1}(k)=\{R\setminus [n-k+1, n]: R\in \mathcal{R}_{1, k}\}$. In addition, we will write $\mathcal{R}_{1}(0)=\mathcal{R}_{1}$.
\end{definition}

\begin{remark}
When we consider $\mathcal{R}_{i}(k)$, we regard the ground set as $[n-k]$.
For $R\in \mathcal{R}_{i}(k)$, we write $f(R)$ simply, in fact, $f(R)=f(R\cup [n-k+1, n])$.
\end{remark}

\begin{lemma}\cite{HP}\label{clm27-}
Let $1\leq j\leq k_1-1$ and $1\leq d\leq k_1-j$. Let $F, H, F', H'\in \mathcal{R}_1(j)$ and $F$, $H$ are $d$-sequential, $F'$, $H'$ are $d$-sequential. If $\max F=\max F'$, then
$\alpha(F, H)=\alpha(F', H')$ and $\beta(F, H)=\beta(F', H')$.
\end{lemma}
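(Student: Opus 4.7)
My plan would be to reduce this to Lemma~\ref{coro22} by showing that the quantities $\alpha$ and $\beta$---defined on pairs in $\mathcal{R}_1(j)$ via the lifts $\tilde F := F \cup [n-j+1,n]$ and $\tilde H := H \cup [n-j+1,n]$ in $\binom{[n]}{k_1}$---depend only on the step parameters $(\max F, \max H, d, j)$ and on the external constants $n, k_1, \ldots, k_t$; in particular, they do not depend on the common prefix $B$ shared by $F$ and $H$. Writing the $d$-sequential data as $F = B \sqcup \{b+1,\ldots,b+d\}$, $H = B \sqcup \{b+s+1,\ldots,b+s+d\}$ (and analogously $F' = B' \sqcup \{b'+1,\ldots,b'+d\}$, $H' = B' \sqcup \{b'+s'+1,\ldots,b'+s'+d\}$), the hypothesis $\max F = \max F'$ forces $b = b'$; one checks from the context (and the fact that the conclusion could not otherwise hold) that $s = s'$, so $\max H = \max H'$ holds as well. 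Once $B$-independence is established, all relevant parameters match for the primed and unprimed pairs and both equalities follow.

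To establish $B$-independence of $\alpha(\tilde F, \tilde H) = |\mathcal{L}(\tilde H, k_1)| - |\mathcal{L}(\tilde F, k_1)|$, I would partition the $k_1$-subsets $S$ of $[n]$ with $\tilde F \precneqq S \preceq \tilde H$ according to whether $[n-j+1,n] \subseteq S$. For the subsets containing the full suffix, the restriction $S' := S \setminus [n-j+1,n]$ is a $(k_1-j)$-subset of $[n-j]$ with $F \precneqq S' \preceq H$; a direct lex-rank computation shows the count of such $S'$ depends only on $b, s, d$ and the ambient size $n - j$, not on $B$. For the remaining $S$, those missing at least one element of $[n-j+1,n]$, I would stratify by the smallest missing element; this yields a sum of binomials in $b, d, j, n, k_1$ alone.

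For $\beta$, I would use a telescoping argument. Writing the lex-chain $\tilde F = \tilde A_0 < \tilde A_1 < \cdots < \tilde A_N = \tilde H$ in $\binom{[n]}{k_1}$ connecting $\tilde F$ to $\tilde H$ via consecutive steps, Proposition~\ref{clm23} gives the single-step increment as $\beta(\tilde A_{\ell-1}, \tilde A_\ell) = \sum_{i=2}^t \binom{n - \max \tilde A_\ell}{k_i - (\max \tilde A_\ell - k_1)}$. The sequence of values $(\max \tilde A_\ell)_\ell$ is determined by the positions of the sliding block and the fixed suffix, i.e.\ by $(b, s, d, j)$; since these agree for the two pairs, the telescoped sums agree.

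The main obstacle is the accounting for contributions lying outside $\mathcal{R}_{1,j}$: in the $\alpha$ analysis, the subsets $S$ missing part of the suffix $[n-j+1,n]$, and in the $\beta$ analysis, the intermediate sets $\tilde A_\ell$ not containing the full suffix. In both cases one must verify explicitly that these excursions contribute in a manner governed only by the sliding-block parameters and by $j$; this is a finite but delicate enumeration of the same flavour as the argument of Lemma~\ref{coro22} in~\cite{HP}, and I expect this bookkeeping, rather than any new conceptual input, to be the technical heart of the proof.
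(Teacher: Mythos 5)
This lemma is quoted from \cite{HP} and the present paper gives no proof of it, so there is nothing to compare against line by line; judged on its own terms, your reduction is sound and is surely the intended argument. The key observation — that every $k_1$-set $S$ with $\tilde F\precneqq S\preceq \tilde H$ must satisfy $S\cap[1,b]=B$ (where $B$ is the common prefix and $b+d=\max F$), so that both the cardinality of the lex interval and the multiset $\{\max \tilde A_\ell\}$ governing the telescoped $\beta$-sum via Proposition \ref{clm23} depend only on $(b,s,d,j,n)$ and not on $B$ — is correct and does all the work; your worry about ``excursions outside $\mathcal{R}_{1,j}$'' is actually harmless for $\beta$, since $\mathcal{R}_1$ is a lex interval containing $\tilde F$ and $\tilde H$ and hence all intermediate sets, so Proposition \ref{clm23} applies to every consecutive step. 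The one point to flag: as stated the lemma only assumes $\max F=\max F'$, and your justification that $s=s'$ ``because the conclusion could not otherwise hold'' is circular — without $\max H=\max H'$ the conclusion is simply false ($\alpha$ grows with $s$). The statement is evidently missing the hypothesis $\max H=\max H'$, exactly as in the parallel Lemma \ref{coro22}, and every invocation in this paper supplies both conditions; you diagnosed this correctly, but in a final write-up you should state the extra hypothesis rather than derive it.
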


%We  introduce the ''local unimodality'' on L-initial $(n, k_1, k_2, \dots, k_t)$-cross intersecting systems which was firstly showed in our recent paper \cite{HP}. In \cite{HP}, we showed  the following Lemmas \ref{c2.22}, \ref{clm28} and \ref{clm30}.

The following two lemmas confirm that $f(R)$ has local unimodality.

\begin{lemma}\cite{HP}\label{clm28}
Suppose that if $t=2$, then $n>k_1+k_t$.
For any $j\in [0, k_1-1]$, let $1\leq c \leq k_1-j$ and $F$, $G$, $H$ be contained in $\mathcal{R}_1(j)$ with $F\overset{c}{\prec}G\overset{c}{\prec}H$. If $f(G)\geq f(F)$, then $f(H)> f(G)$.
\end{lemma}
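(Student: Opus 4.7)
The plan is to translate everything through the decomposition $f(X')-f(X)=\alpha(X,X')-\beta(X,X')$ coming from Definition \ref{ab-}. The hypothesis $f(G)\ge f(F)$ then reads $\alpha(F,G)\ge\beta(F,G)$, and the conclusion $f(H)>f(G)$ reduces to proving $\alpha(G,H)>\beta(G,H)$. My target is the uniform statement
\[
\alpha(G,H)-\beta(G,H)\;\ge\;\alpha(F,G)-\beta(F,G)+1,
\]
which automatically yields $f(H)-f(G)\ge 1>0$ once combined with the hypothesis. To obtain this, I will establish the two ingredients $\alpha(G,H)\ge\alpha(F,G)$ and $\beta(F,G)\ge\beta(G,H)+1$ separately.

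For the $\alpha$-inequality, I exploit the ``structure depends only on $\max$'' principle. Since $F\overset{c}{\prec}G$ and $G\overset{c}{\prec}H$ are two consecutive $c$-sequential steps in $\mathcal{R}_1(j)$, with $\max G=\max F+1$ and $\max H=\max G+1$, Lemma \ref{clm27-} (or Lemma \ref{coro22} when $j=0$) tells me that $\alpha(F,G)$ and $\alpha(G,H)$ are each functions of a single integer parameter, namely $\max F$ and $\max G$ respectively. I will pick convenient canonical representatives with those max values and count the $k_1$-subsets that sit strictly between consecutive $c$-sequential members in lex order. Shifting the block of the last $c$ coordinates one step to the right leaves more room on both sides, so the count of intermediate $k_1$-sets is weakly increasing in the max, giving $\alpha(G,H)\ge\alpha(F,G)$.

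For the $\beta$-inequality, I use Proposition \ref{clm23}, which gives a clean binomial formula $\beta(F,G)=\sum_{j=2}^{t}\binom{n-q}{k_j-(q-k_1)}$ with $q=\max G$ for immediate lex neighbors, extended to $c$-sequential steps by telescoping through the intermediate lex neighbors (the partner is determined by the largest element and the structure on $[\max{\cdot}-1]$, so the intermediate contributions cancel). Comparing at $q=\max G$ and $q'=\max H=q+1$, each summand $\binom{n-q}{k_j-(q-k_1)}$ is strictly decreasing in $q$ provided $n>k_1+k_j$ for some $j$; the hypothesis ``if $t=2$ then $n>k_1+k_t$'' is exactly what prevents the degenerate case in Proposition \ref{clm23} where $\beta\equiv t-1$ is constant. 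Integrality of $\beta$ then upgrades the strict decrease to the integer gap $\beta(F,G)\ge\beta(G,H)+1$.

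The main obstacle I anticipate is the careful handling of boundary and degeneracy issues. One boundary case is $\max H=n$, where the partner acquires a trailing block at $n$ and the binomial-coefficient expression for $\beta$ has to be re-derived; I would treat this separately by direct computation, verifying that in this case $\alpha(G,H)$ is in fact strictly larger than $\alpha(F,G)$, so the $\alpha$-side alone supplies the needed $+1$. The other delicate point is the case $t\ge 3$ with $k_1=k_2=\cdots=k_t$ and $n=2k_1$, where again $\beta$ is insensitive to $q$; here the strict gap must come from $\alpha(G,H)>\alpha(F,G)$, which I would extract from the same counting argument as above by observing that shifting the block rightward strictly increases the number of $k_1$-sets between consecutive $c$-sequential pairs whenever there is any ``room'' on the left, and the hypothesis $F\in\mathcal{R}_1(j)$ guarantees this room. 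Once these two degeneracies are resolved, the two bullet inequalities combine to deliver the desired strict inequality $f(H)>f(G)$.
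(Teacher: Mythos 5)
There is a genuine gap: the first of your two ingredients, $\alpha(G,H)\ge\alpha(F,G)$, is false whenever $c\ge 2$, and the heuristic ``shifting the block one step to the right leaves more room'' points in the wrong direction. Take $j=0$ and write $F=A\sqcup[a+1,a+c]$, $G=A\sqcup[a+2,a+c+1]$, $H=A\sqcup[a+3,a+c+2]$. The $k_1$-sets $R$ with $F\precneqq R\preceq G$ are exactly $G$ together with the sets $A\cup\{a+1\}\cup S$ for $S\in{[a+2,n]\choose c-1}\setminus\{[a+2,a+c]\}$, so $\alpha(F,G)={n-a-1\choose c-1}$ and likewise $\alpha(G,H)={n-a-2\choose c-1}$: the quantity $\alpha$ strictly \emph{decreases} along the chain for $c\ge 2$. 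Concretely, for $t=2$, $k_1=3$, $k_2=2$, $n=6$ and $F=\{2,3,4\}\overset{2}{\prec}G=\{2,4,5\}\overset{2}{\prec}H=\{2,5,6\}$ one computes $\alpha(F,G)=3>2=\alpha(G,H)$, while $f(F)=14$, $f(G)=15$, $f(H)=17$; the lemma holds here only because $\beta$ drops from $2$ to $0$, not because $\alpha$ grows. For the same reason your fallback in the degenerate case $t\ge 3$, $n=2k_1=\dots=2k_t$ --- extracting the missing $+1$ from a strict increase of $\alpha$ --- cannot work. The $\beta$-ingredient is also not established as stated: for a $c$-sequential step, $\beta(F,G)$ is a sum of single-step contributions over ${n-a-1\choose c-1}$ intermediate lex-neighbours with varying maxima, against ${n-a-2\choose c-1}$ terms for $(G,H)$, so ``each summand strictly decreases'' is not a comparison between sums with matching terms.

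More broadly, your target $\alpha(G,H)-\beta(G,H)\ge\alpha(F,G)-\beta(F,G)+1$ is a uniform strict-convexity claim that never uses the hypothesis $f(G)\ge f(F)$, whereas the genuine argument uses it essentially. (This paper does not reprove Lemma \ref{clm28}, which is imported from \cite{HP}, but Section \ref{sec6} proves the generalization Lemma \ref{lemma star} and shows the shape a correct proof must take: one reduces $f(G)<f(H)$ to a comparison $f(G'_1)\le f(H'_1)$ of immediate predecessors and then runs an induction on $c$ --- Claims \ref{claim3}, \ref{claim5+} and \ref{claim6} --- in which the hypothesis $f(F)\le f(G)$ is propagated along auxiliary chains, together with an outer induction on $j$.) To repair your proof you would need either that machinery or a direct verification of the second-difference inequality that confronts the fact that both $\alpha$ and $\beta$ decrease along the chain.
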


\begin{lemma}\cite{HP}\label{clm30}
Suppose that if $t=2$, then $n>k_1+k_t$.
Let $1\leq m \leq k_t$ and $m+1 \leq j \leq m+k_1-1$. If $f_1([m, j])\leq f_1([m, j-1])$, then $f([m, j-1])< f([m, j-2])$.
\end{lemma}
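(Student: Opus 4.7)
}
Let $k := k_1 - j + m - 1$ and set $R_0 := [m, j] \cup [n-k+1, n]$, $R_1 := [m, j-1] \cup [n-k, n]$, $R_2 := [m, j-2] \cup [n-k-1, n]$; these are the three $k_1$-subsets representing $[m, j]$, $[m, j-1]$, $[m, j-2]$ (with appropriate tails attached), and $R_0 \prec R_1 \prec R_2$ in lex order. The plan is to compute both jumps $f(R_1) - f(R_0)$ and $f(R_2) - f(R_1)$ in closed form via the decomposition $f = \alpha - \beta$ from Definition \ref{ab-}, and then combine them through Pascal's identity.

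Enumerating $k_1$-subsets $F$ with $R_0 \precneqq F \preceq R_1$: such $F$ must agree with both on the prefix $[m, j-1]$, its $(j-m+1)$-st entry $f$ ranges over $[j+1, n-k]$, and the remaining $k$ entries form an arbitrary subset of $[f+1, n]$. Summing $\binom{n-f}{k}$ and applying the hockey-stick identity gives $\alpha(R_0, R_1) = \binom{n-j}{k+1}$, and analogously $\alpha(R_1, R_2) = \binom{n-j+1}{k+2}$. For the partner side, Remark \ref{r2.6} replaces $T_i = ([n] \setminus R_i) \cup \{n\}$ by the $k_{j'}$-partner $K_i^{(j')} = [1, m-1] \cup \{j-i\} \cup [n - k_{j'} + m + 1, n]$ (valid because $k_{j'} \geq k_t \geq m$). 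The same break-point enumeration yields $\beta(R_0, R_1) = \sum_{j'=2}^t \binom{n-j}{k_{j'}-m}$ and $\beta(R_1, R_2) = \sum_{j'=2}^t \binom{n-j+1}{k_{j'}-m}$.

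Applying Pascal's identity to every binomial then gives the clean second-difference identity
\[
[f(R_2) - f(R_1)] - [f(R_1) - f(R_0)] \;=\; \binom{n-j}{k+2} - \sum_{j'=2}^{t} \binom{n-j}{k_{j'}-m-1} \;=:\; \Delta.
\]
If $\Delta > 0$, the hypothesis $f(R_1) \geq f(R_0)$ at once forces $f(R_2) > f(R_1)$, which is the desired conclusion.

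The main obstacle is verifying $\Delta > 0$. Using the symmetry $\binom{n-j}{k+2} = \binom{n-j}{n-k_1-m-1}$ together with $n - k_1 \geq k_t \geq k_{j'}$, every index on the right of $\Delta$ satisfies $k_{j'} - m - 1 \leq n - k_1 - m - 1$, so the binomials can be compared via the unimodality of $r \mapsto \binom{n-j}{r}$. The strictness is extracted from the hypothesis $f(R_1) \geq f(R_0)$ itself: unpacked, this reads $\binom{n-j}{n-k_1-m} \geq \sum_{j'} \binom{n-j}{k_{j'}-m}$, and analyzing the ratio $\binom{n-j}{r-1}/\binom{n-j}{r} = r/(n-j-r+1)$ shows that the hypothesis pushes $n - k_1 - m$ at least as close to the midpoint $(n-j)/2$ as each $k_{j'} - m$; together with $n > k_1 + k_t$ when $t = 2$ (and the bound $j \leq m + k_1 - 1$ plus the multiplicity of summands when $t \geq 3$), this places the indices so that $\binom{n-j}{n-k_1-m-1}$ strictly dominates the sum on the right, giving $\Delta > 0$. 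The hard part is the case analysis showing the indices $n - k_1 - m$ and $k_{j'} - m$ fall on the correct side of the midpoint $(n-j)/2$; this is exactly the role played by the extra hypothesis ``if $t = 2$, then $n > k_1 + k_t$.''
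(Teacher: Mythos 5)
The paper does not actually prove Lemma \ref{clm30} here --- it is quoted from \cite{HP} --- so there is no in-paper proof to compare against; the closest analogues proved in this paper (Lemmas \ref{lemma star} and \ref{c2.28+}) proceed by an incremental scheme: one-step differences $\alpha,\beta$ for consecutive or $c$-sequential sets (Proposition \ref{clm23}, Lemma \ref{coro22}), transfer of increments between congruent segments, and induction. Your route is genuinely different: you evaluate both full jumps in closed form. Your identifications of the representative $k_1$-sets $R_0,R_1,R_2$, the hockey-stick evaluations $\alpha(R_0,R_1)=\binom{n-j}{k+1}$, $\alpha(R_1,R_2)=\binom{n-j+1}{k+2}$, the $k_{j'}$-partner computation, $\beta(R_0,R_1)=\sum_{j'}\binom{n-j}{k_{j'}-m}$, $\beta(R_1,R_2)=\sum_{j'}\binom{n-j+1}{k_{j'}-m}$, and the Pascal collapse to $\Delta=\binom{n-j}{k+2}-\sum_{j'}\binom{n-j}{k_{j'}-m-1}$ all check out (they agree with Proposition \ref{clm23} summed over the intermediate consecutive pairs and with small examples). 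This buys an explicit reduction of the whole lemma to a single binomial inequality, which for this particular statement is more transparent than the sequential machinery.

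The one genuine gap is the final step: $\Delta>0$ is asserted, not proved, and your ``distance to the midpoint'' framing is not the right mechanism. The correct completion is the ratio-transfer you gesture at: with $N=n-j$, $a=n-k_1-m-1$, $b_{j'}=k_{j'}-m-1$, the hypothesis is $\binom{N}{a+1}\ge\sum_{j'}\binom{N}{b_{j'}+1}$; since $\binom{N}{r}/\binom{N}{r+1}=(r+1)/(N-r)$ is increasing in $r$ and $a\ge b_{j'}$ for all $j'$ (because $n\ge k_1+k_2$), multiplying the hypothesis through by $(a+1)/(N-a)$ gives $\binom{N}{a}\ge\sum_{j'}\binom{N}{b_{j'}}$, i.e., $\Delta\ge0$, with \emph{strict} inequality as soon as $a>b_2$ (equivalently $n>k_1+k_2$) and some $\binom{N}{b_{j'}+1}>0$. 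The remaining cases must be handled separately: for $t=2$ the side hypothesis $n>k_1+k_t$ supplies $a>b_2$; for $t\ge3$ with $n=k_1+k_2$ one has $a=b_2$ and the hypothesis forces $\sum_{j'\ge3}\binom{N}{k_{j'}-m}\le0$, which is impossible since each term is at least $1$ (using $j\le m+k_1-1$ and $n\ge k_1+k_2$ to see $0\le k_{j'}-m\le N$), so the implication is vacuous rather than via $\Delta>0$; and the fully degenerate case where every $b_{j'}=-1$ needs a line ($\binom{N}{a}>0$ because $0\le a\le N-2$). Note also that your partner formula $[1,m-1]\cup\{j-2\}\cup[n-k_{j'}+m+1,n]$ breaks at the boundary $j=m+1$, where $[m,j-2]=\emptyset$; this edge case should either be excluded or treated directly. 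With these points filled in, the proof is complete.
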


\section{Verify Parity: Proof of Lemma \ref{l2.18}}\label{sec5}

In this section,  we will give the proof  of Lemma \ref{l2.18}. The proof is divided into  two cases.

\textbf{Case 1: $I_1\prec I_2$. }

By Fact \ref{fact2.7} , Fact \ref{f2.8}, Remark \ref{remark2.20+} and $(\mathcal{F}_1, \dots, \mathcal{F}_t)$ is extremal, we have that for each $i\in [3, t]$, $I_i$ is the $k_i$-partner of $I_2$. By Fact \ref{f2.11}, for all $i\in [4, t]$, we have
\begin{equation}\label{e2}
I_i\prec I_3 \text{ or } I_3 \text{ is the $k_3$-parity of } I_i .
\end{equation}

\begin{claim}\label{clm2.16}
$(I_2, I_3)$ is maximal.
\end{claim}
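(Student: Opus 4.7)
I plan to prove Claim \ref{clm2.16} by contradiction. Assume $(I_2,I_3)$ is not maximal. In Case~1 we already know that $I_3$ is the $k_3$-partner of $I_2$, so by Fact \ref{f2.10} the $k_2$-partner $I_2^{\star}$ of $I_3$ satisfies $I_2\precneqq I_2^{\star}$ and $(I_2^{\star},I_3)$ is maximal. I will construct an L-initial $(n,k_1,\dots,k_t)$-cross intersecting system whose total size strictly exceeds $\sum_{i=1}^{t}|\mathcal{F}_i|$, contradicting extremality.

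The competitor I have in mind is $(\mathcal{F}_1,\mathcal{F}_2^{\star},\mathcal{F}_3,\mathcal{F}_4^{\star},\dots,\mathcal{F}_t^{\star})$, where $\mathcal{F}_2^{\star}:=\mathcal{L}(I_2^{\star},k_2)$ and, for $i\in[4,t]$, $\mathcal{F}_i^{\star}:=\mathcal{L}(I_i^{\star},k_i)$ with $I_i^{\star}$ the $k_i$-partner of $I_2^{\star}$. Fact \ref{fact2.7} gives $I_i^{\star}\preceq I_i$, so $\mathcal{F}_i^{\star}\subseteq\mathcal{F}_i$, and hence cross intersection with the unchanged $\mathcal{F}_1$ and $\mathcal{F}_3$ is inherited; Fact \ref{f2.8} handles the pairs $\mathcal{F}_2^{\star}$--$\mathcal{F}_i^{\star}$, the maximality of $(I_2^{\star},I_3)$ handles $\mathcal{F}_2^{\star}$--$\mathcal{F}_3$, and $n<k_1+k_2$ makes $\mathcal{F}_1$--$\mathcal{F}_2^{\star}$ free, while Remark \ref{remark2.20+} completes the remaining pairs.

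The central obstacle is then to show the strict gain
\[
|\mathcal{F}_2^{\star}|-|\mathcal{F}_2| \;>\; \sum_{i=4}^t\big(|\mathcal{F}_i|-|\mathcal{F}_i^{\star}|\big).
\]
My plan is to recognize both $(I_2,I_3,I_4,\dots,I_t)$ and $(I_2^{\star},I_3,I_4^{\star},\dots,I_t^{\star})$ as non-mixed L-initial $(n,k_2,k_3,\dots,k_t)$-cross intersecting systems---which is legal since $n\ge k_1+k_3\ge k_2+k_3$---so that each of their total sizes is captured by the function $f$ of Lemma \ref{c2.22} (applied with $k_2$ playing the role of the largest size). The required gain then reads $f(I_2^{\star})>f(I_2)$. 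I intend to obtain this by walking from $I_2$ to $I_2^{\star}$ along a $c$-sequential path in $\mathcal{R}_2$ and iterating the unimodality Lemmas \ref{clm28} and \ref{clm30} from Section \ref{sec4}, concatenating the strict increments along the path.

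The main difficulty I expect is choosing this path carefully so that every intermediate step stays inside an appropriate sub-region $\mathcal{R}_2(j)$ and the hypotheses of the unimodality lemmas remain available, together with handling the boundary hops where Lemma \ref{clm30} is required instead of Lemma \ref{clm28}; once the path is properly set up, the concatenation of positive increments yields $f(I_2^{\star})-f(I_2)>0$ and the desired contradiction.
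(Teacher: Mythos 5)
Your opening move is the same as the paper's: assume $(I_2,I_3)$ is not maximal, let $I_2^{\star}$ be the $k_2$-partner of $I_3$, and invoke Fact \ref{f2.10} to get $I_2\precneqq I_2^{\star}$ with $(I_2^{\star},I_3)$ maximal. But from there you take on a burden the paper avoids, and your plan for discharging that burden does not work. The paper's key observation is that nothing else needs to change: by property (\ref{e2}) (for all $i\in[4,t]$, $I_i\prec I_3$ or $I_3$ is the $k_3$-parity of $I_i$) together with Fact \ref{fact2.7}, the $k_2$-partner of $I_3$ precedes the $k_2$-partner of $I_j$ for every $j\in[4,t]$, so $\mathcal{L}(I_2^{\star},k_2)$ is \emph{already} cross intersecting with each original $\mathcal{F}_j$. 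Hence $(\mathcal{F}_1,\mathcal{L}(I_2^{\star},k_2),\mathcal{F}_3,\dots,\mathcal{F}_t)$ is a valid system whose total size strictly exceeds $\sum_i|\mathcal{F}_i|$, and extremality is contradicted with no quantitative comparison at all. (In fact this shows your $\mathcal{F}_i^{\star}$ equals $\mathcal{F}_i$ for $i\ge 4$, so your "loss" terms vanish --- but you need precisely the paper's containment argument to see that.)

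The genuine gap is in your proposed proof of the strict gain $|\mathcal{F}_2^{\star}|-|\mathcal{F}_2|>\sum_{i=4}^t(|\mathcal{F}_i|-|\mathcal{F}_i^{\star}|)$, i.e.\ $f(I_2^{\star})>f(I_2)$. Lemmas \ref{clm28} and \ref{clm30} are purely conditional statements of the form "if $f(G)\geq f(F)$ then $f(H)>f(G)$"; they say that $f$ is valley-shaped along a sequential path, not that it increases. There are no unconditional "strict increments" to concatenate: for all these lemmas tell you, $f$ could be strictly decreasing on the entire stretch from $I_2$ to $I_2^{\star}$, and then your inequality fails. To rule that out you would need exactly the structural fact that the families $\mathcal{F}_4,\dots,\mathcal{F}_t$ do not shrink, which is the containment argument above --- at which point the unimodality machinery is superfluous. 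So the proposal as written has a step that would fail; replacing the unimodality walk by the observation $I_{2,3}\prec I_{2,j}$ for $j\in[4,t]$ repairs it and recovers the paper's proof.
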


\begin{proof}
 Assume on the contrary that $(I_2, I_3)$ is not maximal. Let $F_{2, i}$ be the $k_2$-partner of $I_i$ for each $i\in [3, t]$, by Fact \ref{f2.10}, $(I_{2, i}, I_i)$ is maximal. Since $(I_2, I_3)$ is not maximal,   $I_2\precneqq I_{2, 3}$. By (\ref{e2}) and Fact \ref{fact2.7}, we have $I_{2, 3}\prec I_{2, j}$ for all $j\in [4, t]$. This implies that for $i\in [3, t]$, $\mathcal{L}(I_{2, 3}, k_2)$ and $\mathcal{L}(I_i, k_i)$ are cross intersecting. Further more, $(\mathcal{F}_1, \mathcal{L}(I_{2, 3}, k_2), \mathcal{F}_3, \dots, \mathcal{F}_t )$ is an $(n, k_1, \dots, k_t)$-cross intersecting system. However, $I_2\precneqq I_{2, 3}$ implies $\mathcal{F}_2 \subsetneqq \mathcal{L}(I_{2, 3}, k_2)$, this is a contradiction to the assumption that $(\mathcal{F}_1, \mathcal{F}_2, \dots, \mathcal{F}_t)$ is extremal.
\end{proof}
Claim \ref{clm2.16} tells us that $I_2 \in \mathcal{F}_{2, 3}$.
By Proposition \ref{prop2.9}, $\mathcal{F}_{1, 3}$ is the $k_1$-parity of $\mathcal{F}_{2, 3}$. Then there exists $I'_1\in \mathcal{F}_{1, 3}$ such that $I'_1$ is the $k_1$-parity of $I_2$.
If $I_1=I'_1$, then $I_1$ and $I_2$ satisfy (\ref{e1}), as desired. So we may assume  $I_1\ne I'_1$.
By Proposition \ref{prop2.9}, $(I'_1, I_3)$ is maximal. Since $\mathcal{F}_1$ and $\mathcal{F}_3$ are cross intersecting, $I_1\precneqq I'_1$.
Let $I'_i$ be the $k_i$-partner of $I'_1$ for each $i\in [3, t]$. It follows from Fact \ref{fact2.7} that $I'_i=I_i$, $i\in [3, t]$. Therefore, $(\mathcal{L}(I'_1, k_1), \mathcal{F}_2,  \dots, \mathcal{F}_t )$ is an $(n, k_1, \dots, k_t)$-cross intersecting system. However, $I_1\precneqq I'_1$ implies $\mathcal{F}_1 \subsetneqq \mathcal{L}(I'_1, k_1)$, this is a contradiction to the assumption that $(\mathcal{F}_1, \mathcal{F}_2, \dots, \mathcal{F}_t)$ is extremal.

\textbf{Case 2: $I_2\precneqq I_1$.}

Using a similar argument to Case 1, we conclude the following three properties:

(a) $I_i$ is the $k_i$-partner of $I_1$ for each $i\in [3, t]$;

(b) for all $i\in [4, t]$, $I_i\prec I_3$ or $I_3$ is the $k_3$-parity of  $I_i$;

(c) $(I_1, I_3)$ is maximal and $I_1 \in \mathcal{F}_{1, 3}$.

If $(I_2, I_3)$ is maximal, then $I_2 \in \mathcal{F}_{2, 3}$, and Proposition \ref{prop2.9} implies that
$I_1$ is the $k_1$-parity of $I_2$. However, $I_2\precneqq I_1$ implies that  $I_1$ is not the $k_1$-parity of $I_2$, a contradiction. Therefore $(I_2, I_3)$ is not maximal. Since $(\mathcal{F}_1, \mathcal{F}_2, \dots, \mathcal{F}_t)$ is extremal, combining (b), Fact \ref{fact2.7} and Fact \ref{f2.8}, we have that $I_2$ is the $k_2$-partner of $I_3$. Therefore, Fact \ref{f2.10} implies that the $k_3$-partner $I'_3$ of $I_2$ is such that $(I_2, I'_3)$ is maximal. So $I_2\in \mathcal{F}_{2, 3}$.
By Proposition \ref{prop2.9}, there exists $I'_1\in \mathcal{F}_{1, 3}$ such that $I'_1$ is the $k_1$-parity of $I_2$.
For $i\in [3, t]$, let $I'_i$ be the $k_i$-partner of $I_2$. It follows from Facts \ref{fact2.7} and  \ref{f2.8} that $I'_i$ is maximal to $I'_1$ for all $i\in [3, t]$. Combining with Remark \ref{remark2.20+}, we conclude that $(\mathcal{L}(I'_1, k_1), \mathcal{F}_2, \mathcal{L}(I'_3, k_3), \dots, \mathcal{L}(I'_t, k_t))$ is an $(n, k_1, \dots, k_t)$-cross intersecting system. Since $(\mathcal{F}_1, \dots, \mathcal{F}_t)$ is extremal,
\begin{equation}
|\mathcal{L}(I'_1, k_1)|+|\mathcal{F}_2|+\sum_{i=3}^t|\mathcal{L}(I'_i, k_i)|\leq \sum_{i=1}^t|\mathcal{F}_i|.\nonumber
\end{equation}
This implies that
\begin{equation}\label{e4}
\sum_{i=1, i\ne 2}^t|\mathcal{L}(I'_i, k_i)|\leq \sum_{i=1, i\ne 2}^t|\mathcal{F}_i|.
\end{equation}

%Let $n'=\max (F'_1\setminus \{n-\ell(F_1)+1, \dots, n\})$ and $k=|F'_1\setminus \{n-\ell(F_1)+1, \dots, n\}|$.

Let $H_1=I_1\setminus I_1^{\rm t}$ and  $g=|H_1|$.
Recall that $I_2$ is the $k_2$-partner of $I_3$, $(I_2, I_3)$ is not maximal and $(I_1, I_3)$ is maximal. It follows from Facts \ref{fact2.10} and \ref{fact2.4+} that
$$k_2< g.$$
Let $H_2=\{x_1, \dots, x_i, x_{i+1}, \dots, x_{k_2}\}$ be the first $k_2$ elements of $H_1$.
%If $x_{i+1}=x_i+1$ for all $i\in [k_2-1]$, then since $F_2$ is the $k_2$-partner of $F_3$ and $(F_1, F_3)$ is maximal, $F_2=\{x_1-1,n-k_2+2, \dots, n\}$.  Note that $F'_1$ is the $k_1$-parity of $F_2$, we get \[F'_1=\{x_1-1,  n-k_1+2, \dots, n\}.\]
Let $i\in [0, k_2-1]$ be the  subscript such that $x_i+2\leq x_{i+1}$ and $x_j+1=x_{j+1}$ for all $j\in [i+1, k_2-1]$, where $i=0$ if $x_{j+1}=x_j+1$ for all $j\in [k_2-1]$.
Since $I_2$ is the $k_2$-partner of $I_3$ and $(I_1, I_3)$ is maximal, we can see that if $i<k_2-1$, then $I_2=\{x_1, \dots, x_i, x_{i+1}-1, n-k_2+i+2, \dots, n\}$,  if $i=k_2-1$, then $I_2=\{x_1, \dots, x_{k_2-1}, x_{k_2}-1\}$. Since $I'_1$ is the $k_1$-parity of $I_2$ and $k_1>k_2$, we get $\ell(I'_1)>0$ and
\[I'_1=\{x_1, \dots, x_i, x_{i+1}-1, n-k_1+i+2, \dots, n\}, \,where  \,\, i\leq k_2-1.\]
%$$L(Y,f_1(X))=\begin{cases}1,\quad &Y\neq f_1(X)\\0,\quad &Y=f_1(X)\end{cases}$$
%Therefore, $\ell(F'_1)\geq k_1-k_2$. Note that $k_2<g$ and $\ell(F_1)=k_1-g$, so $\ell(F_1)<\ell(F'_1)$. This gives (i).
%Since $k_2\leq k_1$, Claim \ref{clm2.20} gives $\min (F_2\setminus F_1)= \min (F_1\setminus F_2)-1$.

\begin{claim}\label{clm2.20}
$I'_1\precneqq I_1\precneqq I''_1:=\{x_1, \dots,  x_{i+1}, n-k_1+i+2,  \dots, n\}.$
\end{claim}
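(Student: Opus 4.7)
The plan is to verify both inequalities by direct comparison in the lex order, using the explicit forms of $I'_1$, $I_1$ and $I''_1$ together with the structural facts about $H_1=\{x_1,\dots,x_g\}$ that were just established. Throughout, I will use the standard characterization $A\prec B$ iff $A\supseteq B$ or $\min(A\setminus B)<\min(B\setminus A)$, and the single quantitative input $x_g\le n-\ell(I_1)-1=n-k_1+g-1$, which is a direct consequence of the maximality in the definition of $\ell(I_1)$ (since $n-\ell(I_1)\notin I_1$).

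For the first inequality $I'_1\precneqq I_1$, both sets share $\{x_1,\dots,x_i\}$, and the element $x_{i+1}-1$ lies in $I'_1$. The gap condition $x_i+2\le x_{i+1}$ places $x_{i+1}-1$ strictly between $x_i$ and $x_{i+1}$, so it is not in the front segment $\{x_1,\dots,x_g\}$ of $I_1$; and $x_{i+1}-1\le x_g-1<n-k_1+g+1$ rules out its being in the tail $[n-k_1+g+1,n]$ of $I_1$. On the other side, $g\ge i+1$ gives $[n-k_1+g+1,n]\subseteq[n-k_1+i+2,n]\subseteq I'_1$, so $I_1\setminus I'_1\subseteq\{x_{i+1},\dots,x_g\}$ and hence $\min(I_1\setminus I'_1)\ge x_{i+1}$. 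Combining, $\min(I'_1\setminus I_1)=x_{i+1}-1<x_{i+1}\le\min(I_1\setminus I'_1)$, which gives $I'_1\precneqq I_1$.

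For the second inequality $I_1\precneqq I''_1$, I will use $g>k_2\ge i+1$ to ensure $x_{i+2}\in H_1\subseteq I_1$. The two sets share $\{x_1,\dots,x_{i+1}\}$, so it suffices to show $x_{i+2}<n-k_1+i+2$; then $x_{i+2}\in I_1\setminus I''_1$ while every element of $I''_1\setminus I_1$ is at least $n-k_1+i+2$. The required bound comes from two ingredients: the choice of $i$ makes $x_{i+1},\dots,x_{k_2}$ consecutive so that $x_{k_2}=x_{i+1}+k_2-i-1$, and the inequality $x_g\le n-k_1+g-1$ together with the strict increase of the $x_j$'s forces $x_{k_2}\le n-k_1+k_2-1$. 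These yield $x_{i+1}\le n-k_1+i$ and therefore $x_{i+2}\le n-k_1+i+1<n-k_1+i+2$, as desired.

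The whole argument is a routine unwinding of definitions; the only quantitative input is $x_g\le n-k_1+g-1$ coming from the maximality of $\ell(I_1)$, so no new lemmas are needed beyond the setup already in place. The mildest subtle point is ensuring that $x_{i+2}$ actually exists inside $H_1$, which is why the hypothesis $g>k_2$ that was derived immediately before Claim \ref{clm2.20} is essential.
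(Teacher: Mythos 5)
Your proof is correct and is essentially the paper's own argument carried out in full detail: the paper dismisses $I'_1\precneqq I_1$ as obvious, and for $I_1\precneqq I''_1$ it observes that $I''_1$ is the lex-last $k_1$-set containing $\{x_1,\dots,x_{i+1}\}$ (so $I_1\prec I''_1$) and rules out equality because $I_1=I''_1$ would force $g=i+1\le k_2$, contradicting $k_2<g$ --- which is the same use of $g>k_2$ that you make by exhibiting the witness $x_{i+2}$. One small repair is needed in the case $i=k_2-1$: there the step ``$x_{i+1}\le n-k_1+i$ and therefore $x_{i+2}\le n-k_1+i+1$'' does not follow, since $x_{i+2}=x_{k_2+1}$ lies outside the consecutive block $x_{i+1},\dots,x_{k_2}$ and need not equal $x_{i+1}+1$; however, the required bound is immediate from the estimate you already invoke, namely $x_j\le x_g-(g-j)\le n-k_1+j-1$ for every $j\le g$, applied with $j=i+2$.
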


\begin{proof}
Obviously, $I'_1\precneqq I_1$.
We are going to prove $I_1\precneqq I''_1$.
Notice that $\{x_1, \dots, x_i,\\  x_{i+1}\}\subset I_1$ and $|I''_1|=k_1$, these imply $I_1 \prec I''_1$.
If $I_1=I''_1$, then $H_1=\{x_1, \dots, x_{i+1}\}$. So $g=i+1\leq k_2$ since $i\leq k_2-1$, this is a contradiction to $k_2<g$.
\end{proof}

Since $(\mathcal{F}_1, \mathcal{F}_2, \dots, \mathcal{F}_t)$ is an $(n, k_1, k_2, \dots, k_t)$-cross intersecting system with $k_1+k_3\leq n<k_1+k_2$, $(\mathcal{F}_1, \mathcal{F}_3, \dots, \mathcal{F}_t)$ is an $(n, k_1, k_3, \dots, k_t)$-cross intersecting system with $n\geq k_1+k_3$.
Denote
\begin{align*}
m' &=\max \big\{\sum_{i=1}^t|\mathcal{G}_i|:  (\mathcal{G}_1, \mathcal{G}_3, \dots, \mathcal{G}_t) \text{ is an L-initial } (n, k_1, k_3, \dots, k_t)\text{-cross intersecting}\\
&\quad\quad\quad\quad \text{system with $n\geq k_1+k_3$, the ID} \ {G}_1 \ \text{of} \  \mathcal{G}_1 \ \text{satisfies} \  I'_1\prec G_1\prec I''_1 \big\}.
\end{align*}

Suppose that $(\mathcal{L}(G_1, k_1), \mathcal{L}(G_3, k_3), \dots, \mathcal{L}(G_t, k_t))$ is an $(n, k_1, \dots, k_t)$-cross intersecting system with $n\geq k_1+k_3$ such that $m' =\sum_{i=1, i\ne 2}^t|\mathcal{G}_i| $ and $I'_1\prec G_1\prec I''_1$.  Since $I'_1$ is the $k_1$-parity of $I_2$ and  $I'_1\prec G_1\prec I''_1$,
we have that either $I_2 \prec G_1$ (if $G_1\ne I'_1$) or $G_1$ is the $k_1$-parity of $I_2$ (if $G_1= I'_1$). Thus, $(\mathcal{L}(G_1, k_1), \mathcal{F}_2,  \mathcal{L}(G_3, k_3), \dots, \mathcal{L}(G_t, k_t))$ is an $(n, k_1, \dots, k_t)$-cross intersecting system as well.
Since $(\mathcal{F}_1, \mathcal{F}_2, \dots, \mathcal{F}_t)$ is extremal,  we have $\sum_{i=1, i\ne 2}^t|\mathcal{F}_i|\ge \sum_{i=1, i\ne 2}^t|\mathcal{G}_i|=m'$. Therefore,
\begin{equation}\label{contra}
m'=\sum_{i=1, i\ne 2}^t|\mathcal{F}_i|.
\end{equation}
To complete the proof of Lemma \ref{l2.18}, we only need to prove the following claim since it makes a contradiction to (\ref{contra}) and Claim \ref{clm2.20}.

\begin{claim}\label{clm2.21}
Let $(\mathcal{G}_1, \mathcal{G}_3, \dots, \mathcal{G}_t)$ be an L-initial $(n, k_1, k_3, \dots, k_t)$-cross intersecting system with $n\geq k_1+k_3$ and  $G_1$ be the ID of $\mathcal{G}_1$ satisfying $I'_1\prec G_1\prec I''_1$.  Then $\sum_{i=1, i\ne 2}^t|\mathcal{G}_i|=m'$ if and only if  $G_1=I'_1$ or  $G_1=I''_1$.
\end{claim}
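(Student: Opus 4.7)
I will reduce Claim \ref{clm2.21} to a discrete convexity statement for the single-variable function $f$ from Section \ref{sec4}, and then apply its local unimodality.

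By Fact \ref{f2.8} together with Remark \ref{remark2.20+}, for each $G_1 \in [I'_1, I''_1]$ the supremum of $\sum_{i \in [t]\setminus\{2\}}|\mathcal{G}_i|$ over valid L-initial completions $(\mathcal{G}_3,\ldots,\mathcal{G}_t)$ equals
\[
f(G_1) := |\mathcal{L}(G_1,k_1)| + \sum_{j=3}^{t}|\mathcal{L}(K_j(G_1),k_j)|,
\]
where $K_j(G_1)$ is the $k_j$-partner of $G_1$. This is precisely the non-mixed-type function of Lemma \ref{c2.22} applied to the system $(n,k_1,k_3,\ldots,k_t)$ (permissible because $n\ge k_1+k_3$). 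Hence $m' = \max\{f(G_1) : G_1 \in [I'_1, I''_1]\}$, and the claim is equivalent to showing that no strictly interior $G_1$ attains this maximum.

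Since $I'_1$ already carries the lex-maximum tail $\{n-k_1+i+2,\ldots,n\}$ among $k_1$-sets with prefix $\{x_1,\ldots,x_i,x_{i+1}-1\}$, the strict interior $(I'_1,I''_1)$ consists of exactly those $k_1$-sets starting with $\{x_1,\ldots,x_i,x_{i+1}\}$ other than $I''_1$; in lex order they form a sub-range from $A_{\min} := \{x_1,\ldots,x_{i+1},x_{i+1}+1,\ldots,x_{i+1}+k_1-i-1\}$ (the immediate lex-successor of $I'_1$) up to $I''_1$. On this sub-range I apply the discrete convexity of $f$ from Section \ref{sec4}: Lemma \ref{clm28} forces $f$ to be strictly unimodal with a unique minimum along any $c$-sequential chain in $\mathcal{R}_1$, so its maxima on such a chain sit at the endpoints, while Lemma \ref{clm30} controls transitions between chains whose sliding windows differ in position. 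Cascading such chains through $(A_{\min},I''_1)$ -- sliding the last coordinate, then the penultimate, and so on -- yields $\max_{G_1\in(A_{\min},I''_1)} f(G_1) < \max\{f(A_{\min}),f(I''_1)\}$.

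The remaining step is to compare $f(I'_1)$ with $f(A_{\min})$. Because $A_{\min}$ is the immediate lex-successor of $I'_1$ in ${[n]\choose k_1}$, we have $\alpha(I'_1,A_{\min}) = 1$, and Proposition \ref{clm23} (adapted to the $(k_1,k_3,\ldots,k_t)$-system) gives
\[
\beta(I'_1,A_{\min}) = \sum_{j=3}^{t}\binom{n-x_{i+1}-k_1+i+1}{k_j-x_{i+1}+i+1}.
\]
Invoking the Case 2 setup of Lemma \ref{l2.18} (where $I_2$ is the $k_2$-partner of $I_3$ with $(I_2,I_3)$ not maximal, which constrains how $x_{i+1}-i-1$ relates to the $k_j$'s), one checks $\beta(I'_1,A_{\min}) \ge 1$, hence $f(A_{\min}) \le f(I'_1)$ and so $m'$ cannot be achieved strictly inside $(I'_1,I''_1)$. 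The main obstacle is exactly this last inequality: while the interior convexity cascade transfers essentially verbatim from the non-mixed-type analysis of Section \ref{sec4}, verifying that at least one binomial in the sum for $\beta$ is nonzero relies delicately on the precise relationship between $x_{i+1}-i-1$ (the size of the partner of $I_2$'s core) and the sequence $k_3\ge\cdots\ge k_t$ enforced by Case 2.
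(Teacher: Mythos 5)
Your overall strategy coincides with the paper's: reduce to the single-variable function $f_1(G_1)=|\mathcal{L}(G_1,k_1)|+\sum_{j=3}^t|\mathcal{L}(K_j(G_1),k_j)|$, identify $m'$ with its maximum over $[I'_1,I''_1]$, and kill the interior by local unimodality. However, there are two genuine gaps. First, your "cascade" through $(A_{\min},I''_1)$ is under-justified: repeated use of Lemma \ref{clm28} only reduces the maximum to the finite set $\{A_0,A_1,\dots,A_{k-1},I'_1,I''_1\}$, where $A_j$ is the first $k_1$-set of $\mathcal{F}(j)$ (the $A_j$'s have growing tails $[n-j+1,n]$ and shrinking prefixes). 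Eliminating $A_1,\dots,A_{k-1}$ is not covered by Lemma \ref{clm30}, which applies only to initial intervals $[m,j]$, whereas the $A_j$'s begin with the non-interval prefix $\{x_1,\dots,x_i,x_{i+1}\}$; consecutive $A_j,A_{j+1}$ are also not $c$-sequential inside a common $\mathcal{R}_1(\ell)$. The paper needs a dedicated argument here (Claim \ref{lemma4.9}, proved via Lemma \ref{coro22}, Proposition \ref{clm23} and a re-entry into Lemma \ref{clm28}), and your proposal supplies no substitute for it.

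Second, your endpoint comparison proves too weak a conclusion. Your inequality $\beta(I'_1,A_{\min})\ge 1$ is in fact correct (the $j=3$ term ${n-q\choose k_3-(q-k_1)}$ is positive because $I_2\in\mathcal{F}_{2,3}$ forces the partner of $I_2\setminus I_2^{\rm t}$, of size $x_{i+1}-i-1$, to have size at most $k_3$, and $n\ge k_1+k_3$), so $f_1(A_{\min})\le f_1(I'_1)$ does hold. But "$f_1(A_{\min})\le f_1(I'_1)$, hence $m'$ is not achieved strictly inside" is a non sequitur when $\beta(I'_1,A_{\min})=1$: then $f_1(A_{\min})=f_1(I'_1)$, and if one only knew $f_1(I''_1)\le f_1(I'_1)$ the interior point $A_{\min}$ would attain $m'$, contradicting the "only if" direction of the claim. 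To close this you must show that in the equality case $f_1$ strictly increases from $A_{\min}$ to $I''_1$ — this is exactly the paper's Claim \ref{lemma4.10}, which uses Proposition \ref{clm23} and $n>k_1+k_t$ to get $\beta(F,G)<\beta(I'_1,A_0)\le 1$, hence $\beta(F,G)=0$, for all consecutive pairs beyond $A_0$. Your write-up omits this step, so the "if and only if" is not established.
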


\begin{proof}
For each $i\in [3, t]$, let $T_i$ be the $k_i$-partner of $G_1$. By Remark \ref{r2.6} and Lemma \ref{c2.22}, we have
$$\sum_{i=1, i\ne 2}^t|\mathcal{G}_i|\leq|\mathcal{L}(G_1, k_1)|+\sum_{i=3}^t|\mathcal{L}(T_i, k_i)|=:f_1(G_1).$$
By the definition of $m'$,
$$m'=\max \left\{f_1(R): R \in {[n]\choose k_1} \ \text{and} \  I'_1\prec R\prec I''_1\right\}.$$

Denote
$$\mathcal{F}=\{F\in {[n]\choose k_1}: I'_1 \prec F \prec I''_1\}.$$
Then $m'=f_1(\mathcal{F})$ (recall that $f_1(\mathcal{F})=\max \{f_1(F): F\in \mathcal{F}\}$).
For each $j\in [k_1]$, denote
$$\mathcal{F}(j)=\{F\setminus [n-j+1, n]: F\in \mathcal{F}, [n-j+1, n]\subseteq F\}.$$
 To prove Claim \ref{clm2.21} is equivalent to prove  the following claim.

\begin{claim}\label{claim4.8}
$f_1(\mathcal{F})=\max \{f_1(I'_1), f_1(I''_1)\}$, and for any $F\in \mathcal{F}$ with $I'_1\precneqq F \precneqq I''_1$, we have $f_1(F)<f_1(\mathcal{F})$.
\end{claim}

\begin{proof}
By the definitions of $I'_1$ and $I''_1$,  we can see that $\ell(I'_1)=\ell(I''_1)=:k>0$, $\mathcal{F}(k)=\{I'_1\setminus [n-\ell(I'_1)+1, n],\, I''_1\setminus [n-\ell(I''_1)+1, n]\}$ and for each $F\in \mathcal{F}$, $\ell(F)\leq k$.
Denote
\begin{align*}
A_0&=\{x_1, \dots, x_{i}, x_{i+1}, x_{i+1}+1, \dots, x_{i+1}+k_1-i-1\},\\
A_1&=\{x_1, \dots, x_{i}, x_{i+1}, x_{i+1}+1,\dots, x_{i+1}+k_1-i-2\}\cup \{n\},\\
&\vdots\\
A_{k-1}&=\{x_1, \dots, x_{i}, x_{i+1}, x_{i+1}+1\}\cup [n-k_1+i+3, n].
\end{align*}

Then $A_0$ is the $k_1$-set with $F'_1<A_0$.
Applying Lemma \ref{clm28} repeatedly, we obtain
\begin{align*}
f_1(\mathcal{F})&=\max \{f_1(A_0), f_1(\mathcal{F}(1))\},\\
f_1(\mathcal{F}(1))&=\max \{f_1(A_1), f_1(\mathcal{F}(2))\},\\
&\vdots\\
f_1(\mathcal{F}(k-1))&=\max \{f_1(A_{k-1}), f_1(\mathcal{F}(k))\},\\
f_1(\mathcal{F}(k))&=\max \{f_1(F'_1), f_1(F''_1)\}.
\end{align*}
Thus
$$f_1(\mathcal{F})=\max \{f_1(A_0), f_1(A_1), \dots, f_1(A_{k-1}), f_1(I'_1), f_1(I''_1)\}.$$
Since $k_1>k_2$ and $n\geq k_2+k_t$,  $n> k_1+k_t$.
Then by Lemma \ref{clm28} again, we can see that for any $F\in \mathcal{F}\setminus\{A_0, A_1, \dots, A_{k-1}, I'_1, I''_1\}$, we have $f_1(F)<f_1(\mathcal{F})$.
Combing with the  following Claims \ref{lemma4.9} and \ref{lemma4.10}, we will get
Claim \ref{claim4.8}.

\begin{claim}\label{lemma4.9}
$\max \{f_1(A_0), f_1(A_1), \dots, f_1(A_{k-1}), f_1(I''_1)\}=\max \{f_1(A_0), f_1(I''_1)\}$.
\end{claim}

\begin{proof}
If $k=1$, then we are fine. We may assume that $k\geq 2$.
Let $I''_1=A_k$.
To prove Claim \ref{lemma4.9}, it is sufficient to prove that for any $j\in [0, k-2 ]$, if $f_1(A_j)\leq f_1(A_{j+1})$, then $f_1(A_{j+1})<f_1(A_{j+2})$.
Let $j\in [0, k-2]$.
Clearly, $\ell(A_{j+1})=\ell(A_j)+1\geq 1$ and $\ell(A_{j+2})=\ell(A_{j+1})+1$.
Let $A'_j$ and $A'_{j+1}$ be the $k_1$-sets such that $A_j<A'_j$ and $A_{j+1}<A'_{j+1}$.
Then $A'_j\overset{\ell(A_{j+1})}{\longrightarrow}A_{j+1}$.
Let $J$ be the $k_1$-set such that  $A'_{j+1}\overset{\ell(A_{i+1})}{\longrightarrow}J$. Then $A'_j$, $A_{j+1}$ are $\ell(A_{i+1})$-sequential and $A'_{j+1}$, $J$ are $\ell(A_{i+1})$-sequential. Clearly,
%$$J=\{x_1, \dots,x_i\} \cup \{x_{i+1}, x_{i+1}+1, \dots, x_{i+1}+k_1-i-3-j,  x_{i+1}+k_1-i-1-j\}\cup [n-j, n].$$
 $\max A'_j=\max A'_{j+1}$ and $\max A_{j+1}=\max J=n$. Applying Lemma \ref{coro22}, we have
\begin{align}\label{e26+}
\text{$\alpha(A'_j, A_{j+1})=\alpha(A'_{j+1}, J)$ and $\beta(A'_j, A_{j+1})=\beta(A'_{j+1}, J)$. }
\end{align}
If $\ell(A'_j)\geq 1$, then $A_j<A'_j=A_{j+1}<A_{j+2}=A'_{j+1}$.
In this case, $\alpha(A_{j+1}, A_{j+2})=1$.
By  Proposition \ref{clm23} and $n>k_1+k_t$ (since $k_1>k_2$ and $n\geq k_2+k_t$),
we have $\beta(A_{j+1}, A_{j+2})=0$.
So  $f(A_{j+1})< f(A_{j+2})$, as desired.
Next we may assume that $\ell(A'_j)=0$.
Clearly, $\alpha(A_j, A'_j)=\alpha(A_{j+1}, A'_{j+1})=1$. By Proposition \ref{clm23} and $\max A'_j=\max A'_{j+1}$, we have $\beta(A_j, A'_j)=\beta(A_{j+1}, A'_{j+1})$.
Combining with (\ref{e26+}), we get
\begin{align*}
\text{$\alpha(A_j, A_{j+1})=\alpha(A_{j+1}, J)$ and $\beta(A_j, A_{j+1})=\beta(A_{j+1}, J)$. }
\end{align*}
Thus $f(J)\geq f(A_{j+1})$ since $f(A_j)\leq f(A_{j+1})$.
Note that $\ell(A_{j+1})=\ell(J)$ and $A_{j+1}\setminus A_{j+1}^{\rm t}\in \mathcal{R}_1(\ell(A_{j+1}))$, so $J\setminus J^{\rm t}\in \mathcal{R}_1(\ell(A_{j+1}))$ and $A_{j+1}\setminus A_{j+1}^{\rm t} \overset{1}{\prec}J\setminus J^{\rm t}$. Since $\ell(A_{j+2})=\ell(A_{j+1})+1$, $A_{j+2}\setminus [n-\ell(A_{j+1})+1, n]\in \mathcal{R}_1(\ell(A_{j+1}))$. And in $\mathcal{R}_1(\ell(A_{j+1}))$,  we have
\[
A_{j+1}\setminus A_{j+1}^{\rm t} \overset{1}{\prec}J\setminus J^{\rm t}\overset{1}{\longrightarrow}A_{j+2}\setminus [n-\ell(A_{j+1})+1, n].
\]
Recall that $n>k_1+k_t$.
By Lemma \ref{clm28} and $f(J)\geq f(A_{j+1})$, we obtain $f(A_{j+2})>f(J)\geq f(A_{j+1})$, as required.
\end{proof}

\begin{claim}\label{lemma4.10}
If $f_1(A_0)\geq f_1(I'_1)$, then $f_1(A_0)< f_1(I''_1)$.
\end{claim}

\begin{proof}

Note that $I'_1<A_0$ in $\mathcal{R}_1$.
Then $\alpha(I'_1, A_0)=1$. Since $f_1(A_0)\geq f_1(I'_1)$, $\alpha(I'_1, A_0)\geq \beta(I'_1, A_0)$, so $\beta(I'_1, A_0)\leq 1$. Consider the family $\mathcal{F'}:=\{F: |F|=k_1, A_0\prec F\prec I''_1\}$. We can see that for each $F\in \mathcal{F'}$, $\max F\geq \max A_0$.
By Proposition \ref{clm23} and $n>k_1+k_t$ (since $k_1>k_2$ and $n\geq k_2+k_t$), for each two $k_1$-sets $F$ and $G$ with $A_0 \prec F<G\prec I''_1$, we have that $\beta(F, G)<\beta(I'_1, A_0)\leq 1$ and $\alpha(F, G)=1$.
Thus, $f_1(I''_1)>f_1(A_0)$. This completes the proof of Claim \ref{lemma4.10}
\end{proof}

Since we have shown Claims \ref{lemma4.9} and \ref{lemma4.10},  the proof of Claim \ref{claim4.8} is complete.
\end{proof}
Since the proof of Claim \ref{claim4.8} is complete,  Claim \ref{clm2.21} follows as noted before.
\end{proof}
Since the proof of Claim \ref{clm2.21} is complete, the  proof of Lemma \ref{l2.18} is complete.

\section{Verify Unimodality: Proof of Lemma \ref{c2.27}}\label{sec6}
In this section, we are going to prove a more general result Lemma \ref{lemma star} which will be applied for the most general case $n\geq k_1+k_t$ in \cite{mix2}. Lemma \ref{c2.27} will follow from Lemma \ref{lemma star}  immediately. Before  stating the result, we need to make some preparations.

\begin{definition}\label{def5.1}
Suppose that $t\geq 2$ is a positive integer, $s\in [t-1]$, $k_1\geq k_2\geq\dots\geq k_t$ and $k_1+k_{s+1}\leq n <k_{s-1}+k_s$. We define $\mathcal{R}_i$ for every $i\in [t]$ as follows.
For $i\in [s]$, let
\[
\mathcal{R}_i=\{R\in {[n]\choose k_i}: \{1, n-k_i+2, \dots, n\} \prec R \prec \{k_t, n-k_i+2, \dots, n\}\}.
\]
For $i\in [s+1, t]$, let
\[
\mathcal{R}_i=\{ R\in {[n]\choose k_i}: [k_t]\subseteq R \}.
\]
\end{definition}

In Section \ref{sec3}, we defined notations $\mathcal{R}_i$, $1\leq i \leq t$. Throughout the paper except in this section, $\mathcal{R}_i$ follows from the definitions in Section \ref{sec3}. $\mathcal{R}_i$ in this section follows from the above definition. When $s=2$, they are consistent. 

\begin{definition}
Let $R$ and $T$ be two subsets of $[n]$ with different sizes. We write $R\overset{\sim}{<} T$ if $R\prec T$ and there is no other set $R'$ such that  $|R'|=|R|$ and $R\precneqq R' \prec T$.
\end{definition}

By the definition of parity,  we have the following simple remark.
\begin{remark}
For any $R\in \mathcal{R}_1$ and $i\in [2, s]$, $R$ has a $k_i$-parity if and only if $|R\setminus R^{\rm t}|\leq k_i$.
\end{remark}
From the above observation, for any $R_1\in \mathcal{R}_1$ and $i\in [2, s]$, we define the {\it corresponding $k_i$-set} of $R_1$ as follows.
\begin{definition}\label{R_2}
Let $R_1\in \mathcal{R}_1$ and $i\in [2, s]$. If $R$ has a $k_i$-parity, then let $R_i$ be the $k_i$-parity of $R_1$; otherwise, let $R_i$ be the $k_i$-set such that $R_i\overset{\sim}{<} R_1$. We call $R_i$ the {\it corresponding $k_i$-set} of $R_1$.
\end{definition}

\begin{definition}\label{deff}
Let $R_1\in \mathcal{R}_1$. For each $i\in [2, s]$, let $R_i$ be  the corresponding $k_i$-set of $R_1$  as in Definition \ref{R_2} and for each $i\in [s+1, t]$, let $R_i$ be the $k_i$-partner of $R_1$. We denote
$$f(R_1)=\sum_{i=1}^t |\mathcal{L}(R_i, k_t)|.$$
\end{definition}

\begin{definition}\label{def6.6}
For each $j\in [k_1-1]$, let 
\begin{align*}
\mathcal{R}_{1, j}&=\{R\in \mathcal{R}_1: [n-j+1, n]\subseteq R\},\\
\mathcal{R}_1(j)&=\{R\setminus [n-j+1, n]: R\in \mathcal{R}_{1, j}\}. 
\end{align*}
For any $j\in [k_1-1]$ and any $R\in \mathcal{R}_{1, j}$, we define $f(R\setminus [n-j+1, n])=f(R)$. For example, $f(\{1\})=f(\{1, n-k_1+2, \dots, n\})$.
\end{definition}

\begin{definition}\label{ab}
Let $R_1, R'_1\in \mathcal{R}_1$ with $R_1\prec R'_1$ and for any $i\in [s+1, t]$, $R_i, R'_i$ be the $k_i$-partners of $R_1, R'_1$ respectively, and for each $i\in [2, s]$, let $R_i, R'_i$ be the corresponding $k_i$-sets of $R_1, R'_1$ respectively  as in Definition \ref{R_2}.  We define the following functions.

For $i\in [s]$, let
\begin{align*}
\alpha_i(R_1, R'_1)&=|\mathcal{L}(R'_i, k_1)|-|\mathcal{L}(R_i, k_1)|,\\
\gamma(R_1, R'_1)&=\sum_{i=1}^s\alpha_i(R_1, R'_1),\\
\delta(R_1, R'_1)&=\sum_{i=s+1}^t\big(|\mathcal{L}(R_i, k_i)|-|\mathcal{L}(R'_i, k_i)|\big).
\end{align*}
\end{definition}

\begin{definition}
For any $j\in [k_1-1]$ and any $R_1, R'_1 \in \mathcal{R}_{1, j}$ with $R_1\prec R'_1$, we define $\alpha_i(R_1\setminus [n-j+1, n], R'_1\setminus [n-j+1, n])=\alpha_i(R_1, R'_1)$ for each $i\in [s]$, $\gamma(R_1\setminus [n-j+1, n], R'_1\setminus [n-j+1, n])=\gamma(R_1, R'_1)$ and $\delta(R_1\setminus [n-j+1, n], R'_1\setminus [n-j+1, n])=\delta(R_1, R'_1)$.
\end{definition}

From the above definition, we have 
\[
f(R'_1)-f(R_1)=\gamma(R_1, R'_1)-\delta(R_1, R'_1).
\]
\begin{remark}\label{add}
Suppose that $A_1, B_1, C_1\in \mathcal{R}_1$ with $A_1\prec B_1\prec C_1$. Then
for any $i\in [s]$,
$
\alpha_i(A_1, C_1)=\alpha_i(A_1, B_1)+\alpha_i(B_1, C_1)$, and
$\gamma(A_1, C_1)=\gamma(A_1, B_1)+\gamma(B_1, C_1)$,
 $\delta(A_1, C_1)=\delta(A_1, B_1)+\delta(B_1, C_1)$.
\end{remark}

We will prove the following more general lemma, which implies Lemma \ref{c2.27}.

\begin{lemma}\label{lemma star}
Let $k\in [0, k_1-1]$ and $F_1, G_1, H_1 \in \mathcal{R}_1(k)$ with $F_1\overset{c}{\prec} G_1 \overset{c}{\prec} H_1$ for some $c\in [k_1]$. If $f(F_1)\leq f(G_1)$, then $f(G_1)<f(H_1)$.
\end{lemma}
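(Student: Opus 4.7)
The plan is to extend the non-mixed unimodality of Lemma \ref{clm28} from \cite{HP} to the mixed functional $f$. After reducing to $k=0$ via the identification $R \mapsto R \cup [n-k+1,n]$ (which by Definition \ref{def6.6} preserves $f$, $\gamma$, $\delta$, and the $c$-sequential relation), I would write the two increments as $\Delta_1 := f(G_1)-f(F_1) = \gamma(F_1,G_1)-\delta(F_1,G_1)$ and $\Delta_2 := f(H_1)-f(G_1) = \gamma(G_1,H_1)-\delta(G_1,H_1)$, and target the second-difference inequality $\Delta_2 \ge \Delta_1$, strictly enough to force $\Delta_2 > 0$ from $\Delta_1 \ge 0$.

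The partner component $\delta = \sum_{i=s+1}^{t}(|\mathcal{L}(R_i,k_i)|-|\mathcal{L}(R'_i,k_i)|)$ coincides with the non-mixed $\beta$ of Definition \ref{ab-} applied to the uniformity sub-sequence $(k_1,k_{s+1},\ldots,k_t)$. Proposition \ref{clm23} expresses $\beta$ as a sum of binomial coefficients decreasing in $\max R'_1$, so the full non-mixed contribution $|\mathcal{L}(R_1,k_1)|+\sum_{i=s+1}^{t}|\mathcal{L}(R_i,k_i)|$ inherits the unimodality of Lemma \ref{clm28}; this handles the terms $i=1$ and $i \in [s+1,t]$ of $f$ directly by transcription from \cite{HP}, giving $\alpha_1(F_1,G_1) - \delta(F_1,G_1) \le \alpha_1(G_1,H_1) - \delta(G_1,H_1)$.

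The new ingredient is the parity sum $\sum_{i=2}^{s}\alpha_i$. For each $i\in[2,s]$, Definition \ref{R_2} gives $R_i$ either as the $k_i$-parity of $R_1$ (when $|R_1\setminus R_1^{\rm t}|\le k_i$) or as the $k_i$-set with $R_i\overset{\sim}{<}R_1$. When all three of $F_1,G_1,H_1$ fall under the same clause, the resulting $F_i,G_i,H_i$ form an analogous $c'$-sequential triple (shifted by Definition \ref{def+} and Fact \ref{fact2.13+} in the parity case, or by the $\overset{\sim}{<}$ analogue in the other case), and the non-mixed analysis via Lemma \ref{coro22} transfers to show the contribution to the second difference is non-negative. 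The transitional sub-case, where the condition $|R_1\setminus R_1^{\rm t}|\le k_i$ flips as $R_1$ advances from $F_1$ through $G_1$ to $H_1$, is the main obstacle: the two clauses of Definition \ref{R_2} produce $k_i$-sets of different shapes, and one must compute both $\alpha_i(F_1,G_1)$ and $\alpha_i(G_1,H_1)$ explicitly, using Facts \ref{fact2.4+} and \ref{fact2.10}, to verify $\alpha_i(G_1,H_1) - \alpha_i(F_1,G_1) \ge 0$ at the crossing step. This transitional bookkeeping has no counterpart in \cite{HP} and is precisely where the ``two free variables'' difficulty flagged in the introduction manifests technically.

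Summing the sub-claims gives $\Delta_2 \ge \Delta_1$. Strictness then follows by a final case split: if the step $G_1 \overset{c}{\prec} H_1$ alters the partner-relevant maximum, then $\delta(G_1,H_1)<\delta(F_1,G_1)$ by Proposition \ref{clm23}, so $\Delta_2 > \Delta_1 \ge 0$; otherwise the elementary bound $\alpha_1(F_1,G_1)\ge 1$ combined with $\Delta_1 \ge 0$ forces $\delta(F_1,G_1)\ge 1$, and the parity computation above then upgrades the non-strict estimate on $\Delta_2$ to a strict one. Either way $f(H_1) > f(G_1)$, completing the proof.
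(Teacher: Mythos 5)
Your opening move --- reducing the general case $k\in[0,k_1-1]$ to $k=0$ via $R\mapsto R\cup[n-k+1,n]$ --- does not work, and this is the central gap. That identification preserves the \emph{value} of $f$ (Definition \ref{def6.6}), but it does not preserve the $c$-sequential relation: if $F_1\overset{c}{\prec}G_1$ in $\mathcal{R}_1(k)$, the lifted sets $F_1\cup[n-k+1,n]$ and $G_1\cup[n-k+1,n]$ are far apart in the lex order on ${[n]\choose k_1}$, separated by many $k_1$-sets whose tail-lengths $\ell$ vary, and it is exactly the $\gamma$-contributions of those intermediate sets (via Claim \ref{claim1}, which makes $\alpha_i$ depend on $\ell$) that must be controlled. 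This is why the paper proves the lemma by induction on $k$: the induction step lifts $F_1\overset{c}{\prec}G_1\overset{c}{\prec}H_1$ in $\mathcal{R}_1(k+1)$ to a $(c+1)$-sequential configuration in $\mathcal{R}_1(k)$, introduces the auxiliary sets $A,B,C,D,E$ sitting between the lifts, and needs Claims \ref{0000}, \ref{equal}, \ref{00000} and \ref{claim6.24} to transfer the hypothesis $\gamma(F_1,G_1)\ge\delta(F_1,G_1)$ down to the pair $(A,C)$ where the induction hypothesis applies, and then back up to $(D,H_1')$. None of this is captured by your proposal.

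Even restricted to $k=0$, the proposal defers rather than performs the essential work. You aim for a second-difference inequality $\Delta_2\ge\Delta_1$ plus a strictness upgrade, but Lemma \ref{clm28} of \cite{HP} is an implication between signs of increments, not a second-difference estimate, so the non-mixed part does not ``transcribe'' to $\alpha_1(F_1,G_1)-\delta(F_1,G_1)\le\alpha_1(G_1,H_1)-\delta(G_1,H_1)$ as you assert. The paper's base case instead splits on whether $\delta(G_1',G_1)<s'$, and in the main case reduces to $f(G_1')\le f(H_1')$ for the immediate predecessors; establishing that requires Claim \ref{claim5+} and the inner induction of Claim \ref{claim6} (propagating $f(B_1)\le f(C_1)$ to all the intermediate sets $D_j$ via Claim \ref{claim7}). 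Your ``transitional sub-case'' paragraph correctly locates where the parity terms $\alpha_i$, $i\in[s'+1,s]$, cause trouble, but saying that one ``must compute both $\alpha_i(F_1,G_1)$ and $\alpha_i(G_1,H_1)$ explicitly'' is a statement of the problem, not a solution; the actual computation is Claim \ref{claim1} together with Claim \ref{claim4}, and the strictness in the end comes from Proposition \ref{clm23} (strict decrease of $\delta$ when the maximum grows, using $n>k_1+k_t$ via Remark \ref{remark5.9}) combined with the strict growth of $\gamma$ when a new $k_j$-parity appears. As it stands the proposal is a plausible plan whose two hardest components --- the passage between levels $k$ and the inner induction in the base case --- are missing.
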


Let us explain why Lemma \ref{lemma star} implies Lemma \ref{c2.27}. Take $s=2$ (see Definition \ref{def5.1}). Take $F_2, G_2, H_2 \in \mathcal{F}_{2, 3}(j)$ satisfying the condition $F_2 \overset{c}{\prec}G_2\overset{c}{\prec}H_2$ (see Lemma \ref{c2.27}). Let $F'_2=F_2\cup [n-j+1, n], G'_2=G_2\cup [n-j+1, n]$ and  $H'_2=H_2\cup [n-j+1, n]$. Then $F'_2, G'_2, H'_2 \in \mathcal{F}_{2, 3}$ and $g(F_2)=g(F'_2)$, $g(G_2)=g(G'_2)$ and $g(H_2)=g(H'_2)$ (see Definition \ref{def3.11}). Let $F'_1, G'_1, H'_1$ be the $k_1$-parities of $F'_2, G'_2, H'_2$ respectively. (Proposition \ref{prop2.9}) guarantees the $k_1$-parities of $F'_2, G'_2, H'_2$ exist.)
Take $k=j+k_1-k_2$ in Lemma \ref{lemma star}.
Let $F_1=F'_1\setminus [n-k+1, n]$, $G_1=G'_1\setminus [n-k+1, n]$ and $H_1=H'_1\setminus [n-k+1, n]$. Then $F_1, G_1, H_1 \in \mathcal{R}_1(k)$ with $F_1\overset{c}{\prec}G_1\overset{c}{\prec}H_1$ and $f(F_1)=g(F'_1)$, $f(G_1)=f(G'_1)$ and $f(H_1)=f(H'_1)$ (see Definition \ref{def6.6}).
By Fact \ref{fact2.7}, for each $i\in [3, t]$, $F'_1$ and $F'_2$ have the same $k_i$-partner,
$G'_1$ and $G'_2$ have the same $k_i$-partner and $H'_1$ and $H'_2$ have the same $k_i$-partner.
Therefore, $g(F_2)=g(F'_2)=f(F'_1)=f(F_1)$, $g(G_2)=g(G'_2)=f(G'_1)=f(G_1)$ and $g(H_2)=g(H'_2)=f(H'_1)=f(H_1)$. Now applying Lemma \ref{lemma star}, we have that if $f(F_1)=f(F'_1)=g(F_2)\leq g(G_2)=f(G'_1)=f(G_1)$, then $g(G_2)=f(G'_1)=f(G_1)<f(H_1)=f(H'_1)=g(H_2)$.

Before proving Lemma \ref{lemma star}, we need to make some preparations.
Denote
\begin{equation}\label{s'}
s'=\{i: i\in [s], k_i=k_1\}.
\end{equation}

The following remark is  useful.
\begin{remark}\label{remark5.9}
When $k_1=k_2=\dots=k_s$,  the authors have proved the truth of Theorem \ref{main1} in \cite{HP+}, see Corollary 1.12 in \cite{HP+} (taking $k=k_1=\dots=k_s$ in Corollary 1.12). So we may assume that $s'<s$. So $n>k_{s'}+k_{s+1}$.
\end{remark}

\begin{claim}\label{claim1}
Let  $R_1, R'_1\in \mathcal{R}_1$ with $R_1<R'_1$ and $R=R'_1\setminus {R'_1}^{\rm t}$
Then for each $i\in [s]$, we have
\[
\alpha_i(R_1, R'_1)={\ell(R'_1)\choose k_i-|R|}.
\]
In particular, $\alpha_i(R_1, R'_1)=0$ if and only if $\ell(R'_1)< k_1-k_i$. Furthermore, if $\ell(R'_1)=0$, then $\gamma(R_1, R'_1)=s'$.
\end{claim}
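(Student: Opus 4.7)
The proof proceeds by explicitly identifying $R_1$, $R_i$, and $R'_i$, then counting the $k_i$-sets $F$ that lie strictly between $R_i$ and $R'_i$ in lex order. Write $R=\{x_1<\cdots<x_{k_1-l}\}$ where $l=\ell(R'_1)$; since $R'_1\in\mathcal{R}_1$ we have $R\neq\emptyset$, and when $l\ge 1$ we have $\max R<n-l$, so the predecessor of $R'_1=R\cup[n-l+1,n]$ in the lex order on $\binom{[n]}{k_1}$ is $R_1=R\cup\{n-l\}\cup[n-l+2,n]$; the edge case $l=0$ reduces to Case C below when $k_i<k_1$ and is trivial when $k_i=k_1$.

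The analysis splits according to the position of $l$ relative to $k_1-k_i$. \textbf{Case A} ($l>k_1-k_i$, equivalently $|R|<k_i$): both $R_1$ and $R'_1$ admit $k_i$-parities, and Definition \ref{R_2} gives $R'_i=R\cup[n-(k_i-|R|)+1,n]$ and $R_i=R\cup\{n-l\}\cup[n-(k_i-|R|-1)+1,n]$. \textbf{Case B} ($l=k_1-k_i$, so $|R|=k_i$): $R'_i=R$ is the $k_i$-parity of $R'_1$, while $R_1$ has no $k_i$-parity so $R_i\overset{\sim}{<}R_1$. \textbf{Case C} ($l<k_1-k_i$, equivalently $|R|>k_i$): neither set has a $k_i$-parity. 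In Case C, $R_1$ and $R'_1$ coincide on $[1,x_{k_i}]$ (both equal $\{x_1,\ldots,x_{k_i}\}$ there) while their symmetric difference lies at values $\ge n-l>x_{k_i}$, so a $k_i$-set $S$ satisfies $S\prec R_1$ iff $S\prec R'_1$; hence $R_i=R'_i$ and $\alpha_i(R_1,R'_1)=0=\binom{l}{k_i-|R|}$ by the convention on binomials with negative lower index.

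The heart of the argument is Case A, where I claim the $k_i$-sets $F$ with $R_i\precneqq F\prec R'_i$ are exactly those of the form $F=R\cup S$ with $S\in\binom{[n-l+1,n]}{k_i-|R|}$, giving $\binom{l}{k_i-|R|}$ sets. The forward direction is a direct lex-order computation using $R\subseteq R_i\cap R'_i$. For the converse I first show $R\subseteq F$: if $r=\min(R\setminus F)$ existed, the hypothesis $F\prec R'_i$ would force some $y\in F$ with $y<r$ and $y\notin R'_i$; the described form of $R_i$ shows $y\notin R_i$ as well, so $y\in F\setminus R_i$, while $R\cap[1,r-1]\subseteq F$ together with $n-l>r$ give $\min(R_i\setminus F)=r$, contradicting $\min(R_i\setminus F)<\min(F\setminus R_i)\le y<r$. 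A similar contradiction argument, treating a putative $z\in F\setminus R$ according to whether $z<\min R$, $z$ lies in a gap of $R$, $\max R<z<n-l$, or $z=n-l$, then establishes $F\setminus R\subseteq[n-l+1,n]$. Case B is handled by the same style of analysis and yields the single set $F=R'_i=R$, matching $\binom{l}{0}=1$.

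Finally, the in-particular statements follow: $\alpha_i(R_1,R'_1)=0$ iff $\binom{l}{k_i-|R|}=0$ iff $l<k_1-k_i$ (since $0\le k_i-|R|\le l$ in Cases A and B), and when $\ell(R'_1)=0$ each $\alpha_i=\binom{0}{k_i-k_1}$ equals $1$ if $k_i=k_1$ and $0$ otherwise, so $\gamma(R_1,R'_1)=\sum_{i=1}^s\alpha_i=s'$ by the definition in (\ref{s'}). The main technical obstacle is the converse direction of Case A; in particular, the sub-case $\max R<z<n-l$ is the most delicate, as one must further distinguish whether $n-l\in F$ (which, combined with the structural constraint on $F\setminus R_i$, forces $F=R_i$, contradicting $R_i\precneqq F$).
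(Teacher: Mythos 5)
Your overall route is the same as the paper's: identify the corresponding $k_i$-sets $R_i,R'_i$ explicitly according to whether the parities exist, and compute $|\mathcal{L}(R'_i,k_i)|-|\mathcal{L}(R_i,k_i)|$ directly. Your Case A supplies the counting argument that the paper only asserts, and Cases A, B, and C are correct for $\ell(R'_1)\ge 1$.

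There is, however, a genuine gap in the edge case $\ell(R'_1)=0$, which you claim "reduces to Case C". It does not. When $\ell(R'_1)=0$ we have $\max R'_1<n$, and the immediate lex-predecessor $R_1$ of $R'_1=\{y_1,\dots,y_{k_1}\}$ is obtained by decrementing the first element of the terminal run of consecutive integers of $R'_1$ and appending a tail $[n-\ell(R_1)+1,n]$; in general $R_1\not\supseteq R$, the two sets share no useful initial segment, and $\ell(R_1)$ may be large. Concretely, take $n=10$, $k_1=4$, $k_i=2$, $R'_1=\{2,3,4,5\}$, so that $R_1=\{1,8,9,10\}$. Here the families of $2$-sets preceding $R_1$ and $R'_1$ are different (e.g.\ $\{1,9\}\prec R'_1$ but $\{1,9\}\not\prec R_1$), and, more importantly, $R_1$ \emph{does} have a $k_i$-parity, so by Definition \ref{R_2} its corresponding $k_i$-set is that parity $\{1,10\}$ rather than an $\overset{\sim}{<}$-predecessor. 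Both premises of your Case C (coincidence of $R_1$ and $R'_1$ on $[1,x_{k_i}]$, and absence of $k_i$-parities for both sets) therefore fail. The conclusion $\alpha_i(R_1,R'_1)=0$ is still true — in the example $R'_i\overset{\sim}{<}R'_1$ also equals $\{1,10\}$ — but it requires a separate argument comparing the $k_i$-parity of $R_1$ (or its $\overset{\sim}{<}$-predecessor when $\ell(R_1)=0$) with the $\overset{\sim}{<}$-predecessor of $R'_1$; since this sub-case is exactly what yields the "furthermore" statement $\gamma(R_1,R'_1)=s'$, it cannot be absorbed into Case C. As a minor additional point, in your Case A converse the genuinely delicate sub-case is $z=n-\ell(R'_1)$ (i.e.\ $n-l\in F$), not $\max R<z<n-l$: for any $z\notin R$ with $z<n-l$ one gets $F\precneqq R_i$ immediately, whereas $z=n-l$ lies in $R_i$ and needs the tail comparison you describe.
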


\begin{proof}
Clearly, for $i\in [s']$, $\alpha_i(R_1, R'_1)=1={\ell(R'_1)\choose k_i-|R|}$.
We next consider for $i\in[s'+1, s]$. In this case, $k_i<k_1$.
Let $R_i$ and $R'_i$ be the corresponding $k_i$-sets of $R_1$ and $R'_1$ respectively  as in Definition \ref{R_2}.  Then $R_i\prec R'_i$, moreover $\alpha_i(R_1, R'_1) \geq 0$ and  $\alpha_i(R_1, R'_1)=0$ if and only if $R_i=R'_i$ clearly. $R_i=R'_i$ implies $R'_i\overset{\sim}{<}R'_1$, furthermore, $\ell(R'_1)< k_1-k_i$.
Then  we can see that if $\ell(R'_1)=0$, then $\alpha_i(R_1, R'_1)=0={\ell(R'_1)\choose k_i-|R|}$. Therefore, $\gamma(R_1, R'_1)=s'$,
as required.
We next assume that $R'_i$ is the $k_i$-parirty of $R'_1$. So $\ell(R'_1)\geq 1$. In this case,
since $R_1<R'_1$, $\ell(R_1)=\ell(R'_1)-1$. If $R_i\overset{\sim}{<}R_1$, then
$\ell(R'_1)= k_i-|R|$ and
$\alpha_i(R_1, R'_1)=1={\ell(R'_1)\choose k_i-|R|}$, as required.
Last, we  assume that $R_i$ is the $k_i$-parirty of $R_1$. So $\ell(R_1)\geq 1$.
Let $k_1-k_i=k$.
In this case we have
\begin{align*}
R'_1&=R\cup [n-\ell(R'_1)+1, n];\\
R_1&=R\cup \{n-\ell(R'_1)\} \cup [n-\ell(R'_1)+2, n];\\
R_i&=R\cup \{n-\ell(R'_1)\} \text { if $\ell(R_i)=0$,}\\
R_i&=R\cup \{n-\ell(R'_1)\}\cup [n-\ell(R'_1)+2+k, n] \text{ if $\ell(R_i)\geq 1$};\\
R'_i&=R\cup [n-\ell(R'_1)+1+k, n].
\end{align*}
Then
\[
\alpha_i(R_1, R'_1)=|\mathcal{L}(R'_i, k_i)|-|\mathcal{L}(R_i, k_i)|={\ell(R'_1)\choose k_i-|R|},
\]
as required.
\end{proof}

From Definitions \ref{ab} and \ref{ab-}, we have the following observation.
\begin{remark}\label{remark4.14}
For any $R_1, R_1' \in \mathcal{R}_1$ with $R_1\prec R'_1$, we have that
$
\alpha_i(R_1, R'_1)=\alpha(R_1, R'_1)$ holds for each $i\in [s']$, and
$\delta(R_1, R'_1)=\beta(R_1, R'_1)$.
\end{remark}

\begin{claim}\label{equal}
Let $j\in [0, k_1-1]$, $d\in [k_1-j]$ and $A_1, B_1, C_1, D_1\in \mathcal{R}_1(j)$.
Suppose that $A_1, B_1$ are $d$-sequential and  $C_1, D_1$ are $d$-sequential with $\max A_1=\max C_1$ and $\max B_1=\max D_1$. Then $\gamma(A_1, B_1)=\gamma(C_1, D_1)$ and
$\delta(A_1, B_1)=\delta(C_1, D_1)$.
In particular, if $A_1\overset{d}{\longrightarrow} B_1$, $C_1\overset{d}{\longrightarrow} D_1$ and $\max A_1=\max C_1$, then $\gamma(A_1, B_1)=\gamma(C_1, D_1)$ and
$\delta(A_1, B_1)=\delta(C_1, D_1)$.
\end{claim}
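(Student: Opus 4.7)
The proof splits into two independent parts, one for $\delta$ and one for $\gamma$. The $\delta$-statement is immediate: Remark~\ref{remark4.14} identifies $\delta$ with the function $\beta$ of \cite{HP}, so the desired invariance $\delta(A_1,B_1)=\delta(C_1,D_1)$ is precisely Lemma~\ref{coro22} (when $j=0$) or Lemma~\ref{clm27-} (when $j\geq 1$) of \cite{HP} applied to $\beta$. For $\gamma$, decompose $\gamma=\sum_{i=1}^{s}\alpha_i$. For $i\in[s']$ (so that $k_i=k_1$), Remark~\ref{remark4.14} again gives $\alpha_i=\alpha$, and the invariance reduces to the same two lemmas of \cite{HP}. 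The substantive new content is therefore the case $i\in[s'+1,s]$, where $k_i<k_1$ strictly.

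For such an index $i$, my plan is to use the additivity in Remark~\ref{add} applied to the single quantity $|\mathcal{L}(\cdot,k_i)|$, telescoping $\alpha_i$ along a chain of immediate lex-successors from $A_1$ to $B_1$ and, in parallel, from $C_1$ to $D_1$. This reduces the problem to showing that paired single lex-successor steps contribute identical values of $\alpha_i$. For an immediate lex-successor pair $R_1<R'_1$ in $\mathcal{R}_1$, Claim~\ref{claim1} provides the per-step formula
\[
\alpha_i(R_1,R'_1)=\binom{\ell(R'_1)}{k_i-|R'_1\setminus {R'_1}^{\mathrm{t}}|},
\]
which depends on $R'_1$ only through $\ell(R'_1)$, since $|R'_1\setminus{R'_1}^{\mathrm{t}}|=k_1-\ell(R'_1)$. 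Thus it suffices to verify, by induction along the two chains, that paired intermediate sets share the same value of $\ell(\cdot)$. Summing the equal per-step contributions then yields $\alpha_i(A_1,B_1)=\alpha_i(C_1,D_1)$, and summing over $i$ gives $\gamma(A_1,B_1)=\gamma(C_1,D_1)$. The ``in particular'' clause is the special case where the $d$-sequential chains end with $\max=n$, handled identically.

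The main obstacle I anticipate is the inductive matching of the two lex-chains. Although the hypotheses fix the common endpoint maxima, the prefixes of $A_1$ and $C_1$ below the sliding $d$-window may differ, so one must argue that the lex-successor operation produces the same top-tail structure $[N-\ell(\cdot)+1,N]$ at corresponding chain positions in both sequences (where $N=n-j$). The key observation needed is that the lex-successor of a $k_1$-set $R_1$ is determined entirely by $\max R_1$, by $\ell(R_1)$, and by the element immediately below the top-tail—quantities that all coincide for paired sets by the inductive hypothesis. Combined with the endpoint hypothesis $\max A_1=\max C_1$ and $\max B_1=\max D_1$, this local invariance propagates along the chains and secures the step-by-step matching. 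This is precisely the step that extends the approach of \cite{HP}, which handled only the $k_i=k_1$ case via $\alpha$ and $\beta$, to the new functions $\alpha_i$ with $k_i<k_1$.
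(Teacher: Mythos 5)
Your proposal matches the paper's proof: both dispatch $\delta$ and $\alpha_i$ for $i\in[s']$ via Remark \ref{remark4.14} together with Lemmas \ref{coro22}/\ref{clm27-}, and both handle $i\in[s'+1,s]$ by telescoping $\alpha_i$ over the chain of consecutive $k_1$-sets between the two endpoints, applying the per-step formula of Claim \ref{claim1} and summing with Remark \ref{add}. The only place you go beyond the paper is in sketching why corresponding links of the two chains have equal $\ell$-values (the paper simply asserts $\ell(R_j)=\ell(T_j)$ without proof), and your sketch is essentially correct because every intermediate set retains the fixed prefix $A$ (resp.\ $C$) with its remaining elements above $\max A$ (resp.\ $\max C$), so the suffix evolves identically in both chains and its terminal run never extends into the prefix.
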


\begin{proof}
 By the definitions of $A_1, B_1, C_1, D_1$, from Lemma \ref{clm27-} and Remark \ref{remark4.14}, we have
 $\delta(A_1, B_1)=\delta(C_1, D_1)$ and $\alpha_i(A_1, B_1)=\alpha_i(C_1, D_1)$ holds for each $i\in [s']$.  Next, we aim to show that for each $i\in [s'+1, s]$, $\alpha_i(A_1, B_1)=\alpha_i(C_1, D_1)$. Let $i\in [s'+1, s]$.
Denot
\begin{align*}
\mathcal{A}&=\{R: A_1\cup [n-j+1, n]\prec R\prec B_1\cup [n-j+1, n] \text{ and } |R|=k_1\},\\
\mathcal{B}&=\{T: C_1\cup [n-j+1, n]\prec T\prec D_1\cup [n-j+1, n]\text{ and } |T|=k_1\}.
\end{align*}
Since $\alpha_1(A_1, B_1)=\alpha_1(C_1, D_1)$, $|\mathcal{A}|=|\mathcal{B}|=:h$.
Let $\mathcal{A}=\{R_1, R_2, \dots, R_h\}$ and $\mathcal{B}=\{T_1, T_2, \dots, T_h\}$, where $R_1 \prec R_2 \prec \dots \prec R_h$ and $T_1 \prec T_2 \prec \dots \prec T_h$.
For any $j\in [h]$, we have $\ell(R_j)=\ell(T_j)$ and $|R_j\setminus R_j^{\rm t}|=|T_j\setminus T_j^{\rm t}|$. Thus, by Claim \ref{claim1}, for any $j\in [h-1]$, $\alpha_i(R_j, R_{j+1})=\alpha_i(T_j, T_{j+1})$. Furthermore, by Remark \ref{add}, we conclude that $\alpha_i(A_1, B_1)=\alpha_i(C_1, D_1)$.
\end{proof}

\begin{claim}\label{claim4}
Let $A_1, B_1, C_1\in \mathcal{R}_1$ and $a$ be an integer. Suppose  $A_1\setminus A_1^{\rm t}=A\cup \{a, a+1\}$, $B_1=A\cup \{a\}\cup [n-\ell(B_1)+1, n]$ and $C_1=A\cup \{a+1\}\cup [n-\ell(B_1)+1, n]$ for some  $A$.
Then $\delta(A_1, B_1)=\delta(B_1, C_1)$.
If $a+1\leq n-\ell(A_1)-2$, then $\gamma(A_1, B_1)=\gamma(B_1, C_1)$. If $a+1= n-\ell(A_1)-1$, then $\gamma(A_1, B_1)\leq \gamma(B_1, C_1)$, equality holds if and only if $C_1$ does not have $k_{s'+1}$-parity (recall that $s'$ is the integer such that $k_1=\dots=k_{s'}>k_{s'+1}$).
\end{claim}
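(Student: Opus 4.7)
The plan is to decompose the proof into the $\delta$ equality (valid in both cases) and the $\gamma$ comparison (case-dependent). Throughout, use Fact~\ref{fact2.4+} to reduce all $k$-partners to depend only on the non-tail parts $A_1^\ast = A\cup\{a,a+1\}$, $B_1^\ast = A\cup\{a\}$, and $C_1^\ast$, where $C_1^\ast = A\cup\{a+1\}$ in Case~1 and $C_1^\ast = A$ in Case~2 (the latter arising because the element $a+1$ is absorbed into the tail of $C_1$, so $\ell(C_1) = \ell(B_1)+1$). Writing $m = |A|$, the relation $|A_1|=k_1$ gives $\ell(A_1) = k_1-m-2$ in Case~1 and $k_1 = m+n-a$ in Case~2.

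For the $\delta$ equality, compute the partners $P_A = ([a-1]\setminus A)\cup\{a+1\}$ and $P_B = ([a-1]\setminus A)\cup\{a\}$; these differ in a single coordinate. A similarly elementary shift relates $P_B$ to $P_C$ in each case. For every $i \in [s+1, t]$, the $k_i$-partners $T_{A,i}, T_{B,i}, T_{C,i}$ are obtained by either appending or truncating a tail, and in either regime I would set up a bijection between the lex-intervals $[T_{B,i}, T_{A,i})$ and $[T_{C,i}, T_{B,i})$. Summing the resulting equalities $|\mathcal{L}(T_{A,i}, k_i)| - |\mathcal{L}(T_{B,i}, k_i)| = |\mathcal{L}(T_{B,i}, k_i)| - |\mathcal{L}(T_{C,i}, k_i)|$ over $i \in [s+1, t]$ yields $\delta(A_1, B_1) = \delta(B_1, C_1)$ in both cases.

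For the $\gamma$ comparison in Case~1, I would count $\alpha_i(A_1, B_1)$ and $\alpha_i(B_1, C_1)$ directly for each $i\in[s]$ by partitioning the relevant $k_i$-sets by their $(m+1)$-st and $(m+2)$-nd elements. A Hockey-Stick calculation shows that both equal $\binom{n-a-1}{k_i-m-1}$ in the generic regime $k_i \geq m+1$, while both vanish (with coinciding $\overset{\sim}{<}$-predecessors determined by the common prefix $A$) when $k_i \leq m$; summing gives $\gamma(A_1, B_1) = \gamma(B_1, C_1)$. For the $\gamma$ comparison in Case~2, the constraint $a+1 = n-\ell(A_1)-1$ leaves no room for a $k_1$-set strictly between any two of $A_1, B_1, C_1$, hence they are lex-consecutive in $\mathcal{R}_1$. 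Claim~\ref{claim1} then gives $\alpha_i(A_1, B_1) = \binom{n-a-1}{k_i-m-1}$ and $\alpha_i(B_1, C_1) = \binom{n-a}{k_i-m}$, and Pascal's identity rewrites the difference as $\binom{n-a-1}{k_i-m} \geq 0$, so $\gamma(B_1, C_1) \geq \gamma(A_1, B_1)$. Using $k_1 = m+n-a$, the binomial vanishes automatically for $i \leq s'$; for $i \geq s'+1$ it vanishes iff $k_i < m$. Monotonicity of the $k_i$ translates this into the condition $k_{s'+1} < m = |C_1^\ast|$, which is precisely the statement that $C_1$ fails to admit a $k_{s'+1}$-parity.

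The main technical obstacle is the $\delta$ calculation when $k_i$ is strictly smaller than $|P_A| = |P_B| = a-m$, in which case the $k_i$-partners are defined via the predecessor construction rather than by appending a tail; one must describe these truncated partners explicitly to confirm the bijection of lex-intervals still holds. A secondary difficulty is the case analysis in Case~1 for $\gamma$ at the boundary values $k_i \in \{m, m+1\}$, where some of the corresponding $k_i$-sets are $\overset{\sim}{<}$-predecessors rather than $k_i$-parities, and the termwise equality of $\alpha_i$ on the two sides must be verified separately in each subcase.
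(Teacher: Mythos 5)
Your overall strategy is sound, and your Case~2 analysis of $\gamma$ is correct and in fact more transparent than the paper's: the paper likewise reduces that case to Claim~\ref{claim1} applied to the consecutive triple $A_1<B_1<C_1$, but your explicit binomials $\binom{n-a-1}{k_i-m-1}$ and $\binom{n-a}{k_i-m}$ together with Pascal's identity pin down the equality condition cleanly (the paper's own wording of that ``if and only if'' is garbled). Where you diverge is in method: the paper computes nothing from scratch. It passes to the immediate lex-successors $A'_1,B'_1$ of $A_1,B_1$, notes that $A'_1,B_1$ and $B'_1,C_1$ are $(\ell(A_1)+1)$-sequential with matching maxima, and then transfers everything from one interval to the other by citing Lemma~\ref{coro22}/Claim~\ref{equal} (for the $\alpha_i$) and Proposition~\ref{clm23} (for $\delta$), with Claim~\ref{claim1} disposing of the short segments $(A_1,A'_1]$ and $(B_1,B'_1]$. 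You instead propose direct counts and explicit partner computations.

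The two steps you flag as obstacles are genuine gaps in the proposal as written, and they are precisely the steps the paper's imported machinery absorbs. For $\delta$: since $n\ge k_1+k_i$ for $i\in[s+1,t]$ and $\max A_1=n$ whenever $\ell(A_1)\ge 1$, the partner of $A_1$ has size $n-k_1+1>k_i$, so the truncated-partner regime is the \emph{generic} case here, not a corner case; your plan cannot defer it. The clean fix avoids partners entirely: the map $A\cup\{a\}\cup S\mapsto A\cup\{a+1\}\cup S$ (with $S\subseteq[a+2,n]$, $|S|=k_1-m-1$) is a max-preserving bijection from the lex-interval $(A_1,B_1]$ onto $(B_1,C_1]$, and Proposition~\ref{clm23} says the $\delta$-increment of a consecutive step depends only on the maximum of the larger set; summing over consecutive steps gives $\delta(A_1,B_1)=\delta(B_1,C_1)$ in both cases with no partner description needed. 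Likewise, your Case~1 boundary analysis of $\alpha_i$ at $k_i\in\{m,m+1\}$ (where the corresponding $k_i$-set is a $\overset{\sim}{<}$-predecessor rather than a parity) is handled uniformly by Claim~\ref{equal} applied to the sequential pairs $A'_1,B_1$ and $B'_1,C_1$, plus Claim~\ref{claim1} giving $\alpha_i(A_1,A'_1)=\alpha_i(B_1,B'_1)=0$ when $\ell(A'_1)=0$. With those two substitutions your argument closes; as it stands it is a correct plan with these two steps unexecuted.
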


\begin{proof}
Let $A'_1$ and $B'_1$ be the $k_1$-sets such that $A_1<A'_1$ and $B_1<B'_1$.
Then $\max A'_1=\max B'_1$, and $A'_1$, $B_1$ are $(\ell(A_1)+1)$-sequential,
$B'_1$, $C_1$ are $(\ell(A_1)+1)$-sequential. Note that $\max B_1=\max C_1=n$, then  applying Lemma \ref{coro22} and Reamrk \ref{remark4.14}, we have that $\alpha_i(A'_1, B_1)=\alpha_i(B'_1, C_1)$ holds for each $i\in [s']$ and $\delta(A'_1, B_1)=\delta(B'_1, C_1)$.
Clearly, $\alpha_1(A_1, A'_1)=\alpha_1(B_1, B'_1)=1$ holds for each $i\in [s']$ and using Proposition \ref{clm23}
and Remark \ref{remark4.14}, we have $\delta(A_1, A'_1)=\delta(B_1, B'_1)$. Therefore, by Remark \ref{add}, we have $\delta(A_1, B_1)=\delta(B_1, C_1)$. Clearly, for each $i\in [s']$,
\begin{equation}\label{eq13}
\alpha_i(A_1, B_1)=\alpha_i(A_1, A'_1)+\alpha_i(A'_1, B_1)=\alpha_i(B_1, B'_1)+\alpha_i(A'_1, C_1)=\alpha_i(B_1, C_1).
\end{equation}
Since $A_1\setminus A_1^{\rm t}=A\cup \{a, a+1\}$, $a+1\leq n-\ell(A_1)-1$.
By the definitions of $A_1$, $B_1$, and  $C_1$,
if $a+1\leq n-\ell(A_1)-2$, then
$\ell(B_1)=\ell(C_1)=\ell(A_1)+1$, and if $a+1= n-\ell(A_1)-1$, then
$\ell(B_1)=\ell(A_1)+1$ and $\ell(C_1)=\ell(B_1)+1$.
 Using Claim \ref{equal}, for each $j\in [s'+1, s]$, we have
 \begin{equation}\label{eq14}
 \alpha_j(A'_1, B_1)=\alpha_j(B'_1, C_1).
 \end{equation}
If the previous case happens, then $\ell(A'_1)=\ell(B'_1)=0$, so Claim \ref{claim1} gives $\alpha_i(A_1, A'_1)=\alpha_i(B_1, B'_1)=0$ for each $i\in [s'+1, s]$. Combing this with (\ref{eq13}), (\ref{eq14}) and Remark \ref{add}, we get
\begin{align*}
\gamma(A_1, B_1)&=\sum_{i=1}^s\alpha_i(A_1, B_1)\\
&=\sum_{i=1}^{s'}\alpha_i(A_1, B_1)+\sum_{i=s'+1}^s(\alpha_i(A_1, A'_1)+\alpha_i(A'_1, B_1))\\
&=\sum_{i=1}^{s'}\alpha_1(B_1, C_1)+\sum_{i=s'+1}^s(\alpha_i(B_1, B'_1)+\alpha_i(B'_1, C_1))\\
&=\gamma(B_1, C_1),
\end{align*}
as desired.
If the later case happens, then $A_1<B_1<C_1$, by Claim \ref{claim1}, since $\ell(B_1)=\ell(A_1)+1$ and $\ell(C_1)=\ell(B_1)+1$, then for each $j\in [s'+1, s]$, we have $\alpha_j(A_1, B_1)= \alpha_j(B_1, C_1)$ if $k_j<|C_1\setminus C_1^{\rm t}|$, i.e., $C_1$ dest not have $k_j$-parity; and $\alpha_j(A_1, B_1)<\alpha_j(B_1, C_1)$ if $k_j\geq |C_1\setminus C_1^{\rm t}|$, i.e., $C_1$ has $k_j$-parity. Note that $k_{s'+1}\geq \dots \geq k_s$, so if $C_1$ does not have $k_{s'+1}$-parity, then it does not have any $k_j$-parity for $j\in [s'+1, s]$. 
Therefore, $\sum_{j=1}^s \alpha_j(A_1, B_1)\leq \sum_{j=1}^s \alpha_j(B_1, C_1)$, and the equality holds if and only if $C_1$ has $k_{s'+1}$-parity, that is, $\gamma(A_1, B_1)\leq \gamma(B_1, C_1)$, and the equality holds if and only if $C_1$ does  not have $k_{s'+1}$-parity, as desired.
\end{proof}

\iffalse
\begin{claim}\label{claim5}
Let $A_1, B_1, C_1, D_1 \in \mathcal{R}_1$, $a$ be an integer, $A_1\setminus A_1^{\rm t}=A\cup \{a, a+1\}$, $B_1\setminus B_1^{\rm t}=A\cup \{a, a+2\}$, $C_1\setminus C_1^{\rm t}=A\cup \{a\}$ and $D_1\setminus D_1^{\rm t}=A\cup \{a+1, a+2\}$ for some subset $A$. Then $\gamma(A_1, B_1)=\gamma(C_1, D_1)$ and $\delta(A_1, B_1)=\delta(C_1, D_1)$.
\end{claim}

\begin{proof}
Clearly, $\ell(A_1)=\ell(B_1)=\ell(D_1)=\ell(C_1)-1$.  The proof is divided into two cases.

(i) $\ell(A_1)=0$. In this case, $A_1<B_1$, $C_1<D_1$ and $\max B_1=\max D_1$. By Proposition \ref{clm23} and Remark \ref{remark4.14}, we have $\delta(A_1, B_1)=\delta(C_1, D_1)$, as required. By Cliam \ref{main1} and $\ell(B_1)=\ell(D_1)=0$, we have $\gamma(A_1, B_1)=\gamma(C_1, D_1)=1$, as desired.

(ii) $\ell(A_1)\geq 1$. Let $A'_1$ and $C'_1$ be the $k_1$-sets such that $A_1<A'_1$ and $C_1<C'_1$. Then $A'_1\overset{\ell(A_1)}{\longrightarrow}B_1$, $C'_1\overset{\ell(A_1)}{\longrightarrow}D_1$ and $\max A'_1=\max C'_1$. By Claim \ref{equal}, $\gamma(A'_1, B_1)=\gamma(C'_1, D_1)$ and $\delta(A'_1, B_1)=\delta(C'_1, D_1)$.
Since $B_1\setminus B_1^{\rm t}=A\cup \{a, a+2\}$, $a+1\leq n-\ell(A_1)-2$. Then $\ell(A'_1)=\ell(C'_1)=0$. By Claim \ref{claim1} again, $\gamma(A_1, A'_1)=\gamma(C_1, C'_1)=1$. By Proposition \ref{clm23} and Remark \ref{remark4.14}, we have $\delta(A_1, A'_1)=\delta(C_1, C'_1)$.
Thus, $\gamma(A_1, B_1)=\gamma(C_1, D_1)$ and $\delta(A_1, B_1)=\delta(C_1, D_1)$, as required.
\end{proof}
\fi

%\subsection{The proof of Lemma \ref{lemma star}}
We are going to prove Lemma \ref{lemma star} by induction on $k$.

\subsection{\bf{ The Base Case }}
We are going to show that Lemma \ref{lemma star} holds for $k=0$. Assume that $F_1, G_1, H_1\in \mathcal{R}_1$ with $F_1 \overset{c}{\prec}G_1\overset{c}{\prec}H_1$ for some $c\in [k_1]$ and $f(F_1)\leq f(G_1)$, we are going to show that $f(G_1)<f(H_1)$.
Since $F_1, G_1, H_1 \in \mathcal{R}_1$ with $F_1\overset{c}{\prec} G_1 \overset{c}{\prec} H_1$,
$\max F_1\leq n-2$, $\max G_1\leq n-1$ and $\max H_1\leq n$.
Let $G'_1$ and $H'_1$ be the $k_1$-sets such that $G'_1<G_1$ and $H'_1<H_1$.
we first show  the following observation.

\begin{claim}\label{claim2}
Let $F_m$ be the $k_1$-set such that $F_1\overset{c}{\longrightarrow}F_m$, where $m$ is the number of $k_1$-subsets $R$ satisfying $F_1\prec R \prec F_m$.
If $\delta(G'_1, G_1)<s'$, then $f(R)$, where $R\in \mathcal{F}$, increases on  $\mathcal{F}$, in particular, $f(G_1)<f(H_1)$.
\end{claim}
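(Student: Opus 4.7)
The plan is to establish that every step of the chain $\mathcal{F}=\{F_1,F_2,\ldots,F_m\}$ strictly increases $f$, i.e.\ $\gamma(F_i,F_{i+1})>\delta(F_i,F_{i+1})$ for every $i$; the conclusion $f(G_1)<f(H_1)$ is then immediate since $G_1,H_1\in\mathcal{F}$. Recall that $f(F_{i+1})-f(F_i)=\gamma(F_i,F_{i+1})-\delta(F_i,F_{i+1})$, so the task is to compare these two quantities step by step.

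The first ingredient is the uniform lower bound $\gamma(F_i,F_{i+1})\geq s'$. Decomposing the $c$-sequential step into lex-immediate-successor steps and invoking Claim \ref{claim1}, each such lex step contributes $\alpha_j=1$ for every $j\in[s']$; additivity (Remark \ref{add}) then yields the bound. In fact, over a full $c$-sequential jump one typically picks up strictly more than $s'$, which gives a little slack to be used later.

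The second ingredient is the upper bound $\delta(F_i,F_{i+1})<s'$. By Claim \ref{equal}, $\delta$ on a $c$-sequential step depends only on the maxima of its two endpoints; and by the explicit binomial expression of Proposition \ref{clm23} combined with Remark \ref{remark4.14}, $\delta(R_1,R'_1)$ is a strictly decreasing function of $\max R'_1$ for a lex-immediate pair. When $F_i$ lies at or beyond $G_1$ in the chain, $\max F_{i+1}\geq\max G_1$, so this monotonicity together with the hypothesis $\delta(G'_1,G_1)<s'$ delivers $\delta(F_i,F_{i+1})<s'\leq\gamma(F_i,F_{i+1})$ at once.

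The main obstacle is the portion of the chain strictly before $G_1$, where $\max F_{i+1}$ may be smaller than $\max G_1$ and the binomial monotonicity points the wrong way. The strategy is to exploit the standing hypothesis $f(F_1)\leq f(G_1)$ of Lemma \ref{lemma star}: by additivity over steps, the total $\gamma$ from $F_1$ to $G_1$ dominates the total $\delta$. A contradiction argument, leveraging the per-step lower bound $\gamma\geq s'$ and the binomial structure from Proposition \ref{clm23}, should show that if $\gamma\leq\delta$ failed at some step before $G_1$, then a term-by-term binomial comparison would force $\delta(G'_1,G_1)\geq s'$, contradicting the hypothesis. This is where the detailed binomial arithmetic, together with $n>k_{s'}+k_{s+1}$ (Remark \ref{remark5.9}), is expected to be the most delicate. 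Once this is settled, strict monotonicity holds throughout $\mathcal{F}$, and the claim follows.
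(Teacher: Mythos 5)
Your two ingredients are the right ones ($\gamma(F_i,F_{i+1})\geq s'$ per lex-immediate step, and $\delta(F_i,F_{i+1})<s'$ per step), and your treatment of the steps at or beyond $G_1$ matches the paper. But the ``main obstacle'' you identify --- steps of the chain strictly before $G_1$ where $\max F_{i+1}$ might be smaller than $\max G_1$ --- does not exist, and the contradiction argument you propose to handle it is only sketched (``should show'', ``is expected to be the most delicate''), so as written the proposal has a genuine gap exactly at the point you flag as the crux.

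The missing observation is elementary: writing $F_1=F\cup[x+1,x+c]$, the lex-immediate successor of $F_1$ is $F_2=F\cup[x+1,x+c-1]\cup\{x+c+1\}$, so $\max F_2=x+c+1=\max G_1$. Moreover every $R\in\mathcal{F}$ with $R\neq F_1$ satisfies $\max R\geq x+c+1=\max F_2$, since $[x+1,x+c]$ is the only $c$-subset of $[x+1,n]$ with maximum at most $x+c$. Since $\delta$ of a lex-immediate pair depends only on the maximum of the larger endpoint (Proposition \ref{clm23} together with Remark \ref{remark4.14}), this gives $\delta(F_1,F_2)=\delta(G'_1,G_1)<s'$ directly from the hypothesis, and then $\delta(F_i,F_{i+1})\leq\delta(F_1,F_2)<s'$ for all $i$ by the monotonicity in Proposition \ref{clm23}. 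In particular there is no portion of the chain where ``the binomial monotonicity points the wrong way'', the hypothesis $f(F_1)\leq f(G_1)$ of Lemma \ref{lemma star} is never needed for this claim, and the term-by-term binomial contradiction you defer to is both unexecuted and unnecessary. To repair your write-up, replace the third paragraph by the observation $\max F_2=\max G_1$ and the fact that all later maxima are at least $\max F_2$.
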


\begin{proof}
We may assume that
$\mathcal{F}=\{F_i: 1\leq i \leq m\}$ and
$F_1<F_2<\dots<F_m$. Then $\max F_2=\max G_1$. Since $F_1<F_2$ and $G'_1<G_1$, by Proposition \ref{clm23} and Remark \ref{remark4.14}, $\delta(F_1, F_2)=\delta(G'_1, G_1)<s'$ (assumption). Since $\max F_i\geq F_2$ for any $i\in [2, m]$, then by Proposition \ref{clm23} and Remark \ref{remark5.9}, $\delta(F_i, F_{i+1})<s'$ holds for all $i\in [m-1]$. On the other hand, $\gamma(F_i, F_{i+1})\geq s'$ for all $i\in [m-1]$, thus, $f(R)$, where $R\in \mathcal{F}$, increases on  $\mathcal{F}$. Clearly, $G_1, H_1\in \mathcal{F}$ and $G_1\precneqq H_1$, therefore, $f(G_1)<f(H_1)$, as required.
\end{proof}

By Claim \ref{claim2}, next we may assume that
\begin{align}\label{equality15}
\delta(G'_1, G_1)\geq s'.
\end{align}
Note that
\begin{align}\label{equality16}
f(G_1)&=f(G'_1)+\gamma(G'_1, G_1)-\delta(G'_1, G_1),\\ \label{equality17}
f(H_1)&=f(H'_1)+\gamma(H'_1, H_1)-\delta(H'_1, H_1).
\end{align}
Since $\ell(G_1)=0$, by Claim \ref{claim1}, for each $i\in [s'+1, s]$, we have
\begin{equation}\label{e29}
\alpha_i(G'_1, G_1)=0\leq \alpha_i(H'_1, H_1).
\end{equation}
Clearly, for $i\in [s']$, $\alpha_i(G'_1, G_1)=\alpha_i(H'_1, H_1)=1$.  So $\gamma(G'_1, G_1)\leq\gamma(H'_1, H_1)$.
Note that $\max G_1<\max H_1$, combining Proposition \ref{clm23}, Remark \ref{remark5.9}, Remark \ref{remark4.14} and (\ref{equality15}), we obtain
$\delta(G'_1, G_1)> \delta(H'_1, H_1)$.
Combining with equalities (\ref{equality16}) and (\ref{equality17}),  to show $f(G_1)<f(H_1)$, it is sufficient to show the following claim.

\begin{claim}\label{claim3}
$f(G'_1)\leq f(H'_1)$.
\end{claim}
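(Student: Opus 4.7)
The plan is to split the argument into cases according to the value of $c$ in the $c$-sequential hypothesis $F_1 \overset{c}{\prec} G_1 \overset{c}{\prec} H_1$.

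First I would make $G'_1$ and $H'_1$ explicit. Writing $F_1 = A \cup \{a+1,\dots,a+c\}$, $G_1 = A \cup \{a+2,\dots,a+c+1\}$ and $H_1 = A \cup \{a+3,\dots,a+c+2\}$ with $|A| = k_1-c$ and $a \ge \max A$, the standard description of the lex predecessor of a $k_1$-set (decrement the smallest ``variable'' entry by one and push the remaining variable entries to the top of $[n]$) gives
\[
G'_1 = A\cup\{a+1\}\cup [n-c+2,n], \qquad H'_1 = A\cup\{a+2\}\cup [n-c+2,n].
\]
When $c=1$ the trailing block $[n-c+2,n]$ is empty, so $G'_1 = F_1$ and $H'_1 = G_1$, and $f(G'_1)\le f(H'_1)$ is exactly the hypothesis $f(F_1)\le f(G_1)$.

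For $c \ge 2$, both $G'_1$ and $H'_1$ contain $[n-c+2,n]$ (so $\ell(G'_1),\ell(H'_1)\ge c-1$) and they differ only in that $G'_1$ contains $a+1$ where $H'_1$ contains $a+2$. Accordingly I would decompose
\[
f(H'_1)-f(G'_1) = \sum_{i=0}^{m-1}\bigl(\gamma(X_i,X_{i+1})-\delta(X_i,X_{i+1})\bigr)
\]
along the lex chain $G'_1 = X_0 < X_1 < \cdots < X_m = H'_1$ of adjacent $k_1$-sets. The first step is $G'_1 < G_1$ and contributes $s'-\delta(G'_1,G_1)\le 0$ by the current hypothesis \eqref{equality15}. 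Each subsequent step has strictly larger $\max$, so Proposition~\ref{clm23} (together with Remark~\ref{remark4.14}) forces the $\delta$-contributions to decrease monotonically, while Claim~\ref{claim1} guarantees $\gamma \ge s'$ at each step. The goal is then to pair the single initial ``loss'' $s'-\delta(G'_1,G_1)$ against the accumulated gains along $G_1 \to X_2 \to \cdots \to H'_1$.

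The main obstacle is showing that these later gains actually offset the initial loss, i.e.\ that $\sum_{i\ge 1}\bigl(\gamma(X_i,X_{i+1})-\delta(X_i,X_{i+1})\bigr) \ge \delta(G'_1,G_1)-s'$. For this I would invoke the hypothesis $f(F_1)\le f(G_1)$ via an analogous chain decomposition for the lex interval $[F_1,G_1]$: the ``primed'' triple $F'_1,G'_1,H'_1$ (where $F'_1$ denotes the lex predecessor of $F_1$) is $1$-sequential with common $\max=n$, so Claim~\ref{equal} lets one transfer $\gamma$ and $\delta$ values between the primed and unprimed chains. Combined with the monotonicity of $\delta$ in $\max$, this converts the hypothesis $f(F_1)\le f(G_1)$ into the required bound on $\delta(G'_1,G_1)-s'$, yielding $f(H'_1)-f(G'_1)\ge 0$.
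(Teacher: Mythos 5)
Your reduction for $c=1$ is correct and matches the paper, and your explicit description of $G'_1$ and $H'_1$ for $c\ge 2$ is also right. However, for $c\ge 2$ your argument has two problems. First, the intermediate claim that along the lex chain $G'_1=X_0<X_1<\cdots<X_m=H'_1$ ``each subsequent step has strictly larger $\max$'' is false: after the first step the chain runs through \emph{all} $k_1$-sets whose initial segment is $A\cup\{a+2\}$, and their maxima oscillate (they repeatedly climb to $n$ and then drop back down). So Proposition \ref{clm23} does not give you a monotone decrease of the $\delta$-contributions along the chain, and the later steps are not automatically ``gains.''

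Second, and more seriously, the step where you ``convert the hypothesis $f(F_1)\le f(G_1)$ into the required bound'' is exactly where the real work lies, and your proposal does not supply it. The paper's route is: using Lemma \ref{coro22} and Claim \ref{claim4} one gets $\gamma(F''_1,G'_1)\le\gamma(G'_1,H'_1)$ and $\delta(F''_1,G'_1)=\delta(G'_1,H'_1)$ (where $F''_1$ is a specific set strictly between $F_1$ and $G'_1$), which reduces Claim \ref{claim3} to the inequality $f(F''_1)\le f(G'_1)$ (Claim \ref{claim5+}). That inequality does \emph{not} follow from $f(F_1)\le f(G_1)$ by a direct transfer of $\gamma$ and $\delta$ values, because $F''_1$ sits in the middle of the interval $[F_1,G'_1]$ and the hypothesis only controls the endpoints; establishing it requires the separate induction of Claim \ref{claim6} (showing $f(D_j)\le f(C_1)$ for a whole family of intermediate sets), which is the bulk of the paper's argument. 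Your sketch neither identifies this intermediate inequality nor provides an argument for it, so as written the proof has a genuine gap.
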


\begin{proof}
If $c=1$, then $F_1=G'_1$ and $G_1=H'_1$. Since $f(F_1)\leq f(G_1)$, $f(G'_1)\leq f(H'_1)$, as desired. Thus, we have proved  for $c=1$.

Next, we consider  $c\geq 2$. In this case, $F_1\overset{c-1}{\longrightarrow}G'_1$ and $G_1\overset{c-1}{\longrightarrow}H'_1$. Let $F'_1$ be the $k_1$-set such that $F_1\overset{c-1}{\prec}F'_1$. Therefore, $\max F'_1=\max G_1$ and $F'_1\overset{c-1}{\longrightarrow}G'_1$.
Note that $\max G'_1=\max H'_1=n$.
By Lemma \ref{coro22} and Remark \ref{remark4.14}, we have
$\alpha_1(F'_1, G'_1)=\alpha_1(G_1, H'_1)$ and $\delta(F'_1, G'_1)=\delta(G_1, H'_1)$.
Let $F''_1$ be the $k_1$-set such that $F''_1<F'_1$. Thus, $F''_1$, $G'_1$ and $H'_1$ satisfy the condition of Claim \ref{claim4}. So we conclude that $\gamma(F''_1, G'_1)\leq \gamma(G'_1, H'_1)$ and $\delta(F''_1, G'_1)=\delta(G'_1, H'_1)$. Note that
\begin{align*}
f(G'_1)=f(F''_1)+\gamma(F''_1, G'_1)-\delta(F''_1, G'_1),\\
f(H'_1)=f(G'_1)+\gamma(G'_1, H'_1)-\delta(G'_1, H'_1).
\end{align*}
Thus, to show $f(G'_1)\leq f(H'_1)$ is sufficient  to show the following claim.
\begin{claim}\label{claim5+}
$f(F''_1)\leq f(G'_1)$.
\end{claim}

\begin{proof}
Suppose on the contrary that $f(F''_1)>f(G'_1)$. Since
$\alpha_i(G'_1, G_1)=1$ holds for each $i\in [s']$ and
$\alpha_j(G'_1, G_1)=0$ holds for each $j\in [s'+1, s]$ (see (\ref{e29})), $\gamma(G'_1, G_1)=s'$. Since $\delta(G'_1, G_1)\geq s'$ and
$f(G_1)=f(G'_1)+\gamma(G'_1, G_1)-\delta(G'_1, G_1)$, $f(G'_1)\geq f(G_1)$. Therefore, $f(G'_1)\geq f(F_1)$ since $f(G_1)\geq f(F_1)$. Hence, $f(F''_1)>f(F_1)$ since $f(F''_1)>f(G'_1)$. This implies $F_1\ne F''_1$. Therefore, $c\geq 3$ and $F_1\overset{c-2}{\longrightarrow}F''_1$. Since $F_1\overset{c}{\prec}G_1$, we may assume that there exists some $(k_1-c)$-set $F$ and some integer $x$ such that $F_1=F\cup [x+1, x+c]$ and $\max F\leq x$ if $F\ne \emptyset$.
Then $F''_1=F\cup \{x+1, x+2\}\cup [n-c+3, n]$.

The forthcoming claim will be used to make a contradiction to $f(F''_1)>f(G'_1)$, thereby ending the proof of Claim \ref{claim5+}. We will explain this after stating the claim.

\begin{claim}\label{claim6}
Let $d$ be a positive integer, $B_1=B\cup [y+1, y+d]$ for some set $B$ with $\max B\leq y$ and $y+d<n$. Suppose $i\in [d]$ and $C_1$ is the $k_1$-set such that  $B_1\overset{i}{\longrightarrow}C_1$. Let $p=n-y-d$. We define $k_1$-sets $D_1, D_2, \dots, D_p$ as follows.
If $i=1$, then let $D_1=B_1$ and $D_j=B\cup [y+1, y+d-1] \cup \{y+d+j-1\}$ for each $j\in [2, p]$, when $d=1$,  we regard $[y+1, y+d-1]$ as an empty set. If $i\geq 2$, then let
$D_j=B\cup [y+1, y+d-i] \cup \{y+d+j-i\}\cup [n-i+2, n]$ for each $j\in [ p]$,  when $d=i$,  we regard $[y+1, y+d-i]$ as an empty set.
If $f(B_1)\leq f(C_1)$, then for each $j\in [p]$, $f(D_j)\leq f(C_1)$.
\end{claim}

To finish the proof of Claim \ref{claim5+}, we only need to prove Claim \ref{claim6}. Assume that Claim \ref{claim6} holds, taking $B_1=F_1$, $y=x$, $d=c$ and $i=c-1$ in Claim \ref{claim6}, we can see that $C_1=G'_1$ and $D_1=F''_1$. Since $f(F_1)\leq f(G'_1)$, then Claim \ref{claim6} gives $f(F''_1)\leq f(G'_1)$, which is a contradiction to the assumption that $f(F''_1)> f(G'_1)$.
This completes the proof of Claim \ref{claim5+} by assuming the truth of Claim \ref{claim6}.
\end{proof}

\begin{proof}[Proof for Claim \ref{claim6}]
We are going to prove Claim \ref{claim6} by induction on $i$.

We first consider the case $i=1$. In this case, $B_1=D_1\overset{1}{\prec}D_2\overset{1}{\prec}\dots\overset{1}{\prec}D_p\overset{1}{\prec}C_1$.
If $p=1$, then we have nothing to say. We may assume $p\geq 2$.
If  $f(D_2)> f(C_1)$, then $f(D_1)< f(D_2)$ since $f(B_1)=f(D_1)\leq f(C_1)$. Note that we have already proved Lemma \ref{lemma star} when $c=1$. By taking $c=1$ in Lemma \ref{lemma star}, we have $f(D_2)<f(D_3)<\dots<f(D_p)<f(C_1)$, a contradiction. Using the same argument, we can see that for each $j\in [2, p]$, $f(D_j)<f(C_1)$, as required.

We next consider the case $i\geq 2$ and assume that Claim \ref{claim6} holds for $i-1$.
If $f(D_1)\geq f(C_1)$, then $f(D_1)\geq f(B_1)$ since $f(B_1)\leq f(C_1)$. Note that $B_1\overset{i-1}{\longrightarrow}D_1$. By replacing $C_1$ with $D_1$ and $i$ with $i-1$ in Claim \ref{claim6}, we define $E_1, E_2, \dots, E_p$ as definitions of $D_1, D_2, \dots, D_p$. Then by induction hypothesis, we obtain
\begin{equation}\label{eq15}
\text{ $f(E_j)\leq f(D_1)$ holds for each $j\in [p]$.}
\end{equation}

For convenience, we denote $C_1=D_{p+1}$. We have the following claim.

\begin{claim}\label{claim7}
 For each $j\in [p]$, $\delta(E_j, D_1)=\delta(D_j, D_{j+1})$.
 For each $j\in [p-1]$, $\gamma(E_j, D_1)=\gamma(D_j, D_{j+1})$ and
 $\gamma(E_p, D_1)\leq \gamma(D_p, D_{p+1})$.
\end{claim}

\begin{proof}
Note that $E_p<D_1$, $D_p<D_{p+1}$, $\ell(D_1)=\ell(D_{p+1})-1$ and $|D_1\setminus D_1^{\rm t}|=|D_{p+1}\setminus D_{p+1}^{\rm t}|+1$. By Claim \ref{claim1}, for each $j\in [s'+1, s]$, $\alpha_j(E_p, D_1)\leq \alpha_j(D_p, D_{p+1})$. Trivially, for each $j\in [s']$, $\alpha_i(E_p, D_1)\leq \alpha_i(D_p, D_{p+1})=1$, therefore, $\gamma(E_p, D_1)\leq \gamma(D_p, D_{p+1})$, as required.

For each $j\in [2, p]$, let $F_j$ and $H_j$  be the $k_1$-sets such that $D_{j-1}<F_j$ and  $E_{j-1}<H_j$. Thus, for each $j\in [2, p]$,
$F_j \overset{i-1}{\longrightarrow}D_j$ and we have
$
B_1 \overset{i-1}{\prec} H_2 \overset{i-1}{\prec} H_3 \overset{i-1}{\prec}\dots \overset{i-1}{\prec} H_p \overset{i-1}{\prec} D_1
$
and
$
B_1 \overset{i}{\prec} F_2 \overset{i}{\prec} F_3 \overset{i}{\prec}\dots \overset{i}{\prec} F_p \overset{i}{\prec} C_1.
$
Then for each $j\in [2, p]$, we have $\max H_j=\max F_j$. By Claim \ref{equal},  $\gamma(H_j, D_1)=\gamma(F_j, D_j)$ and $\delta(H_j, D_1)=\delta(F_j, D_j)$.
Note that for each $j\in [2, p]$, $D_{j-1}<F_j$ and $E_{j-1}< H_j$, hence
Claim \ref{claim1} gives $\gamma(D_{j-1}, F_j)=\gamma(E_{j-1},  H_j)$, Proposition \ref{clm23} and Remark \ref{remark4.14} give $\delta(D_{j-1}, F_j)=\delta(E_{j-1},  H_j)$. So for each $j\in[2, p]$, we have 
\begin{align*}
\gamma(E_{j-1}, D_1)&=\gamma(E_{j-1}, H_j)+\gamma(H_j, D_1)\\
&=\gamma(D_{j-1}, F_j)+\gamma(F_j, D_j)\\
&=\gamma(D_{j-1}, D_j),
\end{align*}
and
\begin{align*}
\delta(E_{j-1}, D_1)&=\delta(E_{j-1}, H_j)+\delta(H_j, D_1)\\
&=\delta(D_{j-1}, F_j)+\delta(F_j, D_j)\\
&=\delta(D_{j-1}, D_j).
\end{align*}
Since $E_p<D_1$, $D_p<D_{p+1}$ and $\max D_1=\max D_{p+1}=\max C_1=n$. By
Proposition \ref{clm23} and Remark \ref{remark4.14}, $\delta(E_p, D_1)=\beta(E_p, D_1)=\beta(D_p, D_{p+1})=\delta(D_p, D_{p+1})$.
This completes the proof of Claim \ref{claim7}.
\end{proof}

Let us continue the proof of Claim \ref{claim6}.
Note that for each $j\in [p]$, $f(D_{j+1})=f(D_j)+\gamma(D_j, D_{j+1})-\delta(D_j, D_{j+1})$ and $f(D_{1})=f(E_j)+\gamma(E_j, D_{1})-\delta(E_j, D_{1})$.
By Claim \ref{claim7} and (\ref{eq15}), we conclude that
\[
f(D_1)\leq f(D_2)\leq \dots \leq f(D_{p+1})=f(C_1).
\]
This completes the proof of Claim \ref{claim6}.
\end{proof}
Since we have shown that Claim \ref{claim6} holds,  Claim \ref{claim3} holds.
\end{proof}
This completes the base case of  Lemma \ref{lemma star}. We next to consider the induction step.

\subsection{\bf{ The Induction Step }}
Recall that $\mathcal{R}_{1, k}=:\{R\in\mathbb{R}_1: [n-k+1, n]\subset R\},
\mathcal{R}_1(k)=:\{R\setminus [n-k+1, n]: R\in\mathcal{R}_{1, k}\}$ for $k\in [k_1-1]$.
The authors have shown the following result in \cite{HP}.

\begin{claim}\cite{HP}\label{0000-}
Let $j\in [0, k_1-j]$, $F_1<F'_1, G_1<G'_1$ in $\mathcal{R}_1(j)$, and  $\max F'_1=\max G'_1$. Then $\alpha(F_1, F'_1)=\alpha(G_1, G'_1)$ and $\beta(F_1, F'_1)=\beta(G_1, G'_1)$.
\end{claim}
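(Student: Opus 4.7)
The plan is to prove both equalities by showing that $\alpha(F_1, F_1')$ and $\beta(F_1, F_1')$ depend only on $q := \max F_1'$ together with the fixed data $n, j, k_1, \ldots, k_t$. Once such closed-form expressions in $q$ are established, applying them to $(G_1, G_1')$ with $\max G_1' = q$ yields the two equalities immediately. Note also that the hypothesis $j \in [0, k_1-j]$ is presumably a typo for $j \in [0, k_1-1]$, which is the range used throughout the excerpt.

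For the $\alpha$-identity, I would proceed as follows. Set $F_1^{\mathrm{f}} := F_1 \cup [n-j+1, n]$ and $F_1'^{\mathrm{f}} := F_1' \cup [n-j+1, n]$. By definition, $\alpha(F_1, F_1')$ counts $k_1$-subsets $F \subseteq [n]$ with $F_1^{\mathrm{f}} \precneqq F \preceq F_1'^{\mathrm{f}}$. Because $F_1 < F_1'$ are consecutive in $\mathcal{R}_1(j)$, no set of $\mathcal{R}_{1,j}$ lies strictly between $F_1^{\mathrm{f}}$ and $F_1'^{\mathrm{f}}$; hence the only counted $F$ containing the tail $[n-j+1, n]$ is $F_1'^{\mathrm{f}}$ itself. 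A direct lex-order analysis then shows that every other counted $F$ must have the form $\{(F_1')_1, \ldots, (F_1')_{k_1-j-1}, q\} \cup S$ where $S$ is a $j$-subset of $[q+1, n]$ distinct from $[n-j+1, n]$; indeed, any deviation in the prefix would produce an intermediate element of $\mathcal{R}_{1,j}$. Including $F_1'^{\mathrm{f}}$ yields $\alpha(F_1, F_1') = \binom{n-q}{j}$, a quantity that depends only on $n, j, q$, and the same value is obtained from $(G_1, G_1')$.

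For the $\beta$-identity, I would use the explicit partner description: since $\max F_1^{\mathrm{f}} = \max F_1'^{\mathrm{f}} = n$, the partners are $T = ([n] \setminus F_1^{\mathrm{f}}) \cup \{n\}$ and $T' = ([n] \setminus F_1'^{\mathrm{f}}) \cup \{n\}$, of common size $n - k_1 + 1$. The symmetric difference $F_1^{\mathrm{f}} \triangle F_1'^{\mathrm{f}}$, and consequently $T \triangle T'$, is governed by the lex-successor operation on $\mathcal{R}_1(j)$, which in turn is controlled by $q$. Expanding each $|\mathcal{L}(T, k_i)| - |\mathcal{L}(T', k_i)|$ through the standard Kruskal--Katona cascade expresses it as a sum of binomial coefficients in $n, q, j, k_i$; summing over $i \in [s+1, t]$ produces a $q$-only formula for $\beta(F_1, F_1')$, which again matches for $(G_1, G_1')$.

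The main obstacle is the $\beta$-part: although $\alpha$ collapses to a single binomial, the partners of $F_1^{\mathrm{f}}$ and $G_1^{\mathrm{f}}$ can look very different as sets (their prefixes before $q$ are generally distinct), and yet the paired $\mathcal{L}$-count differences must coincide. I expect the cleanest route is a case split based on whether the successor operation is a simple increment ($\max F_1 = q-1$) or a carry ($\max F_1 = n-j$), since in the carry case $F_1$ and $F_1'$ fail to be $c$-sequential for any single $c$ and one cannot directly invoke Lemma \ref{coro22} or Lemma \ref{clm27-}. A natural fallback is to interpolate between $F_1^{\mathrm{f}}$ and $F_1'^{\mathrm{f}}$ through a chain of $c$-sequential intermediaries (allowing temporary exit from $\mathcal{R}_{1,j}$) and apply those two translation-invariance lemmas step-by-step, confirming that the cumulative change in each parameter is a function of $q$ alone.
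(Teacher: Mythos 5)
First, a point of reference: the paper itself gives no proof of Claim \ref{0000-} — it is imported verbatim from \cite{HP} — so there is no internal argument to compare yours against, and your proposal has to stand on its own. Your $\alpha$-part does: the $k_1$-sets $F$ with $F_1\cup[n-j+1,n]\precneqq F\preceq F_1'\cup[n-j+1,n]$ are exactly the sets $F_1'\cup S$ with $S\in\binom{[q+1,n]}{j}$, giving $\alpha(F_1,F_1')=\binom{n-q}{j}$, which visibly depends only on $n,j,q$. (Your stated justification — that a deviation in the prefix would ``produce an intermediate element of $\mathcal{R}_{1,j}$'' — is not the actual mechanism; what one checks is that the lex-successor of $F_1\cup[n-j+1,n]$ among $k_1$-sets is $F_1'\cup[q+1,q+j]$, even in the carry case, and that the $k_1$-sets lex-between $F_1'\cup[q+1,q+j]$ and $F_1'\cup[n-j+1,n]$ are precisely those with prefix $F_1'$.)

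The gap is in the $\beta$-part, which is where the content lies. Your first route — expanding $|\mathcal{L}(T,k_i)|-|\mathcal{L}(T',k_i)|$ ``through the standard Kruskal--Katona cascade'' into a $q$-only formula — is an assertion, not an argument: $T$ and $T'$ are complements of sets whose non-tail prefixes differ, and nothing you write shows that the difference of the two cascades collapses to a function of $q$ alone. Your second route is the right one, but you stop at ``confirming that the cumulative change in each parameter is a function of $q$ alone,'' which is exactly the statement to be proved. The confirmation is short and requires no case split on carries. Since $\beta$ telescopes, $\beta(F_1,F_1')=\sum_{l=1}^{N}\beta(B_{l-1},B_l)$ where $F_1\cup[n-j+1,n]=B_0<B_1<\dots<B_N=F_1'\cup[n-j+1,n]$ is the chain of consecutive $k_1$-sets; all $B_l$ lie in the lex-interval $\mathcal{R}_1$, so Proposition \ref{clm23} gives $\beta(B_{l-1},B_l)=\sum_{i\geq 2}\binom{n-\max B_l}{k_i-(\max B_l-k_1)}$, a function of $\max B_l$ only; and by your own interval characterization $\{B_1,\dots,B_N\}=\{F_1'\cup S: S\in\binom{[q+1,n]}{j}\}$, so the multiset of maxima is $\{\max S: S\in\binom{[q+1,n]}{j}\}$, which depends only on $n,j,q$. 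Running the same computation for $(G_1,G_1')$ yields the identical sum, proving both identities simultaneously. I recommend replacing both of your $\beta$-sketches with this three-line argument.
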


We have the following claim.
\begin{claim}\label{0000}
Let $j\in [0, k_1-1]$, $F_1<F'_1, G_1<G'_1$ and $F_1 \prec F'_1 \prec G_1 \prec G'_1$ in $\mathcal{R}_1(j)$, and  $\max F'_1=\max G'_1$ with $\ell(F'_1)=\ell(G'_1)$ in $\mathcal{R}_1(j)$.
Then $\gamma(F_1, F'_1)=\gamma(G_1, G'_1)$ and $\delta(F_1, F'_1)=\delta(G_1, G'_1)$.
\end{claim}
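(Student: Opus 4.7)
The plan is to split the identity for $\gamma$ into a ``$k_i=k_1$'' piece (indices $i\in[s']$) and a ``$k_i<k_1$'' piece (indices $i\in[s'+1,s]$), and to handle $\delta$ together with the first piece using Claim \ref{0000-}. Recall from Remark \ref{remark4.14} that $\delta(R_1,R'_1)=\beta(R_1,R'_1)$ and $\alpha_i(R_1,R'_1)=\alpha(R_1,R'_1)$ for every $i\in[s']$. Since the hypotheses $F_1<F'_1$ and $G_1<G'_1$ in $\mathcal{R}_1(j)$ together with $\max F'_1=\max G'_1$ are precisely those of Claim \ref{0000-}, I obtain at once
\[
\delta(F_1,F'_1)=\delta(G_1,G'_1)\qquad\text{and}\qquad \alpha_i(F_1,F'_1)=\alpha_i(G_1,G'_1)\ \text{for every }i\in[s'].
\]

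For the remaining indices $i\in[s'+1,s]$ (i.e.\ $k_i<k_1$) I appeal to Claim \ref{claim1}. Transported to $\mathcal{R}_1(j)$ via the identification $R\leftrightarrow R\cup[n-j+1,n]$ that underlies the definitions of $\alpha_i,\gamma,\delta$ on $\mathcal{R}_1(j)$, that claim yields
\[
\alpha_i(F_1,F'_1)=\binom{\ell(F'_1)}{k_i-|F'_1\setminus F'^{\mathrm{t}}_1|},\qquad \alpha_i(G_1,G'_1)=\binom{\ell(G'_1)}{k_i-|G'_1\setminus G'^{\mathrm{t}}_1|}.
\]
Because $F'_1$ and $G'_1$ are both $k_1$-sets, one has $|F'_1\setminus F'^{\mathrm{t}}_1|=k_1-\ell(F'_1)$ and $|G'_1\setminus G'^{\mathrm{t}}_1|=k_1-\ell(G'_1)$, and the hypothesis $\ell(F'_1)=\ell(G'_1)$ then forces the two binomials to coincide. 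Summing the resulting equalities $\alpha_i(F_1,F'_1)=\alpha_i(G_1,G'_1)$ across $i\in[s]$ gives $\gamma(F_1,F'_1)=\gamma(G_1,G'_1)$, as required.

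I do not foresee a substantive obstacle: the proof is essentially a bookkeeping argument that decomposes $\gamma$ into its contributions from $k_i=k_1$ and $k_i<k_1$ and invokes exactly one earlier result for each piece. The only point that needs a moment of care is that both Claim \ref{0000-} and Claim \ref{claim1} must be read at the level of $\mathcal{R}_1(j)$ rather than $\mathcal{R}_1$; this is immediate from the definitions, which transfer unchanged once the fixed tail $[n-j+1,n]$ is appended. The extra ordering condition $F_1\prec F'_1\prec G_1\prec G'_1$ in the hypothesis plays no active role beyond guaranteeing that the two adjacent pairs are disjoint and in the correct lex order.
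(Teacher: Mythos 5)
Your handling of $\delta$ and of $\alpha_i$ for $i\in[s']$ via Claim \ref{0000-} and Remark \ref{remark4.14} is sound, since that claim is genuinely stated for consecutive pairs in $\mathcal{R}_1(j)$. The gap is in the step for $i\in[s'+1,s]$: Claim \ref{claim1} computes $\alpha_i(R_1,R'_1)$ only for $R_1<R'_1$ \emph{consecutive $k_1$-sets in $\mathcal{R}_1$}, and it does not transport to consecutive pairs of $\mathcal{R}_1(j)$. The reason is that $\alpha_i$ on $\mathcal{R}_1(j)$ is \emph{defined} through the lifted sets $A_1=F_1\cup[n-j+1,n]$ and $A'_1=F'_1\cup[n-j+1,n]$, and for $j\geq1$ these are not adjacent in $\mathcal{R}_1$: between them sits the immediate successor $A''_1$ of $A_1$ followed by a whole $j$-sequential segment ending at $A'_1$. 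Hence $\alpha_i(F_1,F'_1)$ is the sum of the one-step contributions of Claim \ref{claim1} over all intermediate $k_1$-sets, not the single binomial you wrote. Concretely, take $s=2$, $k_1=6$, $k_2=5$, $k_3=2$, $n=10$, $j=2$, $i=2$, $F_1=\{2,3,4,6\}$, $F'_1=\{2,3,4,7\}$ (consecutive in $\mathcal{R}_1(2)$). Then $A_1=\{2,3,4,6,9,10\}$ and $A'_1=\{2,3,4,7,9,10\}$ are separated in $\mathcal{R}_1$ by $\{2,3,4,7,8,9\}$ and $\{2,3,4,7,8,10\}$; the $k_2$-parities of $A_1$ and $A'_1$ are $\{2,3,4,6,10\}$ and $\{2,3,4,7,10\}$, so $\alpha_2(F_1,F'_1)=3$ (equivalently, summing Claim \ref{claim1} over the three consecutive steps gives $0+1+2=3$), whereas your formula gives ${0\choose 1}=0$ (and even reading $\ell$ on the lifted set gives ${2\choose 1}=2$). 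So the identity you rely on for $\alpha_i$ is simply false when $j\geq 1$. (For $j=0$ your argument is correct and coincides with the paper's.)

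The conclusion still holds, but proving it for $i\in[s'+1,s]$ requires exactly the decomposition you skipped, which is what the paper does: write $A_1<A''_1\overset{j}{\longrightarrow}A'_1$ and $B_1<B''_1\overset{j}{\longrightarrow}B'_1$ for the two lifted pairs; apply Claim \ref{claim1} to the single consecutive steps $A_1<A''_1$ and $B_1<B''_1$ (this is where $\ell(F'_1)=\ell(G'_1)$ is actually used, to make the two binomials agree), apply Claim \ref{equal} to the two $j$-sequential segments (which have matching maxima by $\max F'_1=\max G'_1$), and add the pieces via Remark \ref{add}. Two smaller inaccuracies: $F'_1$ and $G'_1$ are $(k_1-j)$-sets, not $k_1$-sets, so the identity $|F'_1\setminus F'^{\rm t}_1|=k_1-\ell(F'_1)$ must be read on the lifted sets; and the ordering hypothesis $F_1\prec F'_1\prec G_1\prec G'_1$ is not idle decoration — it is what makes the additive decompositions of Remark \ref{add} available in the intended way.
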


\begin{proof}
If $j=0$, then we are fine (recall Claim \ref{claim1} and Proposition \ref{clm23}).
Assume $j\geq 1$.
Let $A_1=F_1\cup [n-j+1, n]$, $A'_1=F'_1\cup [n-j+1, n]$, $B_1=G_1\cup [n-j+1, n]$ and $B'_1=G'_1\cup [n-j+1, n]$.
 Let $A''_1$ and $B''_1$ be the  $k_1$-sets such that $A_1<A''_1$ and $B_1<B''_1$. Then by the definitions of $F_1, F'_1, G_1, G'_1$, we have $\max A''_1=\max B''_1$, $A''_1\overset{j}{\longrightarrow}A'_1$ and $B''_1\overset{j}{\longrightarrow}B'_1$. By Claim \ref{equal}, $\gamma(A''_1, A'_1)=\gamma(B''_1, B'_1)$ and $\delta(A''_1, A''_1)=\delta(B''_1, B'_1)$. By Claim \ref{claim1} and $\ell(F'_1)=\ell(G'_1)$ in $\mathcal{R}_1(j)$, $\gamma(A_1, A'_1)=\gamma(B_1, B''_1)$ and $\delta(A_1, A''_1)=\delta(B_1, B''_1)$. Thus, $\gamma(A_1, A'_1)=\gamma(B_1, B'_1)$ and $\delta(A_1, A'_1)=\delta(B_1, B'_1)$, that is $\gamma(F_1, F'_1)=\gamma(G_1, G'_1)$ and $\delta(F_1, F'_1)=\delta(G_1, G'_1)$,
 as desired.
\end{proof}

%It's easy to check the following corollary by using Claim \ref{equal} and Claim \ref{add}.
%\begin{corollary} \label{000}
%Let $j\in [0, k_1-1]$, $d\in [k_i-j]$ and $F_1, G_1, F'_1, G'_1\in \mathcal{R}_1(j)$. If $F_1, G_1$ are $d$-sequential, $F'_1, G'_1$ are $d$-sequential satisfying $\max F_1=\max F'_1$ and $\max G_1=\max G'_1$, then $\gamma (F_1, G_1)=\gamma (F'_1, G'_1)$ and $\delta (F_1, G_1)=\delta (F'_1, G'_1)$.
%\end{corollary}

We are ready to give  the induction step of Lemma \ref{lemma star}.

\begin{proof}[Proof of Lemma \ref{lemma star}]
By induction on $k$. We have shown that it holds for $k=0$ in Section 6.1. Suppose it holds for $k\in[0, k_1-2]$, we are going to prove it holds for $k+1$. Let $F_1, G_1, H_1 \in \mathcal{R}_1(k+1)$ with $F_1\overset{c}{\prec}G_1\overset{c}{\prec}H_1$ and $f(G_1)\geq f(F_1)$, i.e., $\gamma(F_1, G_1)
\geq \delta(F_1, G_1)$. We are going to apply induction hypothesis to show $f(H_1)>f(G_1)$, i.e., $\gamma(G_1, H_1)>\delta(G_1, H_1)$.
Let $F'_1=F_1\cup \{\max F+1\}, G'_1=G_1\cup \{\max G_1+1\}$ and $H'_1=H_1\cup \{\max H_1+1\}$.
Then $F'_1, G'_1, H'_1\in \mathcal{R}_1(k)$. Moreover, $F'_1\overset{c+1}{\prec}G'_1\overset{c+1}{\prec}H'_1$ in  $\mathcal{R}_1(k)$.

Let $A, B, C, D, E$ be the $(k_1-k)$-sets satisfying $C<G'_1<D, E<H'_1, F'_1\overset{c}{\prec}A$ and $F'_1<B$ in $\mathcal{R}_1(k)$. Let $\widetilde{F}_1=F_1\sqcup\{n-k\}, \widetilde{G}_1=G_1\sqcup\{n-k\}$, $\widetilde{H}_1=H_1\sqcup\{n-k\}$. Then $\widetilde{F}_1, \widetilde{G}_1, \widetilde{H}_1\in \mathcal{R}_1(k)$.
We can see that
if $c\geq 2$, then
\begin{equation}\label{eq22}
F'_1<B\overset{1}{\longrightarrow} \widetilde{F}_1\precneqq A\overset{c}{\longrightarrow} C<G'_1<D\overset{1}{\longrightarrow}  \widetilde{G}_1\, \,\text{and}\,\,G'_1\overset{c}{\longrightarrow}E<H'_1.
\end{equation}
If $c=1$, then
\begin{equation}\label{eq22+}
F'_1<A=B\overset{1}{\longrightarrow} \widetilde{F}_1=C<G'_1<D\overset{1}{\longrightarrow}  \widetilde{G}_1\, \,\text{and}\,\,G'_1\overset{1}{\longrightarrow}E<H'_1.
\end{equation}

\begin{claim}\label{00000}
$\gamma(A, C)>\delta(A, C).$
\end{claim}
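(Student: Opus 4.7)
The plan is to exploit the induction hypothesis $f(F_1) \leq f(G_1)$, which via Definition \ref{def6.6} is equivalent to $\gamma(\widetilde{F}_1, \widetilde{G}_1) \geq \delta(\widetilde{F}_1, \widetilde{G}_1)$ in $\mathcal{R}_1(k)$: both $F_1 \cup [n-k,n]$ and $\widetilde{F}_1 \cup [n-k+1,n]$ lift to the same subset of $[n]$, so the $f$-values (and hence the incremental functions $\gamma, \delta$) match across the two levels. Using the chain (\ref{eq22}),
\[
\widetilde{F}_1 \prec A \overset{c}{\longrightarrow} C < G'_1 < D \overset{1}{\longrightarrow} \widetilde{G}_1,
\]
additivity (Remark \ref{add}) rewrites
\[
\gamma(A,C) - \delta(A,C) \;=\; [\gamma(\widetilde{F}_1,\widetilde{G}_1) - \delta(\widetilde{F}_1,\widetilde{G}_1)] \;-\; \Sigma,
\]
where $\Sigma$ collects $\gamma(\cdot,\cdot) - \delta(\cdot,\cdot)$ over the four boundary intervals $(\widetilde{F}_1,A)$, $(C,G'_1)$, $(G'_1,D)$, $(D,\widetilde{G}_1)$. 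Since the first bracket is non-negative by hypothesis, it suffices to establish $\Sigma < 0$.

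To bound $\Sigma$ I would pair each boundary interval with a parallel sub-interval inside the main chain $A \overset{c}{\longrightarrow} C$. Claims \ref{equal} and \ref{0000} then imply that the $\gamma$ contributions on paired intervals agree (matching $\max$ and $\ell$ values), while Proposition \ref{clm23} combined with $n > k_1 + k_t$ (guaranteed by Remark \ref{remark5.9}) forces $\delta$ on each boundary interval to be strictly larger than on its matched interior interval, since the sets in the boundary interval have strictly smaller $\max$ and $\beta$ strictly decreases with $\max$. The parity-sensitive $\alpha_i$ for $i \in [s'+1, s]$ are controlled by Claim \ref{claim1}, which expresses them as binomial coefficients $\binom{\ell(R'_1)}{k_i - |R'_1 \setminus R'_1{}^{\mathrm{t}}|}$ that vanish unless $\ell(R'_1) \geq k_1 - k_i$; this activation on the interior side (where $\ell$ can reach the top) dominates any corresponding contribution from the boundary side (where $\ell = 0$ for the relevant portion). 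Summing over the four boundary pieces yields $\Sigma < 0$, hence $\gamma(A,C) > \delta(A,C)$.

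In the degenerate case $c = 1$ (equation (\ref{eq22+}), where $A = B$ and $C = \widetilde{F}_1$) the hypothesis reduces to $\gamma(C, \widetilde{G}_1) \geq \delta(C, \widetilde{G}_1)$ and no longer directly bounds the target interval $(A,C)$. In this sub-case I would argue unconditionally from the geometry: since $A \overset{1}{\longrightarrow} C$ with $\max C = n - k$, the chain ends at the top of $\mathcal{R}_1(k)$, so Claim \ref{claim1} forces a strictly positive extra contribution to $\gamma(A,C)$ for each $i \in [s'+1, s]$ whose $k_i$-parity becomes active in the final segment, whereas Proposition \ref{clm23} confines $\delta(A,C)$ to a strictly smaller sum.

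The main obstacle is the careful accounting of the parity-sensitive terms $\alpha_i$, $i \in [s'+1, s]$: their activation threshold $\ell(R'_1) \geq k_1 - k_i$ depends on $i$, so the pairing between boundary and interior sub-intervals must track the onset of each parity individually and verify that the interior contribution always matches or strictly exceeds the boundary contribution. Dispatching the $c = 1$ case through a direct (rather than decomposition-based) argument is the other delicate step.
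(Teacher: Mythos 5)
There is a genuine gap: your central claim, that $\Sigma<0$ (equivalently $f(A)<f(F'_1)$) follows from the hypothesis, is false in general. The function $f$ is only \emph{unimodal} along the $c$-sequential chain $F'_1\overset{c}{\prec}A\overset{c}{\prec}\cdots\overset{c}{\longrightarrow}C$; when $F'_1$ and $A$ already lie in the increasing part of that chain one has $f(A)\geq f(F'_1)$, i.e.\ $\Sigma\geq 0$, and this is perfectly compatible with $f(F_1)\leq f(G_1)$. So no amount of pairing boundary intervals with interior sub-intervals can establish $\Sigma<0$ unconditionally, and the decomposition $\gamma(A,C)-\delta(A,C)=[\gamma(\widetilde{F}_1,\widetilde{G}_1)-\delta(\widetilde{F}_1,\widetilde{G}_1)]-\Sigma$ alone cannot close the argument. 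The paper instead argues by contradiction: assuming $\gamma(A,C)\leq\delta(A,C)$, the identities from Claims \ref{0000} and \ref{equal} (matching $(C,G'_1)$ with $(F'_1,B)$ and $(G'_1,\widetilde{G}_1)$ with $(B,\widetilde{F}_1)$) convert the hypothesis $\gamma(\widetilde{F}_1,\widetilde{G}_1)\geq\delta(\widetilde{F}_1,\widetilde{G}_1)$ into $\gamma(F'_1,A)\geq\delta(F'_1,A)$; then the \emph{induction hypothesis of Lemma \ref{lemma star} at level $k$}, applied to the triple $F'_1\overset{c}{\prec}A\overset{c}{\longrightarrow}C$ in $\mathcal{R}_1(k)$, yields $\gamma(A,C)>\delta(A,C)$, a contradiction. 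In other words, exactly in the case $\Sigma\geq 0$ where your approach breaks down, the level-$k$ unimodality takes over. You never invoke this level-$k$ induction hypothesis (you conflate it with the assumption $f(F_1)\leq f(G_1)$, which is the hypothesis of the statement at level $k+1$), and it is the indispensable ingredient.

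The $c=1$ sub-case has the same defect: your proposed ``unconditional geometric'' argument that $\gamma(A,C)$ strictly exceeds $\delta(A,C)$ from parity activation alone is not valid — $\delta(A,C)=\beta(A,C)$ need not be small, and the paper again must route through the contradiction argument and the level-$k$ induction hypothesis (using the identities $\gamma(C,G'_1)+\gamma(G'_1,\widetilde{G}_1)=\gamma(F'_1,\widetilde{F}_1)$ and its $\delta$-analogue, then subtracting the $(A,C)$ piece). To repair your proof you would need to split into the two cases $\Sigma<0$ (where your decomposition suffices) and $\Sigma\geq 0$ (where you must apply Lemma \ref{lemma star} at level $k$ to $(F'_1,A,C)$), which is precisely the paper's argument in contrapositive form.
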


\begin{proof}
Suppose on the contrary that $\gamma(A, C)\leq\delta(A, C).$
We first consider the case $c\geq 2$.
By (\ref{eq22}),
\begin{align*}
&\gamma(\widetilde{F}_1, \widetilde{G}_1)=\gamma(\widetilde{F}_1, A)+\gamma(A, C)+\gamma(C, G'_1)+\gamma(G'_1, \widetilde{G}_1),\\
&\delta(\widetilde{F}_1, \widetilde{G}_1)=\delta(\widetilde{F}_1, A)+\delta(A, C)+\delta(C, G'_1)+\delta(G'_1, \widetilde{G}_1).
\end{align*}
Note that
$f(F_1)=f(\widetilde{F}_1)$ and $f(G_1)=f(\widetilde{G}_1)$, therefore,
$\gamma(F_1, G_1)\geq\delta(F_1, G_1)$ implies $\gamma(\widetilde{F}_1, \widetilde{G}_1)\geq\delta(\widetilde{F}_1, \widetilde{G}_1)$. Since $\gamma(A, C)\leq\delta(A, C)$, then
\begin{equation}\label{eq23}
\gamma(\widetilde{F}_1, A)+\gamma(C, G'_1)+\gamma(G'_1, \widetilde{G}_1)\geq\delta(\widetilde{F}_1, A)+\delta(C, G'_1)+\delta(G'_1, \widetilde{G}_1).
\end{equation}
Note that $\max B=\max G'_1$ with $\ell(B)=\ell(G'_1)=0$ in $\mathcal{R}_1(j)$ (i.e., $\max B=\max G'_1<n-k$). By Claim \ref{0000}, we have $\delta(F'_1, B)=\delta(C, G'_1)$ and $\gamma(F'_1, B)=\gamma(C, G'_1)$. Note that $B\overset{1}{\longrightarrow} \widetilde{F}_1, G'_1\overset{1}{\longrightarrow} \widetilde{G}_1$, $\max B=\max G'_1$ and $\max  \widetilde{F}_1=\max  \widetilde{G}_1 $, it follows from Claim \ref{equal} that
$\gamma(B, \widetilde{F}_1)=\gamma(G'_1, \widetilde{G}_1)$ and $\delta(B, \widetilde{F}_1)=\delta(G'_1, \widetilde{G}_1).$  Then
\begin{align*}
\gamma(\widetilde{F}_1, A)+\gamma(C, G'_1)+\gamma(G'_1, \widetilde{G}_1)
&=\gamma(\widetilde{F}_1, A)+\gamma(F'_1, B)+\gamma(B, \widetilde{F}_1)=\gamma(F'_1, A).
\end{align*}
Similarly, we have
$$\delta(\widetilde{F}_1, A)+\delta(C, G'_1)+\delta(G'_1, \widetilde{G}_1)=\delta(F'_1, A).$$
So inequality (\ref{eq23}) gives $\gamma(F'_1, A)\geq\delta(F'_1, A)$.
Note that $F'_1\overset{c}{\prec}A, A\overset{c}{\longrightarrow}C$ in $\mathcal{R}_1(k)$ for $c\in [k_i-k]$, by induction hypothesis, $\gamma(A, C)>\delta(A, C)$. A contradiction to our assumption.

We next consider the case $c=1$. 
By (\ref{eq22+}), we have
\begin{align*}
&\gamma(\widetilde{F}_1, \widetilde{G}_1)=\gamma(C, G'_1)+\gamma(G'_1, \widetilde{G}_1),\\
&\delta(\widetilde{F}_1, \widetilde{G}_1)=\delta(C, G'_1)+\delta(G'_1, \widetilde{G}_1).
\end{align*}
Note that $\gamma(F_1, G_1)\geq\delta(F_1, G_1)$ implies $\gamma(\widetilde{F}_1, \widetilde{G}_1)\geq\delta(\widetilde{F}_1, \widetilde{G}_1)$ and
\begin{equation}\label{eq23+}
\gamma(C, G'_1)+\gamma(G'_1, \widetilde{G}_1)\geq\delta(C, G'_1)+\delta(G'_1, \widetilde{G}_1).
\end{equation}
Note that $\max B=\max G'_1$ and $\ell(B)=\ell(G'_1)=0$ in $\mathcal{R}_1(k)$. By Claim \ref{0000}, we have $\delta(F'_1, B)=\delta(C, G'_1)$ and $\gamma(F'_1, B)=\gamma(C, G'_1)$. Note that $B\overset{1}{\longrightarrow} \widetilde{F}_1, G'_1\overset{1}{\longrightarrow} \widetilde{G}_1$, $\max B=\max G'_1$ and $\max  \widetilde{F}_1=\max  \widetilde{G}_1 $, it follows from Claim \ref{equal} that
$\gamma(B, \widetilde{F}_1)=\gamma(G'_1, \widetilde{G}_1)$ and $\delta(B, \widetilde{F}_1)=\delta(G'_1, \widetilde{G}_1).$  Then
\begin{align*}
\gamma(C, G'_1)+\gamma(G'_1, \widetilde{G}_1)
&=\gamma(F'_1, B)+\gamma(B, \widetilde{F}_1)=\gamma(F'_1, \widetilde{F}_1).
\end{align*}

Similarly, we have
$$\delta(C, G'_1)+\delta(G'_1, \widetilde{G}_1)=\delta(F'_1, \widetilde{F}_1).$$
So inequality (\ref{eq23+}) gives $\gamma(F'_1, \widetilde{F}_1)\geq\delta(F'_1, \widetilde{F}_1)$.

In view of (\ref{eq22+}) that
\[
\gamma(F'_1, A)=\gamma(F'_1, \widetilde{F}_1)-\gamma(A, C)
\]
and
\[
\delta(F'_1, A)=\delta(F'_1, \widetilde{F}_1)-\delta(A, C).
\]
Since $\gamma(A, C)\leq\delta(A, C)$ and $\gamma(F'_1, \widetilde{F}_1)\geq\delta(F'_1, \widetilde{F}_1)$, we get $\gamma(F'_1, A)\geq\delta(F'_1, A)$.
Note that $F'_1\overset{c}{\prec}A, A\overset{c}{\longrightarrow}C \in\mathcal{R}_1(k)$,  where $c=1\in [k_1-k]$, by induction hypothesis, $\gamma(A, C)>\delta(A, C)$. A contradiction to our assumption. This completes the proof of Claim \ref{00000}.
\end{proof}

By (\ref{eq22}) and (\ref{eq22+}), we have $G'_1\overset{c}{\longrightarrow}E, A\overset{c}{\longrightarrow} C,\max G'_1=\max A, \max E=\max C$ in $\mathcal{R}_1(k)$. By Claim \ref{equal} and Claim \ref{00000}, we get
\begin{equation}\label{hh}
\gamma(G'_1, E)>\delta(G'_1, E).
\end{equation}

\begin{claim}\label{claim6.24}
$\gamma(D, H'_1)>\delta(D, H'_1)$.
\end{claim}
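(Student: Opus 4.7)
The plan is to reduce the desired inequality $\gamma(D,H'_1)>\delta(D,H'_1)$ to the already-established (\ref{hh}), namely $\gamma(G'_1,E)>\delta(G'_1,E)$, by a short telescoping along the chain $G'_1<D\prec E<H'_1$ in $\mathcal{R}_1(k)$. Applying Remark \ref{add} twice to the chains $G'_1\prec D\prec E$ and $D\prec E\prec H'_1$ gives
\[
\gamma(D,H'_1)-\delta(D,H'_1)=\bigl[\gamma(G'_1,E)-\delta(G'_1,E)\bigr]+\bigl[\gamma(E,H'_1)-\delta(E,H'_1)\bigr]-\bigl[\gamma(G'_1,D)-\delta(G'_1,D)\bigr].
\]
The first bracket is strictly positive by (\ref{hh}), so it is enough to verify that the remaining two brackets cancel, i.e.\ that $\gamma(G'_1,D)=\gamma(E,H'_1)$ and $\delta(G'_1,D)=\delta(E,H'_1)$.

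To obtain this cancellation I would first read off from (\ref{eq22}) and (\ref{eq22+}) that $G'_1<D$ and $E<H'_1$ are both lex-adjacent single steps in $\mathcal{R}_1(k)$ lying at the same height, in the sense that $\max D=\max H'_1$ is the immediate successor of $\max G'_1=\max E$. In the generic regime where $\max D<n-k$ one has $\ell(D)=\ell(H'_1)=0$, so all hypotheses of Claim \ref{0000} hold for the four-tuple $(G'_1,D,E,H'_1)$ with $j=k$. Claim \ref{0000} then yields exactly the required equalities of $\gamma$ and $\delta$, and substituting back leaves $\gamma(D,H'_1)-\delta(D,H'_1)=\gamma(G'_1,E)-\delta(G'_1,E)>0$, which is the claim.

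The main obstacle is the boundary regime $\max D=\max H'_1=n-k$. There the explicit shapes force $\ell(D)=1$ while $\ell(H'_1)=c+1$, so the hypothesis $\ell(F'_1)=\ell(G'_1)$ of Claim \ref{0000} literally fails. In this case I would either re-derive the two equalities directly from the explicit descriptions of $D$, $E$ and $H'_1$ that come out of (\ref{eq22}) and (\ref{eq22+}), mirroring the short case analysis already used to prove Claim \ref{0000}, or lift the whole comparison once more to $\mathcal{R}_1(k-1)$ by appending one further top element so that the boundary is pushed away. Either route uses only the tools already in play, namely Claims \ref{0000}, \ref{0000-} and \ref{equal} together with Remark \ref{add}, so no essentially new machinery should be needed beyond what has been developed for Claim \ref{00000} and for inequality (\ref{hh}).
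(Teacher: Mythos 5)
Your telescoping identity and your treatment of the generic case are exactly the paper's argument: when $\ell(D)=\ell(H'_1)$ (equivalently $\max D<n-k$, since $\max D=\max H'_1$), Claim \ref{0000} gives $\gamma(G'_1,D)=\gamma(E,H'_1)$ and $\delta(G'_1,D)=\delta(E,H'_1)$, and the two extra brackets cancel, leaving $\gamma(D,H'_1)-\delta(D,H'_1)=\gamma(G'_1,E)-\delta(G'_1,E)>0$ by (\ref{hh}). You also correctly isolate the boundary regime $\max D=n-k$, where $\ell(D)=1<\ell(H'_1)$ and Claim \ref{0000} is unavailable.

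The gap is in your plan for that boundary case: the equalities you propose to ``re-derive directly'' are false there, so that route cannot be completed. Both $G'_1<D$ and $E<H'_1$ are single lex steps with the same maximum, so Proposition \ref{clm23} does give $\delta(G'_1,D)=\delta(E,H'_1)$; but by Claim \ref{claim1} the single-step increment $\alpha_i$ equals a binomial coefficient ${\ell(\cdot)\choose k_i-|\cdot|}$ evaluated at the upper set, and since $\ell(H'_1)>\ell(D)$ one gets $\alpha_i(E,H'_1)\geq\alpha_i(G'_1,D)$ with strict inequality whenever some $i>s'$ contributes, hence in general $\gamma(E,H'_1)>\gamma(G'_1,D)$. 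The two brackets therefore do not cancel. Fortunately the discrepancy has the favorable sign: $\bigl[\gamma(E,H'_1)-\delta(E,H'_1)\bigr]-\bigl[\gamma(G'_1,D)-\delta(G'_1,D)\bigr]=\gamma(E,H'_1)-\gamma(G'_1,D)\geq 0$, so your identity still yields $\gamma(D,H'_1)-\delta(D,H'_1)\geq\gamma(G'_1,E)-\delta(G'_1,E)>0$. This inequality-instead-of-equality observation is precisely how the paper closes the case $\ell(D)<\ell(H'_1)$ (separately for $c=1$, where $D=E$, and $c\geq2$, where $D=\widetilde{G}_1\precneqq E$). Your alternative of lifting to $\mathcal{R}_1(k-1)$ is not developed enough to evaluate and is unnecessary; replace the claimed cancellation by the one-sided estimate above and the proof is complete.
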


\begin{proof}
Since $G'_1<D$, $E<H'_1$  and $G'_1\prec D \prec E \prec H'_1$ in $\mathcal{R}_1(k)$, 
we will meet the following two cases: $\ell(D)=\ell(H'_1)$ and $\ell(D)<\ell(H'_1)$.
We first consider the case $\ell(D)=\ell(H'_1)$. Then
by Claim \ref{0000}, we have $\gamma(G'_1, D)=\gamma(E, H'_1)$ and $\delta(G'_1, D)=\delta(E, H'_1)$. Therefore,
\begin{align*}
\gamma(D, H'_1)&=\gamma(G'_1, E)-\gamma(G'_1, D)+\gamma(E, H'_1)\\
&=\gamma(G'_1, E)\\
&>\delta(G'_1, E)\\
&=\delta(G'_1, E)-\delta(G'_1, D)+\delta(E, H'_1)\\
&=\delta(D, H'_1),
\end{align*}
as desired. Next we assume that $\ell(D)<\ell(H'_1)$. 
If $c=1$, then $G'_1<D=E<H'_1$. By Claim \ref{claim1}, 
\[
\gamma(D, H'_1)\geq \gamma(G'_1, D)\overset{(\ref{hh})}{>}\delta(G'_1, E)=\delta(D, H'_1),
\]
where the last equality holds by Proposition \ref{clm23}, as required.
If $c\geq 2$, then $G'_1<D=\widetilde{G}_1\precneqq E<H'_1$, $\max D=\max E=n-k$ and $\ell(E)>\ell(D)\geq 0$. By Claim \ref{claim1} and Remark \ref{add},
\[
\gamma(D, H'_1)\geq \gamma(G'_1, E)\overset{(\ref{hh})}{>}\delta(G'_1, E)=\delta(D, H'_1),
\]
as required.
\end{proof}
Consequently,
$f(D)<f(H'_1)$ following from Claim \ref{claim6.24}. Recall that $D\overset{1}{\longrightarrow}\widetilde{G}_1$ and $H'_1\overset{1}{\longrightarrow}\widetilde{H}_1$. Hence, $f(\widetilde{G}_1)<f(\widetilde{H}_1)$ by applying Claim \ref{equal}. This implies $\gamma(G_1, H_1)>\delta(G_1, H_1)$, as desired.

This  completes the proof of  Lemma \ref{lemma star}.
\end{proof}

\section{Verify Unimodality: The Proofs of Lemmas \ref{c2.28}, \ref{c2.29} and Proposition \ref{claim3.20}}\label{sec7}
 Lemmas \ref{c2.28} and \ref{c2.29}  will  follow from the following lemma.

\begin{lemma}\label{c2.28+}
Let $B_0=\{b_1, \dots, b_x\} \cup [y, y+k]$ with $b_x<y-1$, $k\geq 1$ and $y+k<n$. For $i\in [k]$, let $B_i=\{b_1, \dots, b_x\} \cup [y, y+k-i]\cup [n-i+1, n]$.
Suppose that  $B_i, B_{i+1}, B_{i+2}\in \mathcal{R}_{1}$ for some $i\in [0, k-2]$ and $f(B_i)\leq f(B_{i+1})$, then $f(B_{i+1})< f(B_{i+2})$.
%Let $4\leq j\leq k_{2}+1$ and $F_{1}, G_{1}, H_{1}\in\mathcal{R}_{1}$ with $[2, j]=F_{1}\setminus [n-\ell(F_{1})+1, n]$, $[2, j-1]=G_{1}\setminus [n-\ell(G_{1})+1, n]$ and $[2, j-2]=H_{1}\setminus [n-\ell(H_{1})+1, n]$.  Let $F_2, G_2, H_2$ be   the corresponding $k_2$-sets of $F_{1}, G_{1}, H_{1}$ respectively, see Definition \ref{R_2}.
%Then $f(G_{1})\geq f(F_{1})$ implies $f(H_{1})>f(G_{1})$.
\end{lemma}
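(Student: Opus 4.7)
The plan is to establish $f(B_{i+2}) > f(B_{i+1})$ by a direct comparison of the first differences $\Delta_j := f(B_{j+1}) - f(B_j) = \gamma(B_j, B_{j+1}) - \delta(B_j, B_{j+1})$, showing that $\Delta_{i+1} > \Delta_i$ so that the hypothesis $\Delta_i \geq 0$ forces $\Delta_{i+1} > 0$. The structural feature driving the proof is that both transitions $B_i \to B_{i+1}$ and $B_{i+1} \to B_{i+2}$ have the identical shape --- remove the maximum middle-block element and append the next tail element --- so that $\gamma$ and $\delta$ can be expressed as binomial sums whose upper parameters shift by exactly $1$ between the two transitions.

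First I will obtain closed forms for $\gamma(B_i, B_{i+1})$ and $\delta(B_i, B_{i+1})$. Counting $k_1$-subsets $R$ with $B_i \precneqq R \prec B_{i+1}$ directly, using that $R$ must share the prefix $\{b_1, \dots, b_x\} \cup [y, y+k-i-1]$ and that its next coordinate ranges over $[y+k-i+1, n-i-1]$, then applying the hockey-stick identity, gives
\[
\alpha_1(B_i, B_{i+1}) \;=\; \binom{n-y-k+i}{i+1}.
\]
For $j \in [s'+1, s]$, the corresponding $k_j$-set $R_j^{(i)}$ of $B_i$ (Definition \ref{R_2}) is its $k_j$-parity exactly when $|P_i| = x+k-i+1 \leq k_j$; when both $B_i$ and $B_{i+1}$ have $k_j$-parities, an analogous enumeration produces $\alpha_j(B_i, B_{i+1})$ as another binomial expression in the parameter $n-y-k+i$. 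On the $\delta$ side, Fact \ref{fact2.4+} reduces the $k_j$-partner $T_j^{(i)}$ to a function of $P_i$; since $P_{i+1} = P_i \setminus \{y+k-i\}$, the partners $T_j^{(i)}$ and $T_j^{(i+1)}$ differ in one coordinate, and a direct count yields
\[
\delta(B_i, B_{i+1}) \;=\; \sum_{j=s+1}^t \binom{n-y-k+i}{k_j - (y-x)}.
\]

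Passing from $i$ to $i+1$ replaces $n-y-k+i$ by $n-y-k+i+1$; Pascal's identity then provides term-wise increments
\[
\gamma(B_{i+1}, B_{i+2}) - \gamma(B_i, B_{i+1}) \;\geq\; \binom{n-y-k+i}{i+2} + (\text{non-negative contributions from } j \in [s'+1, s]),
\]
\[
\delta(B_{i+1}, B_{i+2}) - \delta(B_i, B_{i+1}) \;=\; \sum_{j=s+1}^t \binom{n-y-k+i}{k_j - (y-x) - 1}.
\]
Since $B_{i+2} \in \mathcal{R}_1$ combined with $y+k<n$ forces $n-y-k+i \geq i+2$, the leading term $\binom{n-y-k+i}{i+2}$ is positive. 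The remaining work is a term-by-term comparison showing that the $\alpha_1$-increment (together with any extra $\alpha_j$-contributions) outweighs the sum of $\delta$-increments, which amounts to comparing binomials with a common top parameter and bottom parameters controlled by the relations $k_j \leq k_1 = x + k + 1$ and $y - x \geq 2$ (the latter from $b_x < y - 1$).

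The main obstacle will be the parity boundary: for $j \in [s'+1, s]$ the $k_j$-parity of $B_i$ may exist for one or two of the triple $\{B_i, B_{i+1}, B_{i+2}\}$ but not for the other(s) (this happens precisely when $|P_i|$ equals $k_j+1$ for exactly one of the indices), and at such a crossing the value $\alpha_j$ changes discontinuously. Following the template of Claim \ref{claim4} in Section 6, I will decompose each transition via the immediate lex-predecessor of each $B_j$ so that each sub-piece stays within a single parity regime, then reassemble via Remark \ref{add}. A secondary delicacy is the strictness of $\Delta_{i+1}$ when $\Delta_i = 0$: Remark \ref{remark5.9} eliminates the degenerate case $s' = s$ (already known from \cite{HP+}), and the surviving strictly positive $\alpha_1$-increment from Pascal then supplies the required strictness.
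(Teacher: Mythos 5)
Your closed forms for $\alpha_1(B_i,B_{i+1})=\binom{N}{i+1}$ and $\delta(B_i,B_{i+1})=\sum_{j}\binom{N}{k_j-(y-x)}$ with $N=n-y-k+i$ are correct, but the comparison step they are supposed to feed into does not go through, and this is where the real content of the lemma lives. First, $B_{i+2}\in\mathcal{R}_1$ together with $y+k<n$ does \emph{not} force $N\geq i+2$: when $y+k=n-1$ one has $N=i+1$, so your ``leading term'' $\binom{N}{i+2}$ vanishes and all strictness must come from the $\alpha_j$ with $j\in[s'+1,s]$ (this is exactly the delicate equality sub-case the paper isolates). Second, the term-by-term comparison fails even when $N\geq i+2$: the constraint $n\geq k_1+k_j$ gives $k_j-(y-x)-1\leq N-i-2$, but since $\binom{N}{a}$ is unimodal in $a$ this does not make $\binom{N}{k_j-(y-x)-1}\leq\binom{N}{N-i-2}=\binom{N}{i+2}$; a single $\delta$-increment can exceed the $\alpha_1$-increment. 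So your argument must lean on the increments of $\alpha_j$ for $j\in[s'+1,s]$, which you never compute — and these are not single binomials but sums of $\binom{\ell(R)}{k_1-k_j}$ over all $R$ in the lex interval, so the ``Pascal plus term-by-term'' template does not apply to them. Finally, you have not explained where the hypothesis $f(B_i)\leq f(B_{i+1})$ enters; every unimodality statement in this paper is conditional for a reason, and an unconditional second-difference inequality $\Delta_{i+1}>\Delta_i$ is a strictly stronger claim than the lemma that you would need to justify separately.

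The paper's proof avoids all of this by splitting on $\ell(B_i')$, where $B_i'$ is the immediate lex successor of $B_i$. If $\ell(B_i')\geq 1$ (the case $y+k=n-1$), then $B_i<B_i'=B_{i+1}<B_{i+2}$ and Proposition \ref{clm23} and Claim \ref{claim1} give $\delta(B_i,B_{i+1})=\delta(B_{i+1},B_{i+2})$ and $\gamma(B_i,B_{i+1})\leq\gamma(B_{i+1},B_{i+2})$, with a separate argument ruling out the degenerate equality case via $n>k_1+k_t$. If $\ell(B_i')=0$, it constructs an auxiliary set $J$ with $B_{i+1}\prec J\prec B_{i+2}$ such that, by the translation-invariance Claim \ref{equal}, $f(J)-f(B_{i+1})=f(B_{i+1})-f(B_i)\geq 0$, and then invokes the already-established Lemma \ref{lemma star} on the $1$-sequential triple in $\mathcal{R}_1(\ell(B_{i+1}))$ to get $f(B_{i+2})>f(J)$. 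That reduction to Lemma \ref{lemma star} — not a binomial identity — is what carries the hypothesis into the conclusion, and it is the ingredient your proposal is missing.
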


Let us  explain why Lemma \ref{c2.28+} implies Lemmas \ref{c2.28} and \ref{c2.29}.
Let $F_2, G_2, H_2$ be as in Lemma \ref{c2.28} or Lemma \ref{c2.29}.
 Let $F_1, G_1, H_1$ be the $k_1$-parities of $F_2, G_2, H_2$ respectively (Proposition \ref{prop2.9} guarantees that $F_1, G_1, H_1$ exists). 
 
 By Fact \ref{fact2.7}, for each $i\in [3, t]$, $F_1$ and $F_2$ have the same $k_i$-partner. Take $s=2$ (see Definition \ref{def5.1}), we have $g(F_2)=f(F_1)$, $g(G_2)=f(G_1)$ and $g(H_2)=f(H_1)$. We may take $B_i=F_1$, $B_{i+1}=G_1$ and $B_{i+2}=H_1$. So $g(F_2)\leq g(G_2)$ implies
\[
f(B_i)=f(F_1)=g(F_2)\leq g(G_2)=f(G_1)=f(B_{i+1}).
\]
Then by Lemma \ref{c2.28+}, we have $f(B_{i+2})>f(B_{i+1})$. So
\[
g(H_2)=f(H_1)=f(B_{i+2})>f(B_{i+1})=f(G_1)=g(G_2).
\]

\begin{proof}[Proof for Lemma \ref{c2.28+}]
Clearly, $\ell(B_{i+1})=\ell(B_i)+1\geq 1$ and $\ell(B_{i+2})=\ell(B_{i+1})+1$.
Let $B'_i$ and $B'_{i+1}$ be the $k_1$-sets such that $B_i<B'_i$ and $B_{i+1}<B'_{i+1}$. Then $B'_i\overset{\ell(B_{i+1})}{\longrightarrow}B_{i+1}$. 
Let $J$ be the $k_1$-set  such that $B'_{i+1}\overset{\ell(B_{i+1})}{\longrightarrow}J$.
%$$J=\{b_1, \dots,b_x\} \cup \{y, \dots,  y+k-i-2, y+k-i\}\cup [n-i, n].$$
Then $B'_i, B_{i+1}$ are $\ell(B_{i+1})$-sequential and $B'_{i+1}, J$ are $\ell(B_{i+1})$-sequential. Clearly, $\max B'_i=\max B'_{i+1}$ and $\max B_{i+1}=\max J=n$.
 By Claim \ref{equal},
\begin{align}\label{e26}
\text{$\gamma(B'_i, B_{i+1})=\gamma(B'_{i+1}, J)$ and $\delta(B'_i, B_{i+1})=\delta(B'_{i+1}, J)$. }
\end{align}

If $\ell(B'_i)\geq 1$, then $B_i<B'_i=B_{i+1}<B_{i+2}=B'_{i+1}$.
By Proposition \ref{clm23}, $\delta(B_i, B_{i+1})=\delta(B_{i+1}, B_{i+2})$. 
By Claim \ref{claim1}, 
we have $\gamma(B_i, B_{i+1})\leq \gamma(B_{i+1}, B_{i+2})$.
So $f(B_{i+1})-f(B_i)=\gamma(B_i, B_{i+1})-\delta(B_i, B_{i+1})\leq \gamma(B_{i+1}, B_{i+2})-\delta(B_{i+1}, B_{i+2})=f(B_{i+2})-f(B_{i+2})$.
So if $f(B_i)< f(B_{i+1})$, then $f(B_{i+1})< f(B_{i+2})$, as desired.
We next assume that
 $f(B_i)= f(B_{i+1})$. Then $\delta(B_i, B_{i+1})=\gamma(B_i, B_{i+1})\geq s'$.
If  $\gamma(B_i, B_{i+1})=s'$, then $\delta(B_i, B_{i+1})=s'$. Since $B_i<B_{i+1}$ and $\max B_{i+1}=n$, $\delta(B_i, B_{i+1})=\beta(B_i, B_{i+1})=0$ or $1$, and $\delta(B_i, B_{i+1})=\beta(B_i, B_{i+1})=s'$ if and only if $n=k_1+k_t=k_2+k_{t-1}=\dots=k_s'+k_{t-s'+1}$ (see Proposition \ref{clm23}). This is a contradiction to $n>k_1+k_t$ (since $s'<s$ and $n\geq k_s+k_t$). So $\gamma(B_i, B_{i+1})>s'$. Consequently, there exists $j\in [s'+1, s]$ such that $\alpha_j(B_i, B_{i+1})>0$. Let $j$ be any integer that satisfies the above condition. By Claim \ref{claim1}, $\ell(G_{i+1})\geq k_1-k_j$. Since $\ell(B_{i+2})=\ell(B_{i+1})+1$, $\ell(G_{i+1})> k_1-k_j$. 
By Claim \ref{claim1} again, $\alpha_j(B_{i+1}, B_{i+2})>\alpha_j(B_i, B_{i+1})$. 
By the arbitrariness of $j$ and the definitions of $\gamma(B_i, B_{i+1})$ and $\gamma(B_{i+1}, B_{i})$ (see Definition \ref{ab}), we conclude that 
$\gamma(B_{i+1}, B_{i})<\gamma(B_i, B_{i+1})$. Combining with $\delta(B_i, B_{i+1})=\delta(B_{i+1}, B_{i+2})$, we have 
$f(B_{i+1})-f(B_i)=\gamma(B_i, B_{i+1})-\delta(B_i, B_{i+1})< \gamma(B_{i+1}, B_{i+2})-\delta(B_{i+1}, B_{i+2})=f(B_{i+2})-f(B_{i+2})$.
So if $f(B_i)= f(B_{i+1})$, then $f(B_{i+1})< f(B_{i+2})$, as desired.

Next we  assume that $\ell(B'_i)=0$.
By Claim \ref{claim1}, for each $j\in [s'+1, s]$, $\alpha_j(B_i, B'_i)=\alpha_j(B_{i+1}, B'_{i+1})=0$. Clearly, for for each $j\in [s']$, $\alpha_j(B_i, B'_i)=\alpha_j(B_{i+1}, B'_{i+1})=1$ and then $\gamma(B_i, B'_i)=\gamma(B_{i+1}, B'_{i+1})$.
Combining with (\ref{e26}), we get
\begin{align*}
\text{$\gamma(B_i, B_{i+1})=\gamma(B_{i+1}, J)$ and $\delta(B_i, B_{i+1})=\delta(B_{i+1}, J)$. }
\end{align*}
Thus $f(J)\geq f(B_{i+1})$ since $f(B_i)\leq f(B_{i+1})$.
Note that $\ell(B_{i+1})=\ell(J)$ and $B_{i+1}\setminus B_{i+1}^{\rm t}\in \mathcal{R}_1(\ell(B_{i+1}))$, so $J\setminus J^{\rm t}\in \mathcal{R}_1(\ell(B_{i+1}))$ and $B_{i+1}\setminus B_{i+1}^{\rm t} \overset{1}{\prec}J\setminus J^{\rm t}$. Since $\ell(B_{i+2})=\ell(B_{i+1})+1$, $B_{i+2}\setminus [n-\ell(B_{i+1})+1, n]\in \mathcal{R}_1(\ell(B_{i+1}))$. And in $\mathcal{R}_1(\ell(B_{i+1}))$,  we have
\[
B_{i+1}\setminus B_{i+1}^{\rm t} \overset{1}{\prec}J\setminus J^{\rm t}\overset{1}{\longrightarrow}B_{i+2}\setminus [n-\ell(B_{i+1})+1, n].
\]
By Lemma \ref{lemma star} and $f(J)\geq f(B_{i+1})$, we obtain $f(B_{i+2})>f(J)\geq f(B_{i+1})$, as required.
\end{proof}

\begin{proof}[Proof for Proposition \ref{claim3.20}]
The proofs of  equalities (\ref{equation13}) and (\ref{equation14}) are quite similar, we prove the previous one only.

Let $G_2=[2, k_2+1]$ and  $G_1$ be the $k_1$-parity of $G_2$. Since $k_1>k_2$, $\ell(G_1)\geq 1$. So
$G_1=[2, k_2+1]\cup [n-k_1+k_2+1, n]$.
Let $A$ be the $k_1$-parity of $[2, k_2]\cup \{n\}$. So
$A=[2, k_2]\cup [n-k_1+k_2, n]$.
To prove (\ref{equation13}), it is equivalent to  prove that  $f(G_1)<\max \{f(\{1\}), f(A)\}$.
Let $k=k_1-k_2$. Note that $G_1\setminus G_1^{\rm t}$, $\{1, n-k_1+2, \dots, n\}\setminus [n-k_1+k_2+1, n]$ and $A\setminus [n-k_1+k_2+1, n]$ are contained in $\mathcal{R}_1(k)$ and
\[
\{1, n-k_1+2, \dots, n\}\setminus [n-k_1+k_2+1, n]<G_1\setminus G_1^{\rm t}\overset{1}{\longrightarrow}A\setminus [n-k_1+k_2+1, n]
\]
in $\mathcal{R}_1(k)$. Let $C_1=G_1\setminus G_1^{\rm t}\cup \{\max (G_1\setminus G_1^{\rm t})+1 \}\cup [n-\ell(G_1)+2, n]$ (if $\ell(G_1)=2$, then $[n-\ell(G_1)+2, n]=\emptyset$). Since $\ell(G_1)\geq 1$, $|C_1|=k_1$. 
Let $B\in \mathcal{R}_1(k)$ be the set such that $G_1\setminus G_1^{\rm t}<B$ and $B_1=B\cup [n-\ell(G_1)+1, n]$. Clearly, $\{1, n-k_1+2, \dots, n\}\precneqq C_1 \precneqq G_1 \precneqq B_1 \prec A$. We may also assume that $f(G_1)\geq f(C_1)$ since otherwise, $g(\mathcal{F}_{2, 3})>g(G_1)=f(G_1)$, we are done. Based on these, we may apply Lemma \ref{c2.28+} to $C_1, G_1, B_1$. Since $f(G_1)\geq f(C_1)$, Lemma \ref{c2.28+} gives $f(G_1)<f(B_1)$. 
 Consequently,
 by Lemma \ref{lemma star}, $f(A)>f(B)>f(G_1)$, as desired.
\end{proof}

\section{Acknowledgements}
This work was supported by NSFC (Grant No. 11931002).

\frenchspacing


\begin{thebibliography}{99}

%\bibitem{BF}
%P. Borg, C. Feghali, The maximum sum of sizes of cross-intersecting families of subsets of a set, Discrete Math. 345 (2022) 112981.

\bibitem{EKR1961}
P. Erd\H{o}s, C. Ko, R. Rado, Intersection theorems for systems of finite sets,
Quart. J. Math. Oxf. 2(12) (1961) 313--320.


\bibitem{FK2018}
P. Frankl, A. Kupavskii, Erd\H os-Ko-Rado theorem for \{0, ±1\}-vectors, J. Comb. Theory Ser. A 155 (2018), 157--179.




\bibitem{KK3}
P. Frankl, A. Kupavskii, Sharp results concerning disjoint cross intersecting
families, Europ J. Combin 86 (2020) 103089.

%\bibitem{FK2017}
%P. Frankl, A. Kupavskii, A size-sensitive inequality for cross intersecting families. European J. Combin. 62 (2017) 263-271.

%\bibitem{FK2018}
%P. Frankl, A. Kupavskii, Erd\H os-Ko-Rado theorem for \{0, ±1\}-vectors, J. Comb. Theory Ser. A 155 (2018), 157–1792.


\bibitem{FT}
P. Frankl, N. Tokushige, Some best possible inequalities concerning crossing-intersecting families, J. Combin. Theory Ser. A 61 (1992) 87-97.


\bibitem{HP}
Yang Huang, Yuejian Peng, The maximum sum of sizes of non-empty pairwise cross intersecting families. arXiv: 2306.03473.

\bibitem{HP+}
Yang Huang, Yuejian Peng, Coefficients added non-empty pairwise cross intersecting families.
Manuscript. 
\bibitem{mix2}
Yang Huang, Yuejian Peng, Mixed pairwise cross intersecting families (II). In preparation.

\bibitem{HM1967}
A.J.W. Hilton, E.C. Milner, Some intersection theorems for systems of finite
sets, Quart. J. Math. Oxf. 2 (18) (1967) 369-384.

\bibitem{KK4}
A.J.W. Hilton, The Erd\H os-Ko-Rado theorem with valency conditions, Unpublished
Manuscript, 1976.

\bibitem{H}
A.J.W. Hilton, An intersection theorem for a collection of families of subsets of a finite set, J. London Math. Soc. 2 (1977) 369-376.



\bibitem{KK1}
G.O.H. Katona, A theorem of finite sets, in: Theory of Graphs, Proc. Colloq.
Tihany, Akad\'emai Kiad\'o, (1968) 187-207.



\bibitem{KK2}
J.B. Kruskal, The number of simplices in a complex, in: Math. Opt. Techniques,
Univ. of Calif. Press, (1963) 251-278.

%\bibitem{MT}
% M. Matsumoto, N. Tokushige, The exact bound in the Erd\H os-Ko-Rado theorem for cross intersecting families, J. Combin. Theory Ser. A 52 (1989), 90–97.

\bibitem{SFQ2020}
C. Shi, P. Frankl, J. Qian,
On non-empty cross-intersecting families, Combinatorica 42 (2022) 1513--1525


\end{thebibliography}
\end{document}